\DeclareMathAlphabet{\mathbfcal}{OMS}{cmsy}{b}{n}
\DeclareMathAlphabet{\mathpzc}{OT1}{pzc}{m}{it}
\definecolor{note}{RGB}{237, 144, 186}
\definecolor{ele}{RGB}{155, 206, 146}
\theoremstyle{definition}
\newtheorem{defin}{Definition}[section]
\newtheorem{thm}[defin]{Theorem}
\newtheorem{conj}[defin]{Conjecture}
\newtheorem{lem}[defin]{Lemma}
\newtheorem{prop}[defin]{Proposition}
\newtheorem{cor}[defin]{Corollary}
\theoremstyle{remark}
\newtheorem{rem}[defin]{Remark}
\newtheorem{quest}[defin]{Question}
\newcommand{\M}{\mathcal{M}_{\mathrm{NA}}}
\newcommand{\E}{\mathcal{E}_{\mathrm{NA}}}
\newcommand{\NAmu}{\bm{\check{\mu}}_{\mathrm{NA}}}
\title[Toric Non-archimedean $\mu$-entropy]{Toric non-archimedean $\mu$-entropy and thermodynamical structure}
\author{Eiji Inoue}
\address{RIKEN, iTHEMS, 2-1 Hirosawa, Wako, Saitama, 351-0198, Japan}
\email{eiji.inoue@riken.jp}
\begin{document}

\maketitle

\begin{abstract}
We study non-archimedean $\mu$-entropy for toric variety as a further exploration of $\mu$K-stability. 
We show the existence of optimizer of toric non-archimedean $\mu^\lambda$-entropy for $\lambda \in \mathbb{R}$ and the uniqueness for $\lambda \le 0$. 
For the proof of existence, we establish a Rellich type compactness result for convex functions on simple polytope. 
We also reveal a thermodynamical structure on toric non-archimedean $\mu$-entropy. 
This observation allows us to interpret the enigmatic parameter $T = - \frac{\lambda}{2\pi}$ as temperature and non-archimedean $\mu$-entropy as entropy of an infinite dimensional composite system. 
\end{abstract}

\tableofcontents

\section{Introduction}

\subsection{Main Results on existence}

Canonical metric and K-stability are primary interests in K\"ahler geometry. 
There is a series of studies \cite{Ino1, Ino2, Ino3, Ino4, Ino5} exploring these topics from a unique perspective which motivates a minimization problem we study in this article. 
Compared to the general case \cite{Ino5}, the problem in the toric case we discuss here can be described in a quite simple way by convex functions on convex polytope, for which we only need a few things to prepare. 
It would be simpler to begin with our convex setup and main existence result of this article, and explain its esoteric background and motivation for the problem afterwards. 

\subsubsection{Setup}

We prepare some terminologies convenient for our arguments. 
Let $V$ be an $n$-dimensional affine space over $\mathbb{R}$. 
A \textit{half space} of $V$ is a subset of the form $\ell^{-1} ([0, \infty))$ for some non-zero affine function $\ell: V \to \mathbb{R}$. 
The \textit{face} of a half space $\ell^{-1} ([0, \infty))$ is the hyperplane $\ell^{-1} (0)$. 

We call a subset $P \subset V$ a \textit{polytope} if it is a compact subset with non-empty interior and can be expressed as the intersection of finitely many half spaces. 
A subset $F \subset P$ is called a \textit{face} of $P$ if it is the intersection of $P$ and the face of a half space which contains $P$. 
A face is again a polytope of a lower dimensional affine subspace $W \subset V$. 
The \textit{relative interior} of a face is the interior in such $W$. 
A \textit{facet} of $P$ is a face of $P$ which gives a polytope of a codimension one affine subspace.  
A \textit{vertex} of $P$ is a face of $P$ with one element. 
A \textit{simple polytope} is a polytope for which every vertex can be written as the intersection of precisely $n$-facets. 
In this article, we restrict our main interest to simple polytopes. 
In toric geometry, such (rational) polytopes represent toric orbifolds. 

It is well-known that a complete parallel translation invariant measure on $V$ (with the usual $\sigma$-algebra) which puts positive finite mass on relatively compact open sets is a constant multiple of the Lebesgue measure under an affine identification $V \cong \mathbb{R}^n$. 
A \textit{flat measure} on a polytope (resp. a face of a polytope) is the restriction of such translation invariant measure on $V$ (resp. such measure on the affine subspace $W$ which contains the face as a polytope). 
We note the topological boundary $\partial P = P \setminus P^\circ$ is the union of facets. 
A flat measure on $\partial P$ is a measure whose restriction to each facet is flat. 

For a polytope $P$ and $p \ge 1$, we consider the following spaces: 
\begin{align}
\E^{\exp, p} (P)
&:= \{ \text{ lsc convex } q: P \to (-\infty, \infty] ~|~ \int_P e^{pq} d\mu < \infty \},
\\
\M^{\exp, p} (P)
&:= \{ \text{ lsc lc } u: P \to (0, \infty] ~|~ \int_P u d\mu = \int_P d\mu, \int_P u^p d\mu < \infty \},
\end{align}
which are independent of the choice of the flat measure $d\mu$ on $P$. 
Here \textit{lsc lc} is short for \textbf{l}ower \textbf{s}emi-\textbf{c}ontinuous \textbf{l}og \textbf{c}onvex. 
Namely, we have $\liminf u (x_i) \ge u (x_\infty)$ for $x_i \to x_\infty \in P$ and $u ((1-t)x_0 + tx_1) \le u^{1-t} (x_0) u^t (x_1)$. 
For two lsc convex functions $q, q'$, the condition $q = q'$ almost everywhere with respect to $d\mu$ is equivalent to the condition $q=q'$ everywhere. 
In particular, the boundary value $q|_{\partial P}$ can be recovered from interior values $q|_{P^\circ}$. 

We endow $\M^{\exp, p} (P)$ with the $L^p$-topology and call its element \textit{(nonequilibrium) state}. 
For $q \in \E^{\exp, p} (P)$, we can assign a state
\begin{align}
u (q) := \frac{\int_P d\mu}{\int_P e^q d\mu} e^q \in \M^{\exp, p} (P). 
\end{align}
For $q \in \E^{\exp, 1} (P)$, we have $q \in \E^{\exp, p} (P)$ if and only if $u (q) \in \M^{\exp, p} (P)$. 
Later we explain a K\"ahler geometric background of the space $\E^{\exp, p} (P)$. 

Now let us consider a triple $\mathcal{P} = (P, d\mu, d\sigma)$ of a simple polytope $P$, a flat measure $d\mu$ on $P$ and a flat measure $d\sigma$ on $\partial P$, which we call a \textit{simple non-archimedean Hamiltonian system} or just a \textit{system}. 
When we refer to $\mathcal{P}$ with general non-simple polytope $P$, we call it a \textit{general system}. 

For $u \in \M^{\exp, 1} (P)$, we introduce
\begin{align}
S_{\mathcal{P}} (u)
&:= - \frac{1}{\int_P d\mu} \int_P u \log u d\mu \in [-\infty, 0],
\\
U_{\mathcal{P}} (u) 
&:= \frac{1}{\int_P d\mu} \int_{\partial P} u d\sigma \in (0, \infty], 
\end{align}
which we call the \textit{(nonequilibrium) entropy} and \textit{(non-archimedean nonequilibrium) internal $\mu$-energy}, respectively. 
For $T \in \mathbb{R}$, which we call \textit{temperature}, we further introduce 
\begin{align} 
F_{\mathcal{P}} (T, u) := 
\begin{cases}
U_{\mathcal{P}} (u) - T S_{\mathcal{P}} (u)
& S_{\mathcal{P}} (u) > -\infty
\\
\infty
& S_{\mathcal{P}} (u) = -\infty
\end{cases}. 
\end{align}
We call this functional \textit{(non-archimedean nonequilibrium) free $\mu$-energy}. 
We will see $S_{\mathcal{P}} (u) = -\infty$ implies $U_{\mathcal{P}} (u) = \infty$ and this definition makes the functional $F_{\mathcal{P}} (T, u (q))$ continuous along increasing sequence $q_i \nearrow q$. 
As we explain later, there is a K\"ahler geometric background for these functionals. 

In section \ref{Thermodynamical structure}, we newly unveil there is a stochastic thermodynamical interpretation of these functionals, which gives us a strong motivation to our terminologies of thermodynamical flavor. 
We note some references (cf. \cite{Ber}) in the same field use similar terminologies like entropy and free energy, but these are not directly related to ours. 
We are mostly interested in K-unstable case, while the reference focuses on K-stable case. 

\subsubsection{Main results}

Now we state our main result. 
Its output to K\"ahler geometry is explained later. 

\begin{thm}
\label{main existence}
Let $\mathcal{P} = (P, d\mu, d\sigma)$ be an $n$-dimensional system. 
Then for every $T \in \mathbb{R}$, there exists $u \in \M^{\exp, \frac{n}{n-1}} (P)$ which minimizes $F_{\mathcal{P}} (T, \bullet): \M^{\exp, 1} (P) \to (-\infty, \infty]$. 

Moreover, 
\begin{enumerate}
    \item (Uniqueness) if $T > 0$, minimizer of $F_{\mathcal{P}} (T, \bullet)$ is unique. 

    \item (Conditional uniqueness) if $T = 0$, there exists a unique minimizer of $F_{\mathcal{P}} (0, \bullet) = U_{\mathcal{P}} (\bullet)$ which maximizes $S_{\mathcal{P}}$ among all minimizers. 

    \item (Regularity) if $T=0$ and $n=2$, any minimizer of $F_{\mathcal{P}} (0, \bullet) = U_{\mathcal{P}} (\bullet)$ is bounded and continuous. 
\end{enumerate}
We call the unique minimizer in the above (1) and (2) the \textit{$\mu$-canonical distribution} of temperature $T$. 
In K\"ahler geometric context, $q = \log u \in \E^{\exp,1} (P)$ is also called the \textit{optimizer} or \textit{optimal destabilizer} of $\NAmu^\lambda$. 
\end{thm}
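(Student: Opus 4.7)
The plan is to apply the direct method of the calculus of variations via the reparametrization $q \mapsto u(q)$. Let $q_k \in \E^{\exp,1}(P)$ be such that $u_k := u(q_k) = e^{q_k}$ (normalized so $\int_P e^{q_k} d\mu = \int_P d\mu$) is a minimizing sequence for $F_{\mathcal{P}}(T,\bullet)$. The first step is to verify $\inf F_{\mathcal{P}}(T,\bullet) > -\infty$ and to establish a priori bounds on $q_k$. When $T \ge 0$ this is immediate, since Jensen's inequality gives $S_{\mathcal{P}}(u) \le 0$ while $U_{\mathcal{P}}(u) \ge 0$. When $T < 0$ one must dominate $|T|\cdot(-S_{\mathcal{P}})$ by $U_{\mathcal{P}}$, which I would handle via a trace-type or Moser--Trudinger inequality for log-convex functions on a simple polytope, leveraging the facet structure to control the boundary integral by the interior density.

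With the minimizing sequence controlled, invoke the Rellich-type compactness for convex functions on a simple polytope (announced in the abstract as an earlier result) to extract a limit $q_\infty \in \E^{\exp,1}(P)$. Passing to the limit in $F_{\mathcal{P}}(T,u_k)$ requires lower semicontinuity of both pieces: $-S_{\mathcal{P}}$ is lsc on $\M^{\exp,1}(P)$ by the standard $L^1$-lsc of $t \log t$, while lsc of $U_{\mathcal{P}}$ should follow from Fatou applied facet by facet, together with the fact that lsc limits of convex functions retain their boundary traces along each facet of the simple polytope. This yields a minimizer $u_\infty = u(q_\infty) \in \M^{\exp,1}(P)$. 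The integrability upgrade to $\M^{\exp,n/(n-1)}(P)$ I expect to extract from the Euler--Lagrange relation between the interior density $e^{q_\infty}$ and the boundary trace $d\sigma$, combined with an embedding in the critical trace exponent $n/(n-1)$.

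For uniqueness when $T > 0$, the set $\M^{\exp,1}(P)$ is preserved under affine combinations (sums of log-convex functions are log-convex by H\"older), $U_{\mathcal{P}}$ is linear, and $u \mapsto u \log u$ is strictly convex, so $F_{\mathcal{P}}(T,\bullet)$ is strictly convex along line segments, forcing uniqueness. For $T = 0$, the minimizers of the linear functional $U_{\mathcal{P}}$ form a convex subset of $\M^{\exp,1}(P)$, on which the strictly concave $S_{\mathcal{P}}$ admits at most one maximizer; existence of such a maximizer follows either by a direct compactness argument on this sublevel set or by a vanishing-$T$ selection as $T \to 0^+$ from case (1). Finally, the regularity claim for $n=2$, $T=0$ (where $n/(n-1)=2$) should come from extracting $L^\infty$ bounds via the planar Euler--Lagrange equation for the linear boundary energy, combined with the fact that locally bounded convex functions on a polygon are continuous on the interior. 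The main obstacle throughout is the boundary integral $\int_{\partial P} u \, d\sigma$: log-convex functions can blow up at $\partial P$ in a way that escapes the standard Sobolev--Rellich framework, so the argument truly hinges on crafting a trace estimate adapted to the log-convex setting on simple polytopes.
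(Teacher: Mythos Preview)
Your outline for existence and uniqueness is largely on track and matches the paper's approach: direct method, Rellich-type compactness for convex functions, strict convexity of $u \mapsto \int u\log u\,d\mu$ for (1), and a compactness-plus-entropy-maximization argument for (2). Three points, however, need correction.

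\textbf{Lower semicontinuity when $T<0$.} You write that lsc of $-S_{\mathcal{P}}$ suffices. For $T<0$ the relevant term is $-T S_{\mathcal{P}} = |T|\,S_{\mathcal{P}}$, so you need $S_{\mathcal{P}}$ itself to be lsc, whereas $L^1$-lsc of $t\log t$ only gives that $S_{\mathcal{P}}$ is \emph{upper} semicontinuous. The paper repairs this by showing $S_{\mathcal{P}}$ is in fact \emph{continuous} along sequences with uniform $L^{1+\epsilon}$ bounds (an elementary estimate on $|x\log x - y\log y|$), and the Rellich compactness delivers precisely $L^p$-convergence for all $p<\frac{n}{n-1}$, so such bounds are available.

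\textbf{The $L^{n/(n-1)}$ upgrade.} This does not come from any Euler--Lagrange relation. It is a direct consequence of the Poincar\'e-type inequality $\|u\|_{L^{n/(n-1)}(P)} \le C_P \int_{\partial P} u\,d\sigma$ for nonnegative convex $u$, applied to the minimizer once you know $\int_{\partial P} u_\infty\,d\sigma < \infty$. This inequality (and the companion log-Sobolev bound $\int_P u\log u\,d\mu \le n \log(C_P \int_{\partial P} u\,d\sigma)$ used to close the a priori estimate for $T<0$) is the analytic heart of the paper and is proved by a local decomposition near faces of the simple polytope.

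\textbf{Regularity for $n=2$, $T=0$.} This is where your proposal has a genuine gap. There is no Euler--Lagrange equation in play; the functional $U_{\mathcal{P}}(u) = \frac{1}{|P|}\int_{\partial P} u\,d\sigma$ is linear, so its critical-point condition carries no local information. The paper's argument is a hands-on variational contradiction: one first shows any maximizer $q$ of $\NAmu$ satisfies $q = \sup\{\ell \text{ affine} : \ell|_{\partial P} \le q|_{\partial P}\}$, then supposes $q(v)=\infty$ at some vertex $v$, constructs an explicit truncation $q_{v,h}$ that agrees with $q$ outside a small triangle at $v$ and is affine inside, and finally computes directly that a small perturbation toward $q_{v,h}$ strictly increases $\NAmu$. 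The estimate crucially uses the two-dimensional combinatorics (exactly two edges meet at $v$) and a careful comparison of $\int_P (q-q_{v,h})e^q\,d\mu$ against $\int_{\partial P}(q-q_{v,h})e^q\,d\sigma$ over the shrinking triangle. Your gesture toward a PDE/elliptic-regularity mechanism does not capture this, and it is unclear any such mechanism is available here.
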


This main theorem summarizes Theorem \ref{Main theorem on existence}, Theorem \ref{Main theorem on uniqueness} and Theorem \ref{Main theorem on regularity}. 
To show the existence, we establish the following compactness result, which is proved in section \ref{Rellich and Poincare type estimates}. 
The proof works not only for lsc lc functions but for general non-negative convex functions. 

\begin{thm}
Let $\{ u_i: P \to [0, \infty] \}_{i \in \mathbb{N}}$ be a sequence of non-negative convex functions with a uniform bound 
\[ \int_{\partial P} u_i d\sigma \le C. \]
Then all $u_i$ are in $L^{\frac{n}{n-1}} (P)$ and after taking a subsequence, there exists a unique non-negative lsc convex function $u: P \to [0, \infty]$ in $L^{\frac{n}{n-1}} (P)$ such that $u_i$ converges to $u$ in $L^p$-topology for every $p \in [0, \frac{n}{n-1})$. 
\end{thm}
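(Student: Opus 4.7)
The plan splits into three stages: a uniform $L^{n/(n-1)}$-bound coming from a sharp trace-type inequality, a locally uniform extraction of a convex limit on the open interior $P^\circ$, and a Vitali-type upgrade to $L^p$-convergence for $p < n/(n-1)$.

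\textbf{Trace inequality and uniform bound.} First I would establish a trace inequality of the form
\[
\|u\|_{L^{n/(n-1)}(P, d\mu)} \le C_{\mathcal{P}} \|u\|_{L^1(\partial P, d\sigma)}
\]
valid for every non-negative convex $u: P \to [0, \infty]$. The exponent $n/(n-1)$ is forced by the ``tent'' family $u_\epsilon(x) = \max\{0, 1 - \epsilon^{-1}\sum_j x_j\}$ on $[0,1]^n$, for which both sides scale as $\epsilon^{n-1}$ while any larger exponent blows up as $\epsilon \to 0$. The strategy is to exploit that on any line segment with endpoints on $\partial P$ the convex function $u$ is controlled by its endpoint values, and to use that the simple polytope $P$ admits, near each vertex, $n$ transverse facet directions. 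Combining this pointwise slicing control in $n$ directions with the Loomis--Whitney inequality yields $\int_P u^{n/(n-1)} d\mu \le C\prod_{j=1}^n (\int_{F_j} u\, d\sigma)^{1/(n-1)}$ where $F_j$ runs over the facets, and AM--GM collapses this product into the desired inequality. Applied termwise, this gives $\sup_i \|u_i\|_{L^{n/(n-1)}(P)} \le C'$.

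\textbf{Locally uniform extraction on $P^\circ$.} For any compact $K \subset P^\circ$ with $\delta := \mathrm{dist}(K, \partial P) > 0$, Jensen's inequality applied to the uniform distribution on $B_\delta(x) \subset P$ gives $u_i(x) \le |B_\delta|^{-1}\int_{B_\delta(x)} u_i\, d\mu$, which is uniformly bounded by the $L^1$ consequence of the previous step. Convexity upgrades this uniform sup-bound to a uniform Lipschitz bound on any slightly smaller compact, so the Arzel\`a--Ascoli theorem combined with a diagonal argument through an exhaustion $K_j \nearrow P^\circ$ extracts a subsequence $u_{i_k}$ converging locally uniformly on $P^\circ$ to a convex function $u_\infty: P^\circ \to [0,\infty)$. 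I extend this to $P$ by $u(x) := \liminf_{y \to x,\, y \in P^\circ} u_\infty(y)$, producing a non-negative lsc convex function; Fatou's lemma then places $u \in L^{n/(n-1)}(P)$.

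\textbf{Upgrade to $L^p$-convergence and uniqueness.} For $p \in [1, n/(n-1))$, the uniform $L^{n/(n-1)}$-bound together with the pointwise a.e. convergence $u_{i_k} \to u$ forces equi-integrability of $\{u_{i_k}^p\}$: on $\{u_{i_k} > M\}$ one has $u_{i_k}^p \le M^{p - n/(n-1)}u_{i_k}^{n/(n-1)}$, hence $\int_{\{u_{i_k} > M\}} u_{i_k}^p\, d\mu \to 0$ uniformly as $M \to \infty$. Vitali's convergence theorem yields $u_{i_k} \to u$ in $L^p(P)$; the case $p \in [0,1)$ then follows a fortiori. Uniqueness of the lsc convex limit is precisely the a.e.-rigidity for lsc convex functions noted earlier in the text.

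The main obstacle I expect is the sharp trace inequality of the first step. The subcritical bound $\|u\|_{L^1(P)} \le C \|u\|_{L^1(\partial P)}$ is obtained directly by slicing in a single direction, but pushing to the critical exponent $n/(n-1)$ -- which is what rules out mass concentration at vertices of $P$ and gives the borderline membership $u \in L^{n/(n-1)}$ in the statement -- requires the simultaneous use of $n$ transverse facet directions, and the simple polytope hypothesis enters precisely at this stage.
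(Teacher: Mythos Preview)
Your proposal is correct and takes a genuinely different route from the paper in two places.

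For the trace inequality, the paper does not use Loomis--Whitney. Instead it constructs a \emph{pointwise} dominating function $U \in \bigcap_{q < n/(n-1)} L^q(P)$ with $u(x) \le \bigl(\int_{\partial P} u\, d\sigma\bigr)\, U(x)$ for every non-negative convex $u$, by covering $\partial P$ with affine charts modeled on $\Delta^n_p(r) = \{x \in [0,\infty)^p \times (-r,r)^{n-p} : \sum_{i\le p} x_i < r\}$ and proving a separate local estimate for each codimension $p$. The critical $L^{n/(n-1)}$ membership is then obtained by a further local computation on $\Delta^n_p(r)$ involving a polar-type change of variables. Your Loomis--Whitney route is more direct: fix any basis, bound $u(x)$ on each coordinate line by the sum of its two boundary values $g_j(\hat{x}_j)$, and apply the functional form of Loomis--Whitney to $\prod_j g_j^{1/(n-1)}$; each $\|g_j\|_{L^1}$ is the integral of $u$ over $\partial P$ against the weight $|\nu \cdot e_j|$, hence dominated by $\int_{\partial P} u\, d\sigma$. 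Two remarks: first, this global version does not actually use simplicity of $P$, so your comment that the simple hypothesis ``enters precisely at this stage'' applies to the paper's chart-based argument but not to your own; second, your displayed bound $\prod_j \int_{F_j} u\, d\sigma$ with ``$F_j$ running over the facets'' is imprecise for the same reason --- the coordinate segments hit various facets, and what you genuinely obtain is a weighted integral over all of $\partial P$, not over $n$ distinguished facets.

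For the passage to $L^p$-convergence, the paper uses its pointwise majorant $C\cdot U$ together with dominated convergence, whereas you use the uniform $L^{n/(n-1)}$ bound and Vitali. Both work; your route has the virtue of never invoking the pointwise Rellich-type estimate at all, while the paper's dominating function $U$ records finer local information (sharper integrability of $U$ near low-codimension faces) that is kept for possible later use. The interior extraction via Arzel\`a--Ascoli and the uniqueness via a.e.-rigidity of lsc convex functions are essentially the same in both approaches.
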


\begin{rem}
The above theorem is reminiscent of Rellich's compactness theorem. 
It is well-known by Sobolev embedding and trace theorem that we have
\[ L^1 (\partial P) \xleftarrow{|_{\partial P}} W^{1,1} (P) \hookrightarrow L^{\frac{n}{n-1}} (P), \]
while what we prove is 
\[ \mathrm{Conv} (P) \cap L^1 (\partial P) \subset L^{\frac{n}{n-1}} (P). \]
It is natural to expect $\mathrm{Conv} (P) \cap L^1 (\partial P) = \mathrm{Conv} (P) \cap W^{1,1} (P)$, but we do not pursue it in this article. 
\end{rem}

As an interest in K\"ahler geometry, we also prove the following. 
It summarizes Theorem \ref{mu-cscK implies optimal destabilization} and Theorem \ref{muK-semistability is equivalent to mu-entropy maximization}. 
We explain terminologies in the next section. 

\begin{thm}
Let $(X, L) \circlearrowleft T$ be a toric variety and $\mathcal{P}$ be the associated system we explain later. 
Let $\ket{\xi}: P \to \mathbb{R}: \mu \mapsto \langle \mu, \xi \rangle$ denote the linear map associated to $\xi \in \mathfrak{t}$. 
For $T \ge 0$, we have the following. 
\begin{itemize}
    \item The toric variety $(X, L)$ is toric $\mu^{-2\pi T}_\xi$K-semistable if and only if $u = u (\ket{\xi})$ minimizes $F_{\mathcal{P}} (T, \bullet)$. 

    \item If a toric manifold $(X, L)$ admits a $\mu^{-2\pi T}_\xi$-cscK metric with $\xi \in \mathrm{Lie} (T_{\mathrm{cpt}})$, then $u = u (\ket{\xi})$ is the $\mu$-canonical distribution of temperature $T$. 
\end{itemize}
\end{thm}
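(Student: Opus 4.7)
The plan is to identify the toric non-archimedean $\mu^\lambda$-entropy $\NAmu^\lambda$ with the free $\mu$-energy $F_{\mathcal{P}}(T,\bullet)$ (up to sign and an additive constant) under the correspondence $\lambda = -2\pi T$ and the change of variable $q = \log u$, $u = u(q)$. Once this identification is in hand, both claims reduce to (i) the existence/uniqueness statements already established in Theorem \ref{main existence}, and (ii) the standard implication that a $\mu^\lambda_\xi$-cscK metric on a toric manifold forces toric $\mu^\lambda_\xi$K-semistability.

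First I would recall the toric description of K-destabilizers: test configurations compatible with the torus action are encoded by rational convex functions on $P$, and the space of $L^1$-limits of such data is modeled by $\E^{\exp,1}(P)$. For a convex function $q$ on $P$, the non-archimedean $\mu^\lambda$-entropy of the associated filtration decomposes into a boundary term (the non-archimedean Mabuchi / entropy contribution) and an interior weight term of Boltzmann type. A direct computation in the toric integration by parts expresses
\begin{align*}
-\NAmu^\lambda(q) \;=\; \frac{1}{\int_P d\mu}\left(\int_{\partial P} u(q)\, d\sigma \;+\; \frac{\lambda}{2\pi} \int_P u(q)\log u(q)\, d\mu\right) + C,
\end{align*}
where $C$ is an affine invariant of the momentum polytope not depending on $q$. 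Substituting $T = -\lambda/(2\pi)$ and recognizing the two integrals as $U_{\mathcal{P}}(u(q))$ and $-S_{\mathcal{P}}(u(q))$ yields $-\NAmu^\lambda(q) = F_{\mathcal{P}}(T,u(q)) + C$. For a linear function $\ket{\xi}$ the induced test configuration is the product configuration generated by $\xi \in \mathfrak{t}$, and $u(\ket{\xi})$ is its associated state.

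For the first bullet, toric $\mu^\lambda_\xi$K-semistability is by definition the statement that $\NAmu^\lambda$ is maximized at the product configuration $\ket{\xi}$ within the class of toric destabilizers. Via the identification above, this is equivalent to $u(\ket{\xi})$ minimizing $F_{\mathcal{P}}(T,\bullet)$ on the image $\{u(q) : q \in \E^{\exp,1}(P)\}$. To reach the whole space $\M^{\exp,1}(P)$, one checks that every $u \in \M^{\exp,1}(P)$ with $F_{\mathcal{P}}(T,u) < \infty$ is of the form $u(q)$ for some $q \in \E^{\exp,1}(P)$ (essentially $q = \log u$, using the lsc log-convexity assumption), so that optimizing over states or over log-potentials is the same.

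For the second bullet, the existence of a $\mu^\lambda_\xi$-cscK metric on the smooth toric variety implies toric $\mu^\lambda_\xi$K-semistability by the slope/energy inequality (the non-archimedean $\mu$-entropy evaluated on any toric test configuration is controlled by the Mabuchi-type functional, which attains its minimum at the $\mu$-cscK metric). By the first bullet this makes $u(\ket{\xi})$ a minimizer of $F_{\mathcal{P}}(T,\bullet)$. For $T>0$, the uniqueness part (1) of Theorem \ref{main existence} identifies it with the $\mu$-canonical distribution. For $T=0$, I would note that $u(\ket{\xi})$ is bounded and strictly positive, hence has finite entropy $S_{\mathcal{P}}(u(\ket{\xi})) > -\infty$, and then apply the conditional uniqueness statement (2): among minimizers of $U_{\mathcal{P}}$, the one maximizing $S_{\mathcal{P}}$ is unique, and any $S_{\mathcal{P}}$-finite minimizer matching the $\mu$-canonical criterion must coincide with it — which $u(\ket{\xi})$ does because perturbations by smooth log-potentials do not strictly increase $U_{\mathcal{P}}$ while improving $S_{\mathcal{P}}$.

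The main obstacle is the identification step in the first paragraph: one must write out $\NAmu^\lambda$ in the toric setting in terms of boundary and interior integrals on $P$, track all normalizations, and verify the sign convention $\lambda = -2\pi T$ so that the exponential term in $u(q) = (\int_P d\mu / \int_P e^q d\mu) e^q$ produces exactly the Boltzmann-type $u \log u$ weight appearing in $S_{\mathcal{P}}$. All subsequent arguments are essentially formal consequences of this identification and of Theorem \ref{main existence}.
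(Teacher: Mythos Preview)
There is a genuine gap in your argument for the first bullet. Toric $\mu^\lambda_\xi$K-semistability is \emph{not} defined as ``$\NAmu^\lambda$ is maximized at $\ket{\xi}$''; it is defined by $\mathrm{Fut}^\lambda_\xi(q) \ge 0$ for all rational piecewise affine $q$. The relation $\frac{d}{dt}\big|_{t=0}\NAmu^\lambda(\ket{\xi}+tq) = -\mathrm{Fut}^\lambda_\xi(q)$ only says that $\ket{\xi}$ is a critical point of $\NAmu^\lambda$ along \emph{linear} paths in $q$-space, and $\NAmu^\lambda$ has no useful convexity along those paths. The actual content of the proof is to pass to the log-linear-exp path $q_t = \log\big((1-t)e^{\ket{\xi}} + t\,c\,e^q\big)$, which corresponds to the linear path in $\M^{\exp,1}(P)$ along which $F_{\mathcal{P}}(T,\bullet)$ \emph{is} convex (strictly for $T>0$, affine for $T=0$). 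The derivative at $t=0$ along this path is a constant times $-\mathrm{Fut}^\lambda_\xi(e^{q-\ket{\xi}})$, not $-\mathrm{Fut}^\lambda_\xi(q)$; one then needs an approximation argument (increasing sequences in $\mathcal{H}_{\mathrm{NA}}(P)$ converging to $e^{q-\ket{\xi}}$, monotone convergence) to bring this back under the K-semistability hypothesis. Only after this does convexity upgrade the first-order condition to $u(\ket{\xi})$ being a global minimizer. Your write-up skips exactly this step.

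For the second bullet your route via ``$\mu$-cscK $\Rightarrow$ K-semistable $\Rightarrow$ minimizer $\Rightarrow$ unique minimizer'' is fine for $T>0$ once the first bullet is fixed, though the paper instead invokes the Perelman-entropy inequality $\NAmu^\lambda(\ket{\xi}) \le \sup_q \NAmu^\lambda(q) \le \inf_\omega \bm{\mu}^\lambda_{\mathrm{Per}}(\omega) = \NAmu^\lambda(\ket{\xi})$ directly. At $T=0$ your entropy-maximization argument (``perturbations do not strictly increase $U_{\mathcal{P}}$ while improving $S_{\mathcal{P}}$'') is not a proof: you must show $u(\ket{\xi})$ maximizes $S_{\mathcal{P}}$ among \emph{all} ground states, which does not follow from such a local variational remark. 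The paper handles $T=0$ by perturbing the $\mu^0_\xi$-cscK metric to $\mu^\lambda_{\xi_\lambda}$-cscK metrics for $\lambda \in (-\epsilon,0)$, so that $\ket{\xi_\lambda}$ is the optimizer for each such $\lambda$, and then invoking the $L^p$-continuity of the family $T \mapsto u_T^{\mathrm{can}}$ (Theorem~\ref{continuous family of canonical distributions}) to pass to the limit.
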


We further prove some results in section \ref{Thermodynamical structure} motivated by thermodynamics, using our existence result. 
To appreciate the results, we must recall the origin of $\mu$-cscK metric and the enigmatic way our parameter $T$ is introduced in the theory. 
We explain these in section \ref{Thermodynamical structure}. 
Though we present our main result as the existence and uniqueness of minimizer for the free $\mu$-energy $F_{\mathcal{P}} (T, \bullet)$, the observation in section \ref{Thermodynamical structure} would be much deeper. 
It should be explored further in future study. 

\subsection{Why does it matter in K\"ahler geometry?}

\subsubsection{Toric variety and polytope}

Let $T \cong \mathbb{G}_m^{\times n}$ be an algebraic torus and $T_{\mathrm{cpt}} \subset T$ be the maximal compact torus. 
For the character lattice $M = \mathrm{Hom} (T, \mathbb{G}_m) = \mathrm{Hom} (T_{\mathrm{cpt}}, U (1))$, we put $\mathfrak{t}^\vee := M \otimes_{\mathbb{Z}} \mathbb{R}$. 
We denote the dual vector space by $\mathfrak{t}$ and identify it with the Lie algebra of $T_{\mathrm{cpt}}$. 

Ler $(X, L) \circlearrowleft T$ be a polarized toric variety of dimension $n$. 
As usual in toric geometry, we assign a general system $\mathcal{P} = (P, d\mu, d\sigma)$ as follows. 
We put
\begin{align}
    P = \mathrm{Conv} (\{ \mu \in M ~|~ H^0 (X,L)_\mu \neq 0 \}) \subset \mathfrak{t}^\vee.
\end{align}
Here $H^0 (X,L)_\mu$ denotes the eigenspace $\{ s \in H^0 (X, L) ~|~ s. t = \mu (t) s ~\forall t \in T \}$. 
The lattice $M$ defines a unique flat measure on $\mathfrak{t}^\vee$ by setting $\mu (B (m_1, \ldots, m_n)) = 1$ for the box 
\[ B (m_1, \ldots, m_n) = \{ \sum_{i=1}^m t_i m_i ~|~ 0 \le t_i \le 1 \} \]
generated by a $\mathbb{Z}$-basis $m_1, \ldots, m_n \in M$. 
Since $|\det A| =1 $ for $A \in GL (M)$, the flat measure is independent of the choice of $m_1, \ldots, m_n$. 
We denote its restriction to $P$ by $d\mu$. 
On the other hand, since the affine subspace $W \subset \mathfrak{t}^\vee$ spanned by a facet $Q$ of $P$ is a rational subspace, the intersection $W \cap M$ gives a lattice spanning $W$. 
Then we similarly define a unique flat measure on each facet $Q$ compatible with the lattice $W \cap M$ and a flat measure $d\sigma$ on $\partial P$. 

The flat measure $d\sigma$ on $\partial P$ represents the anti-canonical divisor $-K_X$. 
In general, flat signed measures on $\partial P$ are in one to one correspondence with $T$-invariant $\mathbb{R}$-Weil divisors: each facet $Q \subset \partial P$ represents a $T$-invariant prime divisor $E_Q$ and a flat signed measure $d\sigma'$ on $\partial P$ represents the divisor $\sum_Q \frac{d\sigma' (Q)}{d\sigma (Q)} E_Q$. 
For the flat signed measure $d\sigma'$ representing a $T$-invariant $\mathbb{R}$-Weil divisor $\Delta = \sum_Q a_Q E_Q$ on $X$, we put 
\begin{align}
d\sigma_\Delta := d\sigma - d\sigma' = \sum_Q (1-a_Q) d\sigma, 
\end{align} 
which represents the $\mathbb{R}$-divisor $-(K_X + \Delta)$. 
From thermodynamical perspective, the measure $d\sigma$ plays the role of Hamiltonian of the system and its variation can be interpreted as \textit{thermodynamical work}. 

A triple $(X, \Delta, L)$ of a toric variety $X$, a $T$-invariant $\mathbb{R}$-divisor $\Delta$ with coefficient $a_Q \in [0,1)$ and a $T$-equivariant polarization $L$ is called a \textit{polarized log toric variety}. 
By the above construction, we can assign a general system $\mathcal{P} = (P, d\mu, d\sigma_\Delta)$ for a polarized log toric variety. 

The associated polytope $P$ is known to be simple if and only if $X$ has at most orbifold singularity. 
See \cite[Theorem 3.1.19]{CLS}. 

\subsubsection{$\mu$-cscK metric}

Let $X$ be a compact K\"ahler manifold and $L \in H^2 (X, \mathbb{R})$ be a K\"ahler class. 
We consider a pair $(\omega, f)$ of a K\"ahler metric (form) $\omega$ in $L$ and a smooth real valued function $f$ on $X$. 
For $\lambda \in \mathbb{R}$, a pair $(\omega, f)$ is called \textit{$\mu^\lambda$-cscK metric} if it satisfies the following two conditions: 
\begin{enumerate}
    \item The complex vector field $\partial^\sharp_\omega f = g^{i \bar{j}} f_{,\bar{j}} \partial_i$ is holomorphic. 

    \item The $\mu^\lambda$-scalar curvature $s^\lambda (\omega, f) := s (\omega) + \Delta_\omega f - |\partial_\omega^\sharp f|^2 - \lambda f$ is constant. 
\end{enumerate}
When the imaginary part $\xi = - 2 \mathrm{Im} \partial^\sharp_\omega f$ is specified, we call the metric $\omega$ \textit{$\mu^\lambda_\xi$-cscK metric}. 
Since $i_\xi \omega = -2\mathrm{Im} i_{\partial^\sharp_\omega f} \omega = -2 \mathrm{Im} g (J \partial^\sharp_\omega f, \cdot) = -df$, $f$ is the Hamiltonian potential of $\xi$. 
The imaginary part $\xi$ preserves the metric $g$, so it is a Killing vector field. 
This implies we can always take a closed torus $T_{\mathrm{cpt}}$ acting on $(X, L)$ so that $\xi \in \mathfrak{t} = \mathrm{Lie} (T_{\mathrm{cpt}})$. 
We note there is a subtle but an important difference between $\mu^\lambda$-cscK metric and $\mu^\lambda_\xi$-cscK metric: when we refer to $\mu^\lambda$-cscK metric, the vector $\xi$ is not fixed. 

\begin{rem}
The above definition of $\mu^\lambda_\xi$-cscK metric is equivalent to what we call ${^1 \check{\mu}}^\lambda_\xi$-cscK metric in \cite{Ino3}. 
\end{rem}

The notion unifies two frameworks of canonical metrics: constant scalar curvature K\"ahler (cscK) metric and K\"ahler--Ricci soliton. 
Indeed, when $X$ is a Fano manifold (i.e. the anti-canonical class $-K_X = c_1 (X)$ is positive), a K\"ahler metric $\omega$ in the anti-canonical K\"ahler class $T^{-1} K_X$ ($T < 0$) is $\mu^{-2\pi T}$-cscK metric if and only if it is K\"ahler--Ricci soliton: $\mathrm{Ric} (\omega) - \mathscr{L}_{\partial^\sharp f} \omega = - 2\pi T \omega$. 
K\"ahler--Ricci soliton is defined only for Fano manifolds due to the a priori constraint 
\[ 2\pi \lambda [\omega] = [\mathrm{Ric} (\omega) - \mathscr{L}_{\partial^\sharp f} \omega] = 2\pi c_1 (X), \]
while $\mu^\lambda$-cscK metric makes sense for general polarized manifold. 

A fundamental question on $\mu^\lambda$-cscK metric is the existence and uniqueness modulo translation by automorphisms. 
When we fix $\xi$, the theory of $\mu^\lambda_\xi$-cscK metric is enclosed in the theory of weighted cscK metric. 
It is then proved by \cite{Lah2} that $\mu^\lambda_\xi$-cscK metric is unique modulo translation by aumorphisms preserving $\xi$. 
As for the uniqueness of $\mu^\lambda$-cscK metric, we must show the uniqueness of $\xi$ (modulo translation) which admits a $\mu^\lambda_\xi$-cscK metric. 
We confirm this uniqueness for toric manifolds with $\lambda \le 0$. 

\begin{thm}
\label{Uniqueness of mu-cscK metric}
Assume $\lambda \le 0$. 
On a toric manifold,  $\mu^\lambda$-cscK metrics are unique modulo the action of automorphism group. 
\end{thm}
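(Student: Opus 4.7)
The plan is to reduce this uniqueness statement to the uniqueness of the $\mu$-canonical distribution (Theorem \ref{main existence}) via the second bullet of the theorem just preceding, and then invoke Lahdili's uniqueness of $\mu^\lambda_\xi$-cscK metrics once $\xi$ has been pinned down. Suppose $(\omega_1, f_1)$ and $(\omega_2, f_2)$ are two $\mu^\lambda$-cscK metrics on $(X,L)$, with associated imaginary parts $\xi_i = -2\,\mathrm{Im}\,\partial^\sharp_{\omega_i} f_i$.

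The first step is to arrange $\xi_1, \xi_2 \in \mathfrak{t} = \mathrm{Lie}(T_{\mathrm{cpt}})$ after pulling back by automorphisms of $(X,L)$. Each $\xi_i$ is a Killing vector field for $g_i$, so by a weighted Matsushima-type reductivity result (as established by Lahdili \cite{Lah2} in the weighted cscK setting) it lies in the Lie algebra of a maximal compact torus of the reduced automorphism group of $(X,L)$. On a toric manifold all such maximal compact tori are conjugate to the standard $T_{\mathrm{cpt}}$, so after replacing $(\omega_i, f_i)$ by an automorphism pullback we may assume $\xi_1, \xi_2 \in \mathfrak{t}$.

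Set $T := -\lambda/(2\pi) \ge 0$. The second bullet of the theorem preceding the statement now applies to each $(\omega_i, f_i)$, asserting that $u(\ket{\xi_i})$ is the $\mu$-canonical distribution of temperature $T$. Theorem \ref{main existence} supplies uniqueness of this distribution for every $T \ge 0$: unconditional for $T > 0$, and for $T = 0$ it is by definition the unique entropy-maximizing minimizer of $U_{\mathcal{P}}$, which is precisely the object to which the cited bullet points. Hence $u(\ket{\xi_1}) = u(\ket{\xi_2})$. Two functions $q_1, q_2$ satisfying $u(q_1) = u(q_2)$ must differ by an additive constant, so $\ket{\xi_1} - \ket{\xi_2} = \ket{\xi_1 - \xi_2}$ is constant on $P$. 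Since $\ket{\cdot}$ is linear in $\mu$ and $P$ has non-empty interior in $\mathfrak{t}^\vee$, this forces $\xi_1 = \xi_2 =: \xi$.

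With a common $\xi$ both $(\omega_i, f_i)$ are $\mu^\lambda_\xi$-cscK, and Lahdili's uniqueness theorem \cite{Lah2} provides an automorphism preserving $\xi$ that identifies them; composed with the conjugations from the first step, this is the desired automorphism of $(X,L)$. The main obstacle I anticipate is the first step: verifying carefully that the Killing field $\xi$ of an arbitrary $\mu^\lambda$-cscK metric on a toric manifold is automorphism-conjugate to an element of $\mathfrak{t}$. For fixed $\xi$ the reductivity picture is classical, but in the $\mu^\lambda$-cscK regime $\xi$ itself is an unknown, so one has to combine the weighted Matsushima theorem with the fact that maximal tori in the toric $\mathrm{Aut}(X,L)$ are conjugate. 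All of the convex-analytic input is cleanly delivered by Theorem \ref{main existence}, and the remaining work is purely structural.
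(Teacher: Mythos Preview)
Your proposal is correct and follows essentially the same route as the paper: first conjugate each $\xi_i$ into $\mathfrak{t}$ (the paper states this as a one-line remark before Theorem~\ref{mu-cscK implies optimal destabilization}, using exactly the Matsushima-type/maximal-torus-conjugacy reasoning you sketch), then invoke Theorem~\ref{mu-cscK implies optimal destabilization} to identify $u(\ket{\xi_i})$ with the $\mu$-canonical distribution, conclude $\xi_1=\xi_2$ by uniqueness, and finish with Lahdili~\cite{Lah2}. Your worry about the first step is precisely the content of that preliminary remark, so there is no gap.
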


On the other hand, fixing $\xi$, the existence of $\mu^\lambda_\xi$-cscK metric implies an algebro-geometric condition of $(X, L)$ called $\mu^\lambda_\xi$K-stability. 

\subsubsection{$\mu$K-stability}

The notion can be defined for general polarized scheme, but here for simplicity we only explain a toric version of the notion. 
It can be simply described in terms of convex functions. 

\begin{defin}[Toric $\mu$K-stability]
Let $(X, \Delta, L) \circlearrowleft T$ be a polarized log toric variety and $\mathcal{P} = (P, d\mu, d\sigma_\Delta)$ be the associated general system. 

Let $q$ be a rational piecewise affine convex function on $P$. 
Namely, $q$ is of the form $q = \max_{i=1, \ldots, m} \ell_i$ for finitely many rational affine function $\ell_i$ on $\mathfrak{t}^\vee = M \otimes_{\mathbb{Z}} \mathbb{R}$. 
Then for $\lambda \in \mathbb{R}$ and $\xi \in \mathfrak{t} = \mathrm{Lie} (T_{\mathrm{cpt}})$, we define the \textit{$\mu^\lambda_\xi$-Futaki invariant} $\mathrm{Fut}^\lambda_\xi (q)$ by 
\begin{align}
\mathrm{Fut}^\lambda_\xi (q) 
&:= \frac{2\pi \int_{\partial P} q e^{\ket{\xi}} d\sigma_{\Delta} - \lambda \int_P q \ket{\xi} e^{\ket{\xi}} d\mu}{\int_P e^{\ket{\xi}} d\mu}  - \bar{s}^\lambda_\xi \frac{\int_P q e^{\ket{\xi}} d\mu}{\int_P e^{\ket{\xi}} d\mu},
\end{align}
using the linear map $\ket{\xi}:P \to \mathbb{R}: \mu \mapsto \langle \mu, \xi \rangle$. 
Here we put 
\[ \bar{s}^\lambda_\xi := \frac{2\pi \int_{\partial P} e^{\ket{\xi}} d\sigma_\Delta - \lambda \int_P \ket{\xi} e^{\ket{\xi}} d\mu}{\int_P e^{\ket{\xi}} d\mu}. \]

We call $(X, \Delta, L)$ \textit{toric $\mu^\lambda_\xi$K-semistable} if $\mathrm{Fut}^\lambda_\xi (q) \ge 0$ for every rational piecewise affine function $q$ and \textit{toric $\mu^\lambda_\xi$K-polystable} if moreover $\mathrm{Fut}^\lambda_\xi (q) > 0$ for non-affine $q$. 
\end{defin}

\begin{rem}
The above definition of $\mu^\lambda_\xi$-Futaki invariant is equivalent to what we call ${^1 \check{\mu}}^\lambda_\xi$-Futaki invariant in \cite{Ino3}. 
Using the Cartan model of equivariant cohomology as in \cite{Ino2, Ino3}, we can check 
\[ \bar{s}^\lambda_\xi = \frac{\int_X s^\lambda_\xi (\omega) e^{\mu_\xi} \mathrm{vol}_\omega}{\int_X e^{\mu_\xi} \mathrm{vol}_\omega} \]
for $\mathrm{vol}_\omega = \omega^n/n!$. 
For $q = \ket{\eta}$, we have 
\[ \mathrm{Fut}^\lambda_\xi (\ket{\eta}) = \frac{\int_X (s^\lambda_\xi (\omega) - \bar{s}^\lambda_\xi) \mu_\eta e^{\mu_\xi} \mathrm{vol}_\omega}{\int_X e^{\mu_\xi} \mathrm{vol}_\omega}. \]
\end{rem}

When $\xi = 0$, the invariant is independent of $\lambda$: 
\begin{align} 
\mathrm{DF} (q) := \mathrm{Fut}^\lambda_0 (q) = \frac{2\pi}{\int_P d\mu} (\int_{\partial P} q d\sigma_\Delta - \frac{\int_{\partial P} d\sigma_\Delta}{\int_P d\mu} \int_P q d\mu)
\end{align}
It is called the \textit{Donaldson--Futaki invariant}. 

We note the $\mu^\lambda_\xi$-Futaki invariant is well-defined for general integrable convex function $q$ on general system $\mathcal{P}$. 
For a set $\mathcal{Q}$ of convex functions, we call $\mathcal{P}$ \textit{toric $\mu^\lambda_\xi$K-semistable (resp. $\mu^\lambda_\xi$K-polystable) with respect to $\mathcal{Q}$} if $\mathrm{Fut}^\lambda_\xi (q) \ge 0$ for every $q \in \mathcal{Q}$ (resp. if further $\mathrm{Fut}^\lambda_\xi (q) > 0$ for non-affine $q$). 
We call $\mathcal{P}$ \textit{K-unstable} if it is not $\mu_0$K-semistable. 
We note even if $X$ is K-unstable, it may be $\mu^\lambda_\xi$K-semistable with respect to some $\lambda$ and non-trivial $\xi \neq 0$. 

\begin{rem}
\label{piecewise affine}
We define $\mu^\lambda_\xi$K-stability by rational piecewise affine convex functions because these convex functions correspond to toric test configurations. 
A \textit{(compactified ample) toric test configuration} is a $T \times \mathbb{G}_m$-equivariant flat family of polarized schemes $(\bar{\mathcal{X}}, \bar{\mathcal{L}})$ over $\mathbb{P}^1$ which is endowed with a $T \times \mathbb{G}_m$-equivariant isomorphism from $X \times \mathbb{A}^1$ to $\bar{\mathcal{X}} \setminus \mathcal{X}_0$ over $\mathbb{A}^1 = \mathbb{P}^1 \setminus \{ (0:1) \}: w = (1:w)$. 
Here $T \times \mathbb{G}_m$ acts on $\mathbb{P}^1$ by $(z:w). (t, u) = (u z: w)$ and $\bar{\mathcal{L}}$ is an ample $\mathbb{Q}$-line bundle over $\bar{\mathcal{X}}$. 
We may assume $\bar{\mathcal{X}}$ is normal for our interest. 
Since $(\bar{\mathcal{X}}, \bar{\mathcal{L}}) \circlearrowleft T \times \mathbb{G}_m$ is an $n+1$-dimensional polarized toric variety, we can consider the associated polytope $Q \subset \mathfrak{t}^\vee \times \mathbb{R}$. 
We can show the polytope $Q$ can be written as $Q = \{ (x, \tau) ~|~ x \in P,~ 0 \le \tau \le -q(x) \}$ using a convex function $q$ on $P$. 
Since $Q$ is a rational polytope, $q$ is rational piecewise affine convex function. 

For a polarized variety $(X, L)$, we denote by $\mathcal{H}_{\mathrm{NA}} (X, L)$ the set of all (not necessarily $T$-equivariant) test configurations of $(X, L)$. 
Similarly for a rational polytope $P$, we denote by $\mathcal{H}_{\mathrm{NA}} (P)$ the set of all rational piecewise affine convex functions on $P$. 
Convex functions in $\mathcal{H}_{\mathrm{NA}} (P)$ represent $T$-equivariant test configurations, which form a proper subset $\mathcal{H}_{\mathrm{NA}}^T (X, L) \subset \mathcal{H}_{\mathrm{NA}} (X, L)$. 
We note every lower semi-continuous convex function $q$ has an increasing net $q_i \in \mathcal{H}_{\mathrm{NA}} (P)$ which pointwisely converges to $q$, thanks to Fenchel--Moreau theorem. 
\end{rem}

By the result of \cite{AJL} (see also \cite{Lah1, Ino3}), it is known that a polarized manifold $(X, L)$ is $\mu^\lambda_\xi$K-polystable if it admits a $\mu^\lambda_\xi$-cscK metric. 
When $\xi = 0$, the case of cscK metric, it is also known that a uniform version of toric K-polystability implies the existence of cscK metric for toric manifold by \cite{His1, CC3, Li1}. 
As far as the author knows, this implication for general $\xi \neq 0$ is still unkown. 
It is natural to expect the same method for $\xi = 0$ would work. 
For general polarized manifold, it is still a conjecture even for cscK metric, which is known as YTD conjecture, but recently there are great progress (cf. \cite{CC2, Li1, Li2, BJ3}). 
We do not pursue this direction in this article as we are rather interested in K-instability. 

\subsubsection{Perelman entropy}

Now we return to $\mu$-cscK metric. 
Originally, $\mu$-cscK metric is introduced in \cite{Ino2} based on a moment map picture on K\"ahler--Ricci soliton observed in \cite{Ino1}. 
The moment map picture heuristically explains why the existence of $\mu^\lambda_\xi$-cscK metric is related to $\mu^\lambda_\xi$K-stability and moduli problem, so this perspective is important in stability aspect. 
We review this later in order to explain how the parameter $\lambda$ appears. 

On the other hand, it turns out later in \cite{Ino4} that $\mu^\lambda$-cscK metric is also characterized by Perelman's entropy. 
This perspective illuminates ``instability aspect'' of the theory of $\mu$-cscK metric and $\mu$K-stability, which is the main interest of this article. 

Now we introduce Perelman's functionals. 
For a K\"ahler metric $\omega$ and a real valued function $f$ normalized as $\int_X e^f \mathrm{vol}_\omega = \int_X \mathrm{vol}_\omega$ for $\mathrm{vol}_\omega := \omega^n/n!$, we put 
\begin{align}
    \mathcal{W}^\lambda (\omega, f) 
    &:= - \frac{1}{\int_X \mathrm{vol}_\omega} \int_X (s (\omega) + |\partial_\omega^\sharp f|^2 - \lambda f) e^f \mathrm{vol}_\omega - \lambda \log \int_X e^{-n} \mathrm{vol}_\omega,
    \\
    \bm{\mu}^\lambda_{\mathrm{Per}} (\omega) 
    &:= \sup_f \mathcal{W}^\lambda (\omega, f), 
\end{align}
which we call Perelman's \textit{$\mathcal{W}$-entropy} and \textit{$\mu$-entropy}, respectively. 
We note the above convention differs from Perelman's original one by sign and multiplication of constants, which is off course not essential. 
Perelman considered general Riemannian metric for $\mathcal{W}^\lambda$, but we restrict our interest to K\"ahler metric in the context of $\mu^\lambda$-cscK metric, which is in turn essential for our variational result. 

It is proved in \cite{Ino4} that we can characterize $\mu^\lambda$-cscK metrics $(\omega, f)$ as the critical points of $\mathcal{W}^\lambda$. 
We note here we must restrict metrics to the space $\mathcal{H} (X, L)$ of K\"ahler metrics in $L$. 
Moreover, for $\lambda \le 0$, we can also characterize $\omega$ of $\mu^\lambda$-cscK metrics $(\omega, f)$ as minimizers of $\bm{\mu}^\lambda_{\mathrm{Per}}$. 
This characterization is nice as our differential geometric interest is in the metric $\omega$ and not so much in the function $f$ or the vector field $\xi = -2 \mathrm{Im} \partial_\omega f$. 

Recall we fix vector $\xi$ to define $\mu^\lambda_\xi$K-stability, while now we have a vector-free characterization of $\mu^\lambda$-cscK metric. 
To match up, it is better to have an algebro-geometric criterion for $\xi$ which tells if $(X, L)$ could be $\mu^\lambda_\xi$K-semistable. 
However, in general, there may be no such vector. 
In such case, we are interested in finding ``most destabilizing $q$'' in a suitable sense. 
It is revealed in \cite{Ino5} that the nature of these two questions are actually the same. 
We can formalize and solve both questions in terms of toric non-archimedean $\mu$-entropy, which is essentially the functional $F_{\mathcal{P}} (T, \bullet)$ in our main theorem. 

\subsubsection{Toric non-archimedean $\mu$-entropy}

Let $\mathcal{P}$ be the general system associated to a polarized toric variety $(X, L)$. 
For a lsc convex function $q \in \E^{\exp, 1} (P)$, we put 
\begin{align}
\bm{\sigma} (q)
&:= \frac{\int_P (n+q) e^q d\mu}{\int_P e^q d\mu} - \log \int_P e^q d\mu, 
\\
\NAmu (q) 
&:= -2\pi \frac{\int_{\partial P} e^q d\sigma}{\int_P e^q d\mu}.
\end{align}
For $\lambda \in \mathbb{R}$, we consider 
\begin{align}
\NAmu^\lambda (q) := 
\begin{cases}
\NAmu (q) + \lambda \bm{\sigma} (q)
& \bm{\sigma} (q) < \infty
\\
-\infty
& \bm{\sigma} (q) = \infty
\end{cases}
\end{align}
and call it the \textit{toric non-archimedean $\mu$-entropy}. 

These are essentially the functionals we already introduced: 
\begin{align*}
\NAmu (q) 
&= -2\pi U_{\mathcal{P}} (u (q)),
\\
\bm{\sigma} (q)
&= - S_{\mathcal{P}} (u(q)) - \log \int_P e^{-n} d\mu,
\\
\NAmu^\lambda (q) 
&= - 2\pi F_{\mathcal{P}} (-\frac{\lambda}{2\pi}, u(q)) - \lambda \log \int_P e^{-n} d\mu. 
\end{align*}

The differential of toric non-archimedean $\mu$-entropy $\NAmu^\lambda$ gives the minus of $\mu^\lambda_\xi$-Futaki invariant: 
\begin{align} 
\label{NAmu and muFutaki}
\frac{d}{dt}\Big{|}_{t=0} \NAmu^\lambda (\ket{\xi} + t q) = - \mathrm{Fut}^\lambda_\xi (q). 
\end{align}
This implies that if $\NAmu^\lambda$ is maximized at 
a vector $\ket{\xi}$, then $(X, L)$ is toric $\mu^\lambda_\xi$K-semistable. 

It is proved in \cite{Ino4} that there is an inequality between $\NAmu^\lambda$ and $\bm{\mu}_{\mathrm{Per}}^\lambda$ for $\lambda \in \mathbb{R}$: 
\begin{align}
\label{NAmu Permu inequality}
\sup_{q \in \mathcal{H}_{\mathrm{NA}} (X, L)} \NAmu^\lambda (q) \le \inf_{\omega \in \mathcal{H} (X, L)} \bm{\mu}_{\mathrm{Per}}^\lambda (\omega). 
\end{align}
Moreover if $\lambda \le 0$ and there exists a $\mu^\lambda_\xi$-cscK metric $\omega$, then the above equality is achieved by $\NAmu^\lambda (\ket{\xi}) = \bm{\mu}_{\mathrm{Per}}^\lambda (\omega)$. 
This implies if there exists a $\mu^\lambda_\xi$-cscK metric for $\lambda \le 0$, then $\ket{\xi}$ maximizes $\NAmu^\lambda$, which in particular gives another proof for the $\mu^\lambda_\xi$K-semistability of $\mu^\lambda_\xi$-cscK manifold with $\lambda \le 0$. 
It is conjectured the equality holds for $\lambda \le 0$ even when there is no $\mu^\lambda$-cscK metric. 

The non-archimedean $\mu$-entropy $\NAmu^\lambda$ is essentially introduced in \cite{Ino3} for $q \in \mathcal{H}_{\mathrm{NA}} (P)$ and more generally for a test configuration $(\mathcal{X}, \mathcal{L}) \in \mathcal{H}_{\mathrm{NA}} (X, L)$ of a non-toric polarized variety $(X, L)$. 
The paper \cite{Ino5} introduces a completion $\E^{\exp} (X, L)$ of the space $\mathcal{H}_{\mathrm{NA}} (X, L)$ of test configuraitons in terms of non-archimedean pluripotential theory developed in \cite{BJ1}. 
The space $\E^{\exp} (X, L)$ consists of functions on the Berkovich analytification of $X$, which is the reason for the word ``non-archimedean''. 
In the toric setup, the $T$-invariant part of the space $\E^{\exp} (X, L)$ corresponds to the intersection $\bigcap_{p \ge 1} \E^{\exp, p} (P)$. 
It takes considerable pages to extend the non-archimedean $\mu$-entropy to $\E^{\exp} (X, L)$ as an upper semi-continuous functional, which in our toric case turns extremely easy to show. 
In the toric case, $\NAmu^\lambda (q)$ is continuous along increasing sequence by the monotone convergence theorem, so we may replace $\sup_{q \in \mathcal{H}_{\mathrm{NA}} (X, L)} \NAmu^\lambda (q)$ in (\ref{NAmu Permu inequality}) with $\sup_{q \in \E^{\exp, 1} (P)} \NAmu^\lambda (q)$. 

Theorem \ref{main existence} confirms that there always exists a maximizer $q \in \E^{\exp, \frac{n}{n-1}} (P)$ of $\NAmu^\lambda$, even when the toric variety $(X, L)$ is $\mu^\lambda$K-unstable. 
If $q$ is a bounded convex function, we can assign a filtration $\mathcal{F}_q = \mathcal{F}_{\varphi_q}$ on the graded ring $R (X, L) = \bigoplus_m H^0 (X, L^{\otimes m})$ as in \cite{Ino5}. 
If further the filtration is finitely generated, which is the case for instance when $q \in \mathcal{H}_{\mathrm{NA}} (P)$, it produces an algebro-geometric degeneration $(\mathcal{X}, \mathcal{L})/B_\sigma$ of $(X, L)$ called polyhedral configuration in \cite{Ino5}. 
Then we can show that the central fibre $(\mathcal{X}_0, \mathcal{L}_0)$ of such degeneration is $\mu^\lambda_\xi$K-semistable with respect to the vector $\xi$ generated by the degeneration. 
See \cite{Ino5} for further detail. 

\subsubsection{Other works in the same spirit}

One ultimate goal of our study is to construct a degeneration of $\mu$K-unstable polarized variety to a $\mu$K-semistable ``space'' (see Remark \ref{remark on finite generation}), which is often referred to as optimal degeneration. 
The non-archimedean $\mu$-entropy is not the unique quantity measuring K-instability. 
We briefly review other works on K-instability as reference. 

$\bullet$ \textit{Optimal degeneration in the context of K\"ahler--Ricci soliton} 

The first pioneering work on optimal degeneration in the context of K\"ahler--Ricci soliton on Fano manifold would be Chen--Wang's work \cite{CW}. 
They constructed a metric geometric degeneration along K\"ahler--Ricci flow. 
Chen--Sun--Wang \cite{CSW} gives an algebro-geometric description of the degeneration, in which the idea of using filtration and two step degeneration is utilized. 
Dervan--Sz\'ekelyhidi \cite{DS} introduced a quantity called $H$-invariant and showed Chen--Sun--Wang's filtration minimizes the $H$-invariant. 
Han--Li \cite{HL} pursued an algebro-geometric aspect of $H$-invariant and proved the uniqueness of Chen--Sun--Wang's degeneration. 
Finally, Blum--Liu--Xu--Zhuang \cite{BLXZ} constructed Chen--Sun--Wang's degeneration in a purely algebro-geometric way, which allows them to extend the result to Fano variety with singularity. 
It is known by Wang--Zhu's work \cite{WZ} that every toric Fano manifold admits K\"ahler--Ricci soliton. 
This result can also be regarded as the explicit description of optimizer in toric case: the optimizer for $H$-invariant is given by the vector associated to K\"ahler--Ricci soliton. 
In \cite{Ino5}, it turns out that the non-archimedean $\mu$-entropy also characterizes Chen--Sun--Wang's degeneration. 

$\bullet$ \textit{Optimal degeneration in the context of extremal metric} 

Optimal degeneration problem of polarized variety would be firstly considered by Donaldson \cite{Don2} in the context of extremal metric. 
The quantity for K-instability in this context is called normalized Donaldson--Futaki invariant. 
In toric case, Sz\'ekelyhidi \cite{Sze} proved the existence of optimal destabilizing $L^2$-integrable convex function. 
Xia \cite{Xia} pursued the problem from pluripotential theoretic perspective and proved the existence of an optimal destabilizing geodesic ray in a suitable completion $\mathcal{E}^2 (X, L)$ of the space of K\"ahler metrics. 
It turns out by Li's analysis \cite{Li1} that the geodesic ray can be interpreted as a non-archimedean metric. 
Compared to K\"ahler--Ricci soliton, much less is known on the regularity of optimal destabilizer. 
We speculate this context can be treated as the limit $\lambda \to -\infty$ of our framework. 

$\bullet$ Optimal degeneration in the context of Mabuchi soliton: 

Optimal destabilization problem for Mabuchi soliton would be firstly considered by Hisamoto \cite{His2}. 
The quantity for K-instability in this context is called normalized Ding invariant. 
In toric case, Yao \cite{Yao} constructed an optimal destabilizing convex function for normalized Ding invariant. 
He also proved the crucial regularity result: the optimizer is piecewise affine. 
This is much better than the current knowledge on the optimizer for normalized Donaldson--Futaki invariant and non-archimedean $\mu$-entropy. 

$\bullet$ Still many other works...

There are still many other works on optimal degeneration. 
Among all, the structure of the above three frameworks is closest to that of our framework. 
For instance, we have Donaldson type inequality (cf. \cite{Don2, DS, His2, Xia, Ino4}) which can be interpreted as minimax theorem on some functional (cf. \cite{Ino4, Ino5}). 
Thus here we do not give further comments on those other works in the context of $\delta$-invariant and the normalized volume. 

\subsubsection*{Acknowledgments} This work is supported by RIKEN iTHEMS Program. 
We used \textit{Wolfram Mathematica} to create some graphs. 

\section{Convex analysis on simple polytope}

\subsection{Local estimates}

Here we prepare some estimates we use in the next section. 

\subsubsection{Mean value estimate}

\begin{prop}
\label{mean value estimate}
Let $u: \prod_{i=1}^n [a_i, b_i] \to [0, \infty]$ be a convex function. 
Then we have 
\[ u (x) \le \int_{\prod_{i=1}^n [a_i, b_i]} u d\bm{x} \cdot \frac{1}{\prod_{i=1}^n \min \{ x_i -a_i, b_i - x_i \}} \]
for every $x \in \prod_{i=1}^n [a_i, b_i]$. 
Here $d\bm{x}$ denotes the $n$-dimensional Lebesgue measure. 
\end{prop}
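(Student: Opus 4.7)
The plan is to reduce the $n$-dimensional bound to its one-dimensional analogue by slicing, and to prove the 1D case directly by exploiting midpoint convexity together with the sign condition $u\ge 0$.

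The one-dimensional statement I need is: for a non-negative convex $v:[a,b]\to[0,\infty]$ and $x\in[a,b]$ with $r:=\min\{x-a,\;b-x\}$,
\[
v(x)\;\le\;\frac{1}{r}\int_a^b v(t)\,dt.
\]
When $r=0$ the right-hand side is $+\infty$ and there is nothing to prove, so assume $r>0$. For every $t\in[0,r]$ both $x\pm t$ lie in $[a,b]$ and midpoint convexity gives $v(x)\le\tfrac12\bigl(v(x-t)+v(x+t)\bigr)$. Integrating this over $t\in[0,r]$ and changing variables,
\[
r\,v(x)\;\le\;\tfrac12\int_{x-r}^{x+r}v(s)\,ds\;\le\;\tfrac12\int_a^b v(s)\,ds,
\]
the last inequality using $v\ge 0$ and $[x-r,x+r]\subset[a,b]$. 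This in fact gives the stronger constant $\tfrac{1}{2r}$, more than enough for the claim.

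To pass from one to $n$ dimensions, I iterate on coordinates. Freeze $(x_2,\dots,x_n)$; the slice $t_1\mapsto u(t_1,x_2,\dots,x_n)$ is a non-negative convex function on $[a_1,b_1]$, so the 1D estimate yields
\[
u(x_1,\dots,x_n)\;\le\;\frac{1}{r_1}\int_{a_1}^{b_1} u(t_1,x_2,\dots,x_n)\,dt_1,\qquad r_i:=\min\{x_i-a_i,\;b_i-x_i\}.
\]
The key observation permitting iteration is that the partial integral on the right, viewed as a function of $(x_2,\dots,x_n)$, is itself non-negative and convex (the integral of a non-negative convex family being non-negative and convex). Applying the 1D estimate to it in the variable $x_2$, and continuing through $x_n$, then collecting factors by Fubini, delivers the desired bound
\[
u(x)\;\le\;\frac{1}{\prod_{i=1}^n r_i}\int_{\prod_{i=1}^n[a_i,b_i]} u\,d\bm{x}.
\]

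The degenerate cases are easily handled: if some $r_i=0$ the bound is trivial; if $u(x)=\infty$ at an interior point, then convexity forces $u\equiv\infty$ on a one-sided neighborhood of $x$ (otherwise the midpoint inequality $u(x)\le\tfrac12(u(x-t)+u(x+t))$ would fail for small $t$), so the integral is also $+\infty$. Measurability is automatic since convex functions on a box are Borel. I do not anticipate a serious obstacle: the only point requiring any care is that partial integration preserves convexity in the remaining variables, which is immediate from linearity of the integral.
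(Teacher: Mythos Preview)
Your argument is correct and takes a genuinely different route from the paper. The paper proves the inequality in one stroke: at an interior point $x$ it picks a supporting affine function $\ell_x\le u$ with $\ell_x(x)=u(x)$, then chooses for each coordinate the half-interval on which the $i$-th linear coefficient of $\ell_x$ points upward, obtaining a sub-box $\prod_i\Box_i$ on which $\ell_x\ge u(x)$; since $u\ge\max\{\ell_x,0\}$, integrating gives $\int u\ge u(x)\cdot\prod_i|\Box_i|\ge u(x)\cdot\prod_i\min\{x_i-a_i,\,b_i-x_i\}$.

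Your slicing-and-Fubini approach is more elementary (it uses only midpoint convexity, not the supporting hyperplane theorem) and actually yields the sharper constant $\frac{1}{2^n\prod_i r_i}$. The paper's hyperplane argument, on the other hand, generalizes immediately to an arbitrary convex body in place of the box---this is exactly what the paper exploits a few lines later when it replaces $\prod_i\min\{x_i-a_i,\,b_i-x_i\}$ by the quantity $\delta_P(x)$ for a general polytope $P$. Your iteration would not extend to that setting without a product structure. One small wording point: convex functions need not be Borel on the closed box (the boundary values are unconstrained), but since $\partial\bigl(\prod[a_i,b_i]\bigr)$ is Lebesgue-null and a convex function is continuous on the interior of its effective domain, Lebesgue measurability is what you actually need and have.
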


\begin{proof}
The claim is trivial if $x_i = a_i$ or $x_i = b_i$ for some $i$. 
By supporting hyperplane theorem, for $x \in \prod_{i=1}^n (a_i, b_i)$, we can take an affine function $\ell_x (y) = \sum_{i=1}^n \ell^i_x (y_i - x_i) + h$ so that $\ell_x \le u$ on $\prod_{i=1}^n [a_i, b_i]$ and $\ell_x (x) = h = u (x)$. 
If we put 
\[ \Box_i := \begin{cases} [a_i, x_i] & \ell_x^i < 0 \\ [x_i, b_i] & \ell^i_x \ge 0 \end{cases}, \]
we have $\ell_x \ge u (x)$ on $\prod_{i=1}^n \Box_i \subset \prod_{i=1}^n [a_i, b_i]$. 
Since $u$ is non-negative, we have $u \ge \max \{ \ell_x, 0 \}$, so that 
\[ \int_{\prod_{i=1}^n [a_i, b_i]} u d\bm{x} \ge \int_{\prod_{i=1}^n [a_i, b_i]} \max \{ \ell_x, 0 \} d\bm{x} \ge \int_{\prod_{i=1}^n \Box_i} d\bm{x} \cdot u (x). \]
Thus we get 
\[ u (x) \le \int_{\prod_{i=1}^n [a_i, b_i]} u d\bm{x} \cdot \frac{1}{\prod_{i=1}^n \mathrm{vol} \Box_i}. \]
Since $\mathrm{vol} \Box_i \ge \min \{ x_i -a_i, b_i - x_i \}$, we get the desired inequality. 
\end{proof}

We put 
\begin{align} 
\Delta^n_p (r) := \{ x \in [0, \infty)^p \times (-r, r)^{n-p} ~|~ \sum_{i=1}^p x_i < r \} . 
\end{align}

\begin{cor}
\label{convenient mean value estimate}
Let $u: \Delta^n_p (r) \to [0, \infty]$ be a convex function. 
For $\frac{1}{2} r \le r' < r$ and for every $x \in \Delta^n_p (r')$, we have
\[ u (x) \le \frac{(r')^p}{(r-r')^n} \int_{\Delta^n_p (r)} u d\bm{x} \cdot \frac{1}{\prod_{i=1}^p x_i}. \]
\end{cor}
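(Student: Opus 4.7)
The plan is to apply Proposition~\ref{mean value estimate} to an axis-parallel closed box $B \subset \Delta^n_p(r)$ that contains $x$ and is tuned so that the product $\prod_{i=1}^n \min\{x_i - a_i, b_i - x_i\}$ reproduces precisely the denominator $(r-r')^n \prod_{i=1}^p x_i / (r')^p$ appearing in the target bound. Since $u \ge 0$ and $B \subset \Delta^n_p(r)$, one has $\int_B u\, d\bm{x} \le \int_{\Delta^n_p(r)} u \, d\bm{x}$, so this reduction gives the conclusion directly. The degenerate case where $x_i = 0$ for some $i \le p$ is trivial, since the right-hand side is then $+\infty$, so I will assume $x_i > 0$ for all $i \le p$.

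For such $x$ I will take
\[ a_i = 0, \qquad b_i = \tfrac{r}{r'} x_i \qquad (1 \le i \le p), \]
\[ a_i = x_i - (r - r'), \qquad b_i = x_i + (r - r') \qquad (p < i \le n). \]
The containment $B \subset \Delta^n_p(r)$ is then straightforward to verify: for $i \le p$ one has $a_i = 0 \ge 0$ and $\sum_{i=1}^p b_i = (r/r') \sum_{i=1}^p x_i < (r/r') \cdot r' = r$; for $i > p$, the bound $|x_i| < r'$ yields $a_i > -r$ and $b_i < r$. A short computation gives $\min\{x_i - a_i, b_i - x_i\} = \min\{x_i, \tfrac{r-r'}{r'} x_i\}$ for $i \le p$ and $\min\{x_i - a_i, b_i - x_i\} = r - r'$ for $i > p$.

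The one place the hypothesis $r' \ge r/2$ enters is in forcing $(r - r')/r' \le 1$, which is exactly what makes the first minimum equal to $\tfrac{r-r'}{r'} x_i$ rather than $x_i$. Granted this, the product of minima simplifies to
\[ \Bigl( \tfrac{r-r'}{r'} \Bigr)^p \prod_{i=1}^p x_i \cdot (r-r')^{n-p} = \frac{(r-r')^n}{(r')^p} \prod_{i=1}^p x_i, \]
and Proposition~\ref{mean value estimate} yields the stated inequality. I do not anticipate any genuine obstacle: the whole content of the argument is the box design, and once one notices that the two scales $(r-r')/r'$ for the nonnegative coordinates and $r-r'$ for the transverse coordinates are the correct choices, the rest is bookkeeping.
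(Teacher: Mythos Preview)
Your proof is correct and follows essentially the same approach as the paper: both apply Proposition~\ref{mean value estimate} to the box $\prod_{i=1}^p [0,\tfrac{r}{r'}x_i]$ in the first $p$ coordinates, then bound $\int_B u \le \int_{\Delta^n_p(r)} u$. The only cosmetic difference is that in the transverse coordinates the paper uses the full interval $(-r,r)$ and then bounds $\min\{x_j+r,\,r-x_j\} \ge r-r'$, whereas you use the smaller centered interval $[x_j-(r-r'),\,x_j+(r-r')]$ to obtain $r-r'$ exactly; the resulting estimate is identical.
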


\begin{proof}
For $x \in \Delta^n_p (r')$, we have $B (x) := \prod_{i=1}^p [0, \frac{r}{r'} x_i] \times \prod_{j=p+1}^n (-r, r) \subset \Delta^n_p (r)$, so that 
\begin{align*} 
u (x) 
&\le \int_{B (x)} u d\bm{x} \cdot \frac{1}{\prod_{i=1}^p \min \{ x_i, (\frac{r}{r'} -1)x_i \} \cdot (r-r')^{n-p}} 
\\
&\le \frac{(r')^p}{(r-r')^n} \int_{\Delta^n_p (r)} u d\bm{x} \cdot \frac{1}{\prod_{i=1}^p x_i} 
\end{align*}
by the above proposition. 
Here we note $\min \{ x_i, (\frac{r}{r'} -1) x_i \} = \frac{1}{r'} (r-r') x_i$ when $\frac{1}{2} r \le r'$. 
\end{proof}

\subsubsection{Boundary mean value estimate}

For $i=1, \ldots, p$, we put 
\begin{align} 
\partial_i \Delta^n_p (r) := \{ x \in \Delta^n_p (r) ~|~ x_i = 0 \} \cong \Delta^{n-1}_{p-1} (r) 
\end{align}
and consider the $(n-1)$-dimensional Lebesgue measure $d\bm{x}'$. 

\begin{prop}
Suppose $p \ge 2$. 
Let $u: \Delta^n_p (r) \to [0, \infty]$ be a convex function. 
Then for $i \neq j \in \{ 1, \ldots, p \}$, we have 
\[ \Big{(} \frac{r}{2} \Big{)}^{n-p} u (x) \le \int_{\partial_i \Delta^n_p (r)} u d\bm{x}' \cdot \frac{x_j}{(x_i + x_j)^2 \prod_{k \neq i,j} x_k} + \int_{\partial_j \Delta^n_p (r)} u d\bm{x}' \cdot \frac{x_i}{(x_i + x_j)^2 \prod_{k \neq i,j} x_k} \]
for every $x \in \Delta^n_p (\frac{r}{2})$. 
Here $k$ in $\prod$ runs over $1, \ldots, p$, excluding $i,j$. 
\end{prop}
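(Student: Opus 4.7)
The plan is to realize $x$ as an explicit convex combination of two boundary points $x^{(i)} \in \partial_i \Delta^n_p(r)$ and $x^{(j)} \in \partial_j \Delta^n_p(r)$ lying on a single line segment through $x$, then bound $u$ at each of these boundary points via the facewise mean value estimate from Corollary \ref{convenient mean value estimate}. Assume first $x_i, x_j > 0$ (the degenerate cases are trivial or follow by continuity), set $s := x_i + x_j$, and define
\begin{align*}
x^{(i)} := x - x_i e_i + x_i e_j, \qquad x^{(j)} := x - x_j e_j + x_j e_i.
\end{align*}
These agree with $x$ in every coordinate except $i$ and $j$; $x^{(i)}$ has $i$-th coordinate $0$ and $j$-th coordinate $s$, and symmetrically for $x^{(j)}$. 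A direct check gives
\begin{align*}
x = \tfrac{x_j}{s}\, x^{(i)} + \tfrac{x_i}{s}\, x^{(j)},
\end{align*}
so convexity of $u$ yields $u(x) \le \tfrac{x_j}{s} u(x^{(i)}) + \tfrac{x_i}{s} u(x^{(j)})$.

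Next I would check that $x^{(i)}$ lies in the subset $\Delta^{n-1}_{p-1}(r/2) \subset \partial_i \Delta^n_p(r) \cong \Delta^{n-1}_{p-1}(r)$: the sum of its nonnegative coordinates (those indexed by $\{1,\dots,p\}\setminus\{i\}$) equals $s + \sum_{k \neq i,j} x_k = \sum_{k=1}^p x_k < r/2$, and the remaining $n-p$ coordinates still lie in $(-r/2, r/2)$. Applying Corollary \ref{convenient mean value estimate} to the convex function $u|_{\partial_i \Delta^n_p(r)}$ with ambient dimension $n-1$, $p$-parameter $p-1$, and $r' = r/2$, the prefactor $(r')^{p'}/(r-r')^{n'}$ becomes $(r/2)^{p-1}/(r/2)^{n-1} = (r/2)^{-(n-p)}$, while the product of the nonnegative coordinates of $x^{(i)}$ inside the face equals $s \cdot \prod_{k \neq i,j} x_k$. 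Hence
\begin{align*}
u(x^{(i)}) \le \frac{1}{(r/2)^{n-p}} \cdot \frac{1}{(x_i+x_j) \prod_{k \neq i,j} x_k} \int_{\partial_i \Delta^n_p(r)} u\, d\bm{x}',
\end{align*}
and symmetrically for $u(x^{(j)})$.

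Substituting these two bounds into the convexity inequality and using $\tfrac{x_j}{s} \cdot \tfrac{1}{s} = \tfrac{x_j}{(x_i+x_j)^2}$ produces the desired estimate, after multiplying both sides by $(r/2)^{n-p}$. I do not anticipate a genuine obstacle: the only slightly delicate steps are the coordinate bookkeeping and the verification that $x^{(i)}$ belongs to the smaller simplex $\Delta^{n-1}_{p-1}(r/2)$ so that Corollary \ref{convenient mean value estimate} is applicable with $r' = r/2$. The conceptual point is that we redistribute mass only between the two selected coordinates $x_i, x_j$ to reach each facet, which is precisely what produces the quadratic factor $(x_i+x_j)^2$ in the denominator.
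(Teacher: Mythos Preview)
Your proposal is correct and follows essentially the same route as the paper: both arguments write $x$ as the convex combination $\tfrac{x_j}{x_i+x_j}\,x^{(i)} + \tfrac{x_i}{x_i+x_j}\,x^{(j)}$ of the two boundary points obtained by shifting all of the mass in coordinates $i,j$ onto one of them, then invoke Corollary~\ref{convenient mean value estimate} on each face $\partial_i \Delta^n_p(r) \cong \Delta^{n-1}_{p-1}(r)$ with $r' = r/2$. Your bookkeeping is in fact cleaner than the paper's, which contains a harmless label swap between $\bm{x}^i$ and $\bm{x}^j$.
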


\begin{proof}
The claim is trivial on $\{ x_i = x_j = 0 \}$. 
We assume $(x_i, x_j) \neq (0,0)$. 
For 
\begin{align*} 
\bm{x}^i 
&:= (x_1, \ldots, x_{i-1}, x_i+x_j, x_{i+1}, \ldots, x_{j-1}, 0, x_{j+1}, \ldots, x_n) \in \partial_i \Delta^n_p (r), 
\\
\bm{x}^j 
&:= (x_1, \ldots, x_{i-1}, 0, x_{i+1}, \ldots, x_{j-1}, x_i + x_j, x_{j+1}, \ldots, x_n) \in \partial_j \Delta^n_p (r), 
\end{align*}
we have 
\[ (x_1, \ldots, x_n) = \frac{x_i}{x_i+x_j} \bm{x}^j + \frac{x_j}{x_i + x_j} \bm{x}^i. \]
Then by convexity, 
\[ u (x) \le \frac{x_i}{x_i + x_j} u (\bm{x}^j) + \frac{x_j}{x_i+x_j} u (\bm{x}^i). \]
When $x \in \Delta^n_p (\frac{r}{2})$, we have $\bm{x}^i \in \partial_i \Delta^n_p (\frac{r}{2}) \cong \Delta^{n-1}_{p-1} (\frac{r}{2})$, so that the claim follows by Corollary \ref{convenient mean value estimate} applied to $u (\bm{x}^i), u(\bm{x}^j)$ on $\Delta^{n-1}_{p-1} (\frac{r}{2})$. 
\end{proof}

\begin{cor}
\label{local Rellich type estimate}
Suppose $p \ge 2$. 
There exists a function $U: \Delta^n_p (r) \to [0, \infty]$ such that $U \in L^q (\Delta^n_p (r))$ for every $q \in [1, \frac{p}{p-1})$ and 
\[ u (x) \le \sum_{i=1}^p \int_{\partial_i \Delta^n_p (r)} u d\bm{x}' \cdot U (x) \]
for every convex function $u: \Delta^n_p (r) \to [0, \infty]$ and $x \in \Delta^n_p (\frac{r}{2})$. 
\end{cor}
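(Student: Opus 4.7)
The plan is to apply the preceding proposition with a pair $(i,j)$ that depends on $x$. For $x \in \Delta^n_p(r/2)$ with all coordinates $x_k > 0$ ($k = 1, \ldots, p$), let $j = j(x)$ be an index attaining $\min_{k} x_k$. Then $x_j \leq x_i$ for every $i \neq j$, so $(x_i + x_j)^2 \geq x_i^2$, and both coefficients
\[ \frac{x_j}{(x_i+x_j)^2 \prod_{k \neq i,j} x_k}, \qquad \frac{x_i}{(x_i+x_j)^2 \prod_{k \neq i,j} x_k} \]
of the preceding proposition are bounded by $\frac{1}{x_i \prod_{k \neq i,j} x_k} = \frac{1}{\prod_{k \neq j} x_k} = \frac{\min_k x_k}{\prod_k x_k}$. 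Applying the proposition for each $i \neq j$ and summing, using $\sum_{i \neq j} I_i + (p-1) I_j \leq (p-1) \sum_i I_i$ (where $I_i := \int_{\partial_i \Delta^n_p(r)} u \, d\bm{x}'$), yields
\[ u(x) \leq \left( \tfrac{2}{r} \right)^{n-p} \frac{\min_k x_k}{\prod_k x_k} \sum_{i=1}^p I_i. \]
Hence the inequality holds with $U(x) := (2/r)^{n-p} \cdot \min_k x_k / \prod_k x_k$ (set to $+\infty$ on the measure-zero set where it is ambiguous).

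It remains to verify $U \in L^q(\Delta^n_p(r))$ for $q \in [1, p/(p-1))$. The $(n-p)$ factors $(-r,r)$ contribute only a multiplicative constant, so the computation reduces to the $p$-dimensional positive part $\Delta^p_p(r)$. By the symmetry of the $p$ coordinates,
\[ \int_{\Delta^p_p(r)} \Big{(} \frac{\min_k x_k}{\prod_k x_k} \Big{)}^q dx = p \int_{\{0 < x_1 \leq x_k, \, 2 \leq k \leq p\} \cap \Delta^p_p(r)} \frac{x_1^q}{\prod_{k=1}^p x_k^q} \, dx. \]
Substituting $x_k = x_1 y_k$ for $k = 2, \ldots, p$ (so $y_k \geq 1$, Jacobian $x_1^{p-1}$), the integrand transforms to $x_1^{(p-1)(1-q)} \prod_{k=2}^p y_k^{-q}$. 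For $q \geq 1$ the inner integrals $\int_1^{r/x_1} y^{-q} dy$ are uniformly bounded in $x_1$ (or grow only logarithmically when $q=1$), so integrability is dictated by $\int_0^r x_1^{(p-1)(1-q)} dx_1$, which converges near $x_1 = 0$ exactly when $(p-1)(1-q) > -1$, i.e., $q < p/(p-1)$.

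The principal obstacle is the choice of $U$: a naive averaging of the preceding proposition over all pairs $(i,j)$ produces a symmetric function such as $(\sum_m x_m)/\prod_k x_k$, which is singular as $1/x_k$ in each of the $p$ coordinates and hence lies only in $L^q$ for $q < 1$, missing the target range. The remedy is to let $(i,j)$ depend on $x$: taking $j = \arg\min_k x_k$ collapses the singularities into the single function $\min_k x_k / \prod_k x_k$, whose Sobolev-type scaling $U(\epsilon x) = \epsilon^{-(p-1)} U(x)$ along dilations of the positive orthant matches precisely the critical exponent $p/(p-1)$.
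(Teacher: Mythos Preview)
Your proof is correct but takes a genuinely different route from the paper's. The paper bounds the quantity $\min_{i\neq j}\frac{1}{(x_i+x_j)\prod_{k\neq i,j}x_k}$ from above by passing to the average over all pairs and applying AM--GM, arriving at the fully symmetric choice
\[
U(x)=\Big(\frac{2}{r}\Big)^{n-p}\frac{1}{2\prod_{j=1}^p x_j^{(p-1)/p}},
\]
whose $L^q$-integrability for $q<\frac{p}{p-1}$ is then immediate by Fubini (each one-variable factor $x_j^{-(p-1)/p}$ lies in $L^q$ on $[0,r]$ exactly in that range). You instead fix $j=\arg\min_k x_k$ adaptively and use the elementary bound $\frac{x_j}{(x_i+x_j)^2},\,\frac{x_i}{(x_i+x_j)^2}\le\frac{1}{x_i+x_j}\le\frac{1}{x_i}$ to obtain $U(x)=(2/r)^{n-p}\min_k x_k/\prod_k x_k$. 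Your $U$ is in fact pointwise sharper near the relative interior of each facet (it stays bounded as a single $x_j\to 0$, whereas the paper's $U$ blows up like $x_j^{-(p-1)/p}$ there), at the cost of a slightly less transparent integrability verification via the substitution $x_k=x_1 y_k$. Both choices yield the same critical range $q<\frac{p}{p-1}$.

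One minor simplification: the summing over $i\neq j$ is not needed. A single application of the preceding proposition with any one $i\neq j(x)$ already gives
\[
\Big(\frac{r}{2}\Big)^{n-p}u(x)\le \frac{I_i+I_j}{\prod_{k\neq j}x_k}\le \Big(\sum_{m=1}^p I_m\Big)\cdot\frac{\min_k x_k}{\prod_k x_k}.
\]
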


\begin{proof}
By the above proposition, we have 
\[ u(x) \le \Big{(} \frac{2}{r} \Big{)}^{n-p} \sum_{i=1}^p \int_{\partial_i \Delta^n_p (r)} u d\bm{x}' \cdot \min \Big{\{} \frac{1}{(x_i+x_j) \prod_{k \neq i, j} x_k} ~\Big{|}~ i \neq j \in \{ 1, \ldots, p \} \Big{\}}. \]
Here $k$ in $\prod$ runs over $1, \ldots, p$ except for $i,j$. 
We compute
\begin{align*} 
\max \{ (x_i+x_j) \prod_{k \neq i, j} x_k ~|~ i \neq j \} 
&\ge \binom{p}{2}^{-1} \sum_{i,j: i < j} (x_i+x_j) \prod_{k \neq i, j} x_k
\\
&= \frac{2}{p} \sum_{j=1}^p \prod_{k \neq j} x_k
\\
&\ge 2 \sqrt[p]{\prod_{j=1}^p \prod_{k \neq j} x_k} = 2 \prod_{j=1}^p x_j^{\frac{p-1}{p}}. 
\end{align*}
It follows that  
\[ u (x) \le \sum_{i=1}^p \int_{\partial_i \Delta^n_p (r)} u d\bm{x}' \cdot \Big{(} \frac{2}{r} \Big{)}^{n-p} \frac{1}{2 \prod_{j=1}^p x_j^{\frac{p-1}{p}}}. \]
We can easily check the function 
\[ U (x) = \Big{(} \frac{2}{r} \Big{)}^{n-p} \frac{1}{2 \prod_{j=1}^p x_j^{\frac{p-1}{p}}} \]
enjoys the desired convexity and integrability. 
\end{proof}

\begin{prop}
\label{local Poincare type estimate}
Suppose $p \ge 2$. 
Let $u: \Delta^n_p (r) \to [0, \infty]$ be a convex function. 
Then we have 
\[ \int_{\Delta^n_p (\frac{r}{2})} u^{\frac{p}{p-1}} d\bm{x} \le (p-1) \Big{(} \frac{2}{r} \Big{)}^{\frac{n-p}{p-1}} \sum_{i=1}^p \Big{(} \int_{\partial_i \Delta^n_p (r)} u d\bm{x}' \Big{)}^{\frac{p}{p-1}}. \]
\end{prop}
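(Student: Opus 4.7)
The plan is to first reduce to the case $n = p$ by a Fubini argument combined with the convex mean value estimate of Proposition \ref{mean value estimate}, and then handle the pure simplex case by integrating the pairwise pointwise bound of the proposition immediately above.

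\textbf{Step 1 (Reduction to $n = p$).} The set-theoretic identity $\Delta^n_p(r/2) = \Delta^p_p(r/2) \times (-r/2, r/2)^{n-p}$ combined with Fubini gives
\[ \int_{\Delta^n_p(r/2)} u^{p/(p-1)} d\bm{x} = \int_{(-r/2, r/2)^{n-p}} \int_{\Delta^p_p(r/2)} u(\bullet, \tilde{x})^{p/(p-1)} dx_1 \cdots dx_p \, d\tilde{x}. \]
For fixed $\tilde{x}$, the slice $u(\bullet, \tilde{x})$ is convex on $\Delta^p_p(r)$, so by the $n = p$ case of the inequality the inner integral is at most $(p-1) \sum_i A_i(\tilde{x})^{p/(p-1)}$, where $A_i(\tilde{x}) := \int_{\partial_i \Delta^p_p(r)} u(\bullet, \tilde{x}) d\bm{x}'$ is a nonnegative convex function of $\tilde{x}$. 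Applying Proposition \ref{mean value estimate} to $A_i$ on the box $(-r, r)^{n-p}$ gives $A_i(\tilde{x}) \le (2/r)^{n-p} \int_{\partial_i \Delta^n_p(r)} u d\bm{x}'$ for $\tilde{x} \in (-r/2, r/2)^{n-p}$. Writing $A_i^{p/(p-1)} = A_i \cdot A_i^{1/(p-1)}$ and bounding the second factor by this $L^\infty$ estimate,
\[ \int_{(-r/2, r/2)^{n-p}} A_i(\tilde{x})^{p/(p-1)} d\tilde{x} \le (2/r)^{(n-p)/(p-1)} \left( \int_{\partial_i \Delta^n_p(r)} u d\bm{x}' \right)^{p/(p-1)}, \]
which combines with the previous to produce the claim.

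\textbf{Step 2 (The case $n = p$).} It remains to prove $\int_{\Delta^p_p(r/2)} u^{p/(p-1)} dx \le (p-1) \sum_i A_i^{p/(p-1)}$ for convex $u \ge 0$ on $\Delta^p_p(r)$, where $A_i = \int_{\partial_i \Delta^p_p(r)} u d\bm{x}'$. The input is the pointwise bound from the proposition above: $u(x) \le (A_i x_j + A_j x_i)/((x_i+x_j)^2 \prod_{k \neq i, j} x_k)$ for any $i \neq j$ and $x \in \Delta^p_p(r/2)$. For $p = 2$ this simplifies to $u(x_1, x_2) \le (A_1 x_2 + A_2 x_1)/(x_1+x_2)^2$; splitting $u^2 = u \cdot u$ and applying the bound to one factor yields $\int u^2 dx \le A_1 \int u \, x_2/(x_1+x_2)^2 dx + A_2 \int u \, x_1/(x_1+x_2)^2 dx$. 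In polar-type coordinates $s = x_1 + x_2$, $t = x_1/s$ (with $dx = s \, ds \, dt$), using the underlying convexity bound $u(x_1, x_2) \le (x_2 u(0, s) + x_1 u(s, 0))/s$ inside the remaining integrands, one computes $\int u \, x_2/(x_1+x_2)^2 dx \le (1/3) A_1 + (1/6) A_2$ (and symmetrically for the companion), and an AM-GM step gives $\int u^2 \le (1/2)(A_1^2 + A_2^2) \le A_1^2 + A_2^2$. For general $p$, one proceeds analogously: write $u^{p/(p-1)} = u \cdot u^{1/(p-1)}$, average the $\binom{p}{2}$ pair bounds to obtain a symmetric pointwise estimate, apply $(\sum_l a_l)^{1/(p-1)} \le \sum_l a_l^{1/(p-1)}$ (valid for $p \ge 2$ since $1/(p-1) \le 1$) to split the $u^{1/(p-1)}$ factor, and integrate each resulting piece using the pair bound once more in barycentric coordinates $s = \sum_k x_k$, $t_k = x_k/s$, where the Jacobian $s^{p-1} ds$ exactly compensates the $s^{-(p-1)}$ decay of the pair bounds along the radial direction. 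A concluding Young's or AM-GM inequality collects the resulting bilinear expressions in the $A_i$'s into $(p-1) \sum_i A_i^{p/(p-1)}$.

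The main obstacle is Step 2 for $p \ge 3$: the fractional power $1/(p-1)$ prevents the clean linear expansion that makes the $p = 2$ case transparent, and individual pair bounds, if integrated naively, produce borderline-divergent expressions such as $\int_0^{r/2} ds/s$. Symmetric summation over all pairs with the precise weights dictated by the proposition is essential to obtain the exact cancellation required both for finiteness and for the sharp constant $(p-1)$.
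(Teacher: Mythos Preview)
Your Step~1 reduction to $n=p$ via Fubini and the box mean-value estimate is correct and is a pleasant simplification the paper does not make (the paper carries the extra $n-p$ box coordinates through the whole computation). Your Step~2 for $p=2$ is also correct.

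The genuine gap is Step~2 for $p\ge 3$, which you yourself flag as unresolved. The symmetric averaging of the $\binom{p}{2}$ pair bounds cannot close the argument: each pair bound for $(i,j)$ carries a factor $1/\prod_{k\neq i,j}x_k$, which is non-integrable to the power $p/(p-1)$ near every face $\{x_k=0\}$ with $k\neq i,j$, and since each pair bound is singular on a codimension-one set, no averaging or application of subadditivity of $t\mapsto t^{1/(p-1)}$ produces the cancellation you hope for. The barycentric Jacobian $s^{p-1}\,ds$ only handles the radial direction; the angular singularities persist.

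The paper's key idea is a \emph{decomposition} rather than an averaging: write $\Delta^p_p(r)=\bigcup_{i<j}\Delta^p_{p,ij}(r)$ with $\Delta^p_{p,ij}(r):=\{x: x_i,x_j\le\min_{k\neq i,j}x_k\}$, the region where $x_i,x_j$ are the two smallest coordinates. On $\Delta^p_{p,ij}$ use the coordinates $t=x_i/(x_i+x_j)$, $r_{ij}=x_i+x_j$, $\bm{x}^{ij}=(x_k)_{k\neq i,j}$, with Jacobian $r_{ij}\,dt\,dr_{ij}\,d\bm{x}^{ij}$; by convexity of $u^{p/(p-1)}$ in $t$ one reduces to the endpoint values on $\partial_i,\partial_j$, and then Corollary~\ref{convenient mean value estimate} on $\Delta^{p-1}_{p-1}(r)$ produces a factor $x_i^{p-1}/(r_{ij}\prod_{k\neq i,j}x_k)$. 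The crucial point is that on $\Delta^p_{p,ij}$ one has $x_i\le x_k$ for every $k\neq i,j$, so this factor is bounded by $x_i/r_{ij}=t\le 1$, killing the singularity. Summing over the $\binom{p}{2}$ regions, each $A_i^{p/(p-1)}$ appears $p-1$ times, which gives the stated constant. This localize-then-bound trick is what your sketch is missing.
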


\begin{proof}
For $i, j \in \{ 1, \ldots, p \}$, we put 
\begin{align}
\Delta^n_{p, ij} (r) := \{ (x_1, \ldots, x_n) \in \Delta^n_p (r) ~|~ x_i, x_j \le \min_{k \neq i,j} x_k \}. 
\end{align}
Then we have 
\[ \Delta^n_p (r) = \bigcup_{i < j} \Delta^n_{p, ij} (r). \]

Now we consider 
\[ P_{ij} (r) := \{ (t_{ij}; r_{ij}, \bm{x}^{ij}) \in [0,1] \times \Delta^{n-1}_{p-1} (r) ~|~ t_{ij} r_{ij}, (1-t_{ij})r_{ij} \le \min_{k \neq i,j} x_k \}, \]
where $k$ in $\min$ runs over $1, \ldots, p$ except for $i, j$ and we put
\begin{align*}
\bm{x}^{ij} 
&:= (x_1, \ldots, x_{i-1}, x_{i+1}, \ldots, x_{j-1}, x_{j+1}, \ldots, x_n).
\end{align*}
By 
\[ (t_{ij}; r_{ij}, \bm{x}^{ij}) \mapsto (x_1, \ldots, x_{i-1}, t_{ij} r_{ij}, x_{i+1}, \ldots, x_{j-1}, (1-t_{ij}) r_{ij}, x_{j+1}, \ldots, x_n), \]
we get a map from $P_{ij} (r)$ onto $\Delta^n_{p, ij} (r)$, which gives a homeomorphism outside the zero set $\{ x_i = x_j = 0 \} \subset  \Delta^n_{p,ij} (r)$. 
The Lebesgue measure $d\bm{x} = dx_1 \dotsb dx_n$ is transformed into 
\[ dt_{ij} r_{ij} dr_{ij} d\bm{x}^{ij} = dt_{ij} r_{ij} dr_{ij} dx_1 \dotsb dx_{i-1} dx_{i+1} \dotsb dx_{j-1} dx_{j+1} \dotsb dx_n. \]
A convex function $f$ on $\Delta^n_{p,ij}$ gives a convex function $f_{t_{ij}} (r_{ij}, \bm{x}^{ij}) := f (t_{ij}; r_{ij}, \bm{x}^{ij})$ on $\Delta^{n-1}_{p-1} (r)$ for each $t_{ij} \in [0,1]$. 
We also have 
\[ f (t_{ij}; r_{ij}, \bm{x}^{ij}) \le t_{ij} f_1 (r_{ij}, \bm{x}^{ij}) + (1-t_{ij}) f_0 (r_{ij}, \bm{x}^{ij}). \]

Since $u^{\frac{p}{p-1}}$ is convex, we compute
\begin{align*} 
\int_{\Delta^n_{p, ij} (\frac{r}{2})} u^{\frac{p}{p-1}} d\bm{x}
&= \int_{P_{ij} (\frac{r}{2})} dt_{ij} d\bm{x}^{ij} dr_{ij} r_{ij} u (t_{ij}; r_{ij}, \bm{x}^{ij})^{\frac{p}{p-1}} 
\\
&\le \int_{P_{ij} (\frac{r}{2})} dt_{ij} d\bm{x}^{ij} dr_{ij} r_{ij} (t_{ij} u_1 (r_{ij}, \bm{x}^{ij})^{\frac{p}{p-1}} + (1-t_{ij}) u_0 (r_{ij}, \bm{x}^{ij})^{\frac{p}{p-1}})
\\
& \le \int_{P_{ij} (\frac{r}{2})} dt_{ij} d\bm{x}^{ij} dr_{ij} u_1 (r_{ij}, \bm{x}^{ij}) \Big{(} \Big{(} \frac{2}{r} \Big{)}^{n-p} \int_{\Delta^{n-1}_{p-1} (r)} u_1 (r_{ij}, \bm{x}^{ij}) d\bm{x}' \cdot \frac{(r_{ij} t_{ij})^{p-1}}{r_{ij} \prod_{k \neq i,j} x_k} \Big{)}^{\frac{1}{p-1}}
\\
&+ \int_{P_{ij} (\frac{r}{2})} dt_{ij} d\bm{x}^{ij} dr_{ij} u_0 (r_{ij}, \bm{x}^{ij})) \Big{(} \Big{(} \frac{2}{r} \Big{)}^{n-p} \int_{\Delta^{n-1}_{p-1} (r)} u_0 (r_{ij}, \bm{x}^{ij}) d\bm{x}' \cdot \frac{(r_{ij} (1-t_{ij}))^{p-1}}{r_{ij} \prod_{k \neq i,j} x_k} \Big{)}^{\frac{1}{p-1}}.
\end{align*}
Here again $k$ in $\prod$ runs over $1, \ldots, p$ except for $i, j$. 
Since 
\[ \frac{(r_{ij} t_{ij})^{p-1}}{r_{ij} \prod_{k \neq i,j} x_k}, \frac{(r_{ij} (1-t_{ij}))^{p-1}}{r_{ij} \prod_{k \neq i,j} x_k} \le 1 \]
on $P_{ij} (r)$, we get 
\begin{align*}
\int_{\Delta^n_{i,j} (\frac{r}{2})} u^{\frac{n}{n-1}} d\bm{x}
&\le \int_{P_{ij} (\frac{r}{2})} dt_{ij} d\bm{x}^{ij} dr_{ij} u_1 (r_{ij}, \bm{x}^{ij}) \Big{(} \Big{(} \frac{2}{r} \Big{)}^{n-p} \int_{\Delta^{n-1}_{p-1} (r)} u_1 (r_{ij}, \bm{x}^{ij}) d\bm{x}' \Big{)}^{\frac{1}{p-1}} 
\\
&+ \int_{P_{ij} (\frac{r}{2})} dt_{ij} d\bm{x}^{ij} dr_{ij} u_0 (r_{ij}, \bm{x}^{ij})) \Big{(} \Big{(} \frac{2}{r} \Big{)}^{n-p} \int_{\Delta^{n-1}_{p-1} (r)} u_0 (r_{ij}, \bm{x}^{ij}) d\bm{x}' \Big{)}^{\frac{1}{p-1}}
\\
&= \Big{(} \frac{2}{r} \Big{)}^{\frac{n-p}{p-1}} \int_{\Delta^{n-1}_{p-1} (\frac{r}{2})} u_1 (r_{ij}, \bm{x}^{ij}) d\bm{x}' \Big{(} \int_{\Delta^{n-1}_{p-1} (r)} u_1 (r_{ij}, \bm{x}^{ij}) d\bm{x}' \Big{)}^{\frac{1}{p-1}}
\\
&+ \Big{(} \frac{2}{r} \Big{)}^{\frac{n-p}{p-1}} \int_{\Delta^{n-1}_{p-1} (\frac{r}{2})} u_0 (r_{ij}, \bm{x}^{ij}) d\bm{x}' \Big{(}\int_{\Delta^{n-1}_{p-1} (r)} u_0 (r_{ij}, \bm{x}^{ij}) d\bm{x}' \Big{)}^{\frac{1}{p-1}}
\\
&\le \Big{(} \frac{2}{r} \Big{)}^{\frac{n-p}{p-1}} \Big{[} \Big{(} \int_{\partial_j \Delta^n_p (r)} u d\bm{x}' \Big{)}^{\frac{p}{p-1}} + \Big{(} \int_{\partial_i \Delta^n_p (r)} u d\bm{x}' \Big{)}^{\frac{p}{p-1}} \Big{]}.
\end{align*}
Taking the sum $\sum_{i < j}$ of this, we obtain the desired estimate. 
\end{proof}

\subsection{Rellich and Poincare type estimates}
\label{Rellich and Poincare type estimates}

Here we establish a crucial compactness result. 

\subsubsection{Rellich type estimate}

\begin{prop}
Let $P = P$ be a polytope. 
Using a flat measure $d\mu$, we put 
\[ \delta_P (x) := \inf \{ \mathrm{vol} (P \cap \ell^{-1} ([0, \infty))) ~|~ \ell: V \to \mathbb{R} \text{ is an affine function with } \ell (x) = 0 \}. \]
Then $\delta_P (x) > 0$ for every $x \in P^\circ$. 
Moreover, for any non-negative convex function $u$ and $x \in P$, we have 
\[ u (x) \le \int_P u d\mu \cdot \frac{1}{\delta_P (x)}. \]
\end{prop}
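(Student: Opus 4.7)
The plan is to mimic the supporting hyperplane argument of Proposition \ref{mean value estimate}, now applied globally on the whole polytope. Both assertions follow fairly directly from the supporting hyperplane theorem.

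For the positivity of $\delta_P$ on $P^\circ$, I would fix $x \in P^\circ$ and choose $r > 0$ so that the Euclidean ball $B(x, r)$ lies inside $P$. For any affine function $\ell$ with $\ell(x) = 0$, the intersection $B(x, r) \cap \ell^{-1}([0, \infty))$ is a half-ball of radius $r$ centered at $x$ whose flat volume is a constant $c_n r^n > 0$ that is independent of the direction of $\ell$. Since $P \cap \ell^{-1}([0, \infty))$ contains this half-ball, taking the infimum over $\ell$ gives $\delta_P(x) \geq c_n r^n > 0$.

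For the inequality, first take $x \in P^\circ$ with $u(x) < \infty$. The supporting hyperplane theorem applied to the convex epigraph of $u$ at $(x, u(x))$ yields an affine function $\ell_x$ with $\ell_x \leq u$ on $P$ and $\ell_x(x) = u(x)$. Setting $\ell := \ell_x - u(x)$, we get $\ell(x) = 0$, so $\mathrm{vol}(P \cap \ell^{-1}([0, \infty))) \geq \delta_P(x)$; on this set $u \geq \ell_x \geq u(x)$, and $u \geq 0$ elsewhere on $P$. Integrating gives
\[ \int_P u \, d\mu \geq u(x) \cdot \mathrm{vol}(P \cap \{\ell \geq 0\}) \geq u(x) \cdot \delta_P(x). \]
If $u(x) = \infty$, I would apply the same argument after replacing $u(x)$ by arbitrary finite heights $N$ (supporting the closure of the epigraph below $(x, N)$), so that $\int_P u \, d\mu \geq N \delta_P(x)$ for every $N$ forces $\int_P u \, d\mu = \infty$ and the claim is trivial.

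Finally, for $x \in \partial P$, any facet through $x$ gives a supporting hyperplane $\ell$ of $P$ with $\ell(x) = 0$ and $P \subset \ell^{-1}((-\infty, 0])$; hence $P \cap \ell^{-1}([0, \infty))$ is a lower-dimensional face with zero flat volume, so $\delta_P(x) = 0$ and the bound $u(x) \leq \infty$ is vacuous. I do not anticipate a substantial obstacle; the only minor point is ensuring the half-ball volume bound in the positivity argument is uniform in the direction of $\ell$, which is automatic from the rotational symmetry of $B(x, r)$.
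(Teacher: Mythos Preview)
Your proof is correct and follows essentially the same approach as the paper: the inequality is obtained via the supporting hyperplane argument exactly as in Proposition \ref{mean value estimate}, which is precisely what the paper invokes. The only minor difference is in the positivity of $\delta_P$ on $P^\circ$: the paper identifies the set of half-spaces through $x$ with a sphere and uses compactness plus continuity of $H^+ \mapsto \mathrm{vol}(P \cap H^+)$ to show the infimum is attained (hence positive), whereas you give a direct uniform lower bound via the half-ball $B(x,r) \cap \ell^{-1}([0,\infty))$; both are short and valid, and your version avoids the continuity check while the paper's version additionally records that the infimum is achieved and that $\delta_P \equiv 0$ on $\partial P$.
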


\begin{proof}
Let $S (V)$ be the set of half spaces $H^+_\ell = \ell^{-1} ([0,\infty))$ with $\ell (x) = 0$. 
We identify $S (V)$ with a sphere as topological space. 
Then the map $S (V) \to [0, \infty): H^+ \mapsto \mathrm{vol} (P \cap H^+)$ is continuous, so that the infimum of $\delta_P (x)$ is attained by some $H^+$, so that we have $\delta_P (x) > 0$ on $P^\circ$ and $\delta_P (x) = 0$ on $\partial P$. 
The inequality follows by the same argument as in the proof of Proposition \ref{mean value estimate}. 
\end{proof}

\begin{thm}
\label{Rellich type estimate}
Let $\mathcal{P} = (P, d\mu, d\sigma)$ be an $n$-dimensional system. 
There exists a lsc convex function $U: P \to [0, \infty]$ such that $U \in L^q (P, d\mu)$ for every $q \in [1, \frac{n}{n-1})$ and 
\[ u (x) \le \int_{\partial P} u d\sigma \cdot U (x) \]
for every non-negative lsc convex function $u: P \to [0, \infty]$ and $x \in P$. 

Moreover, we can take such $U$ so that it is finite valued and continuous on $P \setminus \partial^2 P$ and for each $x \in \partial^2 P$ lying in the relative interior of a codimension $p \ge 2$ face, there exists a neighbourhood $V$ such that $U|_V \in L^r (V, d\mu|_V)$ for every $r \in [1, \frac{p}{p-1})$. 
\end{thm}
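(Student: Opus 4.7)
The strategy is to combine the local Rellich-type estimate of Corollary \ref{local Rellich type estimate} (valid near codimension-$\ge 2$ faces) with a trace inequality $\int_P u\, d\mu\le C_P\int_{\partial P} u\, d\sigma$ for non-negative convex $u$ (to handle the complement $P\setminus \partial^2 P$), and then to package the resulting pointwise bounds into a single lsc convex function on $P$.

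The first ingredient is purely local: since $P$ is simple, near a point in the relative interior of a codimension-$p$ face $F$ with $p\ge 2$ there is an affine chart identifying a neighborhood with an open subset of $\Delta^n_p(r_F)$, the $p$ facets of $P$ through $F$ matching the coordinate facets. Corollary \ref{local Rellich type estimate} then gives $u(x)\le C_F\prod_{j=1}^p x_j^{-(p-1)/p}\int_{\partial P} u\, d\sigma$ in the chart, where $x_j$ is the distance to the $j$-th facet through $F$ and the constant absorbs the comparison between $d\bm{x}'$ and $d\sigma$. To treat $P\setminus\partial^2 P$ I would first establish the trace inequality: fix an interior base point $x_0\in P^\circ$ and parameterize $P$ radially via $x=(1-s)x_0+s\omega$ for $(s,\omega)\in [0,1]\times \partial P$, so that the pushforward of $d\mu$ is $s^{n-1}$ times a bounded density against $ds\, d\sigma$. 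Convexity gives $u((1-s)x_0+s\omega)\le (1-s)u(x_0)+s\, u(\omega)$, and integrating yields $\int_P u\,d\mu\le A\, u(x_0)+B\int_{\partial P} u\, d\sigma$ for geometric constants $A,B$; combining with the pointwise bound $u(x_0)\le \int_P u\, d\mu/\delta_P(x_0)$ from the proposition immediately above and choosing $x_0$ with $\delta_P(x_0)>A$ (possible near a centroid by Gr\"unbaum's inequality) rearranges to the trace inequality. From this, $u(x)\le C_P \delta_P(x)^{-1}\int_{\partial P} u\, d\sigma$ for $x\in P^\circ$; for $x$ in the relative interior of a facet $F$, the restriction $u|_F$ is convex on the polytope $F$, so applying the same proposition intrinsically on $F$ gives $u(x)\le \delta_F(x)^{-1}\int_{\partial P} u\, d\sigma$, with $\delta_P^{-1}$ and $\delta_F^{-1}$ continuous and finite on their respective strata.

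To assemble these local bounds into a single lsc convex $U$ I would use the affine defining functions $\ell_1,\ldots,\ell_N$ of the facets of $P$ (normalized so $\ell_i\ge 0$ on $P$ and $\ell_i=0$ on $F_i$). Each $\ell_i^{-\alpha}=\exp(-\alpha\log\ell_i)$ is lsc convex on $P$ for $\alpha>0$, since $-\log\ell_i$ is convex and $\exp$ is convex increasing; positive linear combinations and log-convex products of such expressions remain convex. Taking these as building blocks subordinate to the face lattice of $P$, one designs $U$ to realize the product blow-up $\prod_{j\in I_F}\ell_j^{-(p-1)/p}$ near each codim-$p\ge 2$ face $F$ while reducing to the bounded weights of the previous step on the codim-$\le 1$ strata. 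The pointwise inequality $u(x)\le U(x)\int_{\partial P} u\, d\sigma$ then follows by combining the local bounds after absorbing constants, while local integrability $U|_V\in L^r$ for $r\in [1,p/(p-1))$ near codim-$p$ faces, and the global $U\in L^q$ for $q\in [1,n/(n-1))$, reduce to elementary integrability of $\ell^{-\alpha}$ in dimension $p$.

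The main obstacle is the assembly step: producing a single globally convex function whose singular structure matches the sharp weight $\prod x_j^{-(p-1)/p}$ at every codimension-$p\ge 2$ face while remaining finite on the relative interiors of facets, as demanded by the ``Moreover'' clause; naive candidates such as $\sum_F \prod_{j\in I_F}\ell_j^{-(p-1)/p}$ blow up on facet interiors. The trace inequality is a secondary obstacle, requiring the quantitative positivity $\delta_P(x_0)>A$ at a well-chosen interior base point.
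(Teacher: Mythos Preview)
Your local treatment near codimension-$p\ge 2$ faces via affine charts and Corollary~\ref{local Rellich type estimate} is exactly what the paper does. The two obstacles you flag, however, are both artifacts of your chosen route and are completely bypassed in the paper.

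For the assembly step, the paper does not build $U$ explicitly at all. It simply \emph{defines}
\[
U(x):=\sup\Big\{\frac{u(x)}{\int_{\partial P}u\,d\sigma}\ \Big|\ u:P\to[0,\infty]\text{ lsc convex},\ 0<\textstyle\int_{\partial P}u\,d\sigma<\infty\Big\}.
\]
This is automatically lsc convex (supremum of a family of lsc convex functions), and the defining inequality $u(x)\le U(x)\int_{\partial P}u\,d\sigma$ is tautological. Everything else in the proof is then a matter of bounding this $U$ from above, pointwise on $P\setminus\partial^2 P$ and in $L^r$ near $\partial^2 P$; the latter is exactly your local chart argument. Your attempt to engineer a global convex function out of products $\prod\ell_j^{-\alpha}$ that blows up to the correct order along every codimension-$p$ face while staying finite on facet interiors is a genuine combinatorial headache, and you do not actually carry it out; the sup definition makes it unnecessary.

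For the treatment of $P\setminus\partial^2 P$, the paper also avoids your trace-inequality detour. For $x$ in the relative interior of a facet $Q$, one uses the intrinsic mean-value bound on $Q$ directly: $u(x)\le \delta_Q(x)^{-1}\int_Q u\,d\sigma\le \delta_Q(x)^{-1}\int_{\partial P}u\,d\sigma$, so $U(x)\le\delta_Q(x)^{-1}<\infty$. For an interior point $x\in P^\circ$, one simply writes $x=(1-t)x'+tx''$ with $x',x''$ in relative interiors of facets and uses convexity of $U$. No global trace inequality, no Gr\"unbaum-type positivity of $\delta_P(x_0)$ versus a radial constant, is needed. Continuity of $U$ on $P\setminus\partial^2 P$ then follows from convexity plus a short upper-semicontinuity check at facet points.

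In short: your proposal is morally on the right track for the singular estimates, but you have made the two easy parts hard. Replace the explicit construction by the sup definition, and replace the trace inequality by the direct facet bound plus convex interpolation; the proof then closes with no residual obstacles.
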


\begin{proof}
We put 
\[ U (x) := \sup \Big{\{} \frac{u (x)}{\int_{\partial P} u d\sigma} ~\Big{|}~ u: P \to [0, \infty] \text{ is lsc convex with } 0 < \int_{\partial P} u d\sigma < \infty \Big{\}}. \]
This is clearly a lsc convex function on $P$. 

We firstly show that $U$ is finite valued and continuous on $P \setminus \partial^2 P$. 
For $x \in \partial P \setminus \partial^2 P$ lying in the relative interior of a facet $Q \subset P$, we have 
\[ u (x) \le \int_Q u d\sigma \cdot \frac{1}{\delta_Q (x)} \le \int_{\partial P} u d\sigma \cdot \frac{1}{\delta_Q (x)}, \]
so that 
\[ U (x) \le \frac{1}{\delta_Q (x)}. \]
Thus $U (x)$ is finite for $x \in \partial P \setminus \partial^2 P$. 
For $x \in P \setminus \partial^2 P$, we can take $x', x'' \in \partial P \setminus \partial^2 P$ and $t \in [0,1]$ so that $x = (1-t) x' + t x''$. 
Then by the convexity, we have $U (x) \le (1-t) U (x') + t U (x'') < \infty$. 

Since $P$ is a polytope, the upper semi-continuity of $U$ follows by the convexity. 
Indeed, as for the interior $P^\circ$, it is well-known that any convex function is continuous on open set. 
The upper semi-continuity around $\partial P \setminus \partial^2 P$ can be seen as follows. 
Take a local neigbourhood of a point $x \in \partial P \setminus \partial^2 P$ so that it is affine isomorphic to $\Delta^n_1 (2r)$. 
Then for any convex function $u$ on $\Delta^n_1 (r)$ we compute 
\[ u (x^i_1, \ldots, x^i_n) \le (1- \frac{x^i_1}{r}) u (0, x^i_2, \ldots, x^i_n) + \frac{x^i_1}{r} u (r, x^i_2, \ldots, x^i_n). \]
When $x^i = (x^i_1, \ldots, x^i_n) \to 0$, we have 
\[ (1- \frac{x^i_1}{r}) u (0, x^i_2, \ldots, x^i_n) + \frac{x^i_1}{r} u (r, x^i_2, \ldots, x^i_n) \to u (0) \]
by the continuity of $u$ on $\Delta^n_1 (2r)^\circ$ and on $\partial_1 \Delta^n_1 (2r)$. 
Thus we get 
\begin{align}
\label{upper semi-continuity}
\limsup_{i \to \infty} u (x^i) \le u (0), 
\end{align}
which shows the upper semi-continuity. 

Now it follows that for any compact set $K \subset P \setminus \partial^2 P$, there exists a constant $C > 0$ such that $U|_K \le C$. 
In particular, $U|_K$ is $L^q$ for any $q$. 

It suffices to show the integrability around $\partial^2 P$. 
We can reduce this task to show for every point $x_0 \in \partial^2 P$ there exists a neighbourhood $V$ of $x_0$ in $P$ and a function $f: V \to [0, \infty]$ such that $f \in L^q (V, d\mu)$ for every $q \in [1, \frac{n}{n-1})$ and $U|_V \le f$. 

For any boundary point $x_0 \in \partial P$ lying in the relative interior of dimension $p$ face, we can take an affine map $\phi: \mathbb{R}^n \to M_\mathbb{R}$ so that 
\begin{itemize}
\item $\phi (0) = x_0$,

\item The measure $d\mu$ on $V$ is transformed into the Lebesgue measure $d\bm{x}$ on $\mathbb{R}^n$. 

\item There exists $r > 0$ such that $\phi$ gives a homeomorphism from $\Delta^n_p (r)$ onto an open neighbourhood of $x_0$ in $P$. 

\item For each $i = 1,\ldots, p$, there exists a facet $Q_i \subset P$ containing $x_0$ such that $\phi$ gives a homeomorphism from $\partial_i \Delta^n_p (r)$ onto an open neighbourhood of $x_0$ in $Q_i$. 
\end{itemize}
Since $\phi$ is affine, the measure $d\sigma$ on $Q_i$ is transformed into $a_i d\bm{x}'$ on $\partial_i \Delta^n_p (r)$ for some constant $a_i > 0$. 

For a convex function $u$ on $P$, we have 
\[ \sum_{i=1}^p \int_{\partial_i \Delta^n_p (r)} u \circ \phi d\bm{x}' = \sum_{i=1}^p a_i^{-1} \int_{Q_i \cap \phi (\partial_i \Delta^n_p (r))} u d\sigma \le \frac{1}{\min \{ a_i \}} \int_{\partial P} u d\sigma. \]
Then by Corollary \ref{local Rellich type estimate}, we have a function $\tilde{f}: \Delta^n_p (r) \to [0,\infty]$ such that $\tilde{f} \in L^q (\Delta^n_p (r))$ for every $q \in [1, \frac{n}{n-1})$ and 
\[ u \circ \phi (x) \le \int_{\partial P} u d\sigma \cdot \tilde{f} (x) \] 
on $\Delta^n_p (r/2)$ for every convex $u: P \to [0, \infty]$. 
Now we put $V := \phi (\Delta^n_p (r/2))$ and $f := \tilde{f} \circ (\phi^{-1})|_V$, then we get the desired property. 
\end{proof}

\begin{cor}
\label{Rellich type compactness}
Let $\{ u_i: P \to [0, \infty] \}_{i \in \mathbb{N}}$ be a sequence of non-negative convex functions with a uniform bound 
\[ \int_{\partial P} u_i d\sigma \le C. \]
After taking a subsequence, there exists a unique lower semi-continuous convex function $u: P \to [0, \infty]$ such that $u_i$ converges to $u$ in $L^q$-topology for every $q \in [0, \frac{n}{n-1})$. 
Moreover the convergence is uniform on each compact set $K \subset P^\circ$. 
\end{cor}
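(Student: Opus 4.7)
The plan is to combine the pointwise Rellich domination of Theorem \ref{Rellich type estimate} with the classical equicontinuity of uniformly bounded convex functions on open sets, and then upgrade interior uniform convergence to $L^q$-convergence on $P$ via dominated convergence.

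First, I would apply Theorem \ref{Rellich type estimate} to produce a lsc convex function $U:P \to [0,\infty]$ with $U \in L^q(P,d\mu)$ for every $q \in [1,\frac{n}{n-1})$, finite and continuous on $P \setminus \partial^2 P$, and satisfying the pointwise bound $u_i(x) \le C \cdot U(x)$ for all $i$ and every $x \in P$. In particular, $(u_i)$ is uniformly bounded on every compact $K \subset P^\circ$, and also on every compact $K \subset P \setminus \partial^2 P$.

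Next, I would invoke the classical fact that convex functions uniformly bounded on a compact subset of an open set are uniformly Lipschitz on any slightly smaller compact subset. Given an exhaustion $K_1 \subset K_2 \subset \cdots \nearrow P^\circ$ by compacta, Arzel\`a--Ascoli together with a diagonal argument then furnishes a subsequence, which I relabel $(u_i)$, and a locally Lipschitz $u:P^\circ \to [0,\infty)$ such that $u_i \to u$ uniformly on each $K_m$. Pointwise limits of convex functions are convex, so $u$ is convex on $P^\circ$; extending by $\bar u(x) := \liminf_{y \to x,\, y \in P^\circ} u(y)$ yields a lsc convex function $\bar u:P \to [0,\infty]$, and passing $u_i \le C U$ to the liminf preserves the bound $\bar u \le C U$ on all of $P$. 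Uniqueness of $\bar u$ as a lsc convex limit is automatic, because two lsc convex functions coinciding on $P^\circ$ coincide everywhere, as recorded in the introduction.

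Finally, to upgrade to $L^q$-convergence on $P$, I would apply dominated convergence: both $u_i$ and $\bar u$ are dominated pointwise a.e.\ by $C U$, so
\[ |u_i - \bar u|^q \le (2CU)^q \in L^1(P,d\mu) \]
for every $q \in [1,\frac{n}{n-1})$, and pointwise convergence $u_i \to \bar u$ holds on $P^\circ$, which is of full $d\mu$-measure in $P$. Hence $\int_P |u_i - \bar u|^q\, d\mu \to 0$, and the case $q \in [0,1)$ follows by H\"older's inequality and finiteness of $d\mu(P)$. Uniform convergence on compact $K \subset P^\circ$ is already secured by Step 2.

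The main subtlety, rather than a hard step, is choreographing the boundary behaviour: the envelope $U$ genuinely blows up along $\partial^2 P$, but its finite $L^q$-norm for $q < \tfrac{n}{n-1}$ is exactly what makes dominated convergence applicable; meanwhile the lsc envelope $\bar u$ must be checked to remain convex and to inherit the pointwise Rellich bound across $\partial P$. Once these two verifications are carried out, the proof is a clean assembly of Arzel\`a--Ascoli with dominated convergence.
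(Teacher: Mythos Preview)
The proposal is correct and follows essentially the same approach as the paper: apply the pointwise Rellich bound of Theorem \ref{Rellich type estimate}, use Arzel\`a--Ascoli on an exhaustion of $P^\circ$ to extract a locally uniformly convergent subsequence, take the lsc convex extension to $P$, and upgrade to $L^q$-convergence by dominated convergence against $CU$. The only cosmetic differences are that the paper first replaces each $u_i$ by its lsc envelope (so that Theorem \ref{Rellich type estimate} applies verbatim at every point of $P$ rather than just on $P^\circ$) and defines the boundary extension as the supremum of affine minorants rather than as a $\liminf$; both yield the same lsc convex function.
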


\begin{proof}
If we put
\[ \hat{u}_i := \sup \{ \ell (x) ~|~ \ell: P \to \mathbb{R} \text{ is an affine function s.t. } \ell \le u \}, \]
we have $\hat{u}_i|_{P^\circ} = u_i|_{P^\circ}$ by the supporting hyperplane theorem and $\int_{\partial P} \hat{u}_i d\sigma \le \int_{\partial P} u_i d\sigma \le C$. 
Thus we may assume $u_i$ are lower semi-continuous. 
By the above theorem, we have 
\[ u_i (x) \le C \cdot U (x). \]
It follows by the continuity of $U$ on $P \setminus \partial^2 P$ that for any compact set $K \subset P \setminus \partial^2 P$, $u_i|_K$ is uniformly bounded. 
Then by a general argument, for any compact set $K' \subset P^\circ$, we get a uniform Lipschitz bound $|\nabla u_i|_{K'}| \le C_{K'}$. 
Then by Arzel\`a--Ascoli theorem, we can find a subsequence $u_i$ and a function $u^\circ$ on $P^\circ$ so that $u_i$ converges uniformly to $u^\circ$ on every compact set $K \subset P^\circ$. 
Since $u^\circ \le C \cdot U (x)$, we have $u_i \to u^\circ$ in $L^q (P^\circ)$ for every $q \in [1, \frac{n}{n-1})$ by the dominated convergence theorem. 

Now we consider the lsc envelope 
\[ u (x) := \sup \{ \ell (x) ~|~ \ell:V \to \mathbb{R} \text{ is an affine function s.t.} \ell|_{P^\circ} \le u^\circ \} \]
on $P$. 
By the supporting hyperplane theorem, $u$ coincides with $u^\circ$ on $P^\circ$. 
Then since $\partial P$ is a zero set, we have $u_i \to u$ in $L^q (P)$ for every $q \in [1, \frac{n}{n-1})$. 

Since any convex function is continuous on open set, the $L^q$-convergence characterizes $u$ on $P^\circ$. 
Since $u$ restricted to a segment is automatically upper semi-continuous, the lowe semi-continuity of $u$ implies that $u|_{\partial P}$ is uniquely determined by $u|_{P^\circ}$, which shows the uniqueness. 
\end{proof}

\subsubsection{Poincare type estimate}

\begin{thm}
\label{Poincare type estimate}
Let $\mathcal{P} = (P, d\mu, d\sigma)$ be an $n$-dimensional system. 
For every $q \in [1, \frac{n}{n-1}]$, there exists a constant $C_{P, q} > 0$ such that 
\[ \Big{(} \int_P u^q d\mu \Big{)}^{\frac{1}{q}} \le C_{P, q} \int_{\partial P} u d\sigma \]
for every non-negative convex function $u: P \to [0, \infty]$. 
\end{thm}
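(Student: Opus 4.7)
The plan is to reduce to the local Poincar\'e-type estimate of Proposition~\ref{local Poincare type estimate} via a finite covering of $P$ compatible with the local affine charts constructed in the proof of Theorem~\ref{Rellich type estimate}. For $q \in [1, n/(n-1))$ no new work is required: by Theorem~\ref{Rellich type estimate} we have the pointwise bound $u(x) \le (\int_{\partial P} u\, d\sigma) \cdot U(x)$ with $U \in L^q(P, d\mu)$, which already yields the inequality with $C_{P,q} = \|U\|_{L^q(P, d\mu)}$. The substantive case is the endpoint $q = n/(n-1)$, for which $U$ is not known to be integrable to the power $q$.

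For the critical exponent I would cover $P$ as follows. Using simplicity of $P$ together with compactness of $\partial^2 P$, extract finitely many affine charts $\phi_k : \Delta^n_{p_k}(r_k) \to P$ of the type constructed in the proof of Theorem~\ref{Rellich type estimate}, where $p_k \ge 2$ is the codimension of the face containing $\phi_k(0)$, such that the half-size images $W_k := \phi_k(\Delta^n_{p_k}(r_k/2))$ cover $\partial^2 P$ in $P$. Set $W_0 := P \setminus \bigcup_{k=1}^N W_k$; by construction $\overline{W_0}$ is compact and contained in $P \setminus \partial^2 P$, so the function $U$ of Theorem~\ref{Rellich type estimate} is bounded there by some constant $C_0$. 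Then $u \le C_0 \int_{\partial P} u\, d\sigma$ pointwise on $W_0$, giving
\[ \int_{W_0} u^{n/(n-1)}\, d\mu \le C_0' \Big(\int_{\partial P} u\, d\sigma\Big)^{n/(n-1)}. \]

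On each chart piece $W_k$, pulling Proposition~\ref{local Poincare type estimate} back through $\phi_k$ (and using that each $\partial_i \Delta^n_{p_k}(r_k)$ maps onto a portion of a facet of $P$ where $d\sigma$ is a positive bounded multiple of $d\bm{x}'$) yields
\[ \int_{W_k} u^{p_k/(p_k-1)}\, d\mu \le C_k \Big(\int_{\partial P} u\, d\sigma\Big)^{p_k/(p_k-1)}. \]
Since $2 \le p_k \le n$ we have $p_k/(p_k-1) \ge n/(n-1)$, and since $W_k$ has finite $d\mu$-measure, H\"older's inequality converts this to
\[ \int_{W_k} u^{n/(n-1)}\, d\mu \le C_k'' \Big(\int_{\partial P} u\, d\sigma\Big)^{n/(n-1)}. \]
Summing over $k = 0, 1, \ldots, N$ and taking the $(n-1)/n$-th root gives the critical bound, which also recovers the sub-critical range by another application of H\"older.

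The main obstacle is precisely the critical exponent: the Rellich-type estimate of Theorem~\ref{Rellich type estimate} falls short because $U$ need not lie in $L^{n/(n-1)}$, so one cannot avoid invoking the sharper Proposition~\ref{local Poincare type estimate} at vertices (where $p_k = n$ saturates the critical exponent). The simplicity of $P$ is essential, because it guarantees that every boundary point admits a chart of the form $\Delta^n_p$ with $p \ge 2$, so that Proposition~\ref{local Poincare type estimate} applies at every stratum of $\partial^2 P$; the patching of local bounds into a global one is then a routine finite-cover argument.
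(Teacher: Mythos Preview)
Your proposal is correct and follows essentially the same route as the paper: reduce to the critical exponent $q=n/(n-1)$, cover $P$ by finitely many half-size charts $\phi_k(\Delta^n_{p_k}(r_k/2))$ around $\partial^2 P$ together with a region away from $\partial^2 P$ on which $U$ is bounded, apply Proposition~\ref{local Poincare type estimate} on each chart, convert the exponent $p_k/(p_k-1)$ to $n/(n-1)$ via H\"older using finiteness of $d\mu$, and sum. The only cosmetic difference is that the paper takes the ``interior'' piece to be a sublevel set $\{U<C\}$ rather than the complement of the charts, which amounts to the same thing.
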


\begin{proof}
We may assume $u$ is lower semi-continuous. 
Since the measure $d\mu$ is finite, it suffices to show the claim for $q = \frac{n}{n-1}$. 
For each $x \in \partial^2 P$, we take an affine chart $\phi_x: \mathbb{R}^n \to M_{\mathbb{R}}$ as in the above proof and cover $P$ by open sets $V_x := \phi_x (\Delta^n_{p_x} (r_x/2))$ and $W = \{ x \in P \setminus \partial^2 P ~|~ U(x) < C \}$ for some $C > 0$. 
Take a finite subcover $\{ V_{x_1}, \ldots, V_{x_N}, W \}$ so that it still covers $P$. 
On $W$, we have 
\[ \Big{(} \int_W u^{\frac{n}{n-1}} d\mu \Big{)}^{\frac{n-1}{n}} \le C \mathrm{vol} (P)^{\frac{n-1}{n}}. \]
By Proposition \ref{local Poincare type estimate}, we have 
\begin{align*} 
\Big{(} \int_{V_{x_m}} u^{\frac{n}{n-1}} d\mu \Big{)}^{\frac{n-1}{n}} 
&= \Big{(} \int_{\Delta^n_{p_{x_m}} (r_{x_m}/2)} (u \circ \phi_{x_m})^{\frac{n}{n-1}} d\bm{x} \Big{)}^{\frac{n-1}{n}} 
\\
& \le \mathrm{vol} (\Delta^n_{p_{x_m}} (r_{x_m}/2))^{\frac{n-1}{n} - \frac{p_{x_m}-1}{p_{x_m}}} \Big{(} \int_{\Delta^n_{p_{x_m}} (r_{x_m}/2)} (u \circ \phi_{x_m})^{\frac{p_{x_m}}{p_{x_m}-1}} d\bm{x} \Big{)}^{\frac{p_{x_m}-1}{p_{x_m}}}
\\
&\le C_{x_m} \sum_{i=1}^{p_{x_m}} \int_{\partial_i \Delta^n_{p_{x_m}} (r_{x_m})} u \circ \phi_{x_m} d\bm{x}'
\\
&\le \frac{C_{x_m}}{\min \{ a_i (x_m) \}} \int_{\partial P} u d\sigma. 
\end{align*}
Here we put 
\[ C_{x_m} := (p_{x_m} (p_{x_m}-1))^{\frac{n-p_{x_m}}{p_{x_m}}} \Big{(} \frac{2}{r_{x_m}} \Big{)}^{\frac{n-p_{x_m}}{p_{x_m}}} \mathrm{vol} (\Delta^n_{p_{x_m}} (r_{x_m}/2))^{\frac{n-1}{n} - \frac{p_{x_m}-1}{p_{x_m}}}. \]
Take the sum for $m=1, \ldots, N$, we get the result. 
\end{proof}

\begin{rem}
By inductive argument, we obtain the following: 
Let $P = (P, d\mu)$ be an $n$-dimensional simple polytope. 
If we endow a flat measure $d\sigma_k$ on $\partial^k P = \bigcup \{ \text{ codimension $k$ faces } \}$, then for $q \in [1, \frac{n}{n-k}]$, there exists a constant $C_{P, d\sigma_k, q} > 0$ such that 
\[ \Big{(} \int_P u^q d\mu \Big{)}^{\frac{1}{q}} \le C_{P, d\sigma_k, q} \int_{\partial^k P} u d\sigma_k  \]
for every non-negative convex function $u: P \to [0, \infty]$. 
It would be interesting to find application of this fact to the higher regularity of minimizer of $F_{\mathcal{P}} (T, \bullet)$. 
\end{rem}

We use the following log Sobolev type estimate in the next section. 

\begin{cor}
\label{Entropy bound}
For $q \in (1, \frac{n}{n-1}]$, we have 
\[ \int_P u \log u d\mu \le \frac{q}{q-1} \log (C_{P, q} \int_{\partial P} u d\sigma) \]
for every non-negative convex function $u: P \to [0, \infty]$ with $\int_P u d\mu = 1$. 
\end{cor}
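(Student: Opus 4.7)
The plan is to combine Jensen's inequality with the Poincaré-type estimate of Theorem \ref{Poincare type estimate}. Since $u \, d\mu$ is a probability measure on $P$ by the hypothesis $\int_P u \, d\mu = 1$, we can apply Jensen to the concave function $\log$ against the integrand $u^{q-1}$.

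First I would note the elementary identity
\[ (q-1) \int_P u \log u \, d\mu = \int_P \log(u^{q-1}) \, u \, d\mu. \]
Applying Jensen's inequality (with respect to the probability measure $u \, d\mu$) to the concave function $\log$ yields
\[ \int_P \log(u^{q-1}) \, u \, d\mu \le \log \int_P u^{q-1} \cdot u \, d\mu = \log \int_P u^q \, d\mu. \]
Since $q > 1$, dividing by $q-1$ gives
\[ \int_P u \log u \, d\mu \le \frac{1}{q-1} \log \int_P u^q \, d\mu = \frac{q}{q-1} \log \left( \int_P u^q \, d\mu \right)^{1/q}. \]

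Next I would invoke Theorem \ref{Poincare type estimate}, which says $\left(\int_P u^q \, d\mu\right)^{1/q} \le C_{P,q} \int_{\partial P} u \, d\sigma$ for any non-negative convex function $u$ and any $q \in [1, \tfrac{n}{n-1}]$. Substituting this bound into the previous display and using monotonicity of $\log$ yields the desired estimate
\[ \int_P u \log u \, d\mu \le \frac{q}{q-1} \log \Big( C_{P,q} \int_{\partial P} u \, d\sigma \Big). \]

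There is essentially no obstacle here: both ingredients are in place, and the argument is a routine log-Sobolev-type packaging. The only minor point to mention is that the integrand $u \log u$ is bounded below by $-1/e$, so $\int_P u \log u \, d\mu$ is well-defined in $(-\infty, \infty]$; the right-hand side is finite precisely when $\int_{\partial P} u \, d\sigma < \infty$, and when it is infinite the inequality is trivial.
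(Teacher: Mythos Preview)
Your proof is correct and matches the paper's argument essentially line for line: Jensen's inequality for $\log$ against the probability measure $u\,d\mu$, followed by the Poincar\'e-type estimate of Theorem~\ref{Poincare type estimate}. The only addition is your remark on well-definedness, which is a harmless elaboration.
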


\begin{proof}
Since $u d\mu$ is a probability measure, we compute
\[ \int_P u \log u d\mu = \frac{1}{q-1} \int_P \log u^{q-1} ~u d\mu \le \frac{q}{q-1} \log \Big{(} \int_P u^q d\mu \Big{)}^{\frac{1}{q}} \]
by Jensen's inequality on $\log x$. 
The claim follows by applying the above theorem. 
\end{proof}

\section{Optimizers for non-archimedean $\mu$-entropy}

\subsection{Existence and Uniqueness of optimizers}

\subsubsection{Existence of minimizer of $F_{\mathcal{P}}$}

Here we show the existence part of main theorem. 
Let us firstly observe the lower semi-continuity of $F_{\mathcal{P}} (T, u)$. 

\begin{prop}
Suppose $u_i \in \M^{\exp, 1} (P)$ converges to $u_\infty \in \M^{\exp, 1} (P)$ in $L^1$-topology, then 
\begin{align}
\label{lower semi-continuity of internal energy}
U_{\mathcal{P}} (u_\infty) \le \liminf_{i \to \infty} U_{\mathcal{P}} (u_i). 
\end{align}
If moreover the $L^p$-norm of $u_i, u_\infty$ is bounded for some $p >1$, then we have
\begin{align}
S_{\mathcal{P}} (u_i) = \lim_{i \to \infty} S_{\mathcal{P}} (u_\infty).
\end{align}
\end{prop}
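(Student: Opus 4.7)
The plan is to treat the two parts separately, exploiting that every $u \in \M^{\exp,1}(P)$ is in particular a non-negative convex function (since $\log u$ convex and $x \mapsto e^x$ convex increasing imply $u$ convex). A preliminary observation I would record up front is that $L^1$-convergence of non-negative convex functions on $P$ implies locally uniform convergence on $P^\circ$: Proposition \ref{mean value estimate} turns the common $L^1$-bound $\int_P u_i\, d\mu = \int_P d\mu$ into a uniform $L^\infty$-bound on each compact $K \subset P^\circ$, and then standard local Lipschitz estimates for convex functions together with Arzel\`a--Ascoli upgrade $L^1$-convergence to uniform convergence on such compacts. In particular $u_i(p) \to u_\infty(p) < \infty$ for every $p \in P^\circ$.

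For (\ref{lower semi-continuity of internal energy}), I fix once and for all $p \in P^\circ$, and for $\epsilon \in (0,1)$ and $x_0 \in \partial P$ set $y(x_0,\epsilon) := (1-\epsilon) x_0 + \epsilon p$. By convexity along the segment $[x_0,p]$,
\[
u_i(y(x_0,\epsilon)) \le (1-\epsilon)\, u_i(x_0) + \epsilon\, u_i(p),
\]
which rearranges to $u_i(x_0) \ge (1-\epsilon)^{-1}\bigl(u_i(y(x_0,\epsilon)) - \epsilon\, u_i(p)\bigr)$. Integrating against $d\sigma$ on $\partial P$ and noting that $\{y(x_0,\epsilon) : x_0 \in \partial P\}$ is a continuous image of a compact set, hence a compact subset of $P^\circ$, the interior uniform convergence gives
\[
\liminf_{i\to\infty} \int_{\partial P} u_i\, d\sigma \ge \frac{1}{1-\epsilon}\int_{\partial P} u_\infty(y(\cdot,\epsilon))\, d\sigma - \frac{\epsilon\, u_\infty(p)}{1-\epsilon}\, \sigma(\partial P).
\]
Letting $\epsilon \to 0^+$, lower semi-continuity of $u_\infty$ at each $x_0 \in \partial P$ yields $\liminf_{\epsilon\to 0^+} u_\infty(y(x_0,\epsilon)) \ge u_\infty(x_0)$, and Fatou's lemma (applicable because $u_\infty \ge 0$) gives $\liminf_{\epsilon \to 0^+} \int_{\partial P} u_\infty(y(\cdot,\epsilon))\, d\sigma \ge \int_{\partial P} u_\infty\, d\sigma$, completing the bound.

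For (2), I use uniform integrability plus Vitali. The elementary inequality $|t\log t| \le C_\delta(t^{1+\delta}+1)$, valid for $t\ge 0$ and any $\delta > 0$, yields
\[
|u_i \log u_i|^{\alpha} \le C'_\delta\bigl(u_i^{\alpha(1+\delta)} + 1\bigr).
\]
Since $p > 1$, I choose $\delta > 0$ small and $\alpha := p/(1+\delta) > 1$, so that $\int_P |u_i\log u_i|^{\alpha}\, d\mu$ is uniformly bounded using $\|u_i\|_{L^p} \le C$; this implies $\{u_i\log u_i\}$ is uniformly integrable. Along any subsequence extracted so that $u_i \to u_\infty$ almost everywhere, continuity of $t\mapsto t\log t$ on $[0,\infty)$ yields $u_i\log u_i \to u_\infty \log u_\infty$ almost everywhere, and Vitali's convergence theorem produces $L^1$-convergence, hence convergence of the integrals. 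A standard subsequence-of-subsequence argument (every subsequence admits a further subsequence along which the integrals converge to the same limit) promotes this to convergence of the full sequence.

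The main obstacle is (\ref{lower semi-continuity of internal energy}): the boundary trace is not continuous under mere $L^1$-convergence in $P$, so the argument must exploit both the locally uniform interior convergence provided by convexity and the specific boundary behavior of lsc convex functions. The interior base point $p$ and the dilation $y(x_0, \epsilon)$ form the mechanism that transfers information from $P^\circ$, where the convex $L^1$-machinery is well behaved, to $\partial P$. Part (2) by contrast is essentially a direct Vitali argument once the $L^p$-bound is in hand.
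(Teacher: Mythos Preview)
Your proof is correct. For the lower semi-continuity of $U_{\mathcal{P}}$, your argument and the paper's are essentially the same: both use convexity along a segment from a fixed interior point $p$ to transfer the locally uniform interior convergence to the boundary, and both finish with Fatou. The paper first derives the pointwise inequality $u_\infty(x) \le \liminf_i u_i(x)$ for every $x \in \partial P$ and then integrates, while you integrate first and send $\epsilon \to 0$, but the mechanism is identical.

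For the continuity of $S_{\mathcal{P}}$, the paper takes a genuinely different route: it applies the mean value theorem to $t \mapsto t^2\log t^2$ to obtain a pointwise bound of the form
\[
|x\log x - y\log y| \le 2|\sqrt{x}-\sqrt{y}|\bigl(2\max\{e^{-1},\epsilon^{-1}\max(x,y)^{(1+\epsilon)/2}\} + \max(x,y)^{1/2}\bigr),
\]
and then Cauchy--Schwarz gives $\bigl|\int u_i\log u_i\,d\mu - \int u_\infty\log u_\infty\,d\mu\bigr| \le C\,\|\sqrt{u_i}-\sqrt{u_\infty}\|_{L^2}$, where $C$ depends only on the uniform $L^{1+\epsilon}$-bound; since $|\sqrt{a}-\sqrt{b}|^2 \le |a-b|$, this is in turn controlled by $\|u_i-u_\infty\|_{L^1}^{1/2}$. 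Your Vitali argument is the standard textbook approach and is arguably cleaner, while the paper's method has the advantage of producing an explicit modulus of continuity for $S_{\mathcal{P}}$ in terms of the $L^1$-distance. As a minor remark, your preliminary observation already gives $u_i \to u_\infty$ pointwise on $P^\circ$, hence almost everywhere on $P$, so the subsequence extraction in your Vitali step is unnecessary.
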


\begin{proof}
We firstly note the following fact: for a sequence $u_i$ of convex functions on $P$, if the restriction $u_i|_{P^\circ}$ pointwisely converges to a convex function $u^\circ$ on $P^\circ$, then for the lower semi-continuous extension $u$ on $P$ (see the proof of Corollary \ref{Rellich type compactness}), we have 
\begin{align} 
u (x) \le \liminf_{i \to \infty} u_i (x) 
\end{align}
for every point $x \in P$. 
This can be seen as follows. 
Take a point $p \in P^\circ$. 
For $x \in P$, we have 
\[ u_i ((1-t) p + t x) \le (1-t) u_i (p) + t u_i (x). \]
For $t \in [0,1)$, we have $(1-t) p + tx \in P^\circ$, so that we get 
\[ u ((1-t)p+tx) \le (1-t) u (p) + t \liminf_{i \to \infty} u_i (x) \]
by the pointwise convergence on $P^\circ$. 
As $u$ is lower semi-continuous, it is continuous on the segment $\{ (1-t) p +tx \}_{t \in [0,1]}$. 
Thus by taking the limit $t \to 1$, we get the claim. 

Now since $\int_P d\mu \cdot U_{\mathcal{P}} (u) = \int_{\partial P} u d\sigma$, the inequality (\ref{lower semi-continuity of internal energy}) is a consequence of Fatou's lemma. 
Only the pointwise convergence on $P^\circ$ is important for this. 

On the other hand, by mean value theorem on $t^2 \log t^2$, we compute
\begin{align*} 
|x \log x - y \log y| 
&= 2 |\sqrt{x} - \sqrt{y}| |2 \sqrt{z} \log \sqrt{z} + \sqrt{z}| 
\\
&\le 2 |\sqrt{x} - \sqrt{y}| (2 \max \{ e^{-1}, \epsilon^{-1} \sqrt{z}^{1+\epsilon}| \} + \sqrt{z})
\\
&\le 2 |\sqrt{x} - \sqrt{y}| (2 \max \{ e^{-1}, \epsilon^{-1} \sqrt{y}^{1+ \epsilon} \} + \sqrt{y})
\end{align*}
for $0 \le x \le y$ and $\epsilon > 0$, by taking suitable $z \in [x, y]$. 
It follows by Cauchy--Schwarz theorem that 
\begin{align*}
   \Big{|} \int_P u_i \log u_i d\mu 
   &- \int_P u_\infty \log u_\infty d\mu \Big{|}
   \\
   &\le 2 \int_P |\sqrt{u_i} - \sqrt{u_\infty}| 
   \\
   &\qquad \cdot (2 \max \{ e^{-1}, \epsilon^{-1} \sqrt{u_i}^{1+\epsilon}, \epsilon^{-1} \sqrt{u_\infty}^{1+\epsilon} \} + \max \{ \sqrt{u_i}, \sqrt{u_\infty} \}) d\mu
   \\
   &\le \| \sqrt{u_i} - \sqrt{u_\infty} \|_{L^2} \cdot C  
\end{align*}
with a constant $C$ which depends only on $\epsilon$ and a uniform bound on $L^{1+ \epsilon}$-norm of $u_i, u_\infty$. 
This proves the claim for $\int_P d\mu \cdot S_{\mathcal{P}} (u) = -\int_P u\log u d\mu$. 
\end{proof}

We prepare the following uniform estimate. 

\begin{lem}
For any $T, C \in \mathbb{R}$, there exists a constant $\tilde{C} > 0$ depending only on $T, C, n, C_{P, \frac{n}{n-1}}$ such that the following holds: if a non-negative convex function $u: P \to [0, \infty]$ with $\int_P u d\mu = 1$ satisfies 
\[ \int_{\partial P} u d\sigma + T \int_P u \log u d\mu \le C, \]
then we have 
\[ \int_{\partial P} u d\sigma \le \tilde{C}. \]
\end{lem}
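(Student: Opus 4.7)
The plan is to combine the given hypothesis with two opposite bounds on the entropy integral $B := \int_P u \log u\, d\mu$, one coming from Jensen's inequality and the other from Corollary \ref{Entropy bound}, and then analyze the cases $T \ge 0$ and $T < 0$ separately. Set $A := \int_{\partial P} u\, d\sigma$ and $V := \int_P d\mu$, so the hypothesis reads $A + TB \le C$.

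First I would record the lower bound on $B$. Since $\int_P u\, d\mu = 1$, the measure $u\, d\mu$ is a probability measure, and the Kullback--Leibler divergence of $u\, d\mu$ from the normalized reference $V^{-1} d\mu$ equals $B + \log V$. Jensen's inequality (or Gibbs) gives $B + \log V \ge 0$, hence $B \ge -\log V$. Next, from Corollary \ref{Entropy bound} applied with $q = \frac{n}{n-1}$, so that $\frac{q}{q-1} = n$, I get the upper bound
\[
B \le n \log\!\bigl( C_{P,\frac{n}{n-1}}\, A \bigr).
\]

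Now I would split into cases. If $T \ge 0$, then using the lower bound on $B$,
\[
A \le C - TB \le C + T \log V,
\]
so $\tilde{C} := C + T\log V$ works. If $T < 0$, write $|T| = -T$ and use the \emph{upper} bound on $B$:
\[
A \le C - TB = C + |T| B \le C + n|T|\, \log\!\bigl( C_{P,\frac{n}{n-1}}\, A \bigr) = C' + n|T|\, \log A,
\]
where $C' := C + n|T| \log C_{P,\frac{n}{n-1}}$ depends only on $T, C, n, C_{P,\frac{n}{n-1}}$. This inequality can be rewritten as
\[
A - n|T| \log A \le C'.
\]
The function $t \mapsto t - n|T| \log t$ tends to $+\infty$ as $t \to \infty$, so it has a finite pre-image of $(-\infty, C']$, giving the required uniform upper bound $A \le \tilde{C}$ depending only on $T, C, n, C_{P,\frac{n}{n-1}}$.

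The only subtle point is to make sure the hypothesis $\int_P u\, d\mu = 1$ (giving the probability-measure structure used for Jensen) is consistent throughout, and that Corollary \ref{Entropy bound} applies without modification --- the corollary is stated exactly under this normalization, so no rescaling is needed. The main work is simply to combine the two bounds with the correct signs; there is no serious obstacle, and the resulting constant $\tilde{C}$ depends transparently on the listed parameters.
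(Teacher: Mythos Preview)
Your proof is correct and follows essentially the same approach as the paper: Jensen for the case $T \ge 0$, and Corollary~\ref{Entropy bound} combined with the growth of $x - c\log x$ for the case $T < 0$. The only cosmetic difference is that you record the Jensen lower bound as $B \ge -\log V$ rather than the paper's $B \ge 0$, so your $T \ge 0$ constant picks up an extra $T\log V$ term not among the dependencies listed in the statement; this is harmless for the argument.
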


\begin{proof}
When $T \ge 0$, we can choose $\tilde{C} = C$ since we have $\int_P u \log u d\mu \ge 0$ by Jensen's inequality on $x \log x$. 

Suppose $T < 0$. 
By Corollary \ref{Entropy bound}, we get  
\[ \int_{\partial P} u d\sigma \le - n T \log (C_{P, \frac{n}{n-1}} \int_{\partial P} u d\sigma) + C. \]
We get the result for the constant
\[ \tilde{C} := \sup \{ x \in [0, \infty) ~|~ x \le -nT \log (C_{P, \frac{n}{n-1}} x) + C \} < \infty. \]
\end{proof}

Now we prove our main existence theorem. 

\begin{thm}
\label{Main theorem on existence}
Let $\mathcal{P} = (P, d\mu, d\sigma)$ be an $n$-dimensional system. 
For every $T \in \mathbb{R}$, there exists a convex function $u \in \M^{\exp, \frac{n}{n-1}} (P)$ which minimizes $F_{\mathcal{P}} (T, \bullet)$ on $\M^{\exp, 1} (P)$. 
\end{thm}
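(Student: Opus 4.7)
The plan is to run the direct method of the calculus of variations. First, observe that $F_{\mathcal{P}}(T, \bullet)$ is not identically $+\infty$, since the constant state $u \equiv 1$ lies in $\M^{\exp, p}(P)$ for all $p$ and has $S_{\mathcal{P}}(1) = 0$, $U_{\mathcal{P}}(1) = \int_{\partial P} d\sigma / \int_P d\mu < \infty$, hence $F_{\mathcal{P}}(T, 1) < \infty$. Thus the infimum is at most $F_{\mathcal{P}}(T, 1)$, and we can pick a minimizing sequence $\{u_i\} \subset \M^{\exp, 1}(P)$ with $F_{\mathcal{P}}(T, u_i) \le F_{\mathcal{P}}(T, 1)$ for all $i$. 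By the formula $F_{\mathcal{P}}(T, u) = \frac{1}{\int_P d\mu}\bigl(\int_{\partial P} u\, d\sigma + T \int_P u \log u\, d\mu\bigr)$, this upper bound is exactly the hypothesis of the lemma preceding Theorem~\ref{Main theorem on existence}, which yields a uniform estimate $\int_{\partial P} u_i\, d\sigma \le \tilde{C}$.

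With that boundary bound in hand, Theorem~\ref{Poincare type estimate} (Poincar\'e type) gives the uniform $L^{\frac{n}{n-1}}$ bound $\|u_i\|_{L^{n/(n-1)}} \le C_{P, n/(n-1)} \tilde{C}$. Since each $u_i$ is lsc log convex, it is in particular a non-negative lsc convex function on $P$, so Corollary~\ref{Rellich type compactness} applies: after passing to a subsequence, $u_i \to u$ in $L^p$ for every $p \in [1, \tfrac{n}{n-1})$, uniformly on compact subsets of $P^\circ$, with $u$ a non-negative lsc convex function. The uniform $L^{n/(n-1)}$ bound together with Fatou's lemma (applied to a further subsequence converging a.e.) gives $u \in L^{\frac{n}{n-1}}(P)$.

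Next I verify $u \in \M^{\exp, \frac{n}{n-1}}(P)$. The normalization $\int_P u\, d\mu = \int_P d\mu$ passes to the limit because $L^1$ convergence preserves the total integral. Log convexity survives in the limit as follows: pass to a subsequence with a.e. convergence $u_i \to u$; then $\log u_i \to \log u$ a.e. on the set $\{u > 0\}$, and since log convex functions are in particular positive where they are finite, the pointwise a.e.\ limit of the convex functions $\log u_i$ on $P^\circ$ is convex there, making $u$ log convex. Combined with lower semi-continuity (which we can impose by passing to the lsc envelope without changing values on $P^\circ$), this places $u \in \M^{\exp, \frac{n}{n-1}}(P)$.

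It remains to show $F_{\mathcal{P}}(T, u) \le \liminf F_{\mathcal{P}}(T, u_i)$, after which $u$ is a minimizer. Here I invoke the first proposition of this subsection: the uniform $L^{n/(n-1)}$ bound gives uniform $L^{1+\epsilon}$ bounds on $u_i$ and $u$, so $S_{\mathcal{P}}(u_i) \to S_{\mathcal{P}}(u)$, while Fatou on $\partial P$ (combined with the pointwise inequality $u(x) \le \liminf u_i(x)$ at boundary points, deduced from the interior pointwise convergence and lower semi-continuity as in the proof of that proposition) gives $U_{\mathcal{P}}(u) \le \liminf U_{\mathcal{P}}(u_i)$. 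The main obstacle I anticipate is precisely this last step for $T < 0$: one must rule out that the entropy term $-T S_{\mathcal{P}}(u_i)$ degenerates (e.g. $S_{\mathcal{P}}(u_i) \to +\infty$ with $U_{\mathcal{P}}(u_i)$ not compensating) along the minimizing sequence, but this is exactly what the uniform boundary integral bound --- coupled with the log Sobolev estimate of Corollary~\ref{Entropy bound} --- controls, so the pieces fit together cleanly.
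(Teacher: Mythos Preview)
Your proof is correct and follows essentially the same route as the paper: uniform boundary bound from the preceding lemma, Rellich-type compactness to extract a convergent subsequence, and lower semi-continuity of $F_{\mathcal{P}}(T,\bullet)$ in $L^{1+\epsilon}$ via the proposition at the start of the subsection. The only cosmetic difference is the order in which you obtain the $L^{\frac{n}{n-1}}$ regularity of the limit: you apply the Poincar\'e-type estimate to the sequence and pass to the limit by Fatou, whereas the paper first uses lower semi-continuity of $U_{\mathcal{P}}$ to bound $\int_{\partial P} u\, d\sigma$ for the limit and then applies the Poincar\'e-type estimate directly to $u$; both work. One small point you could make more explicit is that $u>0$ on $P^\circ$ (needed for log convexity of the limit): this follows because the uniform convergence on compacta of $P^\circ$ given by Corollary~\ref{Rellich type compactness} preserves the log-convexity inequality pointwise, and a log-convex limit vanishing at a single interior point would vanish identically, contradicting $\int_P u\, d\mu = \int_P d\mu$.
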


\begin{proof}
We recall 
\[ \int_P d\mu \cdot F_{\mathcal{P}} (T, u) = \int_{\partial P} u d\sigma + T \int_P u \log u d\mu. \]
By the above lemma, for any constant $C$, $\int_{\partial P} e^q d\sigma$ is uniformly bounded on the subset 
\[ \{ u \in \M^{\exp, 1} (P) ~|~ F_{\mathcal{P}} (T, u) \le C \}. \]

Take a sequence $u_i \in \M^{\exp, 1} (P)$ so that $F_{\mathcal{P}} (T, u_i) \searrow \inf_u F_{\mathcal{P}} (T, u)$. 
Thanks to the above uniform bound and the compactness in  Corollary \ref{Rellich type compactness}, after taking a subsequence we may assume $u_i$ converges in $L^p$-topology ($p \in [1, \frac{n}{n-1})$) to a lsc log convex function $u_\infty$.  (Note log concavity is preserved by pointwise convergence. ) 
By the $L^1$-convergence, we have $\int_P u_\infty d\mu = \lim \int_P u_i d\mu = \int_P d\mu$, so that $u_\infty > 0$ and hence $u_\infty \in \M^{\exp, 1} (P)$. 
By the lower semi-continuity of $F_{\mathcal{P}} (T, \bullet)$ with respect to $L^{1+\epsilon}$-topology, we get $F_{\mathcal{P}} (T, u_\infty) = \inf_u F_{\mathcal{P}} (T, u)$, which shows the existence of minimizer. 
Finally, since $\int_{\partial P} u_\infty d\sigma \le \liminf_i \int_{\partial P} u_i d\sigma < \infty$ by the above uniform bound, we conclude $u_\infty \in \M^{\exp, \frac{n}{n-1}} (P)$ by Theorem \ref{Poincare type estimate}.  
\end{proof}

\subsubsection{Uniqueness for $T \ge 0$}

While $\NAmu^\lambda$ has no convexity along the linear path $(1-t) q_0 + t q_1$ in $\E^{\exp, 1} (P)$, $F_{\mathcal{P}}$ has convexity along the linear path $(1-t) u_0 + t u_1$ in $\M^{\exp, 1} (P)$, which corresponds to the `log linear exp' path $\log ((1-t) e^{q_0} + t e^{q_1})$ in $\E^{\exp, 1} (P)$. 

\begin{prop}
For $u_0, u_1 \in \M^{\exp, 1} (P)$ and $t \in [0,1]$, we put 
\[ u_t := (1-t) u_0 + t u_1. \]
Then we have $u_t \in \M^{\exp, 1} (P)$ and 
\begin{itemize}
\item For $T = 0$, $F_{\mathcal{P}} (0, u_t) = U_{\mathcal{P}} (u_t)$ is affine on $t$. 

\item For $T > 0$, $F_{\mathcal{P}} (T, u_t)$ is strictly convex when $u_0 \neq u_1$. 

\item For $T < 0$, $F_{\mathcal{P}} (T, u_t)$ is strictly concave when $u_0 \neq u_1$. 
\end{itemize}
\end{prop}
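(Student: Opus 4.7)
The plan is to decompose $F_{\mathcal{P}}(T, u_t) = U_{\mathcal{P}}(u_t) - T S_{\mathcal{P}}(u_t)$, exploit linearity of $U_{\mathcal{P}}$ in $u$, and reduce the strict convexity/concavity assertion to the strict convexity of $\varphi(x) = x\log x$ on $[0,\infty)$ (with $0\log 0 := 0$). First I would verify $u_t \in \M^{\exp,1}(P)$: the normalization $\int_P u_t\, d\mu = \int_P d\mu$ is immediate, positivity is obvious, and lower semi-continuity is preserved under positive linear combinations.

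The only real technical point, and the main obstacle, is checking that $u_t$ is log convex: log convexity is not in general stable under addition, but for a sum it is a classical consequence of H\"older's inequality. I would verify it directly: given $x_0, x_1 \in P$, $s \in [0,1]$, and $y = (1-s)x_0 + sx_1$, log convexity of $u_0, u_1$ gives $u_i(y) \le u_i(x_0)^{1-s} u_i(x_1)^s$, and then the inequality
\[
A^{1-s}B^s + C^{1-s}D^s \le (A+C)^{1-s}(B+D)^s
\]
(H\"older with conjugate exponents $1/(1-s), 1/s$), applied to $A = (1-t)u_0(x_0)$, $B = (1-t)u_0(x_1)$, $C = t u_1(x_0)$, $D = t u_1(x_1)$, yields $u_t(y) \le u_t(x_0)^{1-s} u_t(x_1)^s$.

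For the main claim, since $U_{\mathcal{P}}(u) = \int_{\partial P} u\, d\sigma / \int_P d\mu$ is linear in $u$, the map $t \mapsto U_{\mathcal{P}}(u_t)$ is affine, settling the $T=0$ case. For $T \neq 0$, the strict convexity of $\varphi$ gives the pointwise estimate
\[
u_t(x)\log u_t(x) \le (1-t)\, u_0(x)\log u_0(x) + t\, u_1(x)\log u_1(x),
\]
with strict inequality exactly where $u_0(x) \neq u_1(x)$. Integrating against $d\mu$ yields concavity of $S_{\mathcal{P}}$ along $u_t$, strict whenever $\{u_0 \neq u_1\}$ has positive $d\mu$-measure. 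By the fact recalled earlier in the paper that two lsc convex functions agreeing $d\mu$-a.e.\ agree pointwise (applied to $\log u_0, \log u_1$), this positive-measure condition is equivalent to $u_0 \neq u_1$ as functions. Combining the affinity of $U_{\mathcal{P}}$ with the strict concavity of $S_{\mathcal{P}}$ gives strict convexity of $F_{\mathcal{P}}(T, \bullet)$ along $u_t$ for $T>0$ and strict concavity for $T<0$.

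One minor delicacy is that $F_{\mathcal{P}}$ is defined with value $+\infty$ when $S_{\mathcal{P}}(u) = -\infty$, so the inequalities above should be read in the extended sense; whenever the relevant integrals are finite the argument applies verbatim, and the remaining endpoint cases are either vacuous or preserved by the standard $+\infty$ conventions.
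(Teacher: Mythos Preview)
Your proof is correct and follows essentially the same approach as the paper: log convexity of $u_t$ via H\"older (the paper phrases it as H\"older on the two-point probability space $\{0,1\}$ with weights $1-t,t$, which unwinds to exactly your inequality $A^{1-s}B^s + C^{1-s}D^s \le (A+C)^{1-s}(B+D)^s$), affinity of $U_{\mathcal{P}}$, and strict concavity of $S_{\mathcal{P}}$ from strict convexity of $x\log x$. Your explicit handling of the a.e.-vs.-everywhere equality and the $+\infty$ conventions is slightly more careful than the paper's treatment, but the underlying argument is the same.
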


\begin{proof}
To see the log convexity of $u_t$, we consider the space $\Omega = \{ 0, 1 \}$ with the probability measure 
\[ p_t (i) = \begin{cases} 1-t & i=0 \\ t & i=1 \end{cases} \]
and we compute 
\begin{align*} 
u_t ((1-\theta) x_0 + \theta x_1) 
&= \int_\Omega u_i ((1-\theta) x_0+ \theta x_1) dp_t (i) 
\\
&\le \int_\Omega u_i^{1-\theta} (x_0) u_i^\theta (x_1) dp_t (i) 
\\
&\le \Big{(} \int_\Omega u_i (x_0) dp_t (i) \Big{)}^{1-\theta} \Big{(} \int_\Omega u_i (x_1) d p_t (i) \Big{)}^\theta
= u_t^{1-\theta} (x_0) u_t^\theta (x_1)
\end{align*}
by the H\"older inequality, for every $x_0, x_1 \in P$ and $\theta \in [0,1]$. 

Since $U_{\mathcal{P}} (u) = \int_{\partial P} u d\sigma$, we obviously have 
\begin{align}
\label{Internal energy is affine}
U_{\mathcal{P}} (u_t) = (1-t) U_{\mathcal{P}} (u_0) + t U_{\mathcal{P}} (u_1). 
\end{align}
On the other hand, since $S_{\mathcal{P}} (u) = - \int_P u \log u d\mu$, the strict convexity of $x \log x$ implies 
\begin{align} 
\label{Entropy is strictly concave}
S_{\mathcal{P}} (u_t) 
&\ge (1-t) S_{\mathcal{P}} (u_0) + t S_{\mathcal{P}} (u_1)
\end{align}
with the equality iff $u_0 = u_1$. 
This proves the claim. 
\end{proof}

Now we obtain the following uniqueness. 

\begin{thm}
\label{Main theorem on uniqueness}
For every $T > 0$, there exists a unique $u_T^{\mathrm{can}} \in \M^{\exp, \frac{n}{n-1}} (P)$ which minimizes $F_{\mathcal{P}} (T, \bullet)$, while for $T =0$, there exists a unique $u_0^{\mathrm{can}} \in \M^{\exp, \frac{n}{n-1}} (P)$ which satisfies the following 
\begin{itemize}
\item $u_0^{\mathrm{can}}$ minimizes $F_{\mathcal{P}} (0, \bullet) = U_{\mathcal{P}} (\bullet)$ and 

\item $S_{\mathcal{P}} (u_0^{\mathrm{can}}) = \max \{ S_{\mathcal{P}} (u) ~|~ U_{\mathcal{P}} (u) = \min U_{\mathcal{P}} (\bullet) \}$. 
\end{itemize}
\end{thm}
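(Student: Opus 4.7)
The plan is to combine the existence result of Theorem~\ref{Main theorem on existence} with the strict convexity/concavity structure established in the preceding proposition, together with, for the $T=0$ case, a second compactness argument on the affine subset of $U_{\mathcal{P}}$-minimizers.

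For $T>0$, existence of a minimizer in $\M^{\exp,\frac{n}{n-1}}(P)$ is already furnished by Theorem~\ref{Main theorem on existence}, so only uniqueness remains. I would argue by contradiction: if $u_0 \neq u_1 \in \M^{\exp,1}(P)$ were two distinct minimizers, then by the preceding proposition the midpoint $u_{1/2} := \tfrac{1}{2}(u_0+u_1)$ still lies in $\M^{\exp,1}(P)$ and satisfies
\[ F_{\mathcal{P}}(T,u_{1/2}) < \tfrac{1}{2} F_{\mathcal{P}}(T,u_0) + \tfrac{1}{2} F_{\mathcal{P}}(T,u_1) = \min F_{\mathcal{P}}(T,\bullet) \]
by strict convexity, contradicting minimality.

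For $T=0$, I would first note that~(\ref{Internal energy is affine}) makes $U_{\mathcal{P}}$ affine along linear interpolations in $\M^{\exp,1}(P)$, so the set
\[ \mathcal{M}_0 := \{ u \in \M^{\exp,1}(P) ~|~ U_{\mathcal{P}}(u) = \min U_{\mathcal{P}}(\bullet) \} \]
of minimizers of $F_{\mathcal{P}}(0,\bullet) = U_{\mathcal{P}}(\bullet)$ is convex; the strict concavity asserted in~(\ref{Entropy is strictly concave}) then forces a maximizer of $S_{\mathcal{P}}$ on $\mathcal{M}_0$ to be unique if one exists. To produce one, I take a maximizing sequence $u_i \in \mathcal{M}_0$ with $S_{\mathcal{P}}(u_i) \to \sup_{\mathcal{M}_0} S_{\mathcal{P}}$. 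The boundary integrals $\int_{\partial P} u_i\, d\sigma = \int_P d\mu \cdot \min U_{\mathcal{P}}$ are identically constant, so Corollary~\ref{Rellich type compactness} extracts a subsequence converging in $L^p$ for every $p \in [1, \tfrac{n}{n-1})$ to a lsc convex limit $u_\infty$. The same checks used in the proof of Theorem~\ref{Main theorem on existence}---pointwise stability of log convexity on $P^\circ$, $L^1$-preservation of the normalization $\int_P u\, d\mu = \int_P d\mu$, lower semi-continuity of $U_{\mathcal{P}}$, and the Poincar\'e bound of Theorem~\ref{Poincare type estimate}---then show $u_\infty \in \mathcal{M}_0 \cap \M^{\exp,\frac{n}{n-1}}(P)$.

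The main obstacle is to conclude $S_{\mathcal{P}}(u_\infty) = \sup_{\mathcal{M}_0} S_{\mathcal{P}}$, since $S_{\mathcal{P}}$ is not obviously continuous under mere $L^1$-convergence. Here I would invoke the continuity-of-entropy clause of the earlier proposition, which requires a uniform $L^{1+\epsilon}$-bound on the sequence $\{u_i, u_\infty\}$; this bound is exactly what Theorem~\ref{Poincare type estimate} supplies from the uniform bound on boundary integrals. With that in hand the entropy passes to the limit, proving the supremum is attained by $u_\infty$, and the strict concavity already noted gives uniqueness. No single step is deep: the work is in assembling the Rellich compactness, the Poincar\'e bound, and the affine/strictly-concave split of $F_{\mathcal{P}}(0,\bullet) = U_{\mathcal{P}} - 0 \cdot S_{\mathcal{P}}$ in the correct order.
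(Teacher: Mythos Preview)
Your proposal is correct and follows essentially the same route as the paper: strict convexity via the midpoint for $T>0$, and for $T=0$ the affine/strictly-concave split together with a second compactness argument on the set of $U_{\mathcal{P}}$-minimizers, using the Rellich-type compactness and the $L^{1+\epsilon}$-continuity of $S_{\mathcal{P}}$ to pass the entropy to the limit. The paper phrases the $T=0$ existence step slightly more tersely---it asserts directly that $\mathcal{M}_0$ is compact in $L^{1+\epsilon}$-topology via Corollary~\ref{Rellich type compactness} plus lower semi-continuity of $U_{\mathcal{P}}$---but the underlying mechanism is exactly what you spell out.
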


\begin{proof}
For $T > 0$, if we have two minimizers $u_0 \neq u_1 \in \M^{\exp, 1} (P)$ of $F_{\mathcal{P}} (T, \bullet)$, we get $F_{\mathcal{P}} (T, \frac{1}{2} u_0 + \frac{1}{2} u_1) < F_{\mathcal{P}} (T, u_0) = \min F_{\mathcal{P}} (T, \bullet)$ by the strict convexity, which contradicts to the assumption on $u_i$. 

Similarly, for $T = 0$, if $u_0 \neq u_1$ satisfy the above two conditions, then $\frac{1}{2} u_0 + \frac{1}{2} u_1$ satisfy the first condition by affinity of $U_{\mathcal{P}}$, while we have $S_{\mathcal{P}} (\frac{1}{2} u_0 + \frac{1}{2} u_1) > S_{\mathcal{P}} (u_0)$, which is a contradiction. 
The existence of $u_0^{\mathrm{can}}$ satisfying the two conditions is another application of our compactness: by Corollary \ref{Rellich type compactness} and the lower semi-continuity of $U_{\mathcal{P}}$, the set 
\[ \{ u \in \M^{\exp, 1} (P) ~|~ U_{\mathcal{P}} (u) = \min U_{\mathcal{P}} \} \]
is compact in $L^{1+\epsilon}$-topology, hence we can find a minimizer $u_0^{\mathrm{can}}$ of $U_{\mathcal{P}}$ which maximizes $S_{\mathcal{P}}$ among all minimizers of $U_{\mathcal{P}}$ thanks to the continuity of $S_{\mathcal{P}}$ with respect to $L^{1+\epsilon}$-topology. 
\end{proof}

Let us adopt terminologies from physics for later discussion. 

\begin{defin}[$\mu$-canonical distribution, ground states, optimizer]
We call the above $u_T^{\mathrm{can}}$ the \textit{$\mu$-canonical distribution} of temperature $T \ge 0$. 
For $T = 0$, we call minimizers of $F_{\mathcal{P}} (0, \bullet) = U_{\mathcal{P}} (\bullet)$ \textit{ground states}. 
We also call $\log u_T^{\mathrm{can}} + \text{const.} \in \E^{\exp, 1} (P)$ the \textit{optimizer} for $\NAmu^{-2\pi T}$. 
\end{defin}

The following is a natural question. 

\begin{quest}
Are all ground states are $\mu$-canonical?
\end{quest}

\subsubsection{Family over $T \in [0, \infty]$}

Let $u_T^{\mathrm{can}}$ be the $\mu$-canonical distribution for $T \ge 0$ as in Theorem \ref{Main theorem on uniqueness}. 
We firstly observe the following generality. 

\begin{prop}
Let $\mathcal{P}$ be a general system. 
The function $F_{\mathcal{P}} (T, u_T^{\mathrm{can}})$ is increasing and concave, and the functions $U_{\mathcal{P}} (u_T^{\mathrm{can}}), S_{\mathcal{P}} (u_T^{\mathrm{can}})$ are increasing on $T \ge 0$. 
Moreover, we have 
\[ \lim_{T \to \infty} S_{\mathcal{P}} (u_T^{\mathrm{can}})= 0. \]
\end{prop}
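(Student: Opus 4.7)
The plan is to exploit the envelope structure $f(T) := F_{\mathcal{P}}(T, u_T^{\mathrm{can}}) = \inf_u F_{\mathcal{P}}(T, u)$. For each fixed $u \in \M^{\exp, 1}(P)$ with $S_{\mathcal{P}}(u) > -\infty$, the map $T \mapsto F_{\mathcal{P}}(T, u) = U_{\mathcal{P}}(u) - T S_{\mathcal{P}}(u)$ is affine in $T$ with non-negative slope $-S_{\mathcal{P}}(u) \ge 0$, since Jensen's inequality applied to $x \log x$ with the probability measure $d\mu / \int_P d\mu$ forces $S_{\mathcal{P}}(u) \le 0$; if $S_{\mathcal{P}}(u) = -\infty$, the function is identically $+\infty$. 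Thus $f$ is an infimum of a family of affine, non-decreasing functions, hence concave and non-decreasing on $[0, \infty)$.

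For the monotonicity of $S_{\mathcal{P}}(u_T^{\mathrm{can}})$ and $U_{\mathcal{P}}(u_T^{\mathrm{can}})$, I use the standard exchange inequality. Let $0 \le T_1 < T_2$ and write $u_i := u_{T_i}^{\mathrm{can}}$. Both $U_{\mathcal{P}}(u_i)$ and $S_{\mathcal{P}}(u_i)$ are finite, because the minimal value $f(T_i)$ is bounded above by $F_{\mathcal{P}}(T_i, 1)$ which is finite. From $F_{\mathcal{P}}(T_2, u_2) \le F_{\mathcal{P}}(T_2, u_1)$ I obtain
$$
U_{\mathcal{P}}(u_2) - U_{\mathcal{P}}(u_1) \le T_2 \bigl(S_{\mathcal{P}}(u_2) - S_{\mathcal{P}}(u_1)\bigr).
$$
For $T_1 > 0$, the symmetric inequality $F_{\mathcal{P}}(T_1, u_1) \le F_{\mathcal{P}}(T_1, u_2)$ combined with the previous one gives $(T_2 - T_1)(S_{\mathcal{P}}(u_2) - S_{\mathcal{P}}(u_1)) \ge 0$, so $S_{\mathcal{P}}(u_2) \ge S_{\mathcal{P}}(u_1)$, and plugging this back yields $U_{\mathcal{P}}(u_2) \ge U_{\mathcal{P}}(u_1)$. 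For $T_1 = 0$, I instead use that $u_0^{\mathrm{can}}$ minimizes $U_{\mathcal{P}}$ (so $U_{\mathcal{P}}(u_0) \le U_{\mathcal{P}}(u_2)$ directly); combined with the displayed inequality this forces $S_{\mathcal{P}}(u_2) \ge S_{\mathcal{P}}(u_0)$ since $T_2 > 0$, and yields $U_{\mathcal{P}}(u_2) \ge U_{\mathcal{P}}(u_0)$. The conditional uniqueness of $u_0^{\mathrm{can}}$ (maximizing $S_{\mathcal{P}}$ among ground states) is not needed for monotonicity; this is the only subtlety at the boundary $T = 0$.

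Finally, for the limit I use the constant function $u \equiv 1 \in \bigcap_p \M^{\exp, p}(P)$ as a test state. Direct computation gives $S_{\mathcal{P}}(1) = 0$ and $U_{\mathcal{P}}(1) = \int_{\partial P} d\sigma / \int_P d\mu =: E_0 < \infty$. Minimality of $u_T^{\mathrm{can}}$ then yields
$$
U_{\mathcal{P}}(u_T^{\mathrm{can}}) - T S_{\mathcal{P}}(u_T^{\mathrm{can}}) = F_{\mathcal{P}}(T, u_T^{\mathrm{can}}) \le F_{\mathcal{P}}(T, 1) = E_0.
$$
Since $u_T^{\mathrm{can}} > 0$ and $d\sigma$ is a non-negative measure, $U_{\mathcal{P}}(u_T^{\mathrm{can}}) \ge 0$, hence $-S_{\mathcal{P}}(u_T^{\mathrm{can}}) \le E_0 / T$ for $T > 0$. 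Together with $S_{\mathcal{P}}(u_T^{\mathrm{can}}) \le 0$ this forces $\lim_{T \to \infty} S_{\mathcal{P}}(u_T^{\mathrm{can}}) = 0$. There is no serious obstacle in this proposition; the only care required is distinguishing the $T_1 = 0$ case, which is handled by appealing to the defining property of a ground state rather than the exchange inequality.
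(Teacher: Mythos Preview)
Your proof is correct and follows essentially the same route as the paper: the envelope $\inf_u F_{\mathcal{P}}(T,u)$ is an infimum of non-decreasing affine functions, the monotonicity of $S_{\mathcal{P}}(u_T^{\mathrm{can}})$ and $U_{\mathcal{P}}(u_T^{\mathrm{can}})$ comes from the same two-sided exchange inequality, and the limit comes from comparing with the test state $1_P$. Two minor remarks: your separate treatment of $T_1=0$ is unnecessary, since the symmetric inequality $F_{\mathcal{P}}(0,u_0^{\mathrm{can}})\le F_{\mathcal{P}}(0,u_2)$ is just the ground-state property and the combined exchange inequality $(T_2-T_1)(S_{\mathcal{P}}(u_2)-S_{\mathcal{P}}(u_1))\ge 0$ goes through uniformly; and for the limit the paper uses the slightly sharper lower bound $U_{\mathcal{P}}(u_T^{\mathrm{can}})\ge U_{\mathcal{P}}(u_0^{\mathrm{can}})$ rather than $\ge 0$, but your version is perfectly adequate.
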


\begin{proof}
The infimum of concave functions is concave, so that \[ F_{\mathcal{P}} (T, u_T^{\mathrm{can}}) = \min_u \{ U_{\mathcal{P}} (u) - T S_{\mathcal{P}} (u) \} \] 
is concave. 
Since $- S_{\mathcal{P}} (u) \ge 0$, each $U_{\mathcal{P}} (u) - T S_{\mathcal{P}} (u)$ is increasing, so that $F_{\mathcal{P}} (T, u_T^{\mathrm{can}})$ is increasing. 

To see that $S_{\mathcal{P}} (u_T^{\mathrm{can}})$ is increasing, we compute
\begin{align*}
F_{\mathcal{P}} (T, u_T^{\mathrm{can}}) \le F_{\mathcal{P}} (T, u_{T'}^{\mathrm{can}}) 
&= F_{\mathcal{P}} (T', u_{T'}^{\mathrm{can}}) + (T' - T) S_{\mathcal{P}} (u_{T'}^{\mathrm{can}})
\\
&\le F_{\mathcal{P}} (T', u_T^{\mathrm{can}}) + (T' - T) S_{\mathcal{P}} (u_{T'}^{\mathrm{can}})
\\
&= F_{\mathcal{P}} (T, u_T^{\mathrm{can}}) + (T' -T) (S_{\mathcal{P}} (u_{T'}^{\mathrm{can}}) - S_{\mathcal{P}} (u_T^{\mathrm{can}})), 
\end{align*}
which shows 
\[ (T' -T) (S_{\mathcal{P}} (u_{T'}^{\mathrm{can}}) - S_{\mathcal{P}} (u_T^{\mathrm{can}})) \ge 0. \]

For $T' \ge T \ge 0$, we compute
\begin{align*}
    U_{\mathcal{P}} (u_T^{\mathrm{can}}) 
    &= F_{\mathcal{P}} (T, u_T^{\mathrm{can}}) + T S_{\mathcal{P}} (u_T^{\mathrm{can}}) 
    \\
    &\le F_{\mathcal{P}} (T, u_{T'}^{\mathrm{can}}) + T S_{\mathcal{P}} (u_T^{\mathrm{can}}) = U_{\mathcal{P}} (u_{T'}^{\mathrm{can}}) + T (S_{\mathcal{P}} (u_T^{\mathrm{can}}) - S_{\mathcal{P}} (u_{T'}^{\mathrm{can}}))
    \\
    &\le U_{\mathcal{P}} (u_{T'}^{\mathrm{can}}),
\end{align*} 
which shows the monotonicity for $U_{\mathcal{P}}$. 

Now we note we have the following uniform bounds:
\begin{align*}
F_{\mathcal{P}} (T, u_T^{\mathrm{can}}) 
&\le F_{\mathcal{P}} (T, 1_P) = U_{\mathcal{P}} (1_{\mathcal{P}}),
\\
S_{\mathcal{P}} (u_T^{\mathrm{can}}) 
&\le 0, 
\\
U_{\mathcal{P}} (u_T^{\mathrm{can}}) 
&= F_{\mathcal{P}} (T, u_T^{\mathrm{can}}) + T S_{\mathcal{P}} (u_T^{\mathrm{can}}) 
\\
&\le F_{\mathcal{P}} (T, 1_P)  =U_{\mathcal{P}} (1_P).
\end{align*}
It then follows by monotonicity that the limits of these exist as $T$ tends to $\infty$. 

Since we have 
\begin{align}
\label{TS estimate}
U_{\mathcal{P}} (1_P) = F_{\mathcal{P}} (T, 1_P) 
&\ge F_{\mathcal{P}} (T, u_T^{\mathrm{can}}) = U_{\mathcal{P}} (u_T^{\mathrm{can}}) - T S_{\mathcal{P}} (u_T^{\mathrm{can}})
\\ \notag
&\ge U_{\mathcal{P}} (u_0^{\mathrm{can}}) - T S_{\mathcal{P}} (u_T^{\mathrm{can}}), 
\end{align}
we get 
\[ 0 \ge T S_{\mathcal{P}} (u_T^{\mathrm{can}}) \ge U_{\mathcal{P}} (u_0) - U_{\mathcal{P}} (1_P) \]
for every $T \ge 0$. 
It follows that 
\[ \lim_{T \to \infty} S_{\mathcal{P}} (u_T^{\mathrm{can}}) = 0. \]
\end{proof}

Using the compactness we established, we further obtain the following. 

\begin{thm}
\label{continuous family of canonical distributions}
Let $\mathcal{P}$ be a system. 
The map 
\[ [0,\infty] \to \M^{\exp, \frac{n}{n-1}} (P): T \mapsto \begin{cases} u_T^{\mathrm{can}} & T \in [0, \infty) \\ 1_P & T = \infty \end{cases} \]
is continuous with respect to $L^p$-topology for every $p \in [1, \frac{n}{n-1})$ with continuous $F_{\mathcal{P}} (T, u_T^{\mathrm{can}}), U_{\mathcal{P}} (u_T^{\mathrm{can}}), S_{\mathcal{P}} (u_T^{\mathrm{can}})$. 
Furthermore, we have 
\[ \lim_{T \to \infty} F_{\mathcal{P}} (T, u_T^{\mathrm{can}}) = \lim_{T \to \infty} U_{\mathcal{P}} (u_T^{\mathrm{can}}) = U_{\mathcal{P}} (1_P), \]
\[ \lim_{T \to \infty} T S_{\mathcal{P}} (u_T^{\mathrm{can}}) = 0. \]
\end{thm}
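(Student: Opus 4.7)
The plan is a ``convergence of minimizers'' argument: for any sequence $T_i \to T_\infty$ in $[0, \infty]$, I extract an $L^p$-subsequential limit of $u_{T_i}^{\mathrm{can}}$ via Corollary~\ref{Rellich type compactness}, identify it with $u_{T_\infty}^{\mathrm{can}}$ by uniqueness (Theorem~\ref{Main theorem on uniqueness}), and then upgrade subsequential to full convergence in the standard subsequence-of-any-subsequence way. The uniform tightness needed to start is the bound $U_{\mathcal{P}}(u_T^{\mathrm{can}}) \le U_{\mathcal{P}}(1_P)$ for all $T \in [0, \infty)$: testing against $1_P$ gives $F_{\mathcal{P}}(T, u_T^{\mathrm{can}}) \le F_{\mathcal{P}}(T, 1_P) = U_{\mathcal{P}}(1_P)$, and $-T S_{\mathcal{P}}(u_T^{\mathrm{can}}) \ge 0$ by Jensen for $T \ge 0$. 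Corollary~\ref{Rellich type compactness} then yields, after extraction, $u_{T_i}^{\mathrm{can}} \to u_\infty$ in $L^p$ for every $p \in [1, \frac{n}{n-1})$, with $u_\infty \in \M^{\exp, \frac{n}{n-1}}(P)$: log convexity is preserved by pointwise limits on $P^\circ$, $\int_P u_\infty d\mu = \int_P d\mu$ by $L^1$-convergence, and $u_\infty \in L^{\frac{n}{n-1}}$ by Theorem~\ref{Poincare type estimate}.

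To identify $u_\infty$ for $T_\infty \in (0, \infty)$, pass to the limit in the minimizing inequality $F_{\mathcal{P}}(T_i, u_{T_i}^{\mathrm{can}}) \le F_{\mathcal{P}}(T_i, v)$ against any test state $v$, using the $L^{1+\epsilon}$-continuity of $S_{\mathcal{P}}$ and lower semi-continuity of $U_{\mathcal{P}}$ from the opening proposition of this subsection; strict-convexity uniqueness then forces $u_\infty = u_{T_\infty}^{\mathrm{can}}$. For $T_\infty = \infty$, the preceding proposition already gives $\lim_{T \to \infty} S_{\mathcal{P}}(u_T^{\mathrm{can}}) = 0$, so $S_{\mathcal{P}}(u_\infty) = 0$; combined with $\int_P u_\infty d\mu = \int_P d\mu$, the equality case of Jensen on $x \log x$ forces $u_\infty = 1_P$ almost everywhere, and lower-semicontinuous convexity promotes this to equality everywhere.

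The delicate case is $T_\infty = 0$: the $F$-minimization inequality alone only shows $u_\infty$ is \emph{some} ground state, whereas Theorem~\ref{Main theorem on uniqueness} requires the entropy-maximizing ground state. I would recover this selection by a right-derivative comparison: for any ground state $u_*$ and $T_i > 0$, combining $F_{\mathcal{P}}(T_i, u_{T_i}^{\mathrm{can}}) \le F_{\mathcal{P}}(T_i, u_*)$ with the monotonicity $U_{\mathcal{P}}(u_{T_i}^{\mathrm{can}}) \ge U_{\mathcal{P}}(u_*)$ gives $S_{\mathcal{P}}(u_{T_i}^{\mathrm{can}}) \ge S_{\mathcal{P}}(u_*)$, and passing $T_i \searrow 0$ yields $S_{\mathcal{P}}(u_\infty) \ge S_{\mathcal{P}}(u_*)$ for every ground state $u_*$. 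This entropy-selection step is the main obstacle: it is invisible to pure $F$-minimization and exploits crucially the one-sided approach $T_i > 0$.

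With the $L^p$-convergence $u_{T_i}^{\mathrm{can}} \to u_{T_\infty}^{\mathrm{can}}$ in hand, continuity of the scalar functionals follows. $S_{\mathcal{P}}(u_T^{\mathrm{can}})$ converges by the $L^{1+\epsilon}$-continuity of $S_{\mathcal{P}}$; $U_{\mathcal{P}}(u_T^{\mathrm{can}})$ is squeezed between $\liminf U_{\mathcal{P}}(u_{T_i}^{\mathrm{can}}) \ge U_{\mathcal{P}}(u_\infty)$ and the upper estimate $U_{\mathcal{P}}(u_{T_i}^{\mathrm{can}}) = F_{\mathcal{P}}(T_i, u_{T_i}^{\mathrm{can}}) + T_i S_{\mathcal{P}}(u_{T_i}^{\mathrm{can}}) \le F_{\mathcal{P}}(T_i, u_{T_\infty}^{\mathrm{can}}) + T_i S_{\mathcal{P}}(u_{T_i}^{\mathrm{can}})$, whose limit is $U_{\mathcal{P}}(u_{T_\infty}^{\mathrm{can}})$; and $F_{\mathcal{P}}(T, u_T^{\mathrm{can}})$ is the sum. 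At $T_\infty = \infty$, the estimate (\ref{TS estimate}) forces $T_i S_{\mathcal{P}}(u_{T_i}^{\mathrm{can}}) \to 0$, and $u_\infty = 1_P$ then gives the stated limits $F_{\mathcal{P}}(T, u_T^{\mathrm{can}}), U_{\mathcal{P}}(u_T^{\mathrm{can}}) \to U_{\mathcal{P}}(1_P)$.
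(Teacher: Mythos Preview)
Your proposal is correct and follows essentially the same line as the paper's proof: uniform bound $U_{\mathcal{P}}(u_T^{\mathrm{can}})\le U_{\mathcal{P}}(1_P)$, Rellich-type compactness to extract $L^p$-limits, identification of the limit as a minimizer of $F_{\mathcal{P}}(T_\infty,\bullet)$ via lower semi-continuity, and then uniqueness. The only cosmetic differences are that the paper handles the $T_\infty=0$ selection by quoting the monotonicity $S_{\mathcal{P}}(u_T^{\mathrm{can}})\ge S_{\mathcal{P}}(u_0^{\mathrm{can}})$ from the preceding proposition (which is exactly your comparison inequality, packaged), and deduces continuity of $F_{\mathcal{P}}(T,u_T^{\mathrm{can}})$ from its concavity in $T$ rather than summing $U$ and $-TS$; both routes are equivalent.
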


\begin{proof}
As we see in the above proof, we have $U_{\mathcal{P}} (u_T^{\mathrm{can}}) \le U_{\mathcal{P}} (1_P)$, so by Corollary \ref{Rellich type compactness} the family $\{ u_T^{\mathrm{can}} \}_{T \in [0, \infty)}$ is relatively compact in $L^p$-topology for $p \in [1, \frac{n}{n-1})$. 

Take a convergent sequence $\infty \neq T_i \to T_\infty \in [0, \infty]$ and a subsequence $T_{i (j)}$ so that $u_{T_{i (j)}}^{\mathrm{can}} \to u$ in $L^p$-topology to some $u \in \M^{\exp, 1} (P)$. 
We show the limit is $u_T^{\mathrm{can}}$ for $T \in [0, \infty)$ and $1_P$ for $T = \infty$, independent of the choice of subsequence. 

Assume $T_\infty \in [0, \infty)$. 
By the lower semi-continuity, we get 
\[ \liminf_{j \to \infty} F_{\mathcal{P}} (T_{i (j)}, u_{T_{i (j)}}^{\mathrm{can}}) = \liminf_{j \to \infty} F_{\mathcal{P}} (T_\infty, u_{T_{i (j)}}^{\mathrm{can}}) \ge F_{\mathcal{P}} (T_\infty, u). \]
For any $u' \in \M^{\exp, 1} (P)$, we compute 
\[ F_{\mathcal{P}} (T_\infty, u') = \lim_{j \to \infty} F_{\mathcal{P}} (T_{i (j)}, u') \ge \liminf_{j \to \infty} F_{\mathcal{P}} (T_{i (j)}, u_{T_{i (j)}}^{\mathrm{can}}) \ge F_{\mathcal{P}} (T_\infty, u),  \]
which shows the limit $u$ is a minimizer of $F_{\mathcal{P}} (T_\infty, \bullet)$. 
This shows $u= u_{T_\infty}$ for $T_\infty > 0$ by the uniqueness of minimizer. 

Suppose $T_\infty = 0$ and $u \neq u_0$. 
Then we have $S_{\mathcal{P}} (u_{T_{i (j)}}^{\mathrm{can}}) \to S_{\mathcal{P}} (u) < S_{\mathcal{P}} (u_0^{\mathrm{can}})$ by the uniqueness of $\mu$-canonical distribution. 
Take large $j_0$ so that $S_{\mathcal{P}} (u_{T_{i (j)}}^{\mathrm{can}}) < S_{\mathcal{P}} (u_0^{\mathrm{can}})$ for $j \ge j_0$, which contradicts to the fact that $S_{\mathcal{P}} (u_T^{\mathrm{can}})$ is increasing. 
Thus we have $u = u_0^{\mathrm{can}}$ when $T_\infty =0$. 

Suppose $T_\infty = \infty$. 
By the above proposition, we have 
\[ S_{\mathcal{P}} (u) = \lim S_{\mathcal{P}} (u_{T_{i (j)}}^{\mathrm{can}}) = 0. \]
This implies $u = 1_P$ as $0$ is the maximum value of $S_{\mathcal{P}}$ attained only at $1_P$. 
Thus we proved the continuity of the family $u_T^{\mathrm{can}}$ on $[0, \infty]$. 

Now it follows that 
\[ U_{\mathcal{P}} (1_P) \ge \limsup_{T \to \infty} U_{\mathcal{P}} (u_T^{\mathrm{can}}) \ge \liminf_{T \to \infty} U_{\mathcal{P}} (u_T^{\mathrm{can}}) \ge U_{\mathcal{P}} (1_P). \]
On the other hand, by (\ref{TS estimate}), we have 
\[ 0 \ge T S_{\mathcal{P}} (u_T^{\mathrm{can}}) \ge U_{\mathcal{P}} (u_T^{\mathrm{can}}) - U_{\mathcal{P}} (1_P) \searrow 0, \]
which shows 
\[ \lim_{T \to \infty} T S_{\mathcal{P}} (u_T^{\mathrm{can}}) = 0. \]
Putting these together, we obtain 
\[ \lim_{T \to \infty} F_{\mathcal{P}} (T, u_T^{\mathrm{can}}) = 0. \]

Since $F_{\mathcal{P}} (T, u_T^{\mathrm{can}})$ is concave, it is continuous. 
Since $S_{\mathcal{P}}$ is continuous with respect to $L^{1+ \epsilon}$-topology, $S_{\mathcal{P}} (u_T^{\mathrm{can}})$ is continuous. 
Finally since $U_{\mathcal{P}} (u_T^{\mathrm{can}}) = F_{\mathcal{P}} (T, u_T^{\mathrm{can}}) + T S_{\mathcal{P}} (u_T^{\mathrm{can}})$, $U_{\mathcal{P}} (u_T^{\mathrm{can}})$ is continuous on $[0, \infty)$, while the continuity at $T = \infty$ is already proved. 
\end{proof}

As a sophisticated version of the extremal limit observation in \cite{Ino2, Ino4}, we speculate the optimal destabilizer for normalized Donaldson--Futaki invariant appears in the rescaled limit. 

\begin{conj}
As $T \to \infty$, the convex function $q_T := T \log u_T^{\mathrm{can}}$ converges at least in $L^2$-topology to an lsc convex function $q_{\mathrm{ext}} \in L^2 (P)$ characterized as follows: 
$q_{\mathrm{ext}}$ minimizes the following normalized Donaldson--Futaki invariant among all $L^2$-integrable convex functions on $P$: 
\[ \frac{\mathrm{DF} (q)}{\| \hat{q} \|_{L^2}} = \frac{2\pi \int_{\partial P} q d\sigma + \bar{s} \int_P q d\mu}{(\int_P (q - \bar{q})^2 d\mu)^{1/2}}, \]
where $\bar{q} = \int_P q d\mu/\int_P d\mu$. 
\end{conj}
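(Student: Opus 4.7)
The plan is to cast the problem as a $\Gamma$-convergence statement for the rescaled free energy. Parametrise $u \in \M^{\exp,1}(P)$ with $\int u\,d\mu = \int d\mu$ by writing $u = e^{q/T}/Z_T$, where $Z_T = \int_P e^{q/T}\,d\mu/\int_P d\mu$ and $q$ is determined up to an additive constant. A Taylor expansion in $1/T$, using the constraint $\int u\,d\mu = \int d\mu$ (which forces $\bar q = -\tfrac{1}{2T\int d\mu}\int_P \hat q^2\,d\mu + O(1/T^2)$ with $\hat q := q-\bar q$), yields
\[
T\int_P d\mu\cdot(F_{\mathcal{P}}(T,u) - U_{\mathcal{P}}(1_P)) \;=\; \frac{\int_P d\mu}{2\pi}\,\mathrm{DF}(q) + \tfrac12 \|\hat q\|_{L^2}^2 + O(1/T),
\]
having used that $\mathrm{DF}(q) = \mathrm{DF}(\hat q)$ and that $\int_{\partial P}\hat q\,d\sigma = \mathrm{DF}(\hat q)\int_P d\mu/(2\pi)$ for mean-zero $\hat q$. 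Denoting the leading functional by $\Phi(q)$, optimising $\Phi(tw)$ in $t\in\mathbb{R}$ gives $\min_t \Phi(tw) = -\tfrac{(\int d\mu)^2}{8\pi^2}(\mathrm{DF}(w)/\|\hat w\|_{L^2})^2$, so minimisers of $\Phi$ over convex $L^2$-functions correspond, up to a positive scalar, precisely to minimisers of the normalised Donaldson--Futaki quotient in the conjecture.

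I would then carry out the following steps. First, establish existence of a minimiser $q_{\mathrm{ext}}$ of the normalised DF quotient by taking a minimising sequence with $\|\hat q_i\|_{L^2}=1$ and applying Corollary \ref{Rellich type compactness} together with Theorem \ref{Poincare type estimate} to extract an $L^2$-convergent subsequence; $\mathrm{DF}$ is continuous along such convergence. Second, use $u_{\mathrm{trial}} = e^{t_\ast q_{\mathrm{ext}}/T}/Z_T$, with $t_\ast$ the scalar minimiser of $\Phi(tq_{\mathrm{ext}})$, as a trial state to obtain the asymptotic upper bound $T\int d\mu\cdot(F_{\mathcal{P}}(T, u_T^{\mathrm{can}}) - U_{\mathcal{P}}(1_P)) \le \Phi(t_\ast q_{\mathrm{ext}}) + o(1) < 0$. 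Third, combine this with the coercivity estimate
\[
\Phi(\hat q_T) \;\ge\; \tfrac12 \|\hat q_T\|_{L^2}^2 - \tfrac{|C_0|\int d\mu}{2\pi}\|\hat q_T\|_{L^2},
\]
where $|C_0| := -\min_w \mathrm{DF}(w)/\|\hat w\|_{L^2}$, to deduce a uniform $L^2$-bound on $\hat q_T := T\log u_T^{\mathrm{can}} - \overline{T\log u_T^{\mathrm{can}}}$. Finally, extract a convergent subsequence of $\hat q_T$ by Corollary \ref{Rellich type compactness}, identify its $L^2$-limit as a $\Phi$-minimiser via the limiting asymptotic expansion and lower semi-continuity, and pin down the specific scalar multiple using $t_\ast$ at the level of the limit functional.

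The main obstacle is justifying the Taylor expansion \emph{uniformly} along the optimisers $q_T$: the $O(1/T)$ remainder involves integrals $\int q_T^k\,d\mu/T^{k-1}$ for $k\ge 3$, which are not directly controlled by the Rellich and Poincar\'e estimates of Section \ref{Rellich and Poincare type estimates} applied to $q_T$ itself. A plausible remedy is a bootstrap: the trial-function upper bound already gives $\int u_T^{\mathrm{can}}\log u_T^{\mathrm{can}}\,d\mu = O(1/T^2)$, which combined with Corollary \ref{Entropy bound} and the log-concavity of $u_T^{\mathrm{can}}$ should control the remainder $\int(e^{q_T/T} - 1 - q_T/T - q_T^2/(2T^2))\,d\mu$ via the pointwise bound exploiting convexity of $x\mapsto e^x - 1 - x - x^2/2$. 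A secondary subtlety is that convex functions do not form a linear space, but pointwise (hence $L^2$) limits of convex functions are convex, and the scalar ambiguity intrinsic to the ratio $\mathrm{DF}(q)/\|\hat q\|_{L^2}$ is pinned down by the specific optimal rescaling $t_\ast$ emerging from $\Phi$.
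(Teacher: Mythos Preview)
The statement you are attempting to prove is stated in the paper as a \emph{conjecture}, not a theorem: the paper offers no proof and explicitly leaves it open, remarking only that the existence of $q_{\mathrm{ext}}$ itself is known from \cite{Sze}. There is therefore no ``paper's proof'' against which to compare your proposal.

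On the substance of your approach: the $\Gamma$-convergence strategy via Taylor expansion of the rescaled free energy is natural and your identification of the leading functional $\Phi$ is correct. However, the obstacle you yourself flag is genuine and your proposed remedy does not close it. Knowing $\int u_T^{\mathrm{can}}\log u_T^{\mathrm{can}}\,d\mu = O(1/T^2)$ does not, by itself, control the third and higher moments $\int (q_T/T)^k\,d\mu$ needed to bound the Taylor remainder: the pointwise inequality you invoke for $e^x-1-x-x^2/2$ only gives a one-sided bound, and Corollary~\ref{Entropy bound} goes the wrong direction (it bounds entropy by boundary integral, not higher moments by entropy). There is also a circularity in your coercivity step: the inequality $\Phi(\hat q_T)\ge\tfrac12\|\hat q_T\|^2_{L^2}-\tfrac{|C_0|\int d\mu}{2\pi}\|\hat q_T\|_{L^2}$ is only useful once you already know $T\int d\mu\cdot(F_{\mathcal P}(T,u_T^{\mathrm{can}})-U_{\mathcal P}(1_P))$ is close to $\Phi(\hat q_T)$, which is precisely the uniform expansion you have not justified. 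A rigorous proof would likely require an a priori $L^\infty$ or higher $L^p$ bound on $q_T/T$ independent of the expansion, perhaps via the regularity theory hinted at in Conjecture~\ref{regularity conjecture}, which the paper also leaves open.
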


The existence of $q_{\mathrm{ext}}$ is proved in \cite{Sze}. 
For a non-negative optimizer $q \ge 0$ of $\NAmu^\lambda$, we have $\frac{1}{p!} q^p \le e^q$ for every $p \in \mathbb{N}$, so that $q$ is $L^p$ for every $p \in [1, \infty)$. 
This regularity is much better than $L^2$-regularity of $q_{\mathrm{ext}}$ proved in \cite{Sze}. 

\subsection{Consequences on $\mu$-cscK metric and $\mu$K-stability}

\subsubsection{Uniqueness of $\mu$-cscK metrics on toric manifolds for $\lambda \le 0$}

Now we see $\mu^\lambda$-cscK metric determines optimizer. 
Note if we have a $\mu^\lambda$-cscK metric $\omega$ on toric manifold $(X, L) \circlearrowleft T$, we can take $g \in \mathrm{Aut} (X, L)$ so that $g^* \omega$ is a $\mu^\lambda_\xi$-cscK metric with $\xi \in \mathrm{Lie} (T_{\mathrm{cpt}})$. 

\begin{thm}
\label{mu-cscK implies optimal destabilization}
Assume $\lambda \le 0$. 
If a toric manifold $(X, L)$ admits a $\mu^\lambda_\xi$-cscK metric with $\xi \in \mathrm{Lie} (T_{\mathrm{cpt}})$, then the linear map $\ket{\xi}: P \to \mathbb{R}: \mu \mapsto \langle \mu, \xi \rangle$ is the optimizer of $\NAmu^\lambda$. 
Here the metric is not necessarily a priori $T_{\mathrm{cpt}}$-invariant. 
\end{thm}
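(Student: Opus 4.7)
The plan is to combine the non-archimedean vs.\ Perelman inequality (\ref{NAmu Permu inequality}) from \cite{Ino4}, the monotone-convergence extension of $\NAmu^\lambda$ to $\E^{\exp,1}(P)$, and the uniqueness in Theorem \ref{Main theorem on uniqueness}. Set $T := -\lambda/(2\pi) \ge 0$. Since $\ket{\xi}$ is a bounded affine function on the compact polytope $P$, it lies in $\E^{\exp,1}(P)$ and $\NAmu^\lambda(\ket{\xi})$ is finite.

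First, I would record the equality case from \cite{Ino4} noted right after (\ref{NAmu Permu inequality}): under the hypothesis $\lambda \le 0$, a $\mu^\lambda_\xi$-cscK metric $\omega$ realizes $\NAmu^\lambda(\ket{\xi}) = \bm{\mu}^\lambda_{\mathrm{Per}}(\omega)$. Combined with the chain
\[
\NAmu^\lambda(\ket{\xi}) \le \sup_{q \in \E^{\exp,1}(P)} \NAmu^\lambda(q) = \sup_{q \in \mathcal{H}_{\mathrm{NA}}(X,L)} \NAmu^\lambda(q) \le \inf_{\omega' \in \mathcal{H}(X,L)} \bm{\mu}^\lambda_{\mathrm{Per}}(\omega') \le \bm{\mu}^\lambda_{\mathrm{Per}}(\omega),
\]
where the middle equality is the monotone-convergence replacement noted before the statement (using the Fenchel--Moreau approximation by $q_i \in \mathcal{H}_{\mathrm{NA}}(P)$ from Remark \ref{piecewise affine}) and (\ref{NAmu Permu inequality}) supplies the next inequality, equality propagates through the chain. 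Hence $\ket{\xi}$ is a maximizer of $\NAmu^\lambda$ on $\E^{\exp,1}(P)$.

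Via the identity $\NAmu^\lambda(q) = -2\pi F_{\mathcal{P}}(T, u(q)) - \lambda \log \int_P e^{-n} d\mu$ recorded in the introduction, maximality of $\NAmu^\lambda$ at $\ket{\xi}$ is equivalent to minimality of $F_{\mathcal{P}}(T, u(\ket{\xi}))$ on $\M^{\exp,1}(P)$. For $T > 0$, Theorem \ref{Main theorem on uniqueness} immediately gives $u(\ket{\xi}) = u_T^{\mathrm{can}}$, which translated back says $\ket{\xi} = \log u_T^{\mathrm{can}} + \mathrm{const}$ is the optimizer of $\NAmu^\lambda$. For $T = 0$, one only obtains that $u(\ket{\xi})$ is a ground state, and to identify it as the entropy-maximizing $u_0^{\mathrm{can}}$ one can perturb to $\lambda_j \nearrow 0$ and invoke the continuity of $T \mapsto u_T^{\mathrm{can}}$ from Theorem \ref{continuous family of canonical distributions}.

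The main obstacle I anticipate is precisely this boundary case $T = 0$: the inequality-chain argument produces a ground state but not a priori the distinguished one maximizing entropy, so the $\lambda = 0$ case needs either the perturbation argument sketched above or an extra rigidity input from the geometry of $\mu^0_\xi$-cscK metrics. The rest of the argument is essentially bookkeeping once (\ref{NAmu Permu inequality}) and its equality case from \cite{Ino4} are in hand.
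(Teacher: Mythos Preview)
Your approach is essentially the paper's: the chain (\ref{NAmu Permu inequality}) plus the equality case from \cite{Ino4} to get maximality of $\ket{\xi}$, then Theorem \ref{Main theorem on uniqueness} for $\lambda<0$, and a perturbation-plus-continuity argument for $\lambda=0$.

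The one point where your sketch is thin is the $\lambda=0$ perturbation. Simply taking $\lambda_j \nearrow 0$ and citing continuity of $T \mapsto u_T^{\mathrm{can}}$ does not by itself identify $u(\ket{\xi})$ with $u_0^{\mathrm{can}}$: continuity tells you $u_{T_j}^{\mathrm{can}} \to u_0^{\mathrm{can}}$, but you need to know that $u(\ket{\xi})$ sits on this curve, i.e.\ that $u(\ket{\xi}) = \lim_j u_{T_j}^{\mathrm{can}}$ for some $T_j \to 0$. The paper supplies this by perturbing the \emph{metric}: the implicit function theorem construction of \cite{Ino2} deforms the given $\mu^0_\xi$-cscK metric $\omega$ to $\mu^\lambda_{\xi_\lambda}$-cscK metrics $\omega_\lambda$ for $\lambda \in (-\epsilon,0)$, with $\xi_\lambda \to \xi$. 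Then the $\lambda<0$ case gives $u(\ket{\xi_\lambda}) = u_{-\lambda/2\pi}^{\mathrm{can}}$, and now continuity finishes. Without this geometric perturbation of the metric, the argument for $\lambda=0$ has a genuine gap; once you add it, your proof and the paper's coincide.
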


\begin{proof}
Suppose we have a $\mu^\lambda_\xi$-cscK metric $\omega$ with $\xi \in \mathrm{Lie} (T_{\mathrm{cpt}})$. 
It is proved in \cite{Ino4} (see also \cite{Ino5}) that $\ket{\xi}$ is a maximizer of $\NAmu^\lambda$ for $\lambda \le 0$.
Indeed, we have 
\[ \NAmu^\lambda (\ket{\xi}) \le \max_q \NAmu^\lambda (q) \le \inf_{\omega_\phi} \bm{\mu}_{\mathrm{Per}}^\lambda (\omega_\phi) \le \bm{\mu}_{\mathrm{Per}}^\lambda (\omega), \]
and for $\lambda \le 0$ we have 
\[ \bm{\mu}_{\mathrm{Per}}^\lambda (\omega) = \NAmu^\lambda (\ket{\xi}). \]
This proves the claim for $\lambda < 0$. 

To see that $\ket{\xi}$ is the optimizer for $\lambda = 0$, we perturb the $\mu^0_\xi$-cscK metric $\omega$ to $\mu^\lambda_{\xi_\lambda}$-cscK metrics $\omega_\lambda$ for $\lambda \in (-\epsilon, 0)$ as in the construction in \cite{Ino2}, which is just an application of implicit function theorem. 
We obviously have $\xi_\lambda \to \xi$ as $\lambda \to 0$ by the construction. 
We already know $\ket{\xi_\lambda}$ are the optimizers. 
By Theorem \ref{continuous family of canonical distributions}, we conclude $\ket{\xi} = \lim_{\lambda \to 0} \ket{\xi_\lambda}$ is also the optimizer. 
\end{proof}

By the uniqueness of optimizers of $\NAmu^\lambda$ for $\lambda \le 0$, we conclude the vectors $\xi, \xi'$ associated to $\mu^\lambda$-cscK metrics $\omega, \omega'$ are conjugate. 
Then thanks to the result \cite{Lah2} on the uniqueness of $\mu^\lambda_\xi$-cscK metric for fixed $\xi$, we obtain Theorem \ref{Uniqueness of mu-cscK metric}. 
Here we mention it again. 

\begin{cor}
Assume $\lambda \le 0$. 
On a toric manifold,  $\mu^\lambda$-cscK metrics are unique modulo the action of automorphism group. 
\end{cor}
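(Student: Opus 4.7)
The plan is to combine Theorem \ref{mu-cscK implies optimal destabilization} with the uniqueness part of Theorem \ref{Main theorem on uniqueness} and the fixed-vector uniqueness result of Lahdili \cite{Lah2}. Let $\omega$ and $\omega'$ be two $\mu^\lambda$-cscK metrics on $(X,L)$, with associated holomorphic vector fields $\partial^\sharp_\omega f$ and $\partial^\sharp_{\omega'} f'$, whose imaginary parts $\xi$ and $\xi'$ are Killing and hence generate compact subtori.

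First, I would normalize each metric by an automorphism so that its associated vector lies in $\mathrm{Lie}(T_{\mathrm{cpt}})$ for the fixed maximal compact torus of the toric structure. Since any two maximal compact subtori of $\mathrm{Aut}(X,L)$ are conjugate, after replacing $\omega, \omega'$ by $g^*\omega, (g')^*\omega'$ for suitable $g, g' \in \mathrm{Aut}(X,L)$ we may assume both $\xi, \xi' \in \mathrm{Lie}(T_{\mathrm{cpt}})$.

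Second, Theorem \ref{mu-cscK implies optimal destabilization} then asserts that both linear functions $\ket{\xi}$ and $\ket{\xi'}$ on $P$ are optimizers of $\NAmu^\lambda$. Since $\lambda \le 0$ corresponds to the temperature $T = -\lambda/(2\pi) \ge 0$, the uniqueness statement of Theorem \ref{Main theorem on uniqueness} applies: the $\mu$-canonical distribution is unique, and hence the optimizer $\log u^{\mathrm{can}}_T + \mathrm{const}$ of $\NAmu^\lambda$ is unique modulo additive constants. Two linear functions $\ket{\xi}, \ket{\xi'}$ differ by a constant on $P$ only if they are equal (both vanish on adding a suitable constant affine), so $\xi = \xi'$.

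Third, having reduced to the case that the two metrics share the same vector field $\xi \in \mathrm{Lie}(T_{\mathrm{cpt}})$, I invoke \cite{Lah2}, which gives uniqueness of $\mu^\lambda_\xi$-cscK metrics modulo the subgroup of automorphisms preserving $\xi$. This yields the desired conclusion. The main obstacle is the case $\lambda = 0$, where the bare uniqueness in Theorem \ref{Main theorem on uniqueness} is only \emph{conditional} (the $\mu$-canonical distribution is distinguished among ground states by maximizing $S_{\mathcal{P}}$); however this is already resolved in the proof of Theorem \ref{mu-cscK implies optimal destabilization} via the perturbation argument through $\lambda < 0$ using Theorem \ref{continuous family of canonical distributions}, so the three-line deduction above goes through without further complication.
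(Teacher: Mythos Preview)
Your proof is correct and follows essentially the same approach as the paper: normalize so that the associated vectors lie in $\mathrm{Lie}(T_{\mathrm{cpt}})$, apply Theorem \ref{mu-cscK implies optimal destabilization} together with the uniqueness of the optimizer from Theorem \ref{Main theorem on uniqueness} to conclude $\xi = \xi'$, and then invoke Lahdili's fixed-vector uniqueness \cite{Lah2}. Your treatment of the $\lambda = 0$ case via the perturbation argument already built into Theorem \ref{mu-cscK implies optimal destabilization} is also exactly what the paper does.
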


\subsubsection{Toric $\mu$K-semistability is $\mu$-entropy maximization}

\begin{thm}
\label{muK-semistability is equivalent to mu-entropy maximization}
A toric variety $(X, L)$ is toric $\mu^\lambda_\xi$K-semistable for $\lambda \le 0$ if and only if the linear map $\ket{\xi}: P \to \mathbb{R}$ maximizes $\NAmu^\lambda$. 
\end{thm}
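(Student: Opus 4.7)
The plan is to pass between the spaces of convex functions and states via the correspondence $q \leftrightarrow u(q)$ with temperature $T = -\lambda/(2\pi) \ge 0$, exploiting the convexity of $F_{\mathcal{P}}(T, \bullet)$ in $u$ for $T \ge 0$ established earlier. The forward direction (maximizer implies semistability), which works for any $\lambda \in \mathbb{R}$, follows directly from formula (\ref{NAmu and muFutaki}): for any $q \in \mathcal{H}_{\mathrm{NA}}(P)$, the path $t \mapsto \ket{\xi} + tq$ remains in $\E^{\exp, 1}(P)$ for $t \ge 0$ (affine plus convex is convex, and exponentials of functions bounded on the compact polytope $P$ are integrable), so maximality forces the right-derivative at $t = 0$ to be nonpositive, which by (\ref{NAmu and muFutaki}) amounts to $\mathrm{Fut}^\lambda_\xi(q) \ge 0$.

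For the converse I would connect $\ket{\xi}$ to an arbitrary $q' \in \E^{\exp, 1}(P)$ along the log-linear-exp path $q_t := \log((1-t) e^{\ket{\xi}}/Z_\xi + t e^{q'}/Z_{q'})$ for $t \in [0,1]$, where $Z_\xi = \int_P e^{\ket{\xi}} d\mu$ and $Z_{q'} = \int_P e^{q'} d\mu$. Since a convex combination of positive log-convex functions is log-convex, each $q_t$ is lsc convex in $\E^{\exp, 1}(P)$; the identity $\int_P e^{q_t} d\mu = 1$ then gives $u(q_t) = (1-t) u(\ket{\xi}) + t u(q')$. Combined with the constant-invariance of $\NAmu^\lambda$ and the relation $\NAmu^\lambda(q) = -2\pi F_{\mathcal{P}}(T, u(q)) + \mathrm{const}$, the proposition on convexity of $F_{\mathcal{P}}(T, \bullet)$ for $T \ge 0$ (equivalently $\lambda \le 0$) makes $t \mapsto \NAmu^\lambda(q_t)$ concave on $[0,1]$, with endpoint values $\NAmu^\lambda(\ket{\xi})$ at $t = 0$ and $\NAmu^\lambda(q')$ at $t = 1$.

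The right-derivative at $t = 0$ is $\dot{q}_0 = (Z_\xi/Z_{q'}) e^{q' - \ket{\xi}} - 1$, and the chain rule together with the linearity of $\mathrm{Fut}^\lambda_\xi$ and the identity $\mathrm{Fut}^\lambda_\xi(1) = 0$ (a direct consequence of the definition of $\bar{s}^\lambda_\xi$) gives $\frac{d}{dt}\big|_{t=0^+} \NAmu^\lambda(q_t) = -(Z_\xi/Z_{q'}) \mathrm{Fut}^\lambda_\xi(e^{q' - \ket{\xi}})$. Since $e^{q' - \ket{\xi}}$ is log-convex hence convex and non-negative, I would extend $\mu^\lambda_\xi$K-semistability from $\mathcal{H}_{\mathrm{NA}}(P)$ to such functions via the Fenchel--Moreau increasing approximation (Remark \ref{piecewise affine}) and monotone convergence applied to the three integrals defining $\mathrm{Fut}^\lambda_\xi$, obtaining $\mathrm{Fut}^\lambda_\xi(e^{q' - \ket{\xi}}) \ge 0$. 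Concavity then yields $\NAmu^\lambda(q') \le \NAmu^\lambda(\ket{\xi})$ for every $q' \in \E^{\exp, 1}(P)$, showing that $\ket{\xi}$ maximizes $\NAmu^\lambda$.

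The main obstacle is the density step: verifying $\mathrm{Fut}^\lambda_\xi(q_i) \to \mathrm{Fut}^\lambda_\xi(q_\infty)$ along an increasing sequence $q_i \nearrow q_\infty$ requires splitting $\ket{\xi}$ into positive and negative parts to handle the sign in $\int_P q_i \ket{\xi} e^{\ket{\xi}} d\mu$ and shifting the approximants to be uniformly non-negative, after which monotone convergence handles each piece. Integrability of the limit test function $e^{q' - \ket{\xi}}$ against $e^{\ket{\xi}} d\mu$, $|\ket{\xi}| e^{\ket{\xi}} d\mu$, and $e^{\ket{\xi}} d\sigma$ follows from $e^{q' - \ket{\xi}} e^{\ket{\xi}} = e^{q'}$, the boundedness of $\ket{\xi}$ on $P$, and the integrability conditions $q' \in \E^{\exp, 1}(P)$ and $\int_{\partial P} e^{q'} d\sigma < \infty$ required for $\NAmu^\lambda(q')$ to be finite.
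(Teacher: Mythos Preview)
Your proposal is correct and follows essentially the same route as the paper: the log-linear-exp path $q_t$, concavity of $\NAmu^\lambda$ along it for $\lambda \le 0$ via the convexity of $F_{\mathcal{P}}(T,\bullet)$, the derivative formula $\partial_{t=0}\NAmu^\lambda(q_t) = -(Z_\xi/Z_{q'})\,\mathrm{Fut}^\lambda_\xi(e^{q'-\ket{\xi}})$, and the Fenchel--Moreau monotone approximation of the convex test function $e^{q'-\ket{\xi}}$ by elements of $\mathcal{H}_{\mathrm{NA}}(P)$. The only organizational difference is that the paper first treats continuous $q$ (where $\dot q_t$ is uniformly bounded, so differentiation under the integral is immediate) and then passes to general $q\in\E^{\exp,1}(P)$ by a separate monotone-convergence step, whereas you aim at general $q'$ from the start and defer the integrability and sign-splitting issues to the end; both arrive at the same computation.
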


\begin{proof}
Thanks to (\ref{NAmu and muFutaki}), $(X, L)$ is $\mu^\lambda_\xi$K-semistable if $\ket{\xi}$ maximizes $\NAmu^\lambda$. 

Suppose $(X, L)$ is $\mu^\lambda_\xi$K-semistable for $\lambda \le 0$. 
Take a continuous convex function $q: P \to \mathbb{R}$. 
For the log linear exp path 
\[ q_t := \log ((1-t) e^{\ket{\xi}} + t \frac{\int_P e^{\ket{\xi}} d\mu}{\int_P e^q d\mu} e^q) \in \E^{\exp, 1} (P), \]
we compute 
\[ \frac{d}{dt} q_t = \frac{\frac{\int_P e^{\ket{\xi}} d\mu}{\int_P e^q d\mu} e^q - e^{\ket{\xi}}}{(1-t) e^{\ket{\xi}} + t \frac{\int_P e^{\ket{\xi}} d\mu}{\int_P e^q d\mu} e^q}. \]
Since $\frac{d}{dt} q_\bullet: P \times [0,1] \to \mathbb{R}$ is a continuous function, its absolute value is bounded from above by a uniform constant, so that we can compute
\begin{align*} 
\frac{d}{dt}\Big{|}_{t=0} \NAmu^\lambda (q_t) 
&= - \mathrm{Fut}^\lambda_\xi (\frac{d}{dt}\Big{|}_{t=0} q_t)
= -\frac{\int_P e^{\ket{\xi}} d\mu}{\int_P e^q d\mu} \mathrm{Fut}^\lambda_\xi (e^{q - \ket{\xi}}). 
\end{align*}
By Fenchel--Moreau theorem, we can take an increasing sequence $0 \le u_i \in \mathcal{H}_{\mathrm{NA}} (P)$ so that it converges to $e^{q - \ket{\xi}}$ pointwiesly. 
By monotone convergence theorem, we get 
\[ 0 \le \frac{\int_P e^{\ket{\xi}} d\mu}{\int_P u_i e^{\ket{\xi}} d\mu} \mathrm{Fut}^\lambda_\xi (u_i) \to \frac{\int_P e^{\ket{\xi}} d\mu}{\int_P e^q d\mu} \mathrm{Fut}^\lambda_\xi (e^{q - \ket{\xi}}). \]
Therefore, the $\mu^\lambda_\xi$K-semistability implies 
\[ \frac{d}{dt}\Big{|}_{t=0} \NAmu^\lambda (q_t) \le 0. \]
Since $\NAmu^\lambda$ is concave, we get $\NAmu^\lambda (q) \le \NAmu^\lambda (\ket{\xi})$ for continuous $q$. 

For general $q \in \E^{\exp, 1} (P)$, if $\int_{\partial P} e^q d\sigma = \infty$, we obviously have $-\infty = \NAmu^\lambda (q) \le \NAmu^\lambda (\ket{\xi})$. 
If $\int_{\partial P} e^q d\sigma < \infty$, take an increasing sequence $q_i \in \mathcal{H}_{\mathrm{NA}} (P)$ converging to $q$ pointwisely. 
Then by the monotone convergence theorem we get $\NAmu^\lambda (q_i) \to \NAmu^\lambda (q)$. 
This shows $\NAmu^\lambda (q) \le \NAmu^\lambda (\ket{\xi})$ for general $q \in \E^{\exp, 1} (P)$. 
\end{proof}

In particular for $\lambda < 0$, a toric variety $(X, L)$ can be $\mu^\lambda_\xi$K-semistable at most one $\xi \in \mathfrak{t}$. 
The following remark further implies a similar conclusion for $\lambda = 0$ under $\mu^\lambda_\xi$K-polystability assumption. 

\begin{rem}
\label{Fut of exp q is the difference of NAmu}
For $\lambda = 0$, we can also compute directly 
\[ \mathrm{Fut}_\xi (e^{q - \ket{\xi}}) = \frac{\int_P e^q d\mu}{\int_P e^{\ket{\xi}}d\mu} (\NAmu (\ket{\xi}) - \NAmu (q)) \]
for $q \in \E^{\exp, 1} (P)$ with $\int_{\partial P} e^q d\sigma < \infty$. 
\end{rem}

\begin{cor}
For $\lambda = 0$, if a toric variety $(X, L)$ is toric $\mu_\xi$K-polystable with respect to $\E^{\frac{n}{n-1}} (P) := \mathrm{Conv} (P) \cap L^{\frac{n}{n-1}} (P)$, then ground states are unique and hence $\ket{\xi}$ is the optimizer for $\NAmu$. 
\end{cor}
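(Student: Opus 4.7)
The plan is to show that every ground state coincides with $u(\ket{\xi})$; this simultaneously yields uniqueness and identifies $\ket{\xi}$ as the optimizer. First I would verify that $u(\ket{\xi})$ is itself a ground state. Polystability with respect to $\E^{\frac{n}{n-1}}(P)$ implies in particular $\mathrm{Fut}_\xi(q) \ge 0$ on $\mathcal{H}_{\mathrm{NA}}(P) \subset \E^{\frac{n}{n-1}}(P)$, so $(X,L)$ is toric $\mu_\xi$K-semistable in the usual sense. Theorem \ref{muK-semistability is equivalent to mu-entropy maximization} then gives $\NAmu(\ket{\xi}) = \sup \NAmu$, equivalently $u(\ket{\xi})$ is a ground state.

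Next, let $u$ be any ground state. By Theorem \ref{Main theorem on existence} combined with Theorem \ref{Poincare type estimate}, we have $u \in \M^{\exp, \frac{n}{n-1}}(P)$, and $q := \log u$ is a lsc convex function in $\E^{\exp, \frac{n}{n-1}}(P)$ with $\NAmu(q) = \NAmu(\ket{\xi})$. Invoking the identity of Remark \ref{Fut of exp q is the difference of NAmu} gives
\[
\mathrm{Fut}_\xi(e^{q-\ket{\xi}}) = \frac{\int_P e^q d\mu}{\int_P e^{\ket{\xi}} d\mu}\bigl(\NAmu(\ket{\xi}) - \NAmu(q)\bigr) = 0.
\]
To apply the polystability hypothesis I need $e^{q-\ket{\xi}} \in \E^{\frac{n}{n-1}}(P)$. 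Convexity follows from the fact that $e^x$ is increasing and convex and $q - \ket{\xi}$ is convex. Since $\ket{\xi}$ is bounded on the compact polytope $P$, $e^{-\ket{\xi}}$ is bounded, so $e^{q-\ket{\xi}} \in L^{\frac{n}{n-1}}(P)$ as $u = e^q \in L^{\frac{n}{n-1}}(P)$.

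Polystability forces $e^{q-\ket{\xi}}$ to be affine, say $e^{q-\ket{\xi}} = a + \langle b, \cdot \rangle > 0$ on $P$. Taking logarithms, $q - \ket{\xi} = \log(a + \langle b, \cdot \rangle)$ is concave; being also convex, it must be affine. Then $e^{q-\ket{\xi}}$ is both affine and the exponential of an affine function on a set with nonempty interior, which forces the exponent to be a constant $c$. Hence $u = e^c \cdot e^{\ket{\xi}}$, and the normalization $\int_P u\, d\mu = \int_P d\mu$ determines $e^c = \int_P d\mu / \int_P e^{\ket{\xi}} d\mu$, giving $u = u(\ket{\xi})$. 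The main delicate point is step two—confirming that $e^{q-\ket{\xi}}$ genuinely lies in the admissible class $\E^{\frac{n}{n-1}}(P)$ so that the enlarged polystability hypothesis is applicable; once this is in hand, the remaining rigidity (``an exponential cannot be affine unless it is constant'') is elementary.
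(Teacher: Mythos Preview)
Your proof is correct and follows essentially the same approach as the paper: both verify that $\ket{\xi}$ maximizes $\NAmu$ via semistability, place $e^{q-\ket{\xi}}$ in $\E^{\frac{n}{n-1}}(P)$, invoke Remark~\ref{Fut of exp q is the difference of NAmu} to obtain $\mathrm{Fut}_\xi(e^{q-\ket{\xi}})=0$, and conclude by polystability. The paper is terser---it jumps directly from ``$e^{q-\ket{\xi}}$ is affine'' to ``$q=\ket{\xi}$''---whereas you spell out the rigidity step (log of a positive affine function is concave, hence $q-\ket{\xi}$ is both convex and concave, hence affine, and an exponential of an affine function is affine only if constant); this extra detail is entirely appropriate.
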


\begin{proof}
Suppose $\NAmu (q) = \max \NAmu = \NAmu (\ket{\xi})$ for $q \in \E^{\exp, 1} (P)$. 
Since $\int_{\partial P} e^q d\sigma < \infty$, we have $e^{q - \ket{\xi}} \in \mathrm{Conv} (P) \cap L^{\frac{n}{n-1}} (P)$. 
By the above remark, we have $\mathrm{Fut}_\xi (e^{q - \ket{\xi}}) = 0$, which implies $q = \ket{\xi}$ by our $\mu_\xi$K-polystability assumption. 
\end{proof}

\subsection{Ground state in dimension 2 is bounded}

As we have noted in Remark \ref{piecewise affine}, a rational piecewise affine convex function on toric polytope realizes an algebro-geometric degeneration of toric variety: $X \leadsto \mathcal{X}_0$. 
In comparison with \cite{BLXZ, Yao}, the best regularity we can expect for maximizer of $\NAmu^\lambda$ would be piecewise affinity. 
As noted in \cite{Ino1, BLXZ} in the context of K\"ahler--Ricci soliton, this conjecture is deeply related to moduli theory for polarized varieties. 
To appreciate its extreme importance, we refer to this conjecture as \textit{crystal conjecture}. 

\begin{conj}[$\mu$-entropy maximizer is crystalline]
\label{regularity conjecture}

Let $\mathcal{P}$ be a system. 
Then for each $\lambda \in \mathbb{R}$, every maximizer $q$ of $\NAmu^\lambda$ is piecewise affine. 
\end{conj}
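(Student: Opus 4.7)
The plan is to extract structural information from the first-order optimality condition at a maximizer $q$ of $\NAmu^\lambda$ and argue by a combination of face induction and a strict-convexity exclusion in the interior.

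First I would derive the correct first-order optimality condition. For a maximizer $q \in \E^{\exp, 1}(P)$, the log-linear-exp path $q_t := \log((1-t) e^q + t e^r)$ with $r \in \E^{\exp, 1}(P)$ stays admissible (since $(1-t)e^q + t e^r$ is again lsc log-convex) and satisfies $\NAmu^\lambda(q_t) \le \NAmu^\lambda(q)$. Differentiating at $t=0$ with $v := e^{r-q}$, and using the formula analogous to \eqref{NAmu and muFutaki}, this yields a one-sided weighted Futaki inequality
\[
-2\pi \int_{\partial P} (v-1)\, e^q\, d\sigma \;+\; \int_P (v-1)\bigl[\lambda(q - \bar q) - \NAmu(q)\bigr] e^q\, d\mu \;\le\; 0
\]
for every non-negative log-convex $v$ on $P$, where $\bar q := \int_P q\, e^q\, d\mu / \int_P e^q\, d\mu$. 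On $P^\circ$ the boundary measure vanishes, so the inequality suggests that in a suitable distributional sense the pointwise relation $\lambda(q - \bar q) = \NAmu(q)$ must hold away from the contact set of $q$ with piecewise-affine competitors, which for $\lambda \ne 0$ forces $q$ to be constant on any region where strict curvature persists.

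Next I would perform induction on dimension. Each facet $Q \subset P$ carries induced data $(Q, d\mu_Q, d\sigma_{\partial Q})$, with $d\sigma_{\partial Q}$ a flat measure on the codimension-one boundary of $Q$, and the restriction $q|_Q$ should satisfy an analogous optimality condition (possibly with a shifted effective $\lambda$). Assuming piecewise affinity on each facet by the inductive hypothesis, the problem reduces to a characterization on $P^\circ$ with piecewise-affine boundary data, to which the Rellich-type compactness of Theorem \ref{Rellich type estimate} and the Poincar\'e bound of Theorem \ref{Poincare type estimate} apply.

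The decisive step is a strict-convexity exclusion in the interior. Suppose that on some ball $B \subset P^\circ$ some supporting affine function $\ell$ of $q$ satisfies $\ell < q$ on a positive-measure subset of $B$. Set $v := 1 + \epsilon(e^{\ell - q} \chi_B - c_\epsilon)$ with $\epsilon > 0$ small and $c_\epsilon$ chosen to keep $v$ log-convex; substituting into the optimality inequality should produce a strict sign contradicting maximality unless $q \equiv \ell$ on $B$. The main obstacle is precisely this sign analysis: the admissible test class is a log-convex cone rather than a linear space, so the usual EL machinery must be replaced by a careful inequality argument; moreover the entropy contribution in $\bm{\sigma}$ and the normalization in $\NAmu$ compete, and the outcome depends sensitively on $\lambda$. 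For $\lambda \le 0$ the uniqueness from Theorem \ref{Main theorem on uniqueness} together with a Legendre-duality argument in the spirit of Yao's treatment of Mabuchi solitons may carry the day, but for $\lambda > 0$ a genuinely new idea seems to be required; this is presumably the reason the statement is posed as a conjecture rather than a theorem.
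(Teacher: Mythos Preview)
The statement you are attempting to prove is labeled \texttt{conj} in the paper---it is the \emph{crystal conjecture}, explicitly presented as open. The paper does not prove it; the only partial progress offered is Theorem~\ref{Main theorem on regularity}, which establishes merely that maximizers of $\NAmu$ (the case $\lambda=0$) in dimension $n=2$ are bounded and continuous. That argument proceeds by a direct variational comparison near a vertex where $q$ is unbounded, and makes no claim about piecewise affinity. So there is no paper proof to compare your proposal against, and your closing remark that ``this is presumably the reason the statement is posed as a conjecture'' is exactly right.

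That said, your outline has concrete gaps beyond the acknowledged difficulty of the sign analysis. First, the face-induction step is not justified: there is no mechanism by which the restriction $q|_Q$ to a facet $Q$ inherits an optimality condition for a lower-dimensional functional. The internal $\mu$-energy $U_{\mathcal{P}}$ involves $d\sigma$ on $\partial P$, and restricting to $Q$ does not produce a well-posed variational problem on $(Q, d\mu_Q, d\sigma_{\partial Q})$ with the same structure; in particular there is no ``shifted effective $\lambda$'' that makes this work. Second, your proposed test function $v = 1 + \epsilon(e^{\ell-q}\chi_B - c_\epsilon)$ is not admissible: the cutoff $\chi_B$ destroys log-convexity, and even without it, $e^{\ell-q}$ has $\ell - q$ concave (affine minus convex), so $e^{\ell-q}$ is log-\emph{concave}, not log-convex. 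The admissible test directions along the log-linear-exp path are of the form $v = e^{r-q}$ with $r$ convex, i.e.\ $v$ such that $v e^q$ is log-convex---a genuinely nonlinear constraint that blocks the localization you need. Third, your heuristic that the interior optimality forces $\lambda(q-\bar q) = \NAmu(q)$ pointwise would, if true, make $q$ globally affine for $\lambda \neq 0$, which is stronger than piecewise affine and certainly false in examples where the optimizer is a nontrivial vector $\ket{\xi}$ only when the variety is $\mu$K-semistable.
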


\begin{rem}
\label{remark on finite generation}
For a rational piecewise affine function $q$, the base polytope of an affine component of $q$ (the image of a facet of $Q \subset \mathfrak{t}^\vee \times \mathbb{R}$ contained in the roof $\{ (x, -q(x)) ~|~ x \in P \} \subset Q$ along the projection $Q \to P$) is rational and realizes an irreducible component of the central fibre $\mathcal{X}_0$ of the associated test configuration. 

On the other hand, for an non-rational piecewise affine function $q$, the base polytope of an affine component of $q$ might be not rational. 
(In this case, the filtration on the graded ring $R (X, L) = \bigoplus_m H^0 (X, L^{\otimes m})$ associated to $q$ as in \cite{Ino5} would be not finitely generated. )
This implicates the category of algebraic scheme might be insufficient to realize $q$ in a geometric way. 
The author speculates there would be a nice category of spaces extending that of algebraic schemes in which we can realize optimizer of $\NAmu^\lambda$ in a geometric way. 
We do not pursue this further in this article, but we just refer to \cite{Joy} as a hint for ``extended toric geometry'' on non-rational polytope/fan. 
\end{rem}

We have proved in the existence theorem that the exponential $e^q$ of a maximizer $q$ is $L^{\frac{n}{n-1}}$-intergable. 
To approach crystal conjecture, we confirm the continuity of maximizer for $\lambda = 0$ in dimension 2. 

\begin{thm}
\label{Main theorem on regularity}
Let $\mathcal{P}$ be a $2$-dimensional system. 
Then every maximizer $q \in \E^{\exp, 1} (P)$ of $\NAmu$ is bounded and continuous. 
\end{thm}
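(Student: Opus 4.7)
The plan is to first establish boundedness of every maximizer $q$ of $\NAmu$; continuity will then follow automatically. Since $P$ is the convex hull of its finitely many vertices $v_1,\ldots,v_N$ and $q$ is convex, any $x\in P$ written as a convex combination of vertices satisfies $q(x)\le\max_i q(v_i)$; thus $q$ is bounded above iff $q(v_i)<\infty$ at every vertex. Boundedness below is automatic: $\int_P e^q\,d\mu<\infty$ forces $\{q<\infty\}\supseteq P^\circ$ and properness, so lower semi-continuity on compact $P$ gives $q>-\infty$. Granted boundedness, a standard argument via convexity along segments from any boundary point into the interior, combined with lower semi-continuity, yields $\lim_{x\to v} q(x)=q(v)$ at every $v\in\partial P$ (at a vertex one uses that the tangent cone has positive reach, valid because $P$ is simple). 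So $q$ and $u=e^q$ are continuous on all of $P$, and the theorem reduces to the claim: if $q$ maximizes $\NAmu$ in dimension $2$, then $q(v_0)<\infty$ at every vertex $v_0$ of $P$.

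To prove the claim, normalize $q$ (without changing $\NAmu(q)$) so that $u:=e^q$ satisfies $\int_P u\,d\mu=\int_P d\mu$; then $u$ minimizes $U_{\mathcal{P}}=F_{\mathcal{P}}(0,\bullet)$ on $\M^{\exp,1}(P)$. Assume for contradiction $q(v_0)=+\infty$ at some vertex $v_0$; by lsc, $u\to+\infty$ near $v_0$. For $M\gg 1$ construct a truncated competitor: let $q_M$ be the lower convex envelope on $P$ of $\min(q,\log M)$, i.e. the supremum of all affine $\ell\le\min(q,\log M)$; set $u_M:=e^{q_M}$, $c_M:=\int_P u\,d\mu/\int_P u_M\,d\mu\ge 1$, and $\tilde u_M:=c_M u_M$. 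Then $\tilde u_M$ is log convex with $\int_P\tilde u_M\,d\mu=\int_P d\mu$, hence a feasible competitor. The minimality inequality $U_{\mathcal{P}}(\tilde u_M)\ge U_{\mathcal{P}}(u)$, combined with $c_M-1=\int_P(u-u_M)\,d\mu/\int_P u_M\,d\mu$, rearranges to
\[
\frac{\int_{\partial P}(u-u_M)\,d\sigma}{\int_P(u-u_M)\,d\mu}\ \le\ \frac{\int_{\partial P}u_M\,d\sigma}{\int_P u_M\,d\mu}.
\]
As $M\to\infty$ the right-hand side converges to $U_{\mathcal{P}}(u)$ and hence stays bounded, so it suffices to show that the left-hand side tends to $+\infty$.

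This is where $n=2$ enters. In local coordinates at $v_0$ with $P$ modeled on $\{x_1,x_2\ge 0\}$, the set $\{u_M<u\}$ --- the \emph{shadow} of $\{q>\log M\}$ under the convex-envelope operation --- has linear scale $r_M\to 0$. An explicit estimate, exploiting log-convexity of $u$ to compare amplitudes along the two edges meeting at $v_0$ and in the interior wedge between them, yields
\[
\int_{\partial P}(u-u_M)\,d\sigma\ \asymp\ r_M\cdot A_M,\qquad
\int_P(u-u_M)\,d\mu\ \asymp\ r_M^{2}\cdot A_M
\]
for a common amplitude $A_M>0$. The extra factor of $r_M$ in the interior integral comes from one additional transverse integration, reflecting that $v_0$ has codimension $2$ in $P$ but only codimension $1$ in $\partial P$. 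Hence the left-hand ratio behaves like $r_M^{-1}\to\infty$, giving the contradiction and completing the proof of boundedness (and hence continuity).

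The main obstacle is making this scaling estimate fully rigorous. One must (i) quantify the shadow of the lower convex envelope, showing $\{u_M<u\}$ has linear extent $\asymp r_M$ rather than a larger order, by analyzing which affine supporting hyperplanes of $q$ survive the cap at $\log M$; and (ii) use log-convexity of $u$ to control both boundary and interior amplitudes by a common factor $A_M$, ruling out pathological concentrations of mass near $v_0$. The hypothesis $n=2$ is essential because then $\{q=\infty\}$, as a closed convex subset of $\partial P$ having $\sigma$-measure zero, must be either empty or a single vertex; in higher dimensions this singular set could extend along a positive-dimensional face, and the codimension-based scaling that drives the contradiction would break down.
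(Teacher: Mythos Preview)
Your strategy---truncate near the singular vertex and show the resulting competitor strictly decreases $U_{\mathcal{P}}$---is the paper's strategy, and your derivation of the inequality
\[
\frac{\int_{\partial P}(u-u_M)\,d\sigma}{\int_P(u-u_M)\,d\mu}\ \le\ \frac{\int_{\partial P}u_M\,d\sigma}{\int_P u_M\,d\mu}
\]
from minimality is correct. But as you yourself say, the proof is incomplete: the scaling claim $\int_{\partial P}(u-u_M)\,d\sigma\asymp r_M A_M$ and $\int_P(u-u_M)\,d\mu\asymp r_M^2 A_M$ with a \emph{common} amplitude $A_M$ is the whole content of the theorem, and you have not proved it.

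In fact this heuristic is too coarse to be made rigorous as stated. The paper first shows that any maximizer satisfies $q=\sup\{\ell\text{ affine}:\ell|_{\partial P}\le q|_{\partial P}\}$, and then uses a more explicit truncation: replace $q$ by a single affine function $\ell_h$ on a small triangle $\triangle=\triangle v p^0_h p^1_h$ and leave $q$ unchanged elsewhere (three lemmas are needed to justify this description). Computing the derivative of $\NAmu$ along $(1-\theta)q+\theta q_{v,h}$ leads to the estimate
\[
\int_P (q-\ell_h)e^{q}\,d\mu \le \frac{\mu(\triangle)}{\sigma(\overline{v p^0_h})}\int_{E^0}(q-\ell_h)e^q\,d\sigma + \frac{\mu(\triangle)}{\sigma(\overline{v p^0_h})\,\sigma(\overline{v p^1_h})}\Big(\int_{\overline{v p^0_h}} e^q\,d\sigma\Big)\int_{E^1}(q-\ell_h)e^q\,d\sigma,
\]
obtained by parametrizing $\triangle$ as $(s,r)\in[0,1]^2$ and using convexity of $t\mapsto te^t$ in the $s$-variable. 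The first coefficient $\mu(\triangle)/\sigma(\overline{v p^0_h})\sim\sigma(\overline{v p^1_h})\to 0$ is your geometric ``extra factor of $r_M$''. But the second coefficient $\mu(\triangle)/(\sigma(\overline{v p^0_h})\sigma(\overline{v p^1_h}))$ is a nonzero \emph{constant}; its smallness comes instead from the tail $\int_{\overline{v p^0_h}}e^q\,d\sigma\to 0$, which uses the finiteness of $\int_{\partial P}e^q\,d\sigma$ rather than scaling. The two edges are thus controlled by genuinely different mechanisms, and the clean factorization into a single shared $A_M$ does not survive the details. Your global convex-envelope truncation $q_M$ also lacks the explicit ``affine on a triangle, unchanged elsewhere'' structure that makes this computation possible; controlling the shadow $\{q_M<q\}$ for your construction would require work at least comparable to the paper's lemmas, together with the preliminary $q=\tilde q$ observation.

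A minor point: continuity of a bounded lsc convex function on a polytope follows from the Gale--Klee--Rockafellar upper semicontinuity result and does not need simplicity of $P$; the ``positive reach of the tangent cone'' remark is not the right justification.
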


We firstly observe every maximizer $q$ of $\NAmu$ is equal to
\begin{align}
\label{tight convex function}
\tilde{q} := \sup \{ \ell: \text{affine} ~|~ \ell|_{\partial P} \le q|_{\partial P} \}.
\end{align} 
Indeed, recall firstly 
\[ q = \hat{q} := \sup \{ \ell: \text{affine} ~|~ \ell \le q \} \]
for any lsc convex function $q$ by Fenchel--Moreau theorem. 
Since we obviously have $\hat{q} \le \tilde{q}$, we get 
\[ q|_{\partial P} = \hat{q}|_{\partial P} \le \tilde{q}|_{\partial P} \le q|_{\partial P} \]
and hence $\tilde{q}|_{\partial P} = q|_{\partial P}$ for any lsc convex function $q$. 
If $q \neq \tilde{q}$, we have $q \le \tilde{q}$ and $q|_B \le \tilde{q}|_B -\varepsilon$ for small $\varepsilon > 0$ on a small ball $B \subset P^\circ$. 
Then we compute
\[ -\frac{1}{2\pi} \NAmu (q) = \frac{\int_{\partial P} e^q d\sigma}{\int_P e^q d\mu} \ge \frac{\int_{\partial P} e^{\tilde{q}} d\sigma}{\int_{P \setminus B} e^{\tilde{q}} d\mu + \int_B e^{\tilde{q} - \varepsilon} d\mu} > \frac{\int_{\partial P} e^{\tilde{q}} d\sigma}{\int_P e^{\tilde{q}} d\mu} = - \frac{1}{2\pi} \NAmu (\tilde{q}), \]
so that $q$ does not maximize $\NAmu$. 

In what follows, we consider convex function $q \not\equiv \infty$ of the form $q = \tilde{q}$. 
Take a vertex $v \in P$. 
For $h \in \mathbb{R}$, we consider 
\begin{align}
q_{v, h} := \sup \{ \ell :\text{affine} ~|~ \ell|_{\partial P} \le q|_{\partial P}, \ell (v) \le h \} \le q. 
\end{align}
We will show if $q (v) = \infty$ then we can find large $h$ and small $\theta \in (0,1)$ so that $\NAmu (q) < \NAmu (q_{v, h, \theta})$ for $q_{v, h, \theta} := (1-\theta) q + \theta q_{v,h}$. 
For the computation, we observe a concrete description of $q_{v,h}$. 

\begin{lem}
Suppose $q = \tilde{q}$ as above. 
Let $E^0 = \overline{vv^0}, E^1 = \overline{vv^1} \subset \partial P$ be the edges (dimension one face) containing $v$. 
Assume for each $i = 0,1$ the right differential $\partial_{t+} q ((1-t)v +t v^i)$ diverges to $-\infty$ as $t \to 0$, which is the case for instance when $q (v) =\infty$. 

Then for $h < q (v)$ sufficiently colse to $q (v)$, there exists a unique affine function $\ell_h$ on $P$ such that $\ell_h (v) = h$, $\ell_h \le q$ on $P$ and $\ell_h (p^i) = q (p^i)$ for some $p^i \in E^i \setminus v$ for each $i=0,1$. 
Moreover, we have $\ell \le \ell_h$ for any affine function $\ell$ on $P$ satisfying $\ell (v) = h$ and $\ell \le q$. 
\end{lem}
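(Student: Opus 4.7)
The plan is to construct $\ell_h$ as the unique affine function on $P$ with $\ell_h(v)=h$ attaining the maximal admissible slopes along the two edges $E^0,E^1$, and then verify that for $h$ sufficiently close to $q(v)$ this $\ell_h$ lies below $q$ globally on $\partial P$ (hence on all of $P$, since $q=\tilde q$).

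Parametrize each edge by $\gamma_i(t):=(1-t)v+tv^i$ for $t\in[0,1]$ and set $f_i(t):=q(\gamma_i(t))$, a lsc convex function. For an affine $\ell$ on $P$ with $\ell(v)=h$, the restriction to $E^i$ is $\ell(\gamma_i(t))=h+m_it$, and $\ell\le q$ on $E^i$ is equivalent to $m_i\le \alpha_i(h):=\inf_{t\in(0,1]}(f_i(t)-h)/t$. The hypothesis $\partial_{t+}f_i(0)=-\infty$, combined with $h<q(v)$, forces the chord slope $(f_i(t)-h)/t$ to tend to $+\infty$ as $t\to 0^+$: when $q(v)<\infty$ because $f_i(t)\to q(v)>h$ while $t\to 0^+$, and when $q(v)=+\infty$ because $f_i(t)/t\to+\infty$. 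Since the chord slope is continuous on $(0,1]$, the infimum is attained at some $t_i^*\in(0,1]$, yielding the touching point $p^i:=\gamma_i(t_i^*)\in E^i\setminus\{v\}$ with $\ell_h(p^i)=f_i(t_i^*)=q(p^i)$. As $v,v^0,v^1$ are affinely independent, the data $(h,\alpha_0(h),\alpha_1(h))$ determines a unique affine $\ell_h$ on the ambient space with the prescribed value at $v$ and slopes $\alpha_i(h)$ along $v^i-v$.

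For the maximality and uniqueness, suppose $\ell$ is any affine function with $\ell(v)=h$ and $\ell\le q$ on $P$. Its slopes along $v^i-v$ are at most $\alpha_i(h)$, so $\ell_h-\ell$ is affine, vanishes at $v$, and has non-negative slopes along both $v^0-v,v^1-v$. Simplicity of $P$ at the vertex $v$ identifies the tangent cone $T_vP$ with the closed convex cone $\mathbb{R}_{\ge 0}(v^0-v)+\mathbb{R}_{\ge 0}(v^1-v)$, and $P-v\subset T_vP$; hence $\ell_h-\ell\ge 0$ on $P$, proving $\ell\le \ell_h$. In particular, any affine $\ell$ satisfying the three stated conditions has slope along $v^i-v$ forced to equal $\alpha_i(h)$ by the touching conditions at $p^i$, so it coincides with $\ell_h$.

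The main obstacle is verifying $\ell_h\le q$ on edges $E^j$ of $\partial P$ not containing $v$, for $h$ sufficiently close to $q(v)$. First, a short contradiction argument using the hypothesis shows $\alpha_i(h)\to -\infty$ as $h\to q(v)^-$: if $\alpha_i(h_k)\ge -M$ along a sequence $h_k\to q(v)^-$ with $q(v)<\infty$, then $f_i(t)\ge h_k-Mt$, and passing to the limit forces $\partial_{t+}f_i(0)\ge -M$, a contradiction; when $q(v)=+\infty$, already $\alpha_i(h)\le(f_i(t_0)-h)/t_0\to -\infty$ for any fixed $t_0$ with $f_i(t_0)<\infty$. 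Next, $q=\tilde q$ is a supremum of affine functions on compact $P$ and is $>-\infty$ everywhere, hence bounded below by a constant $C_1$. For any $x$ on an edge $E^j$ not adjacent to $v$, writing $x-v=s_0(v^0-v)+s_1(v^1-v)$ with $s_0,s_1\ge 0$ by the cone property, one has $s_0+s_1\ge c_j>0$ since $E^j$ is bounded away from $v$, giving $\ell_h(x)\le h+c_j\max(\alpha_0(h),\alpha_1(h))$. Quantifying this bound via the identity $h=f_i(t_i^*)+t_i^*|\alpha_i(h)|$ together with $t_i^*\to 0$ (which controls $|\alpha_i(h)|$ against $h$ through the blow-up rate of $f_i$ at $0$) yields $\ell_h|_{E^j}<C_1\le q$ for $h$ sufficiently close to $q(v)$. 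Combined with $\ell_h\le q$ on $E^0\cup E^1$ by construction, this completes the verification.
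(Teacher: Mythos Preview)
Your proof is correct and follows essentially the same approach as the paper. Both arguments construct $\ell_h$ as the affine function with $\ell_h(v)=h$ and maximal admissible slopes along the two edges $E^0,E^1$, locate the touching points $p^i$ via the chord-slope infimum, establish maximality among all admissible $\ell$ via the cone structure at the simple vertex $v$, and finally argue that for $h$ near $q(v)$ the slopes become so negative that $\ell_h$ drops below $\min_{\partial P}q$ on the remaining boundary. The paper phrases this last step as $\ell_h(v^i)\le Q:=\min_{\partial P}q$ (using that the slope of $\ell^i_h$ is bounded by $\partial_{t+}f_i(t_h)\to-\infty$, borrowing $t_h\to 0$ from the subsequent lemma) and then deduces $\ell_h\le Q$ on $\partial P\setminus(E^0\cup E^1)$; you instead bound $\ell_h$ directly on each non-adjacent edge via $s_0+s_1\ge c_j$. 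These are equivalent.

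One remark: your final sentence ``Quantifying this bound via the identity $h=f_i(t_i^*)+t_i^*|\alpha_i(h)|$ together with $t_i^*\to 0$\ldots'' is too hand-wavy in the case $q(v)=\infty$, where both $h\to\infty$ and $|\alpha_i(h)|\to\infty$ and one must check which dominates; the paper's proof is equally terse at this spot. The clean argument (implicit in both) is to fix any $t_0\in(0,\min(c_j,1))$ with $f_i(t_0)<\infty$ and use $\alpha_i(h)\le(f_i(t_0)-h)/t_0$, giving
\[
h+c_j\,\alpha_i(h)\;\le\;\Bigl(1-\tfrac{c_j}{t_0}\Bigr)h+\tfrac{c_j}{t_0}f_i(t_0)\;\longrightarrow\;-\infty
\]
since $1-c_j/t_0<0$. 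This makes the last step rigorous without appealing to the blow-up rate of $f_i$.
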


\begin{proof}
Observe for each $i$, there exists a unique affine function $\ell^i_h$ on the edge $E^i$ such that $\ell^i_h (v) =h$, $\ell^i_h \le q|_{E^i}$ and $\ell^i_h (p^i) = q (p^i)$ for some point $p^i \in E^i \setminus v$. 
Thanks to $\ell^i_h ((1-t) v +t p^i) = (1-t) h+ t q (p^i)$, we have ${\ell^i}' \le \ell^i_h$ for any other affine function ${\ell^i}'$ on $E^i$ satisfying ${\ell^i}' (v) = h$ and ${\ell^i}' \le q|_{E^i}$.

Let $\ell_h$ be the affine function whose graph represents the plane spanned by the graphs of $\ell^0_h$ and $\ell^1_h$. 
We clearly have $\ell_h (v) = h$. 
Let $Q \in \mathbb{R}$ be the minimum of $q|_{\partial P}$. 
By our assumption on $q$, we have $q (v) > Q$. 
Moreover, for $t_h \in [0,1]$ with $p^i_h = (1-t_h) v + t_h v^i$, we have 
\[ \partial_{t+} \ell^i_h ((1-t)v + t v^i) \le \partial_{t+} q ((1-t_h) v+t_h v^i), \]
so that taking $h \ge Q$ sufficiently close to $q (v)$, we may assume $\ell^i_h (v^i) \le Q$. 
(Compare the subsequent lemma. 
It implies $t_h \to 0$ under our assumption, so that $\partial_{t+} q ((1-t_h) v+t_h v^i) \to -\infty$ as $h \to q (v)$. )
This implies $\ell_h|_{\partial P \setminus (E^0 \cup E^1)} \le Q$, so that we get $\ell_h|_{\partial P} \le q|_{\partial P}$. 
It follows that $\ell_h \le \tilde{q} = q$. 

Let $\ell$ be an affine function on $P$ satisfying $\ell (v) = h$ and $\ell \le q$, which in particular implies $\ell|_{E^i} \le q|_{E^i}$. 
By the above remark, we have $\ell|_{E^i} \le \ell^i_h$, hence $\ell \le \ell_h$ on $P$. 
\end{proof}

Since $F^i := \{ x \in E^i \setminus v ~|~ (q -\ell_h) (x) = 0 \}$ is closed, we have a unique point $p^i_h \in E^i \setminus v$ which is closest to $v$ in $F^i$. 
For this point, we have the following. 

\begin{lem}
Under the assumption in the above lemma, we have $p^i_h \to v$ monotonically as $h \to q (v)$. 
\end{lem}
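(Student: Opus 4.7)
The plan is to parametrize the edge $E^i$ by $t \in [0,1]$ via $\gamma_i(t) = (1-t)v + t v^i$, set $q^i(t) := q(\gamma_i(t))$ (an lsc convex function on $[0,1]$ with $q^i(0) = q(v)$), and write $p^i_h = \gamma_i(t_h)$. By the preceding lemma, $\ell_h^i := \ell_h \circ \gamma_i$ is the affine function through $(0,h)$ with slope $\sigma_h := (q^i(t_h) - h)/t_h$; it lies below $q^i$ on $[0,1]$ and touches $q^i$ at $t_h$ for the smallest $t > 0$. Rearranging $\ell_h^i \le q^i$ gives $\sigma_h \le \sigma(t, h) := (q^i(t) - h)/t$ for every $t \in (0,1]$, so $t_h$ is the smallest minimizer of $\sigma(\,\cdot\,, h)$. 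This variational characterization drives the whole argument.

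For monotonicity, take $h_1 < h_2 < q(v)$ with corresponding minimizers $t_1, t_2$. The inequalities $\sigma(t_1, h_1) \le \sigma(t_2, h_1)$ and $\sigma(t_2, h_2) \le \sigma(t_1, h_2)$, after clearing positive denominators and adding, reduce to
\[
(t_2 - t_1)(h_1 - h_2) \ge 0,
\]
forcing $t_2 \le t_1$. Thus $p^i_h$ moves monotonically toward $v$ as $h$ increases toward $q(v)$.

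For the limit $t_h \to 0$, I introduce $h^{**}(t) := q^i(t) - t\,\partial_{t-} q^i(t)$, the value at $t=0$ of the leftmost support line of $q^i$ at $t$. Subdifferential monotonicity together with the support-line inequality shows that $h^{**}$ is non-increasing on $(0,1]$ with $h^{**}(t) \le q^i(0) = q(v)$, and the chord bound $\partial_{t-} q^i(t) \le (q^i(1) - q^i(t))/(1 - t)$ combined with lsc of $q^i$ at $0$ gives $\lim_{t \to 0^+} h^{**}(t) = q(v)$. The hypothesis $\partial_{t+} q^i(0+) = -\infty$ rules out $q^i$ being affine on any $[0, t_0]$, which is exactly the obstruction to $h^{**}(t_0) = q(v)$; hence $h^{**}(t) < q(v)$ strictly for every $t \in (0,1]$. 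Since $\sigma_h \ge \partial_{t-} q^i(t_h)$, we have $h = q^i(t_h) - t_h \sigma_h \le h^{**}(t_h)$. Given any $t^* > 0$, once $h > h^{**}(t^*)$ (which holds for $h$ sufficiently close to $q(v)$), monotonicity of $h^{**}$ forces $t_h < t^*$; this proves $t_h \to 0$, hence $p^i_h \to v$.

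The main obstacle is the non-smoothness of $q^i$: kinks preclude a single first-order identification of $t_h$, so the argument must be run entirely with one-sided derivatives. The deliberate use of the \emph{left} derivative in $h^{**}$ is what synchronizes with $\sigma_h \ge \partial_{t-} q^i(t_h)$ to produce the key bound $h \le h^{**}(t_h)$. The hypothesis $\partial_{t+} q^i(0+) = -\infty$ enters only at the end, where it upgrades the non-strict limit $h^{**}(t) \to q(v)$ to the strict separation $h^{**}(t) < q(v)$; this upgrade is what licenses the conclusion $t_h < t^*$, and without it the argument would fail exactly as in a toy example like $q^i(t) = |t - 1/2|$, where $p^i_h$ is pinned at the midpoint.
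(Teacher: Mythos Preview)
Your proof is correct and takes a genuinely different route from the paper. For monotonicity, the paper compares $\ell_h$ and $\ell_{h'}$ directly at the three points $v,\,p^i_h,\,p^i_{h'}$ to see that the two affine functions cross between $p^i_{h'}$ and $p^i_h$; you instead recast $t_h$ as the smallest minimizer of the chord-slope function $\sigma(t,h)=(q^i(t)-h)/t$ and extract $(t_2-t_1)(h_1-h_2)\ge 0$ by the standard two-inequality rearrangement. For the limit, the paper argues by contradiction: if the monotone limit $p^i_{q(v)}\neq v$, passing to the limit in $\ell^i_h\le q^i$ forces $q^i$ to be affine on the segment $\overline{v\,p^i_{q(v)}}$, contradicting $\partial_{t+}q^i\to-\infty$. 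Your argument is constructive: you introduce the left-tangent intercept $h^{**}(t)=q^i(t)-t\,\partial_{t-}q^i(t)$, show $h\le h^{**}(t_h)$ and $h^{**}(t)<q(v)$ for every $t>0$, and read off $t_h<t^*$ once $h>h^{**}(t^*)$. The paper's proof is shorter and purely geometric; yours carries more structure (the monotone barrier $h^{**}$ and the variational description of $t_h$) and makes the role of the hypothesis $\partial_{t+}q^i(0^+)=-\infty$ very transparent---it is exactly what forces the strict inequality $h^{**}(t)<q(v)$, as your toy example $q^i(t)=|t-\tfrac12|$ illustrates. Both approaches handle the case $q(v)=\infty$ cleanly, and both deal with the non-smoothness of $q^i$ by working with one-sided derivatives and support lines.
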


\begin{proof}
This is visually clear, but we show it logically in order to check how our assumption on $q$ works. 
For $h \le h'$, we have 
\[ \ell_h (v) =h \le h' = \ell_{h'} (v),~~ \ell_h (p^i_{h'}) \le q (p^i_{h'}) = \ell_{h'} (p^i_{h'}), ~~ \ell_h (p^i_h) = q (p^i_h) \ge \ell_{h'} (p^i_h), \]
which implies the length of the segment $\overline{v p^i_{h'}}$ is shorter than $\overline{v p^i_h}$. 

By the monotonicity, we have a limit point $p^i_{q(v)} := \lim_{h \to q (v)} p^i_h \in E^i$. 
Suppose $p^i_{q (v)} \neq v$. 
Then taking the limit $h \to \infty$ of
\[ (1-t) h + t q (p^i_h) = \ell^i_h ((1-t)v + t p^i_h) \le q ((1-t)v + t p^i_h), \]
we get 
\[ (1-t) q (v) + t q (p^i_{q (v)}) \le q ((1-t)v + t p^i_{q(v)}), \]
while 
\[ q ((1-t)v + t p^i_{q(v)}) \le (1-t) q (v) + t q (p^i_{q (v)}) \]
by convexity. 
This implies $q$ is affine on the segment $\overline{v p^i_{q(v)}}$, which contradicts to our assumption.  
\end{proof}

\begin{lem}
Under the above assumption, we have 
\[ q_{v,h} (x) = 
\begin{cases} 
\ell_h & \text{ on the triangle } \triangle v p^0_h p^1_h 
\\ 
q & \text{ on } P \setminus \triangle v p^0_h p^1_h 
\end{cases}. \]
\end{lem}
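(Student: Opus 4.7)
The plan is to identify $q_{v,h}$ with the piecewise function $f$ defined as $\ell_h$ on $T := \triangle v p^0_h p^1_h$ and as $q$ on $P \setminus T$, by proving both inequalities. The key preliminary geometric fact I would establish first is that $q \equiv \ell_h$ along the diagonal segment $\overline{p^0_h p^1_h}$: by the preceding lemma $q - \ell_h \ge 0$ on $P$, while $q - \ell_h$ vanishes at the two endpoints $p^0_h$ and $p^1_h$ by construction; restricted to the diagonal, it is a one-variable convex function vanishing at both endpoints, hence lies below its chord and is $\le 0$ on the segment. Combining the two bounds forces equality. This identity resolves the would-be discontinuity of $f$ across the diagonal and lets one verify directly that $f$ is lsc convex on $P$, with $f \le q$ on $\partial P$ and $f(v) = h$.

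For $q_{v,h} \le f$, take any admissible affine $\ell$ (so $\ell \le q$ on $\partial P$ and $\ell(v) \le h$). Since $q = \tilde{q}$ by hypothesis, the bound $\ell \le q$ automatically extends to all of $P$, giving $\ell \le q = f$ on $P \setminus T$. On $T$, along each edge $\overline{vp^i_h}$, both $\ell$ and $\ell_h$ are affine, with $\ell(v) \le h = \ell_h(v)$ and $\ell(p^i_h) \le q(p^i_h) = \ell_h(p^i_h)$ at the endpoints, so $\ell \le \ell_h$ along the edge; along the diagonal the identity above gives $\ell \le q = \ell_h$. Thus $\ell \le \ell_h$ at the three vertices of $T$, and since both functions are affine on $T$, the inequality $\ell \le \ell_h = f$ propagates throughout $T$. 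Taking the supremum over admissible $\ell$ yields $q_{v,h} \le f$.

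For $q_{v,h} \ge f$, on $T$ the function $\ell_h$ itself is admissible by the preceding lemma, so $q_{v,h} \ge \ell_h = f$ there. For $x \in (P \setminus T)^\circ$, a supporting affine function $\ell$ of the convex lsc function $f$ at $x$ satisfies $\ell \le f$ on $P$ with $\ell(x) = f(x) = q(x)$; then $\ell \le f \le q$ on $\partial P$ and $\ell(v) \le f(v) = h$ make $\ell$ admissible, so $q_{v,h}(x) \ge \ell(x) = q(x) = f(x)$. For $x \in \partial P \setminus T$, I would construct an admissible $\ell$ directly from the subdifferential $\partial q(x)$: on a facet $F$ not containing $v$, the inward-normal component of a subgradient can be taken arbitrarily negative, so $\ell(v)$ can be driven below $h$; on the open segments $\overline{p^i_h v^i} \subset E^i$, convexity of $q|_{E^i}$ past $p^i_h$ forces the right-tangent slope at $x$ to be at least $(q(x) - h)/\|x - v\|$, and the corresponding supporting line gives $\ell(v) \le h$.

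The main obstacle is the diagonal identity $q = \ell_h$ on $\overline{p^0_h p^1_h}$: without it, $f$ would fail to be convex (there would be a jump in values when crossing the diagonal from $T$ into $P \setminus T$), and the supporting-hyperplane argument driving both inequalities would collapse. Once that identity is in hand, the remainder reduces to elementary manipulations with affine functions on the triangle $T$ together with standard subdifferential calculus on $\partial P$.
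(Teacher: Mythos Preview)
Your approach is essentially the paper's: both rest on the diagonal identity $q \equiv \ell_h$ along $\overline{p^0_h p^1_h}$ (which you prove explicitly; the paper invokes it without comment) and on supporting hyperplanes. Two implementation differences are worth noting. On $P^\circ \setminus T$ you support the piecewise function $f$, which makes $\ell(v) \le h$ automatic but obliges you to verify that $f$ is convex --- a step you only assert. The paper instead supports $q$ at $x$ and checks $\ell_x(v) \le h$ geometrically: the ray $\overline{vx}$ meets the diagonal at some $p^t_h$, and since $\ell_x \le q = \ell_h$ there while $\ell_x = q \ge \ell_h$ at $x$, comparing two affine functions on that line forces $\ell_x(v) \le \ell_h(v) = h$. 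On $\partial P \setminus T$ you build admissible $\ell$ case by case from subdifferentials, whereas the paper dispatches the whole boundary in one stroke: $q_{v,h}$ and $q$ are lsc convex and agree on $P^\circ \setminus T$, hence agree on all of $P \setminus T$ by radial limits inside the convex polygon $\overline{P \setminus T}$. Your boundary route is workable but has loose ends --- vertices of $P$ are not treated, and the slope bound in your case~2 needs the comparison with $\ell^i_h$ (both affine, with known ordering at $p^i_h$ and at $x$) rather than bare convexity ``past $p^i_h$''.
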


\begin{proof}
For $x \in P^\circ \setminus \triangle v p^0_h p^1_h$, take an affine function $\ell_x$ so that $\ell_x \le q$ on $P$ and $\ell_x (x) = q (x)$. 
The two segments $\overline{p^0_h p^1_h}$ and $\overline{v x}$ crosses at a point $p^t_h = (1-t) p^0_h + t p^1_h \in \overline{p^0_h p^1_h}$. 
Since $\ell_x (p^t_h) \le q (p^t_h) = \ell_h (p^t_h)$ and $\ell_x (x) = q (x) \ge \ell_h (x)$, we have $\ell_x (v) \le \ell_h (v) = h$. 
Then by 
\[ q(x) = \ell_x (x) \le q_{v,h} (x) \le q (x), \]
we get $q_{v,h} (x) = q(x)$. 
By the lower semi-continuity, we have 
\[ q_{v,h}|_{P \setminus \triangle vp^0_h p^1_h} = q|_{P \setminus \triangle vp^0_h p^1_h}. \]

Take an affine function $\ell$ on $P$ so that $\ell \le q$ and $\ell (v) \le h$. 
For $x \in \triangle v p^0_h p^1_h \setminus v$, let $p^t_h \in \overline{p^0_h p^1_h}$ be the intersection point with the line spanned by the segment $\overline{vx}$. 
Since $\ell (v) \le h = \ell_h (v)$ and $\ell (p^t_h) \le q (p^t_h) = \tilde{q} (p^t_h) = \ell_h (p^t_h)$, 
we have $\ell (x) \le \ell_h (x)$. 
It follows that $q_h (x) \le \ell_h (x)$. 
On the other hand, we have $q_h \ge \ell_h$ on $P$ as $\ell_h (v) = h$ and $\ell_h|_{\partial P} \le q|_{\partial P}$. 
On the other hand, we obviously have $q_h (v) = h$. 
Thus we get $q_h (x) = \ell_h (x)$ for $x \in \vartriangle vp^0_h p^1_h$. 
\end{proof}

Now we show the continuity. 
By \cite{GKR} (or by an extension of the argument of (\ref{upper semi-continuity})), every bounded convex function on a convex polytope is known to be upper semi-continuous. 
Since we assume the lower semi-continuity for $q \in \E^{\exp, 1} (P)$, it suffices to show the boundedness. 

\begin{proof}[Proof of Theorem \ref{Main theorem on regularity}]
Let $q$ be a maximizer of $\NAmu$. 
We may normalize $q$ so that $q \ge 0$. 
Suppose $q$ is unbounded, then we can find a vertex $v \in P$ with $q (v) = \infty$ by an easy argument. 
For sufficiently large $h \ge 0$, we take $q_{v, h}, \ell_h$ and $p^0_h, p^1_h$ as above. 
We note 
\[ \mu (\triangle vp^0_h p^1_h), \sigma (\overline{vp^0_h}), \sigma (\overline{vp^1_h}) \to 0 \] 
as $h \to \infty$, while 
\[ \frac{\mu (\triangle vp^0_h p^1_h)}{\sigma (\overline{vp^0_h}) \sigma (\overline{vp^1_h})} \] 
is constant. 

We put
\[ u_h := q - q_{v, h} = \begin{cases} q- \ell_h & \text{ on the triangle } \triangle v p^0_h p^1_h \\ 0 & \text{ on } P \setminus \triangle v p^0_h p^1_h \end{cases}. \]
We consider a map from $[0,1]^2$ to $\triangle v p_h^0 p_h^1$ given by 
\[ (s; r) \mapsto v+ (1-s) r \overrightarrow{v p^0_h} + sr \overrightarrow{vp^1_h} \in \triangle v p^0_h p^1_h. \]
The measure $d\mu|_{\triangle v p^0_h p^1_h}$ is transformed to $2 \mu (\triangle v p^0_h p^1_h) r dr ds$ on $[0,1]^2$ and the measure $d\sigma|_{\overline{vp^i_h}}$ is transformed to $\sigma (\overline{vp^i_h}) dr$ on $\{ i \} \times [0,1]$. 

Now we compute
\begin{align*}
\int_P u_h e^q d\mu 
&\le \int_{\triangle v p^0_h p^1_h} u_h e^q d\mu 
\\
&= 2 \mu (\triangle v p^0_h p^1_h) \int_0^1 dr \cdot r \int_0^1 u_h (s;r) e^{q (s; r)} ds
\\
&\le 2 \mu (\triangle v p^0_h p^1_h) \int_0^1 dr \cdot r e^{\max_{s \in [0,1]} \ell_h (s; r)} \int_0^1 u_h (s; r) e^{u_h (s; r)} ds
\\
&\le 2 \mu (\triangle v p^0_h p^1_h) \int_0^1 dr \cdot r e^{\max \{ \ell_h (0; r), \ell_h (1; r) \}} \int_0^1 \{ (1-s) u_h (0; r) e^{u_h (0;r)} + s u_h (1;r) e^{u_h (1;r)} \} ds
\\
&\le \mu (\triangle v p^0_h p^1_h) \int_0^1 dr \cdot r e^{\max \{ \ell_h (0;r ), \ell_h (1;r ) \}} \{ u_h (0;r ) e^{u_h (0;r )} + u_h (1;r ) e^{u_h (1;r )} \} .
\end{align*}
Assume $\max \{ \ell_h (0; 1), \ell_h (1; 1) \} = \ell_h (0;1 )$, we have $\max \{ \ell_h (0; r), \ell_h (1; r) \} = \ell_h (0;r )$ for every $r$, then we further compute 
\begin{align*} 
\int_P u_h e^q d\mu 
&\le \mu (\triangle v p^0_h p^1_h) \int_0^1 dr \cdot r e^{\ell_h (0;r )} \{ u_h (0;r ) e^{u_h (0;r )} + u_h (1;r ) e^{u_h (1;r )} \} 
\\
&\le \mu (\triangle v p^0_h p^1_h) \int_0^1 dr \cdot \{ u_h (0;r ) e^{q (0;r )} + r e^{\ell_h (0; r)} u_h (1; r) e^{q (1;r )} \}
\\
&\le \mu (\triangle v p^0_h p^1_h) \{ \int_0^1 u_h (0; r) e^{q (0;r)} dr + \int_0^1 e^{q (0;r)} dr \cdot \int_0^1 u_h (1;r ) e^{q (1;r)} dr \}
\\
& \le \frac{\mu (\triangle v p^0_h p^1_h)}{\sigma (\overline{vp^0_h})} \int_0^1 u_h (0; r) e^{q (0;r)} \sigma (\overline{vp^0_h}) dr 
\\
&\qquad + \frac{\mu (\triangle v p^0_h p^1_h)}{\sigma (\overline{vp^0_h}) \sigma (\overline{vp^1_h})} \int_0^1 e^{q (0;r)} \sigma (\overline{vp^0_h}) dr \cdot \int_0^1 u_h (1;r ) e^{q (1;r )} \sigma (\overline{vp^1_h}) dr 
\\
&= \frac{\mu (\triangle v p^0_h p^1_h)}{\sigma (\overline{vp^0_h})} \int_{E^0} u_h e^q d\sigma|_{E^0} 
\\
&\qquad + \frac{\mu (\triangle v p^0_h p^1_h)}{\sigma (\overline{vp^0_h}) \sigma (\overline{vp^1_h})} \int_{\overline{vp^0_h}} e^q d \sigma|_{\overline{vp^0_h}} \cdot \int_{E^1} u_h e^q d\sigma|_{E^1}, 
\end{align*}
where the third inequality follows by 
\[ re^{\ell_h (0; r)} \le \int_0^1 e^{\ell_h (0; r)} dr \le \int_0^1 e^{q (0; r)} dr. \]

Fix $h$ large so that 
\[ \frac{\mu (\triangle v p^0_h p^1_h)}{\sigma (\overline{vp^0_h})}, \frac{\mu (\triangle v p^0_h p^1_h)}{\sigma (\overline{vp^0_h}) \sigma (\overline{vp^1_h})} \int_{\overline{vp^0_h}} e^q d \sigma|_{\overline{vp^0_h}} < \Big{(} \frac{\int_{\partial P} e^q d\mu}{\int_P e^q d\sigma} \Big{)}^{-1}. \]
Then for $q_\theta := (1-\theta) q + \theta q_h$, we compute 
\begin{align*}
\frac{d}{d\theta}\Big{|}_{\theta = 0} \frac{\int_{\partial P} e^{q_\theta} d\sigma}{\int_P e^{q_\theta} d\mu}
&= \frac{d}{d\theta}\Big{|}_{\theta = 0} \frac{\int_{\partial P} e^{q - \theta u_h} d\sigma}{\int_P e^{q- \theta u_h} d\mu}
\\
&= \frac{1}{\int_P e^q d\mu} \Big{(} - \int_{\partial P} u_h e^q d\sigma + \frac{\int_{\partial P} e^q d\sigma}{\int_P e^q d\mu} \int_P u_h e^q d\mu \Big{)}
\\
&< \frac{1}{\int_P e^q d\mu} \Big{(} - \int_{\partial P} u_h e^q d\sigma + \int_{E^0} u_h e^q d\sigma|_{E^0} + \int_{E^1} u_h e^q d\sigma|_{E^1} \Big{)}
\\
&= 0, 
\end{align*}
which shows 
\[ \frac{\int_{\partial P} e^{q_\theta} d\sigma}{\int_P e^{q_\theta} d\mu} < \frac{\int_{\partial P} e^q d\sigma}{\int_P e^q d\mu} \]
for sufficiently small $\theta$. 
This contradicts to the assumption that $q$ is a maximizer of $\NAmu$. 
The case $\max \{ \ell_h (0; 1), \ell_h (1; 1) \} = \ell_h (1;1 )$ is similar. 
Thus $q$ must be bounded. 

\end{proof}

\section{Thermodynamical structure}
\label{Thermodynamical structure}

To appreciate the results of this section, let us briefly recall how the parameter $\lambda$ appeared in the beginning \cite{Ino2} of $\mu$-cscK metric. 
The notion of $\mu$-cscK metric was introduced in \cite{Ino2} based on the moment map picture on K\"ahler--Ricci soliton observed in \cite{Ino1}. 
Given a symplectic manifold $(M, \omega)$ with a Hamiltonian vector field $\xi$, the space of $\xi$-invariant almost complex structures $\mathcal{J}_\xi (M, \omega)$ possesses a symplectic structure $\Omega_\xi$ and a moment map $\mathcal{S}_\xi: \mathcal{J}_\xi (M, \omega) \to \mathfrak{ham}_\xi (M, \omega)^\vee$ with respect to the action of $\xi$-compatible Hamiltonian diffeomorphism group $\mathrm{Ham}_\xi (M,\omega)$. 
These are dependent on $\xi$. 
The Hamiltonian potential $\mu_\xi: M \to \mathbb{R}$ of $\xi$ defines an element $\bra{\mu_\xi}$ of $\mathfrak{ham}_\xi (M, \omega)^\vee$ fixed by the coadjoint action, so that $\mathcal{S}_\xi + \lambda \bra{\mu_\xi}$ gives another moment map for the same symplectic structure $\Omega_\xi$. 
If we choose $[\omega] = -T^{-1} c_1 (M, \omega)$ and $\lambda = -2\pi T$ ($T < 0$), the symplectic reduction $(S_\xi + \lambda \bra{\mu_\xi})^{-1} (0)/\mathrm{Ham}_\xi (M, \omega)$ can be interpreted as the moduli space of K\"ahler--Ricci soliton (cf. \cite{Ino1}). 

At this point, there are at least two points of view to extend the theory of K\"ahler--Ricci soliton to general polarized manifold: 
\begin{enumerate}
\item Determine the ``best'' $\lambda = \lambda (X, L)$ for a given polarized manifold $(X, L)$ so that all things go well like K\"ahler--Ricci soliton. 
At least for a Fano manifold $(X, L) = (X, -T^{-1} K_X)$, it would be $\lambda = -2\pi T$. 

\item Regard $\lambda \in \mathbb{R}$ as a free parameter in the theory and construct our theory for any $\lambda \in \mathbb{R}$ as much as possible. 
\end{enumerate}
Nakagawa \cite{Nak} takes the first stance, but it turns out in \cite{Ino2} that the latter perspective is more fruitful than the first one: the product $\mu$-cscK metrics behaves well for the same $\lambda$, extremal metric appears in the limit $\lambda \to -\infty$ and so on. 
Moreover, contrary to the theory on K\"ahler--Ricci soliton, the general theory works well for $\lambda \le 0$ rather than $\lambda > 0$. 
In any case, the parameter $\lambda$ was just introduced at first as a free parameter which enriches the theory. 
Its geometric role was unclear. 

Here we show our theory carries a thermodynamical sturcture for $T = - \frac{\lambda}{2\pi} \ge 0$. 
It gives us a better geometric intuition on our enigmatic parameter $T$ and also implicates the geometry of $\mu$-cscK metric and $\mu$K-stability is of infinite dimensional nature: it is the geometry of a composition of the space of our interest and an infinite dimensional space working as heat bath. 
This is reminiscent of Perelman's statistical mechanical heuristic argument \cite{Per} on his $W$-entropy. 

\subsection{Optimizer for product}

Mutual interaction of thermodynamical systems is a key concept in thermodynamics. 
Mathematically, it is described as a process (or its terminal state) on the product (or tensor product, in quantum setup) of two systems $\mathcal{P}_1, \mathcal{P}_2$. 
Let us begin with the simplest case: mutual interaction of isothermal systems. 

For two systems $\mathcal{P}_1 = (P_1, d\mu_1, d\sigma_1)$, $\mathcal{P}_2 = (P_2, d\mu_2, d\sigma_2)$, we consider the following \textit{composite system}
\[ \mathcal{P}_1 \times \mathcal{P}_2 := (P_1 \times P_2, d\mu_1 \times d\mu_2, d\sigma_1 \boxplus d\sigma_2), \]
where $d\mu_1 \times d\mu_2$ denotes the product measure and $d\sigma_1 \boxplus d\sigma_2$ denotes the measure on $\partial (P_1 \times P_2) = \partial P_1 \times P_2 \cup P_2 \times \partial P_2$ given by 
\[ d\sigma_1 \times d\mu_2 + d\mu_1 \times d\sigma_2. \]
We call $\mathcal{P}_1, \mathcal{P}_2$ subsystems of this composite system. 

\begin{thm}
\label{canonical distribution of product}
For $T \ge 0$, the $\mu$-canonical distribution $u^{\mathrm{can}}_T \in \M^{\exp, 1} (P_1 \times P_2)$ of temperature $T$ on the composyte system $\mathcal{P}_1 \times \mathcal{P}_2$ is the product 
\[ u^{\mathrm{can}}_T = u_1 \times u_2\]
of the $\mu$-canonical distributions $u_1 \in \M^{\exp,1} (P_1), u_2 \in \M^{\exp, 1} (P_2)$ of the same temperature $T$ on the subsystems $\mathcal{P}_1, \mathcal{P}_2$, respectively. 
\end{thm}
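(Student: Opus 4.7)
The plan is to reduce the theorem to the classical thermodynamic fact that the Gibbs state of a composite system factorizes into subsystem Gibbs states, via a marginal-decomposition argument inspired by information theory. As a warm-up, for any $u_1 \in \M^{\exp,1}(P_1)$ and $u_2 \in \M^{\exp,1}(P_2)$, I would first check by Fubini that $u_1 \times u_2 \in \M^{\exp,1}(P_1 \times P_2)$ (log-convexity is immediate from the factors, and normalization uses $\int u_j d\mu_j = \int d\mu_j$) and that $F$ splits additively,
\[ F_{\mathcal{P}_1 \times \mathcal{P}_2}(T, u_1 \times u_2) = F_{\mathcal{P}_1}(T, u_1) + F_{\mathcal{P}_2}(T, u_2). \]
This follows because $U$ splits under the decomposition $d\sigma_1 \boxplus d\sigma_2 = d\sigma_1 \times d\mu_2 + d\mu_1 \times d\sigma_2$, and $\int u_1 u_2 \log(u_1 u_2) d(\mu_1 \times \mu_2)$ breaks into the two entropy integrals after using the normalizations.

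Next, to a general $u \in \M^{\exp,1}(P_1 \times P_2)$ I would associate its marginals
\[ v_1(x) := \frac{1}{\int d\mu_2} \int_{P_2} u(x,y)\, d\mu_2(y), \qquad v_2(y) := \frac{1}{\int d\mu_1} \int_{P_1} u(x,y)\, d\mu_1(x), \]
and check that $v_j \in \M^{\exp,1}(P_j)$: log-convexity of $v_j$ follows from joint log-convexity of $u$ combined with H\"older's inequality, Fatou's lemma plus the lsc envelope supplies lower semi-continuity, and Fubini yields $\int v_j d\mu_j = \int d\mu_j$. The two decisive observations are then: (i) the internal energy depends only on the marginals, i.e., $U_{\mathcal{P}_1 \times \mathcal{P}_2}(u) = U_{\mathcal{P}_1}(v_1) + U_{\mathcal{P}_2}(v_2) = U_{\mathcal{P}_1 \times \mathcal{P}_2}(v_1 \times v_2)$, obtained by unpacking $d\sigma_1 \boxplus d\sigma_2$ and applying Fubini; and (ii) the entropy satisfies the Gibbs inequality
\[ S_{\mathcal{P}_1 \times \mathcal{P}_2}(v_1 \times v_2) - S_{\mathcal{P}_1 \times \mathcal{P}_2}(u) = \frac{1}{\int d\mu_1 \int d\mu_2} \int u \log \frac{u}{v_1 v_2}\, d(\mu_1 \times \mu_2) \ge 0, \]
with equality iff $u = v_1 \times v_2$; this is non-negativity of Kullback--Leibler divergence, provable by Jensen's inequality on $x \log x$ applied to the probability measures obtained by rescaling $u\, d(\mu_1 \times \mu_2)$ and $v_1 v_2\, d(\mu_1 \times \mu_2)$.

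Combining (i) and (ii), for $T \ge 0$ one gets
\[ F_{\mathcal{P}_1 \times \mathcal{P}_2}(T, u) \ge F_{\mathcal{P}_1}(T, v_1) + F_{\mathcal{P}_2}(T, v_2) \ge F_{\mathcal{P}_1}(T, u_1) + F_{\mathcal{P}_2}(T, u_2) = F_{\mathcal{P}_1 \times \mathcal{P}_2}(T, u_1 \times u_2), \]
so $u_1 \times u_2$ is a minimizer. For $T > 0$, Theorem \ref{Main theorem on uniqueness} identifies it uniquely as $u_T^{\mathrm{can}}$. For $T = 0$, one chains the two extremal conditions defining $u_0^{\mathrm{can}}$: the $U$-minimality forces each marginal $v_j$ to attain $\min U_{\mathcal{P}_j}$; then among minimizers of $U$, the strict Gibbs inequality forces $u = v_1 \times v_2$; and finally maximizing $S = S_{\mathcal{P}_1}(v_1) + S_{\mathcal{P}_2}(v_2)$ among such product minimizers forces $v_j = u_j$ by definition of the subsystem $\mu$-canonical distributions.

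The main obstacle I anticipate is the careful verification that the marginals $v_j$ genuinely lie in $\M^{\exp,1}(P_j)$---especially log-convexity via H\"older and the treatment of lower semi-continuity on $\partial P_j$---so that the subsystem functionals $U_{\mathcal{P}_j}, S_{\mathcal{P}_j}$ applied to $v_j$ are unambiguous. Once this regularity is in place the remainder is essentially the textbook derivation that the equilibrium state of a composite isothermal system factorizes into subsystem equilibria.
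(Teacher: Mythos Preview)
Your proposal is correct and follows essentially the same route as the paper: define the marginals, verify they lie in $\M^{\exp,1}(P_j)$ via H\"older, Fatou, and Fubini, show that $U$ decomposes exactly along the marginals while $S$ satisfies the mutual-information (Gibbs/KL) inequality, and then chain these to conclude that $u_1 \times u_2$ minimizes $F_{\mathcal{P}_1 \times \mathcal{P}_2}(T,\bullet)$, invoking uniqueness for $T>0$ and the two-step extremal characterization for $T=0$. The paper's proof is organized identically, including the same justification of the entropy inequality via Jensen on the rescaled probability measures.
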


\begin{proof}
For $u \in \M^{\exp, 1} (P_1 \times P_2)$, we define a function $u_i$ on $P_i$ by 
\begin{align} 
\label{state of subsystem 1}
u_1 (x) 
&:= \frac{1}{\int_{P_2} d\mu_2} \int_{P_2} u (x, y) d\mu_2 (y), 
\\ \label{state of subsystem 2}
u_2 (y) 
&:= \frac{1}{\int_{P_1} d\mu_1} \int_{P_1} u (x, y) d\mu_1 (x).  
\end{align}
These are non-zero log convex functions by H\"older's inequality, lower semi-continuous by Fatou's lemma and $\int_{P_1} u_1 d\mu_1 = \int_{P_1} d\mu_1, \int_{P_2} u_2 d\mu_2 = \int_{P_2} d\mu_2$ by Fubini's theorem, so that $u_1 \in \M^{\exp, 1} (P_1)$ and $u_2 \in \M^{\exp, 1} (P_2)$. 
If $u$ is of the form $u = u' \times u''$ for $u' \in \M^{\exp, 1} (P_1)$ and $u'' \in \M^{\exp, 1} (P_2)$, we have $u_1 = u'$ and $u_2 = u''$. 

Now we compute 
\begin{align*}
\int_{\partial (P_1 \times P_2)} u d\sigma 
&= \int_{\partial P_1} \int_{P_2} u d\mu_2 d\sigma_1 + \int_{\partial P_2} \int_{P_1} u d\mu_1 d\sigma_2
\\
&= \int_{P_2} d\mu_2 \cdot \int_{\partial P_1} u_1 d\sigma_1 + \int_{P_1} d\mu_1 \cdot \int_{\partial P_2} u_2 d\sigma_2,
\end{align*}
which shows 
\begin{align}
\label{Internal energy is the sum of marginals}
U_{\mathcal{P}_1 \times \mathcal{P}_2} (u) = U_{\mathcal{P}_1} (u_1) +U_{\mathcal{P}_2} (u_2). 
\end{align}
This implies if $u$ minimizes $U_{\mathcal{P}_1 \times \mathcal{P}_2}$, then $u_i$ minimizes $U_{\mathcal{P}_i}$ for each $i = 1, 2$. 

On the other hand, we have 
\begin{align}
\label{Entropy is maximized by product}
S_{\mathcal{P}_1 \times \mathcal{P}_2} (u) \le S_{\mathcal{P}_1} (u_1) + S_{\mathcal{P}_2} (u_2) 
\end{align}
with the equality only when $u= u_1 \times u_2$. 
This is a paraphrase of the non-negativity of mutual information known in information theory. 
We can prove this as follows. 
Put $d\tilde{p} = u d\mu_1 \times d\mu_2/\int_{P_1 \times P_2} u d\mu_1 \times d\mu_2$, $d\tilde{p}_1 = u_1 d\mu_1/\int_{P_1} u_1 d\mu_1$ and $d\tilde{p}_2 = u_2 d\mu_2/\int_{P_2} u_2 d\mu_2$. 
Then we have 
\begin{align*} 
\int_{P_1 \times P_2} \frac{d\tilde{p}}{d\tilde{p}_1 d\tilde{p}_2} \log \frac{d\tilde{p}}{d\tilde{p}_1 d\tilde{p}_2} d\tilde{p}_1 d\tilde{p}_2 \ge 0
\end{align*}
by Jensen's inequality, where the equality holds if and only if $d\tilde{p} = d\tilde{p}_1 \times d\tilde{p}_2$. 
For $dp = d\mu/\int_{P_1 \times P_2} d\mu$, $dp_1 = d\mu_1/\int_{P_1} d\mu_1$ and $dp_2 =  d\mu_2/\int_{P_2} d\mu_2$, a simple calculation shows 
\begin{align*} 
\int_{P_1 \times P_2} \frac{d\tilde{p}}{d\tilde{p}_1 d\tilde{p}_2} \log \frac{d\tilde{p}}{d\tilde{p}_1 d\tilde{p}_2} d\tilde{p}_1 d\tilde{p}_2 
&= \int_{P_1 \times P_2} \frac{d\tilde{p}}{dp_1 dp_2} \log \frac{d\tilde{p}}{dp_1 dp_2} dp_1 dp_2
\\
& - \int_{P_1} \frac{d\tilde{p}_1}{dp_1} \log \frac{d\tilde{p}_1}{dp_1} dp_1 - \int_{P_2} \frac{d\tilde{p}_2}{dp_2} \log \frac{d\tilde{p}_2}{dp_2} dp_2.  
\end{align*}
The consequence is the inequality (\ref{Entropy is maximized by product}). 

Now for $T \ge 0$, let $u' \in \M^{\exp,1} (P_1), u'' \in \M^{\exp, 1} (P_2)$ be the $\mu$-canonical distributions of temperature $T$. 
Then for any $u \in \M^{\exp, 1} (P_1 \times P_2)$, we have 
\begin{align*} 
F_{\mathcal{P}_1 \times \mathcal{P}_2} (T, u) 
&\ge F_{\mathcal{P}_1} (T, u_1) + F_{\mathcal{P}_2} (T, u_2) 
\\
&\ge F_{\mathcal{P}_1} (T, u') + F_{\mathcal{P}_2} (T, u'') = F_{\mathcal{P}_1 \times \mathcal{P}_2} (T, u' \times u''),
\end{align*}
so that $u' \times u''$ minimizes $F_{\mathcal{P}_1 \times \mathcal{P}_2} (T, \bullet)$, which shows the claim for $T > 0$ by the uniqueness of minimizer. 
On the other hand, if $u$ minimizes $U_{\mathcal{P}_1 \times \mathcal{P}_2}$, then $u_i$ minimizes $U_{\mathcal{P}_i}$, so that we compute 
\[ S_{\mathcal{P}_1 \times \mathcal{P}_2} (u) \le S_{\mathcal{P}_1} (u_1) + S_{\mathcal{P}_2} (u_2) \le S_{\mathcal{P}_1} (u') + S_{\mathcal{P}_2} (u'') = S_{\mathcal{P}_1 \times \mathcal{P}_2} (u' \times u''). \]
Now the uniqueness of $\mu$-canonical distribution completes the proof. 
\end{proof}

\subsection{Equilibrium and isothermality} 

Equilibrium and isothermality are fundamental notions in thermodynamics. 
To compare our argument, let us briefly recall how the notion of equilibrium is formalized in stochastic thermodynamics. 
In stochastic thermodynamics, a measure space $(\Omega, d\mu)$ (in a simple setup, a finite set with the counting measure) together with a function $E: \Omega \to \mathbb{R}$ called Hamiltonian is interpreted as a thermodynamical system, which serves as the space of possible (deterministic) states. 
A probability measure $p$ on $\Omega$ is interpreted as nonequilibrium state. 
Nonequilibrium entropy $S (p)$ is defined by the relative entropy $- \int_\Omega \frac{dp}{dp_0} \log \frac{dp}{dp_0}$ with respect to $dp_0 = d\mu/|\Omega|$ and its nonequilibrium internal energy $U (p)$ is defined by $\int_\Omega E dp$. 
Equilibrium is described as entropy maximizer on a level set of internal energy. 

By the method of Lagrange multiplier, we can describe equilibrium as critical point of $\beta U -S$ for a proper choice of $\beta \in \mathbb{R}$ depending on energy level. 
The critical state $p_{\mathrm{can}} (\beta)$ can be written explicitly as 
\[ p_{\mathrm{can}} (\beta) = \frac{e^{-\beta E}d\mu}{\int_\Omega e^{-\beta E} d\mu}, \]
which is well known as canonical distribution. 
We can apply a similar argument to non-archimedean $\mu$-entropy. 
We note maximizer of non-archimedean $\mu$-entropy has no simple explicit description, which is different from stochastic thermodynamics. 

\subsubsection{Equilibrium}

Let $\mathcal{P} = (P, d\mu, d\sigma)$ be a system. 
For $U \in \mathbb{R}$, we consider the energy level set: 
\begin{align}
\M^{\exp, 1} (\mathcal{P}, U) := \{ u \in \M^{\exp,1} (P) ~|~ U_{\mathcal{P}} (u) = U \}. 
\end{align}
We call a state $u \in \M^{\exp, 1} (\mathcal{P}, U)$ \textit{equilibrium of internal $\mu$-energy $U$} if it maximizes the entropy $S_{\mathcal{P}}$ on the level set $\M^{\exp, 1} (\mathcal{P}, U)$. 
For instance, the trivial state $1_P$ is equilibrium of internal $\mu$-energy $U = U_{\mathcal{P}} (1_P)$. 

Our main interest is in the equilibrium of internal $\mu$-energy $U$ in the interval
\begin{align}
\mathfrak{U}_{\mathcal{P}} := [\min U_{\mathcal{P}}, U_{\mathcal{P}} (1_P)],
\end{align}
for which we can show the existence. 
We should note even though we have proved the compactness of the sublevel set $\bigcup_{U \ge U'} \M^{\exp, 1} (\mathcal{P}, U')$, we cannot conclude the compactness of the level set $\M^{\exp, 1} (\mathcal{P}, U)$ as $U_{\mathcal{P}}$ is only lower semi-continuous, so the existence of equilibrium is not a direct consequence of our compactness result. 
We make use of the continuity of the family of $\mu$-canonical distributions $u^{\mathrm{can}}_T$, which is a consequence of the uniqueness. 

\begin{thm}
\label{Existence of equilibrium}
Let $\mathcal{P}$ be a system. 
For $U \in \mathfrak{U}_{\mathcal{P}}$, there exists a unique equilibrium $u^{\mathrm{eq}} (U)$ of internal $\mu$-energy $U$. 
\end{thm}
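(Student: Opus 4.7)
The plan is to realize each equilibrium as a $\mu$-canonical distribution $u_T^{\mathrm{can}}$ for a suitable temperature $T \in [0,\infty]$, exploiting the continuous family produced by Theorem \ref{continuous family of canonical distributions}. The key bridge between the two notions is the trivial identity $F_{\mathcal{P}}(T,u) = U - T S_{\mathcal{P}}(u)$ on the level set $\M^{\exp,1}(\mathcal{P},U)$: a global minimizer of $F_{\mathcal{P}}(T,\bullet)$ that happens to land in this level set is automatically an $S_{\mathcal{P}}$-maximizer on it.

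For existence, I would first invoke Theorem \ref{continuous family of canonical distributions} together with the preceding monotonicity proposition: the map $T \mapsto U_{\mathcal{P}}(u_T^{\mathrm{can}})$ is continuous and increasing on $[0,\infty]$, equals $\min U_{\mathcal{P}}$ at $T = 0$, and tends to $U_{\mathcal{P}}(1_P)$ as $T \to \infty$. An intermediate value argument then produces, for every $U \in \mathfrak{U}_{\mathcal{P}}$, some $T = T(U) \in [0,\infty]$ with $U_{\mathcal{P}}(u_{T(U)}^{\mathrm{can}}) = U$. For any other $u' \in \M^{\exp,1}(\mathcal{P},U)$, the global minimality of $u_{T(U)}^{\mathrm{can}}$ for $F_{\mathcal{P}}(T(U),\bullet)$ combined with the above identity yields $S_{\mathcal{P}}(u') \le S_{\mathcal{P}}(u_{T(U)}^{\mathrm{can}})$, so $u^{\mathrm{eq}}(U) := u_{T(U)}^{\mathrm{can}}$ is indeed an equilibrium.

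For uniqueness, given any equilibrium $u$ of internal $\mu$-energy $U$, the relation $S_{\mathcal{P}}(u) = S_{\mathcal{P}}(u_{T(U)}^{\mathrm{can}})$ implies $F_{\mathcal{P}}(T(U),u) = F_{\mathcal{P}}(T(U),u_{T(U)}^{\mathrm{can}}) = \min F_{\mathcal{P}}(T(U),\bullet)$. For $T(U) > 0$, Theorem \ref{Main theorem on uniqueness} forces $u = u_{T(U)}^{\mathrm{can}}$ directly. When $T(U) = 0$, we have $U = \min U_{\mathcal{P}}$ and $u$ is a ground state, which is already entropy-maximizing by assumption, so $u = u_0^{\mathrm{can}}$ by the defining property of the $\mu$-canonical distribution at temperature zero. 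When $T(U) = \infty$, we have $U = U_{\mathcal{P}}(1_P)$, and the identification $u_\infty^{\mathrm{can}} = 1_P$ in Theorem \ref{continuous family of canonical distributions} forces $S_{\mathcal{P}}(u) \ge S_{\mathcal{P}}(1_P) = 0$; since $0$ is the maximal value of $S_{\mathcal{P}}$ attained only at $1_P$ (by strict concavity), we conclude $u = 1_P$.

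The main obstacle is the endpoint $T = \infty$: without the strong continuity at infinity and the identification $u_\infty^{\mathrm{can}} = 1_P$ furnished by Theorem \ref{continuous family of canonical distributions}, the one-parameter family would fail to sweep out all of $\mathfrak{U}_{\mathcal{P}}$, and a direct level-set compactness argument is blocked because $U_{\mathcal{P}}$ is only lower semi-continuous. Non-injectivity of $T \mapsto U_{\mathcal{P}}(u_T^{\mathrm{can}})$ on a subinterval is harmless, since two temperatures giving the same internal $\mu$-energy necessarily produce the same canonical distribution by the uniqueness of $F_{\mathcal{P}}(T,\bullet)$-minimizer for $T > 0$.
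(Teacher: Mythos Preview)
Your existence argument is exactly the paper's: both use the continuity of $T \mapsto U_{\mathcal{P}}(u_T^{\mathrm{can}})$ on $[0,\infty]$ from Theorem \ref{continuous family of canonical distributions} to hit the prescribed energy level, and then the identity $F_{\mathcal{P}}(T,u) = U - T S_{\mathcal{P}}(u)$ on the level set to convert free-energy minimality into entropy maximality. One small wrinkle: your sentence ``the global minimality of $u_{T(U)}^{\mathrm{can}}$ for $F_{\mathcal{P}}(T(U),\bullet)$ \ldots\ yields $S_{\mathcal{P}}(u') \le S_{\mathcal{P}}(u_{T(U)}^{\mathrm{can}})$'' only works verbatim for $T(U) \in (0,\infty)$, since at $T=0$ the identity gives no entropy information and at $T=\infty$ the functional is undefined. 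The paper handles $U = \min U_{\mathcal{P}}$ separately (it is literally the defining property of $u_0^{\mathrm{can}}$), and your own uniqueness paragraph supplies the patch at $T = \infty$; you just need to move those observations into the existence part as well.

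Your uniqueness argument, by contrast, differs from the paper's. The paper gives a one-line direct proof: $U_{\mathcal{P}}$ is affine and $S_{\mathcal{P}}$ is strictly concave along $(1-t)u_0 + t u_1$, so two distinct equilibria $u_0, u_1$ at level $U$ would produce $\tfrac{1}{2}(u_0+u_1)$ in the same level set with strictly larger entropy, a contradiction. This avoids any case split on $T(U)$ and does not invoke Theorem \ref{Main theorem on uniqueness} at all. Your route---identifying any equilibrium as an $F_{\mathcal{P}}(T(U),\bullet)$-minimizer and then appealing to uniqueness of such minimizers---is correct but more circuitous, and forces you into the three-way case analysis $T(U) > 0$, $T(U) = 0$, $T(U) = \infty$. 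The paper's argument is cleaner and also shows uniqueness for \emph{all} $U \in \mathbb{R}$ at which the level set is nonempty, not just $U \in \mathfrak{U}_{\mathcal{P}}$.
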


\begin{proof}
The uniqueness of equilibrium follows by the affinity of $U_{\mathcal{P}}$ and the strict convexity of $S_{\mathcal{P}}$. 
This is a general fact for $U \in \mathbb{R}$. 

As for $U = \min U_{\mathcal{P}}$, equilibrium of this internal $\mu$-energy is nothing but the $\mu$-canonical distribution of temperature $T=0$, so we already proved the existence. 

Now we assume $U \in (\min U_{\mathcal{P}}, U_{\mathcal{P}} (1_P)]$ and show the existence of equilibrium of internal $\mu$-energy $U$. 
Recall we proved $U_{\mathcal{P}} (u_T^{\mathrm{can}})$ for the $\mu$-canonical distribution $u_T^{\mathrm{can}}$ is continuous on $[0, \infty]$ and its image is $[\min U_{\mathcal{P}}, U_{\mathcal{P}} (1_P)]$. 
It follows that there is $T \in (0, \infty]$ satisfying $U_{\mathcal{P}} (u_T^{\mathrm{can}}) = U$, hence $u_T^{\mathrm{can}} \in \M^{\exp, 1} (\mathcal{P}, U)$. 
For any other $u \in \M^{\exp, 1} (\mathcal{P}, U)$, we have 
\[ S_{\mathcal{P}} (u) = \frac{1}{T} (U - F_{\mathcal{P}} (T, u)) \le \frac{1}{T} (U - F_{\mathcal{P}} (T, u_T^{\mathrm{can}})) = S_{\mathcal{P}} (u_T^{\mathrm{can}}), \]
which shows $u_T^{\mathrm{can}}$ is the equilibrium of internal $\mu$-energy $U$. 
\end{proof}

We recall a system $\mathcal{P}$ is called K-unstable if $\mathrm{DF} (q) < 0$ for some convex function $q$. 
Thanks to Theorem \ref{muK-semistability is equivalent to mu-entropy maximization}, this is equivalent to say 
\begin{align}
\mathfrak{U}_{\mathcal{P}}^* := [\min U_{\mathcal{P}}, U_{\mathcal{P}} (1_P)) 
\end{align}
is nonempty. 
For a system $\mathcal{P}_1$ and a K-unstable system $\mathcal{P}_2$, we have $\mathfrak{U}_{\mathcal{P}_1} + \mathfrak{U}_{\mathcal{P}_2}^* = \mathfrak{U}_{\mathcal{P}_1 \times \mathcal{P}_2}^*$ for the Minkowski sum, and $\mathfrak{U}_{\mathcal{P}_1}^* + \mathfrak{U}_{\mathcal{P}_2}^* = \mathfrak{U}_{\mathcal{P}_1 \times \mathcal{P}_2}^*$ when further $\mathcal{P}_1$ is K-unstable. 

Now let $\mathcal{P}$ be a K-unstable system. 
By the monotonic continuity of $U_{\mathcal{P}} (u^{\mathrm{can}}_T)$, for each $U \in \mathfrak{U}_{\mathcal{P}}^*$, the subset
\begin{align}
\mathbb{T}_{\mathcal{P}}^{\mathrm{can}} (U) := \{ T \in [0, \infty) ~|~ U_{\mathcal{P}} (u_T^{\mathrm{can}}) = U \} 
\end{align}
is a nonempty compact interval of $[0, \infty)$, which has only one element except for at most countably many $U \in \mathfrak{U}_{\mathcal{P}}^*$. 
By the above proof, the $\mu$-canonical distribution of temperature $T \ge 0$ is the equilibrium of internal $\mu$-energy $U \in \mathfrak{U}_{\mathcal{P}}^*$ if and only if $T \in \mathbb{T}_{\mathcal{P}}^{\mathrm{can}} (U)$. 
Later we characterize this $\mathbb{T}_{\mathcal{P}}^{\mathrm{can}} (U)$ in terms of the equilibria $u^{\mathrm{eq}} (U)$ rather than the $\mu$-canonical distributions $u_T^{\mathrm{can}}$.

By Remark \ref{Fut of exp q is the difference of NAmu}, for any $u = u (q) \in \M^{\exp, 1} (P)$ with $U_{\mathcal{P}} (u) \in \mathfrak{U}_{\mathcal{P}}^*$, we have 
\[ \frac{1}{\int_P u d\mu} \mathrm{DF} (u) = 2\pi (U_{\mathcal{P}} (u) - U_{\mathcal{P}} (1_P)) < 0. \] 
This allows us to interpret $U_{\mathcal{P}} - U_{\mathcal{P}} (1_P)$ as ``ability of destabilization'' and equilibrium of internal $\mu$-energy $U \in \mathfrak{U}_{\mathcal{P}}^*$ as ``the most balanced state among all states of the same ability of destabilization''. 

\subsubsection{Isothermality}

We firstly began our theory with the enigmatic parameter $T$ called temperature, studied the minimization of $F_{\mathcal{P}} (T, \bullet)$ and finally reached the notion of equilibrium, which is a priori irrelevant to the parameter $T$. 

Now we can shift our perspective. 
According to the teaching of thermodynamics, we can rediscover the temperature $T$ at least in three essentially equivalent ways: 
\begin{enumerate}
    \item (Lagrange multiplier) $1/T^{\mathrm{eq}} (U) = (\partial S^{\mathrm{eq}}/\partial U) (U)$. 

    \item (Mutual interaction) Mutual interaction with thermostat. 
    
    \item (Carnot theorem) The ratio of heats $Q_1/Q_2$ in Carnot cycle. 
\end{enumerate}
The aim of our observation is to understand the role of temperature $T$ and the free $\mu$-energy $F_{\mathcal{P}} (T, u)$ in terms of mutual interaction. 

Now consider equilibria $u_1, u_2$ of internal $\mu$-energy $U_1, U_2$ on systems $\mathcal{P}_1, \mathcal{P}_2$, respectively. 
We are interested in comparing the product state $u_1 \times u_2$ and the equilibrium $u$ of internal $\mu$-energy $U_1 + U_2$ of the composite system $\mathcal{P}_1 \times \mathcal{P}_2$. 
Since in general we have the strict inclusion
\begin{align}
\label{product state}
\M^{\exp,1} (\mathcal{P}_1, U_1) \times \M^{\exp,1} (\mathcal{P}_2, U_2) \subsetneq \M^{\exp,1} (\mathcal{P}_1 \times \mathcal{P}_2, U_1 + U_2), 
\end{align}
the equilibrium $u$ of the composite system may differ from the initial interaction-free state $u_1 \times u_2$. 
In thermodynamics, this is known as thermalisation: the initial states are preserved under mutual interaction only when the systems are isothermal, otherwise, one subsystem is eventually warmed up and the other is cooled down. 

\begin{prop}
Let $u$ be an equilibrium on the composite system $\mathcal{P}_1 \times \mathcal{P}_2$. 
Then $u_i \in \M^{\exp, 1} (P_i)$ defined by (\ref{state of subsystem 1}) and (\ref{state of subsystem 2}) are equilibria of subsystems $\mathcal{P}_i$ and we have $u = u_1 \times u_2$. 
\end{prop}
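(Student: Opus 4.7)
The plan is to exploit the two structural identities established in the proof of Theorem \ref{canonical distribution of product}: the additivity of internal $\mu$-energy under marginals, namely $U_{\mathcal{P}_1 \times \mathcal{P}_2}(u) = U_{\mathcal{P}_1}(u_1) + U_{\mathcal{P}_2}(u_2)$, and the subadditivity of entropy $S_{\mathcal{P}_1 \times \mathcal{P}_2}(u) \le S_{\mathcal{P}_1}(u_1) + S_{\mathcal{P}_2}(u_2)$, with equality if and only if $u = u_1 \times u_2$. Combined with the uniqueness of equilibrium at each energy level (Theorem \ref{Existence of equilibrium}), these two facts force both the product decomposition of $u$ and the equilibrium property of each marginal.

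First I would compute the marginals of the product state $u_1 \times u_2$: by Fubini they are just $u_1$ and $u_2$ themselves, so $U_{\mathcal{P}_1 \times \mathcal{P}_2}(u_1 \times u_2) = U_{\mathcal{P}_1}(u_1) + U_{\mathcal{P}_2}(u_2) = U_{\mathcal{P}_1 \times \mathcal{P}_2}(u)$, meaning $u_1 \times u_2$ lies in the same energy level set as $u$. Then by the marginal entropy inequality one gets $S_{\mathcal{P}_1 \times \mathcal{P}_2}(u_1 \times u_2) = S_{\mathcal{P}_1}(u_1) + S_{\mathcal{P}_2}(u_2) \ge S_{\mathcal{P}_1 \times \mathcal{P}_2}(u)$. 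Since $u$ is an equilibrium at this energy level, this inequality is in fact an equality, and the unique equilibrium at a given internal $\mu$-energy provided by Theorem \ref{Existence of equilibrium} forces $u = u_1 \times u_2$.

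Next I would show each $u_i$ is an equilibrium of $\mathcal{P}_i$ of internal $\mu$-energy $U_i := U_{\mathcal{P}_i}(u_i)$ by a swap argument. Suppose, to get a contradiction, that $u_1$ is not equilibrium: there exists $u_1' \in \M^{\exp,1}(\mathcal{P}_1, U_1)$ with $S_{\mathcal{P}_1}(u_1') > S_{\mathcal{P}_1}(u_1)$. Then $u_1' \times u_2$ has the same internal $\mu$-energy $U_1 + U_2 = U_{\mathcal{P}_1 \times \mathcal{P}_2}(u)$ by additivity, but
\begin{equation*}
S_{\mathcal{P}_1 \times \mathcal{P}_2}(u_1' \times u_2) = S_{\mathcal{P}_1}(u_1') + S_{\mathcal{P}_2}(u_2) > S_{\mathcal{P}_1}(u_1) + S_{\mathcal{P}_2}(u_2) = S_{\mathcal{P}_1 \times \mathcal{P}_2}(u),
\end{equation*}
contradicting that $u$ maximizes $S_{\mathcal{P}_1 \times \mathcal{P}_2}$ on its energy level set. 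The symmetric argument handles $u_2$.

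The main conceptual content is really contained in (\ref{Internal energy is the sum of marginals}) and (\ref{Entropy is maximized by product}), which have already been established; the only genuine obstacle is checking that $u_1' \times u_2$ is an admissible competitor, i.e.\ lies in $\M^{\exp,1}(\mathcal{P}_1 \times \mathcal{P}_2, U)$. This reduces to verifying that the product of two elements of $\M^{\exp,1}(P_i)$ is again log convex, lower semi-continuous, positive, and satisfies $\int_{P_1 \times P_2} u_1' \times u_2 \, d\mu_1 \times d\mu_2 = \int_{P_1 \times P_2} d\mu_1 \times d\mu_2$, all of which follow directly from Fubini and the product structure, together with a brief appeal to the additivity identity to place the product in the correct energy level set.
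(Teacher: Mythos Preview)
Your argument is correct and is exactly the approach the paper intends: the paper's proof consists of the single line ``This is a consequence of (\ref{Internal energy is the sum of marginals}) and (\ref{Entropy is maximized by product})'', and you have simply spelled out those consequences. One minor simplification: you do not actually need the uniqueness from Theorem \ref{Existence of equilibrium} in the first step, since once you have $S_{\mathcal{P}_1 \times \mathcal{P}_2}(u) = S_{\mathcal{P}_1}(u_1) + S_{\mathcal{P}_2}(u_2)$ you are in the equality case of (\ref{Entropy is maximized by product}), which directly gives $u = u_1 \times u_2$.
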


\begin{proof}
This is a consequence of (\ref{Internal energy is the sum of marginals}) and (\ref{Entropy is maximized by product}). 
\end{proof}

\begin{defin}
Let $u', u''$ be equilibria of two K-unstable systems $\mathcal{P}_1, \mathcal{P}_2$, respectively. 
We say $u'$ and $u''$ are \textit{isothermal} if the product $u' \times u''$ is the equilibrium of internal $\mu$-energy $U_{\mathcal{P}_1} (u') + U_{\mathcal{P}_2} (u'')$ on the composite system $\mathcal{P}_1 \times \mathcal{P}_2$. 
\end{defin}

Let us observe the isothermality is reflexive. 

\begin{lem}
\label{isothermal is reflexive}
Let $u$ be an equilibrium of a system $\mathcal{P}$. 
Then $u^{\times k}$ on $\mathcal{P}^{\times k}$ is the equilibrium of internal $\mu$-energy $k U_{\mathcal{P}} (u)$. 
In particular, $u^{\times k}$ and $u^{\times \ell}$ are isothermal. 
\end{lem}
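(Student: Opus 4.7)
The plan is to exploit a fact already visible in the proof of Theorem \ref{Existence of equilibrium}: every equilibrium $u$ of internal $\mu$-energy $U \in \mathfrak{U}_{\mathcal{P}}$ coincides with the $\mu$-canonical distribution $u_T^{\mathrm{can}}$ for some $T \in [0, \infty]$ determined by $U_{\mathcal{P}}(u_T^{\mathrm{can}}) = U$. Granted this identification, multiplicativity of equilibria reduces to multiplicativity of canonical distributions, which we already have.

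First I would write $u = u_T^{\mathrm{can}}$ and, by iterating Theorem \ref{canonical distribution of product} $k-1$ times, conclude that $u^{\times k}$ is the $\mu$-canonical distribution of temperature $T$ on $\mathcal{P}^{\times k}$. Next, iterating the additivity (\ref{Internal energy is the sum of marginals}) gives $U_{\mathcal{P}^{\times k}}(u^{\times k}) = k\, U_{\mathcal{P}}(u)$; in particular $\min U_{\mathcal{P}^{\times k}} = k \min U_{\mathcal{P}}$ and $U_{\mathcal{P}^{\times k}}(1_{P^k}) = k\, U_{\mathcal{P}}(1_P)$, so $\mathfrak{U}_{\mathcal{P}^{\times k}} = k\, \mathfrak{U}_{\mathcal{P}}$ and $k\,U_{\mathcal{P}}(u)$ lies in the range where Theorem \ref{Existence of equilibrium} applies. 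Finally I would apply the argument from the proof of Theorem \ref{Existence of equilibrium} on the composite system $\mathcal{P}^{\times k}$: for any $v \in \M^{\exp,1}(\mathcal{P}^{\times k}, k\, U_{\mathcal{P}}(u))$,
\[
S_{\mathcal{P}^{\times k}}(v) = \tfrac{1}{T}\bigl(k\,U_{\mathcal{P}}(u) - F_{\mathcal{P}^{\times k}}(T, v)\bigr) \le \tfrac{1}{T}\bigl(k\,U_{\mathcal{P}}(u) - F_{\mathcal{P}^{\times k}}(T, u^{\times k})\bigr) = S_{\mathcal{P}^{\times k}}(u^{\times k}),
\]
so $u^{\times k}$ is the equilibrium of internal $\mu$-energy $k\,U_{\mathcal{P}}(u)$. (For $T=0$ the same conclusion follows directly from the definition of ground state, together with the fact that $u_0^{\mathrm{can}} \times \cdots \times u_0^{\mathrm{can}}$ is a ground state on the product that simultaneously maximizes $S_{\mathcal{P}^{\times k}}$ among ground states, by the strict-concavity argument in the proof of Theorem \ref{canonical distribution of product}.)

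For the ``in particular'' part, note that both $u^{\times k}$ on $\mathcal{P}^{\times k}$ and $u^{\times \ell}$ on $\mathcal{P}^{\times \ell}$ are equilibria arising from the same parameter $T$. By the first part applied with exponent $k+\ell$, the product $u^{\times k} \times u^{\times \ell} = u^{\times (k+\ell)}$ is the equilibrium on $\mathcal{P}^{\times k} \times \mathcal{P}^{\times \ell} = \mathcal{P}^{\times(k+\ell)}$ of internal $\mu$-energy $(k+\ell) U_{\mathcal{P}}(u) = U_{\mathcal{P}^{\times k}}(u^{\times k}) + U_{\mathcal{P}^{\times \ell}}(u^{\times \ell})$, which is exactly the isothermality condition in the definition.

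There is no real obstacle here: the heavy lifting has already been done in Theorem \ref{canonical distribution of product} (multiplicativity of canonical distributions) and in the proof of Theorem \ref{Existence of equilibrium} (correspondence between equilibria and canonical distributions via the inequality $S \le (U - F)/T$). The only point requiring a moment of care is ensuring $kU$ lands in the admissible interval $\mathfrak{U}_{\mathcal{P}^{\times k}}$, which is immediate from the additivity of $U_{\mathcal{P}}$ on products and from $1_{P^k} = 1_P^{\times k}$.
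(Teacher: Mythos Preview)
Your proof is correct but takes a genuinely different route from the paper. You identify the equilibrium $u$ with a $\mu$-canonical distribution $u_T^{\mathrm{can}}$ via the proof of Theorem \ref{Existence of equilibrium}, then invoke the multiplicativity of canonical distributions (Theorem \ref{canonical distribution of product}) on the product, and finally re-apply the inequality $S \le (U-F)/T$ on $\mathcal{P}^{\times k}$. The paper instead argues directly by symmetrization, without any reference to canonical distributions: taking the equilibrium $\tilde{u}$ on $\mathcal{P}^{\times k}$ of energy $kU$, it forms the marginals $\tilde{u}_i$ and their average $u' = \tfrac{1}{k}\sum_i \tilde{u}_i$; by affinity of $U_{\mathcal{P}}$ and strict concavity of $S_{\mathcal{P}}$, the state ${u'}^{\times k}$ has the same energy and strictly larger entropy unless all $\tilde{u}_i$ coincide, which then forces $\tilde{u}_i = u$ by uniqueness of the equilibrium at level $U$. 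Your approach is more conceptual---it exhibits the result as a consequence of the equilibrium/canonical-distribution correspondence already established---but it leans on heavier prior results and is confined to $U_{\mathcal{P}}(u) \in \mathfrak{U}_{\mathcal{P}}$, since that is where the identification $u = u_T^{\mathrm{can}}$ is available. The paper's argument is more elementary and, once one reads ``let $\tilde{u}$ be the equilibrium'' as ``let $\tilde{u}$ be any state of energy $kU$'', applies to an equilibrium $u$ at any energy level.
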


\begin{proof}
Put $U := U_{\mathcal{P}} (u)$. 
Let $\tilde{u} \in \M^{\exp, 1} (P^{\times k})$ be the equilibrium of internal $\mu$-energy $kU$. 
We put 
\begin{align*}
\tilde{u}_i := \frac{1}{\int_P d\mu} \int_P \tilde{u} (x_1, \ldots, x_k) d\mu (x_i). 
\end{align*}
Suppose $\tilde{u}_i \neq \tilde{u}_j$ for some $i, j$. 
Then for $u' := (\tilde{u}_1 + \cdots + \tilde{u}_k)/k$, we have 
\begin{align*}
U_{\mathcal{P}^{\times k}} ({u'}^{\times k}) 
&= k U_{\mathcal{P}} (u') = U_{\mathcal{P}} (\tilde{u}_1) + \cdots + U_{\mathcal{P}} (\tilde{u}_k) = U_{\mathcal{P}^{\times k}} (\tilde{u}) = kU
\\
S_{\mathcal{P}^{\times k}} ({u'}^{\times k}) 
& = k S_{\mathcal{P}} (u') > S_{\mathcal{P}} (\tilde{u}_1) + \cdots + S_{\mathcal{P}} (\tilde{u}_k) \ge S_{\mathcal{P}^{\times k}} (\tilde{u})
\end{align*}
by the strict concavity of $S_{\mathcal{P}}$. 
This contradicts to the assumption that $\tilde{u}$ is the maximizer of $S_{\mathcal{P}^{\times k}}$ on $\M^{\exp, 1} (\mathcal{P}^{\times k}, kU)$. 
Thus we have $\tilde{u}_i = \tilde{u}_j = u$ for every $i, j$ and $\tilde{u} = u^{\times k}$, which shows the claim. 
\end{proof}

We expect the isothermality is an equivalence relation for pairs $(\mathcal{P}, U)$, which in thermodynamics is assumed by the zeroth law.  
(Observe this is not true if we include K-stable systems. )
To see the transitivity, we need strict monotonicity of the canonical entropy $S_{\mathcal{P}}^{\mathrm{can}} (T) = S_{\mathcal{P}} (u^{\mathrm{can}}_T)$ on $T \ge 0$, but what we know at present is monotonicity in a weak sense. 

We will prove the strict monotonicity under a slightly better regularity assumption on optimizer. 
Before discussing it, we observe the Lagrange multiplier interpretation of temperature. 

\subsubsection{Equilibrium temperature as Lagrange multiplier}

Let $\mathcal{P}$ be a system. 
For $U \in \mathbb{R}$, we put 
\begin{align}
S_{\mathcal{P}}^{\mathrm{eq}} (U) := \sup \{ S_{\mathcal{P}} (u) ~|~ u \in \M^{\exp, 1} (\mathcal{P}, U) \}. 
\end{align}
We note $S_{\mathcal{P}}^{\mathrm{eq}} (U) = -\infty$ on $U < \min U_{\mathcal{P}}$. 
The following superadditivity is clear from (\ref{product state}): 
\begin{align} 
S_{\mathcal{P}_1 \times \mathcal{P}_2}^{\mathrm{eq}} (U_1 + U_2) \ge S_{\mathcal{P}_1}^{\mathrm{eq}} (U_1) + S_{\mathcal{P}_2}^{\mathrm{eq}} (U_2). 
\end{align}

For general $U_1, U_2 \in \mathbb{R}$, we say pairs $(\mathcal{P}_1, U_1)$ and $(\mathcal{P}_2, U_2)$ are \textit{isothermal} if 
\[ S_{\mathcal{P}_1 \times \mathcal{P}_2}^{\mathrm{eq}} (U_1 + U_2) = S_{\mathcal{P}_1}^{\mathrm{eq}} (U_1) + S_{\mathcal{P}_2}^{\mathrm{eq}} (U_2). \]
This definition makes sense even when there is no equilibria of internal $\mu$-energy $U_i$. 
When $U_i \in \mathfrak{U}_{\mathcal{P}}$, this definition is equivalent to the former definition: the equality holds if and only if the equilibria $u_1, u_2$ of internal $\mu$-energy $U_1, U_2$ on $\mathcal{P}_1, \mathcal{P}_2$, respectively, are isothermal. 
In particular, we have 
\[ S_{\mathcal{P}^{\times k}}^{\mathrm{eq}} (kU) = k S_{\mathcal{P}}^{\mathrm{eq}} (U) \]
by Lemma \ref{isothermal is reflexive}. 

\begin{prop}
The functional $S_{\mathcal{P}}^{\mathrm{eq}} (U)$ is concave on $U \in \mathbb{R}$ whose maximum is achieved at $U = U_{\mathcal{P}} (1_P)$. 
Moreover, it is strictly concave and strictly increasing on the interval $\mathfrak{U}_{\mathcal{P}}$. 
\end{prop}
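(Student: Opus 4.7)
The plan is to leverage the two structural facts isolated in the proposition preceding Theorem \ref{Main theorem on uniqueness}: on linear combinations $u_t = (1-t)u_0 + tu_1$ the internal $\mu$-energy $U_{\mathcal{P}}$ is affine (equation (\ref{Internal energy is affine})), while the entropy $S_{\mathcal{P}}$ is concave, with strict concavity whenever $u_0 \neq u_1$ (equation (\ref{Entropy is strictly concave})). Pushing these facts through the supremum defining $S_{\mathcal{P}}^{\mathrm{eq}}$ yields concavity; the strict statements on $\mathfrak{U}_{\mathcal{P}}$ are then obtained by upgrading the supremum to an attained maximum via Theorem \ref{Existence of equilibrium}.

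First I would prove concavity of $S_{\mathcal{P}}^{\mathrm{eq}}$ on all of $\mathbb{R}$. The case where one of the level sets $\M^{\exp,1}(\mathcal{P}, U_i)$ is empty is trivial, since then $S_{\mathcal{P}}^{\mathrm{eq}}(U_i) = -\infty$. Otherwise, given near-optimal $u_i \in \M^{\exp,1}(\mathcal{P}, U_i)$, the convex combination $u_t$ is again in $\M^{\exp,1}(P)$ by the log-convexity computation (H\"older) from the earlier proposition, and by affinity of $U_{\mathcal{P}}$ it sits in $\M^{\exp,1}(\mathcal{P}, (1-t)U_0 + tU_1)$ with $S_{\mathcal{P}}(u_t) \ge (1-t)S_{\mathcal{P}}(u_0) + tS_{\mathcal{P}}(u_1)$. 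Taking sup in $u_0, u_1$ gives concavity. The maximum is attained at $U_{\mathcal{P}}(1_P)$ because $1_P \in \M^{\exp,1}(\mathcal{P}, U_{\mathcal{P}}(1_P))$ already realizes the global maximum $S_{\mathcal{P}}(1_P) = 0$ of the entropy functional on all of $\M^{\exp,1}(P)$.

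For the strict statements on $\mathfrak{U}_{\mathcal{P}}$, Theorem \ref{Existence of equilibrium} supplies unique equilibria $u_0^{\mathrm{eq}}, u_1^{\mathrm{eq}}$ for any prescribed $U_0, U_1 \in \mathfrak{U}_{\mathcal{P}}$, and $U_0 \neq U_1$ forces $u_0^{\mathrm{eq}} \neq u_1^{\mathrm{eq}}$. Substituting these into the argument above and invoking strict concavity of $S_{\mathcal{P}}$ gives $S_{\mathcal{P}}^{\mathrm{eq}}((1-t)U_0 + tU_1) > (1-t)S_{\mathcal{P}}^{\mathrm{eq}}(U_0) + tS_{\mathcal{P}}^{\mathrm{eq}}(U_1)$. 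Strict monotonicity on $\mathfrak{U}_{\mathcal{P}}$ then follows purely formally from strict concavity plus the maximum sitting at the right endpoint: for $U_0 < U_1 < U_{\mathcal{P}}(1_P)$ in $\mathfrak{U}_{\mathcal{P}}$, write $U_1$ as a convex combination of $U_0$ and $U_{\mathcal{P}}(1_P)$ with nontrivial weights and use $S_{\mathcal{P}}^{\mathrm{eq}}(U_0) \le 0 = S_{\mathcal{P}}^{\mathrm{eq}}(U_{\mathcal{P}}(1_P))$; the boundary case $U_1 = U_{\mathcal{P}}(1_P)$ is handled directly by uniqueness of $1_P$ as the global maximizer of $S_{\mathcal{P}}$.

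No genuine analytic obstacle appears in this proof; the statement is essentially a convex-analytic consequence of earlier inputs. The one point demanding care is the passage from the definition of $S_{\mathcal{P}}^{\mathrm{eq}}$ as a supremum to the use of an actual equilibrium attaining that supremum, because only then does the \emph{strict} concavity of $S_{\mathcal{P}}$ (which requires distinct states at the endpoints) translate into strict concavity of $S_{\mathcal{P}}^{\mathrm{eq}}$; this passage is exactly what Theorem \ref{Existence of equilibrium} supplies on $\mathfrak{U}_{\mathcal{P}}$, and conversely it explains why the strict statements are restricted to this interval.
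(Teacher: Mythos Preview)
Your proof is correct and follows essentially the same direct approach as the paper: affinity of $U_{\mathcal{P}}$ plus concavity of $S_{\mathcal{P}}$ along linear paths gives concavity of $S_{\mathcal{P}}^{\mathrm{eq}}$, and the existence of equilibria on $\mathfrak{U}_{\mathcal{P}}$ (Theorem \ref{Existence of equilibrium}) upgrades this to strict concavity, from which strict monotonicity follows. The paper additionally sketches a ``product trick'' derivation of (non-strict) concavity via $S_{\mathcal{P}^{\times k}}^{\mathrm{eq}}(kU) = kS_{\mathcal{P}}^{\mathrm{eq}}(U)$ and superadditivity, but this is presented only as a thermodynamically suggestive alternative and the substantive argument is the same as yours.
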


\begin{proof}
For $t = p/(p+q), 1-t = q/(p+q)$ with $p,q \in \mathbb{N}$, we compute 
\begin{align*} 
S_{\mathcal{P}}^{\mathrm{eq}} ((1-t) U_0 + t U_1) 
&= \frac{1}{p+q} S_{\mathcal{P}^{\times (p+q)}}^{\mathrm{eq}} (q U_0 + p U_1)
\\
&\ge \frac{q}{p+q} S_{\mathcal{P}}^{\mathrm{eq}} (U_0) + \frac{p}{p+q} S_{\mathcal{P}}^{\mathrm{eq}} (U_1), 
\end{align*}
which shows the concavity. 
This product trick is expressive of a thermodynamical intuition behind the concavity, though we present another proof in the following. 

We can also see the concavity in a more direct way: 
for $u_0 \in \M^{\exp, 1} (\mathcal{P}, U_0)$ and $u_1 \in \M^{\exp, 1} (\mathcal{P}, u_1)$, we have $(1-t) u_0 + t u_1 \in \M^{\exp, 1} (\mathcal{P}, (1-t) U_0 + t U_1)$, so that 
\begin{align*} 
S_{\mathcal{P}}^{\mathrm{eq}} ((1-t) U_0 + t U_1) 
&= \sup \{ S_{\mathcal{P}} (u) ~|~ u \in \M^{\exp, 1} (\mathcal{P}, (1-t) U_0 + t U_1) \}
\\
&\ge \sup \{ S_{\mathcal{P}} ((1-t) u_0 + t u_1) ~|~ u_i \in \M^{\exp, 1} (\mathcal{P}, U_i) \text{ for } i=0,1 \}
\\
&\ge \sup \{ (1-t) S_{\mathcal{P}} (u_0) + t S_{\mathcal{P}} (u_1) ~|~ u_i \in \M^{\exp, 1} (\mathcal{P}, U_i) \text{ for } i=0,1 \}
\\
&= (1-t) S_{\mathcal{P}}^{\mathrm{eq}} (U_0) + t S_{\mathcal{P}}^{\mathrm{eq}} (U_1).
\end{align*}
Since the last supremum is achieved for $U_0, U_1 \in [\min U_{\mathcal{P}}, U_{\mathcal{P}} (1_P)]$, the third inequality is strict for $U_0 \neq U_1$ and $t\neq 0,1$ by the strict concavity of $S_{\mathcal{P}}$. 
The strict monotonicity is a consequence of strict concavity. 
\end{proof}

By the concavity, the right and left differentials exist. 
We put 
\begin{align}
\beta_{\mathcal{P}}^{\mathrm{eq}, +} (U) &:= \partial_{U_+} S_{\mathcal{P}}^{\mathrm{eq}} (U), \quad
T_{\mathcal{P}}^{\mathrm{eq}, +} (U) := \beta_{\mathcal{P}}^{\mathrm{eq}, +} (U)^{-1}, 
\\
\beta_{\mathcal{P}}^{\mathrm{eq}, -} (U) &:= \partial_{U_-} S_{\mathcal{P}}^{\mathrm{eq}} (U), \quad
T_{\mathcal{P}}^{\mathrm{eq}, -} (U) := \beta_{\mathcal{P}}^{\mathrm{eq}, -} (U)^{-1}, 
\end{align}
which are right and left continuous, respectively. 
We further put 
\begin{align}
\mathbb{T}_{\mathcal{P}}^{\mathrm{eq}} (U) := [T_{\mathcal{P}}^{\mathrm{eq}, -} (U), T_{\mathcal{P}}^{\mathrm{eq}, +} (U)] .
\end{align}

For $U' < U \in \mathfrak{U}_{\mathcal{P}}^*$, we have 
\[ 0 = T_{\mathcal{P}}^{\mathrm{eq}, -} (\min U_{\mathcal{P}}) \le \mathbb{T}_{\mathcal{P}}^{\mathrm{eq}} (U') < \mathbb{T}_{\mathcal{P}}^{\mathrm{eq}} (U) < \infty \]
by the strict concavity, where for two intervals $[a, b], [c,d]$ we write $[a,b] \le [c,d]$ (resp. $[a,b] < [c,d]$) if $b \le c$ (resp. $b < c$). 
Now we show the following. 

\begin{prop}
Let $\mathcal{P}$ be K-unstable system. 
Then for $U \in \mathfrak{U}_{\mathcal{P}}^*$, we have 
\[ \mathbb{T}_{\mathcal{P}}^{\mathrm{can}} (U) = \mathbb{T}_{\mathcal{P}}^{\mathrm{eq}} (U). \]
\end{prop}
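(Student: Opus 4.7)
The plan is to establish a Legendre--Fenchel duality between the one-parameter family of $\mu$-canonical distributions $\{u_T^{\mathrm{can}}\}_{T \geq 0}$ and the equilibrium entropy function $S_{\mathcal{P}}^{\mathrm{eq}}$, and then extract the equality of the two intervals from the standard subdifferential formula for concave conjugates. The core identity is
\[ F^{\mathrm{can}}_{\mathcal{P}}(T) := F_{\mathcal{P}}(T, u_T^{\mathrm{can}}) = \inf_{u \in \M^{\exp,1}(P)} F_{\mathcal{P}}(T, u) = \inf_{U \in \mathbb{R}} \bigl[U - T S_{\mathcal{P}}^{\mathrm{eq}}(U)\bigr], \]
obtained by slicing $\M^{\exp,1}(P) = \bigsqcup_U \M^{\exp,1}(\mathcal{P}, U)$ and using the definition of $S_{\mathcal{P}}^{\mathrm{eq}}$ (with $S_{\mathcal{P}}^{\mathrm{eq}}(U) = -\infty$ outside the locus where the fibre is nonempty).

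I would first observe that, by the argument in the proof of Theorem \ref{Existence of equilibrium}, the distribution $u_T^{\mathrm{can}}$ is itself the equilibrium of internal $\mu$-energy $U^*(T) := U_{\mathcal{P}}(u_T^{\mathrm{can}})$, so $S_{\mathcal{P}}(u_T^{\mathrm{can}}) = S_{\mathcal{P}}^{\mathrm{eq}}(U^*(T))$ and the infimum in the core identity is attained at $U = U^*(T)$. Hence $T \in \mathbb{T}_{\mathcal{P}}^{\mathrm{can}}(U)$ iff $U^*(T) = U$ iff $U$ is a minimizer of the convex function $U' \mapsto U' - T S_{\mathcal{P}}^{\mathrm{eq}}(U')$.

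Next, for $U \in \mathfrak{U}_{\mathcal{P}}^*$, by the preceding proposition $S_{\mathcal{P}}^{\mathrm{eq}}$ is strictly concave and strictly increasing on $\mathfrak{U}_{\mathcal{P}}$, so for $T \geq 0$ the function $U' \mapsto U' - T S_{\mathcal{P}}^{\mathrm{eq}}(U')$ is strictly convex on $\mathfrak{U}_{\mathcal{P}}$; in particular it has a unique minimizer in $\mathfrak{U}_{\mathcal{P}}$, and a separate elementary check rules out minimizers outside $\mathfrak{U}_{\mathcal{P}}$ (using that $S_{\mathcal{P}}^{\mathrm{eq}} = -\infty$ to the left of $\min U_{\mathcal{P}}$ and that $S_{\mathcal{P}}^{\mathrm{eq}}(U')\le 0$ while $U'$ grows unboundedly to the right). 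Standard subdifferential calculus for concave $S_{\mathcal{P}}^{\mathrm{eq}}$ then says $U$ is this minimizer precisely when $1/T \in \partial S_{\mathcal{P}}^{\mathrm{eq}}(U) = [\beta^{\mathrm{eq},+}_{\mathcal{P}}(U), \beta^{\mathrm{eq},-}_{\mathcal{P}}(U)]$ (with the convention $1/0 = +\infty$), which rearranges exactly to $T \in [T^{\mathrm{eq},-}_{\mathcal{P}}(U), T^{\mathrm{eq},+}_{\mathcal{P}}(U)] = \mathbb{T}_{\mathcal{P}}^{\mathrm{eq}}(U)$. Combining with the previous step yields $\mathbb{T}_{\mathcal{P}}^{\mathrm{can}}(U) = \mathbb{T}_{\mathcal{P}}^{\mathrm{eq}}(U)$.

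The main obstacle I expect is the treatment of the left-endpoint case $U = \min U_{\mathcal{P}}$ (equivalently $T = 0$), where $\beta^{\mathrm{eq},-}_{\mathcal{P}}(\min U_{\mathcal{P}}) = +\infty$ so $T^{\mathrm{eq},-}_{\mathcal{P}}(\min U_{\mathcal{P}}) = 0$ and the subdifferential characterization has to be read with the appropriate convention; for this case one simply checks directly from the construction of $u_0^{\mathrm{can}}$ that it is the (unique) equilibrium of internal $\mu$-energy $\min U_{\mathcal{P}}$, matching $0 \in \mathbb{T}_{\mathcal{P}}^{\mathrm{eq}}(\min U_{\mathcal{P}})$. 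Away from this endpoint the argument is a direct application of Legendre--Fenchel duality and presents no substantive difficulty.
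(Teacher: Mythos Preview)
Your argument is correct, and it is a genuinely cleaner route than the paper's. Both proofs share the subdifferential computation for the inclusion $\mathbb{T}_{\mathcal{P}}^{\mathrm{eq}}(U)\subset\mathbb{T}_{\mathcal{P}}^{\mathrm{can}}(U)$, but they diverge on the reverse inclusion. The paper does not argue $\mathbb{T}_{\mathcal{P}}^{\mathrm{can}}(U)\subset\mathbb{T}_{\mathcal{P}}^{\mathrm{eq}}(U)$ directly; instead it shows that both families $\{\mathbb{T}_{\mathcal{P}}^{\mathrm{can}}(U)\}_{U\in\mathfrak{U}_{\mathcal{P}}^*}$ and $\{\mathbb{T}_{\mathcal{P}}^{\mathrm{eq}}(U)\}_{U\in\mathfrak{U}_{\mathcal{P}}^*}$ partition $[0,\infty)$ (using K-instability to check that $U_{\mathcal{P}}(u_T^{\mathrm{can}})<U_{\mathcal{P}}(1_P)$ for all $T$, and monotone continuity of $T\mapsto U_{\mathcal{P}}(u_T^{\mathrm{can}})$), and then infers equality of the pieces from the already-proved inclusion. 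Your approach instead invokes the observation from the proof of Theorem~\ref{Existence of equilibrium} that $u_T^{\mathrm{can}}$ is the equilibrium of its own energy level $U^*(T)$, which immediately identifies $U^*(T)$ as a minimizer of $U'\mapsto U'-T S_{\mathcal{P}}^{\mathrm{eq}}(U')$; the reverse inclusion then drops out of the subdifferential characterization without any covering argument. The trade-off is that your route leans on Theorem~\ref{Existence of equilibrium} and on the strict concavity of $S_{\mathcal{P}}^{\mathrm{eq}}$ on $\mathfrak{U}_{\mathcal{P}}$ (for uniqueness of the minimizer), whereas the paper's covering argument is more self-contained but less transparent. One small point to tidy: for $T=0$ the function $U'\mapsto U'$ is not strictly convex, so your uniqueness step should appeal to strict monotonicity there rather than strict convexity; you already flag this endpoint case, so just make the reasoning explicit.
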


\begin{proof}
Since $\mathcal{P}$ is K-unstable, for the $\mu$-canonical distribution $u_T^{\mathrm{can}}$ of temperature $T \ge 0$, we have $U_{\mathcal{P}} (u_T^{\mathrm{can}}) < U_{\mathcal{P}} (1_P)$: if $U_{\mathcal{P}} (u_T^{\mathrm{can}}) = U_{\mathcal{P}} (1_P)$, we have $u_T^{\mathrm{can}} = 1_P$, so that $\mathcal{P}$ is K-semistable. 
This implies
\[ \bigsqcup_{U \in \mathfrak{U}_{\mathcal{P}}^*} \mathbb{T}_{\mathcal{P}}^{\mathrm{can}} (U) = [0,\infty). \]
By the monotonicity of $U_{\mathcal{P}} (u_T^{\mathrm{can}})$, we have 
\[ \mathbb{T}_{\mathcal{P}}^{\mathrm{can}} (U') < \mathbb{T}_{\mathcal{P}}^{\mathrm{can}} (U) \]
for $U' < U \in \mathfrak{U}_{\mathcal{P}}^*$. 
It follows that 
\begin{align}
\label{limit of minimum canonical temperature}
\lim_{U \nearrow U_{\mathcal{P}} (1_P)} \min \mathbb{T}_{\mathcal{P}}^{\mathrm{can}} (U) = \infty.  
\end{align}
Indeed, if not, we have $U_{\mathcal{P}} (u_T^{\mathrm{can}}) \ge U_{\mathcal{P}} (1_P)$ for $T > \lim_{U \nearrow U_{\mathcal{P}} (1_P)} \min \mathbb{T}_{\mathcal{P}}^{\mathrm{can}} (U)$, which is a contradiction. 

We observe
\begin{align*}
\mathbb{T}_{\mathcal{P}}^{\mathrm{eq}} (U) \subset \mathbb{T}_{\mathcal{P}}^{\mathrm{can}} (U).
\end{align*}
For $\beta \in [\beta_{\mathcal{P}}^{\mathrm{eq}, +} (U), \beta_{\mathcal{P}}^{\mathrm{eq}, -} (U)]$ and $U' \in \mathbb{R}$, we have 
\[ S_{\mathcal{P}}^{\mathrm{eq}} (U') \le \beta (U-U') + S_{\mathcal{P}}^{\mathrm{eq}} (U) \]
by concavity. 
Thus for $\beta \in [\beta_{\mathcal{P}}^{\mathrm{eq}, +} (U), \beta_{\mathcal{P}}^{\mathrm{eq}, -} (U)]$, we have 
\[ U - \beta^{-1} S_{\mathcal{P}}^{\mathrm{eq}} (U) \le U' - \beta^{-1} S_{\mathcal{P}}^{\mathrm{eq}} (U'). \]
Now suppose $u$ is the equilibrium of internal $\mu$-energy $U$, then for another state $u' \in \M^{\exp, 1} (P)$, we compute 
\begin{align*}
F_{\mathcal{P}} (\beta^{-1}, u) 
&= U - \beta^{-1} S_{\mathcal{P}}^{\mathrm{eq}} (U) 
\\
&\le U_{\mathcal{P}} (u') - \beta^{-1} S_{\mathcal{P}}^{\mathrm{eq}} (U_{\mathcal{P}} (u')) \le U_{\mathcal{P}} (u') - \beta^{-1} S_{\mathcal{P}} (u') \le F_{\mathcal{P}} (\beta^{-1}, u'). 
\end{align*}
Thus $u$ is the $\mu$-canonical distribution of temperature $T = \beta^{-1}$ for $T \in \mathbb{T}_{\mathcal{P}}^{\mathrm{eq}} (U)$, which shows $\mathbb{T}_{\mathcal{P}}^{\mathrm{eq}} (U) \subset \mathbb{T}_{\mathcal{P}}^{\mathrm{can}} (U)$. 

As a general property on left and right derivative of convex function, we have 
\[ \bigsqcup_{U \in \mathfrak{U}_{\mathcal{P}}^*} \mathbb{T}_{\mathcal{P}}^{\mathrm{eq}} (U) = [0, \lim_{U \nearrow U_{\mathcal{P}} (1_P)} T_{\mathcal{P}}^{\mathrm{eq}, +} (U)). \]
On the other hand, by $\mathbb{T}_{\mathcal{P}}^{\mathrm{eq}} (U) \subset \mathbb{T}_{\mathcal{P}}^{\mathrm{can}} (U)$ and (\ref{limit of minimum canonical temperature}), we have 
\[ \lim_{U \nearrow U_{\mathcal{P}} (1_P)} T_{\mathcal{P}}^{\mathrm{eq}, +} (U) \ge \lim_{U \nearrow U_{\mathcal{P}} (1_P)} \min \mathbb{T}_{\mathcal{P}}^{\mathrm{can}} (U) = \infty, \]
so that we get
\[ \bigsqcup_{U \in \mathfrak{U}_{\mathcal{P}}^*} \mathbb{T}_{\mathcal{P}}^{\mathrm{eq}} (U) = [0, \infty). \]
It follows that 
\[ \bigsqcup_{U \in \mathfrak{U}_{\mathcal{P}}^*} (\mathbb{T}_{\mathcal{P}}^{\mathrm{can}} (U) \setminus \mathbb{T}_{\mathcal{P}}^{\mathrm{eq}} (U)) = \bigsqcup_{U \in \mathfrak{U}_{\mathcal{P}}^*} \mathbb{T}_{\mathcal{P}}^{\mathrm{can}} (U) \setminus \bigsqcup_{U \in \mathfrak{U}_{\mathcal{P}}^*} \mathbb{T}_{\mathcal{P}}^{\mathrm{eq}} (U) = \emptyset, \]
which shows $\mathbb{T}_{\mathcal{P}}^{\mathrm{can}} (U) = \mathbb{T}_{\mathcal{P}}^{\mathrm{eq}} (U)$. 
\end{proof}

\subsubsection{Thermalisation}

Now we can characterize the isothermality by quantities for equilibrium. 

\begin{prop}
\label{characterization of isothermality}
Let $\mathcal{P}_1, \mathcal{P}_2$ be K-unstable systems and $u_1, u_2$ be the equilibria of internal $\mu$-energy $U_1 \in \mathfrak{U}_{\mathcal{P}_1}^*, U_2 \in \mathfrak{U}_{\mathcal{P}_2}^*$ on $\mathcal{P}_1, \mathcal{P}_2$, respectively. 
Then the equilibria $u_1$ and $u_2$ are isothermal if and only if 
\begin{align}
\label{equivalent condition for isothermality}
\mathbb{T}_{\mathcal{P}_1}^{\mathrm{eq}} (U_1) \cap \mathbb{T}_{\mathcal{P}_2}^{\mathrm{eq}} (U_2) \neq \emptyset.
\end{align} 
In this case, we have 
\[ \mathbb{T}_{\mathcal{P}_1 \times \mathcal{P}_2}^{\mathrm{eq}} (U_1 +U_2) =  \mathbb{T}_{\mathcal{P}_1}^{\mathrm{eq}} (U_1) \cap \mathbb{T}_{\mathcal{P}_2}^{\mathrm{eq}} (U_2). \]
\end{prop}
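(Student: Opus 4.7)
The plan is to reduce both assertions to the immediately preceding identification $\mathbb{T}_{\mathcal{P}}^{\mathrm{can}}(U) = \mathbb{T}_{\mathcal{P}}^{\mathrm{eq}}(U)$ combined with the product formula $u^{\mathrm{can}}_{\mathcal{P}_1 \times \mathcal{P}_2, T} = u^{\mathrm{can}}_{\mathcal{P}_1, T} \times u^{\mathrm{can}}_{\mathcal{P}_2, T}$ of Theorem \ref{canonical distribution of product}. First I would check that this apparatus is actually available for the composite system: since $\mathcal{P}_1$ is K-unstable, the product state $u^{\mathrm{can}}_{\mathcal{P}_1, 0} \times 1_{P_2}$ satisfies $U_{\mathcal{P}_1 \times \mathcal{P}_2}(u^{\mathrm{can}}_{\mathcal{P}_1, 0} \times 1_{P_2}) = U_{\mathcal{P}_1}(u^{\mathrm{can}}_{\mathcal{P}_1,0}) + U_{\mathcal{P}_2}(1_{P_2}) < U_{\mathcal{P}_1 \times \mathcal{P}_2}(1_{P_1 \times P_2})$ by (\ref{Internal energy is the sum of marginals}), so $\mathcal{P}_1 \times \mathcal{P}_2$ is K-unstable and $U_1 + U_2 \in \mathfrak{U}^*_{\mathcal{P}_1 \times \mathcal{P}_2}$, hence the preceding proposition applies to all three systems.

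For the $(\Leftarrow)$ direction, pick $T$ in $\mathbb{T}^{\mathrm{eq}}_{\mathcal{P}_1}(U_1) \cap \mathbb{T}^{\mathrm{eq}}_{\mathcal{P}_2}(U_2) = \mathbb{T}^{\mathrm{can}}_{\mathcal{P}_1}(U_1) \cap \mathbb{T}^{\mathrm{can}}_{\mathcal{P}_2}(U_2)$. By the uniqueness of equilibrium at internal $\mu$-energy $U_i$ (Theorem \ref{Existence of equilibrium}), the $\mu$-canonical distribution $u^{\mathrm{can}}_{\mathcal{P}_i, T}$ coincides with $u_i$. Theorem \ref{canonical distribution of product} then yields $u_1 \times u_2 = u^{\mathrm{can}}_{\mathcal{P}_1 \times \mathcal{P}_2, T}$, which as noted in the proof of Theorem \ref{Existence of equilibrium} is the equilibrium at its own internal $\mu$-energy; by (\ref{Internal energy is the sum of marginals}) this energy equals $U_1 + U_2$, so $u_1, u_2$ are isothermal and $T \in \mathbb{T}^{\mathrm{can}}_{\mathcal{P}_1 \times \mathcal{P}_2}(U_1 + U_2) = \mathbb{T}^{\mathrm{eq}}_{\mathcal{P}_1 \times \mathcal{P}_2}(U_1 + U_2)$. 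For $(\Rightarrow)$, assume $u_1 \times u_2$ is the equilibrium at $U_1 + U_2$, and pick any $T \in \mathbb{T}^{\mathrm{eq}}_{\mathcal{P}_1 \times \mathcal{P}_2}(U_1 + U_2)$ (nonempty by the preceding proposition). Uniqueness of equilibrium forces $u_1 \times u_2 = u^{\mathrm{can}}_{\mathcal{P}_1 \times \mathcal{P}_2, T} = u^{\mathrm{can}}_{\mathcal{P}_1, T} \times u^{\mathrm{can}}_{\mathcal{P}_2, T}$. Applying the marginalization (\ref{state of subsystem 1}), (\ref{state of subsystem 2}) to both sides (and using the common normalization $\int_{P_i} u_i\, d\mu_i = \int_{P_i} d\mu_i$ shared by $u_i$ and $u^{\mathrm{can}}_{\mathcal{P}_i, T}$) gives $u_i = u^{\mathrm{can}}_{\mathcal{P}_i, T}$, whence $T$ lies in the intersection.

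The equality $\mathbb{T}^{\mathrm{eq}}_{\mathcal{P}_1 \times \mathcal{P}_2}(U_1 + U_2) = \mathbb{T}^{\mathrm{eq}}_{\mathcal{P}_1}(U_1) \cap \mathbb{T}^{\mathrm{eq}}_{\mathcal{P}_2}(U_2)$ in the isothermal case is just the two arguments above read off for every admissible $T$: the argument in $(\Rightarrow)$ shows $\mathbb{T}^{\mathrm{eq}}_{\mathcal{P}_1 \times \mathcal{P}_2}(U_1 + U_2) \subset \mathbb{T}^{\mathrm{eq}}_{\mathcal{P}_1}(U_1) \cap \mathbb{T}^{\mathrm{eq}}_{\mathcal{P}_2}(U_2)$, while the argument in $(\Leftarrow)$ gives the reverse inclusion. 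I do not anticipate a serious obstacle; the only subtlety to keep track of is the marginal-uniqueness step $u_1 \times u_2 = u'_1 \times u'_2 \Rightarrow u_i = u'_i$, which follows at once from (\ref{state of subsystem 1})--(\ref{state of subsystem 2}) and the normalizations in $\M^{\exp, 1}$.
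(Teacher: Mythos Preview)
Your argument is correct and follows essentially the same route as the paper's proof: both directions hinge on the product formula of Theorem~\ref{canonical distribution of product} together with the identification $\mathbb{T}^{\mathrm{can}}_{\mathcal{P}}(U)=\mathbb{T}^{\mathrm{eq}}_{\mathcal{P}}(U)$, and your marginalization step is exactly how the paper extracts $u_i=u^{\mathrm{can}}_{\mathcal{P}_i,T}$ from the product identity. Your preliminary check that $\mathcal{P}_1\times\mathcal{P}_2$ is K-unstable with $U_1+U_2\in\mathfrak{U}^*_{\mathcal{P}_1\times\mathcal{P}_2}$ is a welcome bit of care that the paper leaves implicit (via the earlier remark $\mathfrak{U}^*_{\mathcal{P}_1}+\mathfrak{U}^*_{\mathcal{P}_2}=\mathfrak{U}^*_{\mathcal{P}_1\times\mathcal{P}_2}$).
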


\begin{proof}
Assume (\ref{equivalent condition for isothermality}) and take $T \in \mathbb{T}_{\mathcal{P}_1}^{\mathrm{eq}} (U_1) \cap \mathbb{T}_{\mathcal{P}_2}^{\mathrm{eq}} (U_2)$. 
Since $u_1, u_2$ are the $\mu$-canonical distribution of temperature $T$, the product $u_1 \times u_2$ is also the $\mu$-canonical distribution of temperature $T$ by Theorem \ref{canonical distribution of product}. 
Thus we have $T \in \mathbb{T}_{\mathcal{P}_1 \times \mathcal{P}_2}^{\mathrm{eq}} (U_1 +U_2)$ and hence $u_1 \times u_2$ is the equilibrium of internal $\mu$-energy $U_1 + U_2$. 
Therefore, $u_1$ and $u_2$ are isothermal. 

Suppose conversely $u_1$ and $u_2$ are isothermal. 
Since $u_1 \times u_2$ is the equilibrium of internal $\mu$-energy $U_1 + U_2$, it is the $\mu$-canonical distribution of temperature $T \in \mathbb{T}_{\mathcal{P}_1 \times \mathcal{P}_2}^{\mathrm{eq}} (U_1 + U_2)$. 
Again by Theorem \ref{canonical distribution of product}, $u_1, u_2$ are also the $\mu$-canonical distribution of temperature $T$ by the above proof. 
Thus we have $T \in \mathbb{T}_{\mathcal{P}_1}^{\mathrm{can}} (U_1) \cap \mathbb{T}_{\mathcal{P}_2}^{\mathrm{can}} (U_2)$. 
Since $\mathbb{T}_{\mathcal{P}_i}^{\mathrm{can}} (U_i) = \mathbb{T}_{\mathcal{P}_i}^{\mathrm{eq}} (U_i)$, we obtain the claim. 
\end{proof}

Now we introduce the following notion. 

\begin{defin}
Let $\mathcal{P}$ be a K-unstable system. 
We call $\mathcal{P}$ \textit{mild} if $\# \mathbb{T}_{\mathcal{P}}^{\mathrm{eq}} (U) = 1$ for every $U \in \mathfrak{U}_{\mathcal{P}}^*$. 
\end{defin}

We speculate every K-unstable system is mild, but this is still a conjecture. 
In the next section, we will see the mildness of some systems including $\mu$K-semistable systems as a consequence of slightly better regularity of equilibrium. 
We note if $\mathcal{P}_1$ is a system and $\mathcal{P}_2$ is a mild K-unstable system, then the composite system $\mathcal{P}_1 \times \mathcal{P}_2$ is also a mild K-unstable system. 

In view of Proposition \ref{characterization of isothermality}, the mildness assumption is essential for the transitivity of isothermaility. 
Here we conclude the following. 

\begin{cor}
Let $\mathcal{P}_1, \mathcal{P}_2, \mathcal{P}_3$ be K-unstable systems. 
Suppose $\mathcal{P}_2$ is mild.  
If for $U_1 \in \mathfrak{U}_{\mathcal{P}_1}^*, U_2 \in \mathfrak{U}_{\mathcal{P}_2}^*, U_3 \in \mathfrak{U}_{\mathcal{P}_3}^*$, $(\mathcal{P}_1, U_1)$ and $(\mathcal{P}_2, U_2)$, $(\mathcal{P}_2, U_2)$ and $(\mathcal{P}_3, U_3)$ are isothermal, respectively, then $(\mathcal{P}_1, U_1)$ and $(\mathcal{P}_3, U_3)$ are also isothermal. 
\end{cor}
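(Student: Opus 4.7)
The plan is to deduce this directly from the equivalent characterization of isothermality in Proposition \ref{characterization of isothermality}, using the mildness of $\mathcal{P}_2$ as a pigeonhole device that forces a common equilibrium temperature across all three systems.

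First, since $U_1 \in \mathfrak{U}_{\mathcal{P}_1}^*$, $U_2 \in \mathfrak{U}_{\mathcal{P}_2}^*$, $U_3 \in \mathfrak{U}_{\mathcal{P}_3}^*$, Theorem \ref{Existence of equilibrium} furnishes equilibria $u_i \in \M^{\exp,1}(P_i)$ of internal $\mu$-energy $U_i$ for $i = 1, 2, 3$. Applying Proposition \ref{characterization of isothermality} to the two assumed isothermalities, I get
\[
\mathbb{T}_{\mathcal{P}_1}^{\mathrm{eq}}(U_1) \cap \mathbb{T}_{\mathcal{P}_2}^{\mathrm{eq}}(U_2) \neq \emptyset, \qquad \mathbb{T}_{\mathcal{P}_2}^{\mathrm{eq}}(U_2) \cap \mathbb{T}_{\mathcal{P}_3}^{\mathrm{eq}}(U_3) \neq \emptyset.
\]

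Next, by the mildness assumption on $\mathcal{P}_2$, the set $\mathbb{T}_{\mathcal{P}_2}^{\mathrm{eq}}(U_2)$ consists of exactly one temperature $T_2$. The two nonempty intersections above then force $T_2 \in \mathbb{T}_{\mathcal{P}_1}^{\mathrm{eq}}(U_1)$ and $T_2 \in \mathbb{T}_{\mathcal{P}_3}^{\mathrm{eq}}(U_3)$, so
\[
T_2 \in \mathbb{T}_{\mathcal{P}_1}^{\mathrm{eq}}(U_1) \cap \mathbb{T}_{\mathcal{P}_3}^{\mathrm{eq}}(U_3).
\]
Invoking the ``if'' direction of Proposition \ref{characterization of isothermality} once more yields the isothermality of $(\mathcal{P}_1, U_1)$ and $(\mathcal{P}_3, U_3)$.

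There is essentially no analytic obstacle here: all the work has been done in Proposition \ref{characterization of isothermality} and in the identification $\mathbb{T}_{\mathcal{P}}^{\mathrm{can}} = \mathbb{T}_{\mathcal{P}}^{\mathrm{eq}}$ for K-unstable systems. The only point to watch is that the mildness hypothesis is indispensable precisely because the intervals $\mathbb{T}_{\mathcal{P}_i}^{\mathrm{eq}}(U_i)$ could in principle be degenerate intervals of positive length, in which case pairwise nonempty intersection need not imply a triple intersection; collapsing the middle interval to a single point eliminates this possibility and secures transitivity.
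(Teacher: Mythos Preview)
Your proof is correct and matches the paper's intended argument exactly: the paper does not spell out a proof but simply remarks ``In view of Proposition \ref{characterization of isothermality}, the mildness assumption is essential for the transitivity of isothermality,'' and then states the corollary. Your write-up is precisely the argument the paper leaves implicit.
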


\begin{figure}[H]
\begin{tikzpicture}
\draw(0.5,0.5)node{$\mathcal{P}_1$};
\draw(2,0.5)node{$\mathcal{P}_2$};
\draw[<-]
(0.8,0.5)--(1.2,0.5);
\draw[thick, blue]
(0,0)--(1,0)--(1,1)--(0,1)--(0,0);
\draw[thick, red]
(1,-0.5)--(1,1.5)--(3,1.5)--(3,-0.5)--(1,-0.5);
\end{tikzpicture}
\caption{Thermalisation of composite system: entropy increases and energy transfer is recognized as heat. }
\end{figure}
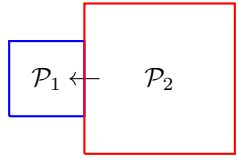

When two systems are not isothermal, the composite system is thermalized to medium temperatue. 

\begin{prop}
Let $\mathcal{P}_1, \mathcal{P}_2$ be K-unstable systems and $\mathcal{P} = \mathcal{P}_1 \times \mathcal{P}_2$ be the composite system. 
For $U_1 \in \mathfrak{U}_{\mathcal{P}_1}^*, U_2 \in \mathfrak{U}_{\mathcal{P}_2}^*$, we have either 
\[ \mathbb{T}_{\mathcal{P}_1}^{\mathrm{eq}} (U_1) < \mathbb{T}_{\mathcal{P}_2}^{\mathrm{eq}} (U_2), \quad \mathbb{T}_{\mathcal{P}_1}^{\mathrm{eq}} (U_1) \cap \mathbb{T}_{\mathcal{P}_2}^{\mathrm{eq}} (U_2) \neq \emptyset, \quad \mathbb{T}_{\mathcal{P}_1}^{\mathrm{eq}} (U_1) > \mathbb{T}_{\mathcal{P}_2}^{\mathrm{eq}} (U_2). \]
Suppose $\mathbb{T}_{\mathcal{P}_1}^{\mathrm{eq}} (U_1) < \mathbb{T}_{\mathcal{P}_2}^{\mathrm{eq}} (U_2)$, then we have 
\[ \mathbb{T}_{\mathcal{P}_1}^{\mathrm{eq}} (U_1) < \mathbb{T}_{\mathcal{P}_1 \times \mathcal{P}_2}^{\mathrm{eq}} (U_1 + U_2) < \mathbb{T}_{\mathcal{P}_2}^{\mathrm{eq}} (U_2). \]
\end{prop}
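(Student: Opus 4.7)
The plan is to split the statement into the trichotomy and the strict interior inclusion, and to transfer everything to the canonical-temperature picture. The trichotomy is immediate: by the concavity of $S_{\mathcal{P}_i}^{\mathrm{eq}}$ each $\mathbb{T}_{\mathcal{P}_i}^{\mathrm{eq}}(U_i) = [T_i^-, T_i^+]$ is a nonempty compact subinterval of $[0,\infty)$, and two closed bounded intervals $[a_1,b_1], [a_2,b_2] \subset \mathbb{R}$ satisfy exactly one of $b_1 < a_2$, $[a_1,b_1] \cap [a_2,b_2] \neq \emptyset$, or $b_2 < a_1$. So I would just note this and move on.

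For the second claim, suppose $T_1^+ < T_2^-$. Since both $\mathcal{P}_i$ are K-unstable so is the composite $\mathcal{P} := \mathcal{P}_1 \times \mathcal{P}_2$, and using the Minkowski sum identity $\mathfrak{U}_{\mathcal{P}_1}^* + \mathfrak{U}_{\mathcal{P}_2}^* = \mathfrak{U}_{\mathcal{P}}^*$ noted earlier, we have $U_1 + U_2 \in \mathfrak{U}_{\mathcal{P}}^*$. Hence the preceding proposition gives $\mathbb{T}_{\mathcal{P}_i}^{\mathrm{eq}}(U_i) = \mathbb{T}_{\mathcal{P}_i}^{\mathrm{can}}(U_i)$ for $i=1,2$ and $\mathbb{T}_{\mathcal{P}}^{\mathrm{eq}}(U_1+U_2) = \mathbb{T}_{\mathcal{P}}^{\mathrm{can}}(U_1+U_2)$, so it suffices to prove the strict chain for the canonical temperature sets. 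The decisive input is Theorem \ref{canonical distribution of product}: for every $T \geq 0$ the $\mu$-canonical distribution of the composite factors as $u^{\mathrm{can}}_T = u^{\mathrm{can}}_{T,1} \times u^{\mathrm{can}}_{T,2}$, and by (\ref{Internal energy is the sum of marginals}) the internal energy decomposes additively along this family:
\[
U_{\mathcal{P}}(u^{\mathrm{can}}_T) = U_{\mathcal{P}_1}(u^{\mathrm{can}}_{T,1}) + U_{\mathcal{P}_2}(u^{\mathrm{can}}_{T,2}).
\]

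The key step is then a three-zone analysis of this sum. The (weak) monotonicity of $T \mapsto U_{\mathcal{P}_i}(u^{\mathrm{can}}_{T,i})$ together with the partition $[0,\infty) = \bigsqcup_{U \in \mathfrak{U}_{\mathcal{P}_i}^*} \mathbb{T}_{\mathcal{P}_i}^{\mathrm{can}}(U)$ established in the proof of the previous proposition forces $U_{\mathcal{P}_i}(u^{\mathrm{can}}_{T,i}) < U_i$ for $T < T_i^-$ and $U_{\mathcal{P}_i}(u^{\mathrm{can}}_{T,i}) > U_i$ for $T > T_i^+$. Consequently, for $T \leq T_1^+ < T_2^-$ the first summand is $\leq U_1$ while the second is $< U_2$, so $U_{\mathcal{P}}(u^{\mathrm{can}}_T) < U_1 + U_2$; symmetrically, for $T \geq T_2^-$ one gets $U_{\mathcal{P}}(u^{\mathrm{can}}_T) > U_1 + U_2$. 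Hence $\mathbb{T}_{\mathcal{P}}^{\mathrm{can}}(U_1+U_2) \subset (T_1^+, T_2^-)$, which translates back to the desired strict chain.

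The main obstacle is essentially bookkeeping rather than new analysis: it amounts to carefully invoking the additive behavior of internal energy on canonical product states and the strict monotonicity of $U_{\mathcal{P}_i}(u^{\mathrm{can}}_T)$ off the interval $\mathbb{T}_{\mathcal{P}_i}^{\mathrm{can}}(U_i)$. All technical ingredients—Theorem \ref{canonical distribution of product}, the continuity/monotonicity of $T \mapsto U_{\mathcal{P}}(u^{\mathrm{can}}_T)$ from Theorem \ref{continuous family of canonical distributions}, the identification $\mathbb{T}^{\mathrm{can}} = \mathbb{T}^{\mathrm{eq}}$, and the partition of $[0,\infty)$ by canonical temperature sets—are already in hand, so no fresh compactness or variational argument is required.
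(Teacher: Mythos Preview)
Your argument is correct but follows a different route from the paper's. The paper works entirely on the equilibrium side: it takes the equilibrium $u$ of the composite system at energy $U_1+U_2$, forms its marginals $u_1,u_2$ via (\ref{state of subsystem 1})--(\ref{state of subsystem 2}), invokes Proposition~\ref{characterization of isothermality} to write $\mathbb{T}_{\mathcal{P}_1\times\mathcal{P}_2}^{\mathrm{eq}}(U_1+U_2)=\mathbb{T}_{\mathcal{P}_1}^{\mathrm{eq}}(U_{\mathcal{P}_1}(u_1))\cap\mathbb{T}_{\mathcal{P}_2}^{\mathrm{eq}}(U_{\mathcal{P}_2}(u_2))$, and then runs a case analysis on the sign of $U_{\mathcal{P}_1}(u_1)-U_1$, using the strict monotonicity $U'<U\Rightarrow\mathbb{T}_{\mathcal{P}_i}^{\mathrm{eq}}(U')<\mathbb{T}_{\mathcal{P}_i}^{\mathrm{eq}}(U)$ coming from strict concavity of $S_{\mathcal{P}_i}^{\mathrm{eq}}$. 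The trichotomy emerges as the exclusive correspondence between the three signs and the three interval relations, and the strict chain follows in the case $U_{\mathcal{P}_1}(u_1)>U_1$. By contrast, you pass immediately to the canonical picture via $\mathbb{T}^{\mathrm{eq}}=\mathbb{T}^{\mathrm{can}}$, use the additive decomposition $U_{\mathcal{P}}(u_T^{\mathrm{can}})=U_{\mathcal{P}_1}(u_{T,1}^{\mathrm{can}})+U_{\mathcal{P}_2}(u_{T,2}^{\mathrm{can}})$ from Theorem~\ref{canonical distribution of product}, and locate $\mathbb{T}_{\mathcal{P}}^{\mathrm{can}}(U_1+U_2)$ by a direct three-zone comparison of this sum against $U_1+U_2$. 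Your route is slightly more elementary in that it bypasses Proposition~\ref{characterization of isothermality} altogether; the paper's route is closer to the thermodynamic narrative (thermalisation redistributes energy between subsystems) and simultaneously yields the trichotomy from the case split rather than treating it as a separate interval-order triviality.
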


\begin{proof}
Let $u_1, u_2$ be the equilibria associated to $u$ as in (\ref{state of subsystem 1}) and (\ref{state of subsystem 2}). 
Since $u_1$ and $u_2$ are isothermal, by the above proposition, we have 
\[ \mathbb{T}_{\mathcal{P}_1 \times \mathcal{P}_2}^{\mathrm{eq}} (U_1 +U_2) =  \mathbb{T}_{\mathcal{P}_1}^{\mathrm{eq}} (U_{\mathcal{P}_1} (u_1)) \cap \mathbb{T}_{\mathcal{P}_2}^{\mathrm{eq}} (U_{\mathcal{P}_2} (u_2)). \]

Now we have exactly three possibilities: $U_{\mathcal{P}_1} (u_1) < U_1$, $U_{\mathcal{P}_1} (u_1) = U_1$ and $U_{\mathcal{P}_1} (u_1) > U_1$. 
Suppose $U_{\mathcal{P}_1} (u_1) > U_1$, then 
\[ \mathbb{T}_{\mathcal{P}_1 \times \mathcal{P}_2}^{\mathrm{eq}} (U_1 + U_2) = \mathbb{T}_{\mathcal{P}_1}^{\mathrm{eq}} (U_{\mathcal{P}_1} (u_1)) \cap \mathbb{T}_{\mathcal{P}_2}^{\mathrm{eq}} (U_{\mathcal{P}_2} (u_2)) \ge \mathbb{T}_{\mathcal{P}_1}^{\mathrm{eq}} (U_{\mathcal{P}_1} (u_1)) > \mathbb{T}_{\mathcal{P}_1}^{\mathrm{eq}} (U_1). \]
Since $U_{\mathcal{P}_2} (u_2) = (U_1 - U_{\mathcal{P}_1} (u_1)) + U_2 < U_2$, we get 
\[ \mathbb{T}_{\mathcal{P}_2}^{\mathrm{eq}} (U_2) > \mathbb{T}_{\mathcal{P}_1 \times \mathcal{P}_2}^{\mathrm{eq}} (U_1 + U_2) \]
by the same argument. 
It follows that $\mathbb{T}_{\mathcal{P}_1}^{\mathrm{eq}} (U_1) < \mathbb{T}_{\mathcal{P}_2}^{\mathrm{eq}} (U_2)$. 

Similarly, we obtain $\mathbb{T}_{\mathcal{P}_1}^{\mathrm{eq}} (U_1) > \mathbb{T}_{\mathcal{P}_2}^{\mathrm{eq}} (U_2)$ when $U_{\mathcal{P}_1} (u_1) < U_1$, $\mathbb{T}_{\mathcal{P}_1}^{\mathrm{eq}} (U_1) = \mathbb{T}_{\mathcal{P}_2}^{\mathrm{eq}} (U_2)$ when $U_{\mathcal{P}_1} (u_1) = U_1$. 
Since these are exclusive conditions, the claim is proved. 
\end{proof}

We will see the medium temperature $\mathbb{T}_{\mathcal{P}_1 \times \mathcal{P}_2}^{\mathrm{eq}} (U_1+U_2)$ can be arbitrary close to $\mathbb{T}_{\mathcal{P}_2}^{\mathrm{eq}}$ when the system $\mathcal{P}_2$ is sufficiently large. 

\subsection{Thermodynamics of non-archimedean $\mu$-entropy}

In the previous section, we discuss equilibrium and thermalisation. 
Here we rediscover the non-archimedean $\mu$-entropy from a further exploration on thermalisation. 
This observation is well-known in thermodynamics. 
We present it in a mathematically rigorous way. 

\subsubsection{Temperature as variable}

Let us firstly consider change of variables as usual in thermodynamics. 

Let $\mathcal{P}$ be a K-unstable system. 
Recall for $T \in [0, \infty)$, there exists unique $U \in \mathfrak{U}_{\mathcal{P}}^*$ such that $T \in \mathbb{T}_{\mathcal{P}}^{\mathrm{eq}}$. 
For a system $\mathcal{P}$ and $T \in [0, \infty)$, we put 
\begin{align}
U_{\mathcal{P}}^{\mathrm{can}} (T) := 
\begin{cases}
U \text{ satisfying } T \in \mathbb{T}_{\mathcal{P}}^{\mathrm{eq}} (U)
& \mathcal{P} \text{ is K-unstable}
\\
U_{\mathcal{P}} (1_P)
& \mathcal{P} \text{ is K-semistable}
\end{cases}
\end{align}
This can be regarded as the inverse function of $\mathbb{T}_{\mathcal{P}}^{\mathrm{eq}} (U)$. 
It is continuous and increasing on $T$. 
We further put 
\begin{align}
S_{\mathcal{P}}^{\mathrm{can}} (T) := S_{\mathcal{P}}^{\mathrm{eq}} (U_{\mathcal{P}}^{\mathrm{can}} (T)),
\end{align}
which is also continuous and increasing. 

Now for $T \in \mathbb{R}$, we introduce 
\begin{align} 
F_{\mathcal{P}}^{\mathrm{can}} (T) := \min_{u \in \M^{\exp, 1} (P)} F_{\mathcal{P}} (T, u) = \min_{u \in \M^{\exp, 1} (P)} (U_{\mathcal{P}} ( u) - T S_{\mathcal{P}} ( u)). 
\end{align}
This is a concave function on $T$. 
Then we have the following. 

\begin{prop}
\label{canonical free energy}
For $T \in [0, \infty)$, we have 
\[ F_{\mathcal{P}}^{\mathrm{can}} (T) = U_{\mathcal{P}}^{\mathrm{can}} (T) - T S_{\mathcal{P}}^{\mathrm{can}} (T). \]
Moreover, $F_{\mathcal{P}}^{\mathrm{can}} (T)$ is continuously differentiable on $T \in [0, \infty)$ with
\begin{align} 
\partial_T F_{\mathcal{P}}^{\mathrm{can}} (T) = - S_{\mathcal{P}}^{\mathrm{can}} (T).
\end{align}
\end{prop}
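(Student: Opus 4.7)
The plan is first to identify the minimum defining $F_{\mathcal{P}}^{\mathrm{can}}(T)$ with the equilibrium value at $u_T^{\mathrm{can}}$, and then establish continuous differentiability by a sandwich estimate using the linearity of $F_{\mathcal{P}}(\cdot, u)$ in $T$ for fixed $u$.

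First I would note that by Theorem \ref{Main theorem on existence} and Theorem \ref{Main theorem on uniqueness}, the minimum for $T \ge 0$ is attained at $u_T^{\mathrm{can}}$, hence
\[ F_{\mathcal{P}}^{\mathrm{can}}(T) = U_{\mathcal{P}}(u_T^{\mathrm{can}}) - T S_{\mathcal{P}}(u_T^{\mathrm{can}}). \]
To identify the right-hand side with $U_{\mathcal{P}}^{\mathrm{can}}(T) - T S_{\mathcal{P}}^{\mathrm{can}}(T)$ I would argue that $u_T^{\mathrm{can}}$ is the equilibrium of internal $\mu$-energy $U_{\mathcal{P}}(u_T^{\mathrm{can}})$: for any competitor $u$ with $U_{\mathcal{P}}(u) = U_{\mathcal{P}}(u_T^{\mathrm{can}})$, the inequality $F_{\mathcal{P}}(T, u) \ge F_{\mathcal{P}}(T, u_T^{\mathrm{can}})$ is just $S_{\mathcal{P}}(u) \le S_{\mathcal{P}}(u_T^{\mathrm{can}})$ (the $T=0$ case is the defining property of $u_0^{\mathrm{can}}$). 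In the K-unstable case the identification $\mathbb{T}_{\mathcal{P}}^{\mathrm{can}}(U) = \mathbb{T}_{\mathcal{P}}^{\mathrm{eq}}(U)$ already proved in the excerpt then gives $U_{\mathcal{P}}(u_T^{\mathrm{can}}) = U_{\mathcal{P}}^{\mathrm{can}}(T)$ and $S_{\mathcal{P}}(u_T^{\mathrm{can}}) = S_{\mathcal{P}}^{\mathrm{eq}}(U_{\mathcal{P}}^{\mathrm{can}}(T)) = S_{\mathcal{P}}^{\mathrm{can}}(T)$. In the K-semistable case, $u_T^{\mathrm{can}} = 1_P$ for every $T \ge 0$ by the uniqueness (since $1_P$ already minimizes $U_{\mathcal{P}}$ and maximizes $S_{\mathcal{P}}$), so both sides reduce to $U_{\mathcal{P}}(1_P)$.

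For the differentiability claim, $F_{\mathcal{P}}^{\mathrm{can}}$ is concave, being the infimum of the family of affine functions $T \mapsto U_{\mathcal{P}}(u) - T S_{\mathcal{P}}(u)$ indexed by $u \in \M^{\exp,1}(P)$. For any $T, T' \ge 0$, testing the variational definition at $u_T^{\mathrm{can}}$ and $u_{T'}^{\mathrm{can}}$ yields
\begin{align*}
F_{\mathcal{P}}^{\mathrm{can}}(T') &\le F_{\mathcal{P}}(T', u_T^{\mathrm{can}}) = F_{\mathcal{P}}^{\mathrm{can}}(T) - (T' - T) S_{\mathcal{P}}(u_T^{\mathrm{can}}), \\
F_{\mathcal{P}}^{\mathrm{can}}(T) &\le F_{\mathcal{P}}(T, u_{T'}^{\mathrm{can}}) = F_{\mathcal{P}}^{\mathrm{can}}(T') + (T' - T) S_{\mathcal{P}}(u_{T'}^{\mathrm{can}}).
\end{align*}
For $T' > T \ge 0$ these give the sandwich
\[ -S_{\mathcal{P}}(u_{T'}^{\mathrm{can}}) \le \frac{F_{\mathcal{P}}^{\mathrm{can}}(T') - F_{\mathcal{P}}^{\mathrm{can}}(T)}{T' - T} \le -S_{\mathcal{P}}(u_T^{\mathrm{can}}), \]
and the symmetric bounds for $T' < T$. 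Passing to $T' \to T$ and invoking the continuity of $T \mapsto S_{\mathcal{P}}(u_T^{\mathrm{can}})$ from Theorem \ref{continuous family of canonical distributions} produces $\partial_T F_{\mathcal{P}}^{\mathrm{can}}(T) = -S_{\mathcal{P}}(u_T^{\mathrm{can}}) = -S_{\mathcal{P}}^{\mathrm{can}}(T)$, and the same continuity promotes this to continuous differentiability.

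The argument is essentially bookkeeping of facts already in place; the only genuine subtlety is the identification $S_{\mathcal{P}}(u_T^{\mathrm{can}}) = S_{\mathcal{P}}^{\mathrm{can}}(T)$, which requires knowing that $u_T^{\mathrm{can}}$ is the entropy maximizer on the level set $\M^{\exp,1}(\mathcal{P}, U_{\mathcal{P}}(u_T^{\mathrm{can}}))$, and the endpoint $T=0$, where only the right derivative is needed (provided automatically by the same sandwich).
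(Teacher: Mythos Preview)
Your proof is correct and follows essentially the same approach as the paper. The paper phrases the differentiability step in subdifferential language---observing that $S_{\mathcal{P}}^{\mathrm{can}}(T')$ is a subgradient of $-F_{\mathcal{P}}^{\mathrm{can}}$ at $T'$ and invoking the general fact that a convex function with a continuous selection of subgradients is differentiable---while you unpack this as an explicit sandwich on the difference quotient; these are two ways of saying the same thing, and both rely on the continuity of $T\mapsto S_{\mathcal{P}}(u_T^{\mathrm{can}})$ from Theorem~\ref{continuous family of canonical distributions}. Your explicit case split for K-semistable versus K-unstable in the first identity is a small addition the paper leaves implicit.
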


\begin{proof}
For the $\mu$-canonical distribution of temperature $T$, we have 
\[ F_{\mathcal{P}}^{\mathrm{can}} (T) = F_{\mathcal{P}} (u_T^{\mathrm{can}}) =U_{\mathcal{P}} (u_T^{\mathrm{can}}) - T S_{\mathcal{P}} (u_T^{\mathrm{can}}) \]
by definiton. 

The $\mu$-canonical distribution $u_T^{\mathrm{can}}$ of temperature $T \in \mathbb{T}_{\mathcal{P}}^{\mathrm{eq}} (U)$ has internal $\mu$-energy $U_{\mathcal{P}} (u_T^{\mathrm{can}}) = U$, so that we have 
\[ U_{\mathcal{P}} (u_T^{\mathrm{can}}) = U_{\mathcal{P}}^{\mathrm{can}} (T). \]

Since $u_T^{\mathrm{can}} = u^{\mathrm{eq}} (U)$ for the equilibrium of internal $\mu$-energy $U = U_{\mathcal{P}}^{\mathrm{can}} (T)$, we have
\[ S_{\mathcal{P}} (u_T^{\mathrm{can}}) = S_{\mathcal{P}} (u^{\mathrm{eq}} (U)) = S_{\mathcal{P}}^{\mathrm{eq}} (U) = S_{\mathcal{P}}^{\mathrm{can}} (T). \] 
Therefore we get the first claim.

To see the second claim, note
\[ F_{\mathcal{P}}^{\mathrm{can}} (T) \le U_{\mathcal{P}} (u) - T S_{\mathcal{P}} (u) \]
for every $u \in \M^{\exp, 1} (P)$. 
In particular for $T, T' \ge 0$, we have 
\[ F_{\mathcal{P}}^{\mathrm{can}} (T) \le U_{\mathcal{P}}^{\mathrm{can}} (T') - T S_{\mathcal{P}}^{\mathrm{can}} (T') \]
with the equality at $T= T'$ thanks to the first claim. 
It follows that $S_{\mathcal{P}}^{\mathrm{can}} (T')$ is a subdifferential of the convex function $-F_{\mathcal{P}}^{\mathrm{can}} (T)$ at $T=T'$. 
Since $S_{\mathcal{P}}^{\mathrm{can}} (T)$ is continuous, $-F_{\mathcal{P}}^{\mathrm{can}} (T)$ is actually differentiable on $T \ge 0$ with the derivative $S_{\mathcal{P}}^{\mathrm{can}} (T)$. 
\end{proof}

\subsubsection{Mild regularity hypothesis}

The mildness of a K-unstable system $\mathcal{P}$ is equivalent to the strict monotonicity of $U_{\mathcal{P}}^{\mathrm{can}} (T)$. 
Since $S_{\mathcal{P}}^{\mathrm{eq}} (U)$ is strictly increasing, it is also equivalent to the strict monotonicity of $S_{\mathcal{P}}^{\mathrm{can}} (T)$. 

\begin{prop}
Let $\mathcal{P}$ be a K-unstable system. 
Suppose for every $T \ge 0$ there exists $p >1$ such that $\int_{\partial P} (u_T^{\mathrm{can}})^p d\sigma < \infty$ for the $\mu$-canonical distribution $u_T^{\mathrm{can}}$ of temperature $T$. 
Then $S_{\mathcal{P}}^{\mathrm{can}} (T)$ is strictly increasing, hence $\mathcal{P}$ is mild. 
\end{prop}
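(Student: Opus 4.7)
The plan is to argue by contradiction: suppose $S_{\mathcal{P}}^{\mathrm{can}}$ is not strictly increasing. Since it is continuous and monotone, there then exist $0\le T_1<T_2$ on which $S_{\mathcal{P}}^{\mathrm{can}}$ is constant. By Proposition \ref{canonical free energy}, $F_{\mathcal{P}}^{\mathrm{can}}$ is affine on $[T_1,T_2]$ with slope $-S_{\mathcal{P}}^{\mathrm{can}}(T_1)$, so $U_{\mathcal{P}}^{\mathrm{can}}=F_{\mathcal{P}}^{\mathrm{can}}+TS_{\mathcal{P}}^{\mathrm{can}}$ is constant on $[T_1,T_2]$ as well. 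A direct calculation gives $F_{\mathcal{P}}(T,u_{T_1}^{\mathrm{can}})=F_{\mathcal{P}}^{\mathrm{can}}(T)$ for every $T\in[T_1,T_2]$; uniqueness of the $\mu$-canonical distribution at positive temperature (Theorem \ref{Main theorem on uniqueness}) then forces $u_T^{\mathrm{can}}=u_{T_1}^{\mathrm{can}}$ for every $T\in(0,\infty)\cap[T_1,T_2]$. Passing to a smaller subinterval if necessary, we may assume $T_1>0$ and write $u:=u_{T_1}^{\mathrm{can}}=u_{T_2}^{\mathrm{can}}$.

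The key idea is to exploit the log-convex one-parameter family
\[
v_\alpha:=\frac{u^\alpha}{Z_\alpha},\qquad Z_\alpha:=\frac{1}{\int_P d\mu}\int_P u^\alpha d\mu,
\]
which lies in $\M^{\exp,1}(P)$ for $\alpha$ in a neighborhood of $1$: positivity and log-convexity are immediate, and $Z_\alpha$ is finite since $u\in L^{n/(n-1)}(P)$ by Theorem \ref{Main theorem on existence}. Because $v_1=u$ minimizes both $F_{\mathcal{P}}(T_1,\bullet)$ and $F_{\mathcal{P}}(T_2,\bullet)$, each map $g_i(\alpha):=F_{\mathcal{P}}(T_i,v_\alpha)$ attains its minimum at the interior point $\alpha=1$, so $g_i'(1)=0$. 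Writing $\tilde p:=u\,d\mu/\int_P d\mu$ and $\langle\cdot\rangle_{\tilde p}:=\int_P(\cdot)\,d\tilde p$, a direct computation yields
\[
g_i'(1)=\frac{1}{\int_P d\mu}\int_{\partial P}u\bigl(\log u-\langle\log u\rangle_{\tilde p}\bigr)\,d\sigma+T_i\operatorname{Var}_{\tilde p}(\log u).
\]
Subtracting the two vanishing identities eliminates the boundary term and leaves $(T_2-T_1)\operatorname{Var}_{\tilde p}(\log u)=0$, hence $\operatorname{Var}_{\tilde p}(\log u)=0$. Since $\log u$ is convex lsc, hence continuous on $P^\circ$, and $\tilde p$ has positive density there, $\log u$ is constant on $P^\circ$; the normalization $\int_P u\,d\mu=\int_P d\mu$ pins this constant to $0$, so $u=1_P$. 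But a first-variation computation at $1_P$ in the direction of a ground state $u^*$ gives $\tfrac{d}{d\theta}\big|_{\theta=0+}F_{\mathcal{P}}(T,1_P+\theta(u^*-1_P))=U_{\mathcal{P}}(u^*)-U_{\mathcal{P}}(1_P)<0$ under K-instability, contradicting the minimality of $u=1_P$. The asserted mildness then follows because strict monotonicity of $S_{\mathcal{P}}^{\mathrm{can}}$ is equivalent to $\#\mathbb{T}_{\mathcal{P}}^{\mathrm{eq}}(U)=1$ for every $U\in\mathfrak{U}_{\mathcal{P}}^*$, via the identity $\mathbb{T}_{\mathcal{P}}^{\mathrm{eq}}(U)=\mathbb{T}_{\mathcal{P}}^{\mathrm{can}}(U)$ already established.

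The main technical obstacle is the justification of the differentiation under the integral sign in the formula for $g_i'(1)$, and this is precisely where the regularity hypothesis $\int_{\partial P}u^p\,d\sigma<\infty$ with $p>1$ is invoked. For $\alpha$ in a small neighborhood of $1$ with $1+\delta<p$, the integrand $u^\alpha|\log u|$ on $\partial P$ is bounded by $C(u^p\vee 1)$ via $|\log u|\le u^\epsilon/\epsilon$ on $\{u\ge 1\}$ (choose $\epsilon$ with $1+\delta+\epsilon\le p$) and the elementary bound $u^\alpha|\log u|\le 1/(e\alpha)$ on $\{u<1\}$, so dominated convergence applies and yields $\tfrac{d}{d\alpha}\int_{\partial P}u^\alpha d\sigma\big|_{\alpha=1}=\int_{\partial P}u\log u\,d\sigma$. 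The corresponding integrability on $P$ needed to differentiate $S_{\mathcal{P}}(v_\alpha)$ at $\alpha=1$ comes for free from $u\in L^{n/(n-1)}(P)$.
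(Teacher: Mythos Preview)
Your argument is correct and shares its core with the paper's proof: both use the one-parameter family $v_\alpha=u^\alpha/Z_\alpha$ (equivalently the paper's $u_t=u((1-t)q)$), both rely on the regularity hypothesis to justify differentiating $\int_{\partial P}u^\alpha\,d\sigma$, and both hinge on the Cauchy--Schwarz identity $\partial_\alpha S_{\mathcal P}(v_\alpha)\big|_{\alpha=1}=-\operatorname{Var}_{\tilde p}(\log u)$.

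The organization differs. The paper argues directly: at a fixed $T$ the first-order condition for $F_{\mathcal P}(T,\cdot)$ along $v_\alpha$ together with $\operatorname{Var}_{\tilde p}(\log u)>0$ (which holds because K-instability forces $u_T^{\mathrm{can}}\neq 1_P$) shows that for any $T'>T$ one can perturb to a state with $F_{\mathcal P}(T',u_t)<F_{\mathcal P}^{\mathrm{can}}(T)-(T'-T)S_{\mathcal P}^{\mathrm{can}}(T)$, and concavity of $F_{\mathcal P}^{\mathrm{can}}$ then yields $S_{\mathcal P}^{\mathrm{can}}(T')>S_{\mathcal P}^{\mathrm{can}}(T)$. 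You instead run a contrapositive: constancy of $S_{\mathcal P}^{\mathrm{can}}$ on an interval forces a common minimizer, and subtracting the two first-order conditions kills the boundary term and isolates the variance. Your route is pleasantly clean in that it avoids the inequality chain through $F_{\mathcal P}^{\mathrm{can}}$; the paper's route has the advantage of never needing to pin down the minimizer at two temperatures simultaneously.

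One small point: your final contradiction asserts $\tfrac{d}{d\theta}\big|_{\theta=0^+}F_{\mathcal P}(T_1,(1-\theta)1_P+\theta u^*)=U_{\mathcal P}(u^*)-U_{\mathcal P}(1_P)$, which tacitly uses $\tfrac{d}{d\theta}\big|_{\theta=0^+}S_{\mathcal P}((1-\theta)1_P+\theta u^*)=0$. This is true (dominate by $u^*\log u^*$, finite if you take $u^*=u_0^{\mathrm{can}}$), but deserves a line of justification. Alternatively, once you have $u_{T_1}^{\mathrm{can}}=1_P$ you can simply invoke Theorem~\ref{muK-semistability is equivalent to mu-entropy maximization}: $\ket{0}$ maximizing $\NAmu^{-2\pi T_1}$ is equivalent to toric $\mu^{-2\pi T_1}_0$K-semistability, i.e.\ K-semistability, contradicting the hypothesis.
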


\begin{proof}
Take $T \in [0, \infty)$. 
It suffices to show $S_{\mathcal{P}}^{\mathrm{can}} (T') > S_{\mathcal{P}}^{\mathrm{can}} (T)$ for every $T' > T$. 
Suppose we could find a family $\{ u_t \}_{t \in (-\varepsilon, \varepsilon)}$ in $\M^{\exp, 1} (P)$ so that 
\begin{itemize}
\item $u_0 = u_T^{\mathrm{can}}$

\item $F_{\mathcal{P}} (T, u_t), S_{\mathcal{P}} (u_t)$ are differentiable at $t=0$ and $\dot{S} := \partial_{t=0} S_{\mathcal{P}} (u_t) > 0$. 
\end{itemize}
Since $u_T^{\mathrm{can}}$ minimizes $F_{\mathcal{P}} (T, \bullet)$, we have $\partial_{t=0} F_{\mathcal{P}} (T, u_t) = 0$. 
This implies that for every $T' > T$, there exists $t_{T'} > 0$ such that  
\[ (T'-T) \frac{S_{\mathcal{P}} (u_t) - S_{\mathcal{P}} (u_T^{\mathrm{can}})}{t} > \frac{F_{\mathcal{P}} (T, u_t) - F_{\mathcal{P}} (T, u_T^{\mathrm{can}})}{t} \]
for every $0 < t < t_{T'}$. 
Then using $F_{\mathcal{P}} (T', u_t) = F_{\mathcal{P}} (T, u_t) - (T' - T) S_{\mathcal{P}} (u_t)$, we compute
\begin{align*} 
F_{\mathcal{P}}^{\mathrm{can}} (T) - (T' - T) S_{\mathcal{P}}^{\mathrm{can}} (T) 
&= F_{\mathcal{P}} (T, u_T^{\mathrm{can}}) - (T' - T) S_{\mathcal{P}} (u_T^{\mathrm{can}}) 
\\
&> F_{\mathcal{P}} (T, u_t) - (T'-T) S_{\mathcal{P}} (u_t) 
\\
&= F_{\mathcal{P}} (T', u_t) \ge F_{\mathcal{P}}^{\mathrm{can}} (T'). 
\end{align*}
It follows by convexity that 
\[ S_{\mathcal{P}}^{\mathrm{can}} (T') = - \partial_T F_{\mathcal{P}}^{\mathrm{can}} (T') \ge - \frac{F_{\mathcal{P}}^{\mathrm{can}} (T') - F_{\mathcal{P}}^{\mathrm{can}} (T)}{T' -T} > S_{\mathcal{P}}^{\mathrm{can}} (T), \]
which shows the strict monotonicity of $S_{\mathcal{P}}^{\mathrm{can}} (T)$. 

Now we construct the family $\{ u_t \}_{t \in (-\varepsilon, \varepsilon)}$ under the regularity assumption on $u_T^{\mathrm{can}}$. 
Take $q \in \E^{\exp, \frac{n}{n-1}} (P)$ with $q \ge 0$ so that $u (q) = u_T^{\mathrm{can}}$. 
We put $u_t := u ((1-t)q)$. 
For $\frac{d}{dt} e^{(1-t)q} = - q e^{(1-t)q}$, we have $\int_{\partial P} q e^{(1-t) q} d\sigma \le \int_{\partial P} e^{(1-t+\epsilon) q} d\sigma < \infty$ for small $t$ by assumption. 
This implies the differentiability of $F_{\mathcal{P}} (T, u_t)$ around $t = 0$. 
Meanwhile, we have 
\[ \frac{d}{dt}\Big{|}_{t=0} S_{\mathcal{P}} (u_t) =  \frac{\int_P q^2 e^q d\mu}{\int_P e^q d\mu} - \Big{(} \frac{\int_P qe^q d\mu}{\int_P e^q d\mu} \Big{)}^2 > 0 \]
by Cauchy--Schwarz inequality. 
This completes the proof. 
\end{proof}

\begin{cor}
Let $\mathcal{P}$ be the system associated to a polarized toric variety $(X, L)$ which is $\mu^\lambda$K-semistable for every $\lambda \le 0$. 
Then $\mathcal{P}$ is either K-semistable or mild K-unstable system. 
\end{cor}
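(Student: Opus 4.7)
The plan is to reduce the corollary to the preceding proposition: it suffices to show that if $\mathcal{P}$ is K-unstable, then $\int_{\partial P} (u_T^{\mathrm{can}})^p d\sigma < \infty$ for some $p > 1$ at every $T \ge 0$. Assume $\mathcal{P}$ is K-unstable; otherwise the first alternative holds. For each $\lambda \le 0$, the hypothesis provides $\xi_\lambda \in \mathfrak{t}$ such that $(X, L)$ is toric $\mu^\lambda_{\xi_\lambda}$K-semistable, so by Theorem \ref{muK-semistability is equivalent to mu-entropy maximization} the linear function $\ket{\xi_\lambda}$ maximizes $\NAmu^\lambda$. Writing $T := -\lambda/(2\pi)$, this is equivalent to saying that $u(\ket{\xi_\lambda}) \in \M^{\exp, 1}(P)$ minimizes $F_{\mathcal{P}}(T, \bullet)$.

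For $T > 0$, the uniqueness clause of Theorem \ref{Main theorem on uniqueness} forces $u_T^{\mathrm{can}} = u(\ket{\xi_\lambda}) = \frac{\int_P d\mu}{\int_P e^{\ket{\xi_\lambda}} d\mu} e^{\ket{\xi_\lambda}}$, which is bounded on the compact polytope $P$ because $\ket{\xi_\lambda}$ is affine. Hence $\int_{\partial P}(u_T^{\mathrm{can}})^p d\sigma < \infty$ for every $p \ge 1$, and the required regularity is trivial in this range.

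The delicate case is $T = 0$: $F_{\mathcal{P}}(0, \bullet) = U_{\mathcal{P}}$ may have many minimizers and $u_0^{\mathrm{can}}$ is singled out among them by maximal entropy, so the hypothesis does not directly yield $u_0^{\mathrm{can}} = u(\ket{\xi_0})$. I would pass to the limit $T \searrow 0$ using Theorem \ref{continuous family of canonical distributions}: $u_T^{\mathrm{can}} \to u_0^{\mathrm{can}}$ in $L^p$-topology for $p \in [1, n/(n-1))$. The affine functions $\ket{\xi_\lambda}$, regarded modulo additive constants, must remain in a bounded subset of $\mathfrak{t}/\mathbb{R}$; otherwise, along a subsequence with $\|\xi_{\lambda_k}\| \to \infty$, normalizing $\eta_k := \xi_{\lambda_k}/\|\xi_{\lambda_k}\| \to \eta$ with $\|\eta\| = 1$, a Laplace-type analysis forces $u(\ket{\xi_{\lambda_k}}) d\mu$ to concentrate weakly on the proper face $\{x \in P : \ket{\eta}(x) = \max_P \ket{\eta}\}$, contradicting $L^1$-convergence to the $d\mu$-absolutely continuous density $u_0^{\mathrm{can}}$ of total mass $\int_P d\mu$. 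Extracting a convergent subsequence $\xi_{\lambda_k} \to \xi_0$ then gives $u_0^{\mathrm{can}} = u(\ket{\xi_0})$, also bounded.

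With the regularity hypothesis verified at every $T \ge 0$, the preceding proposition yields mildness of $\mathcal{P}$. The main obstacle is the case $T = 0$, which requires a properness-type control ruling out blow-up of the optimizer parameters as $T \searrow 0$; this control is delivered precisely by the positivity and fixed total mass built into $\M^{\exp, 1}(P)$, together with the continuity of the canonical family established earlier.
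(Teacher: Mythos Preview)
Your argument is correct, and for $T>0$ it is exactly what the paper (which gives no explicit proof) has in mind: the hypothesis supplies $\xi_\lambda$ with $(X,L)$ toric $\mu^\lambda_{\xi_\lambda}$K-semistable, Theorem~\ref{muK-semistability is equivalent to mu-entropy maximization} makes $\ket{\xi_\lambda}$ a maximizer of $\NAmu^\lambda$, and uniqueness (Theorem~\ref{Main theorem on uniqueness}) forces $u_T^{\mathrm{can}}=u(\ket{\xi_\lambda})$, which is bounded on the compact polytope.

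Your limit argument at $T=0$, however, is unnecessary. The proposition's proof shows that regularity at a given $T$ yields $S_{\mathcal{P}}^{\mathrm{can}}(T')>S_{\mathcal{P}}^{\mathrm{can}}(T)$ for all $T'>T$; having this for every $T>0$ already forces strict monotonicity on all of $[0,\infty)$. Indeed, for $T'>0$ choose any $T''\in(0,T')$ and combine $S_{\mathcal{P}}^{\mathrm{can}}(T')>S_{\mathcal{P}}^{\mathrm{can}}(T'')$ with the general weak monotonicity $S_{\mathcal{P}}^{\mathrm{can}}(T'')\ge S_{\mathcal{P}}^{\mathrm{can}}(0)$. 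Thus the regularity hypothesis at $T=0$ is redundant, and your compactness argument for $\xi_\lambda$ as $\lambda\to 0^-$, while valid, can be omitted entirely. (A minor point of phrasing: ``$\mathfrak{t}/\mathbb{R}$'' and ``modulo additive constants'' are off---$\ket{\xi}$ is linear, not affine, so $\xi\in\mathfrak{t}$ is already canonically determined by $u(\ket{\xi})$; what you need is simply boundedness of $\xi_\lambda$ in $\mathfrak{t}$.)
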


In view of crystal conjecture (Conjecture \ref{regularity conjecture}), the assumption of the above theorem, which we call the \textit{mild regularity hypothesis}, is believed to be always true. 

\begin{rem}
As a weak evidence for the mild regularity hypothesis, we note 
\[ \int_{\partial P} u \log u d\sigma = \int_{\partial P} q e^q d\sigma < \infty \]
for $\mu$-canonical distribution $u = e^q$ of any temperature. 
This could be viewed as a limit of $N (u^{1 + \frac{1}{N}} - u)$. 

For $t \searrow 0$, we have pointwise monotone convergence $0 \le (e^q - e^{(1-t)q})/t \nearrow q e^q$ by the convexity of exponential. 
Then by monotone convergence theorem, we compute 
\[ \frac{d}{dt_+}\Big{|}_{t=0} \int_{\partial P} e^{(1-t) q} d\sigma = \lim_{t \searrow 0} \frac{\int_{\partial P} e^{(1-t) q} d\sigma - \int_{\partial P} e^q d\sigma}{t} = -\int_{\partial P} q e^q d\sigma. \]
Since $\int_P e^{(1-t) q} d\mu$ and $\bm{\sigma} ((1-t)q)$ are smooth for $1- t \in (0, \frac{n}{n-1})$, we directly compute
\begin{align*} 
0 
&\ge \frac{d}{dt_+}\Big{|}_{t=0} \NAmu^{-2\pi T} ((1-t) q) 
\\
&= \frac{2\pi}{\int_P e^q d\mu} (\int_{\partial P} q e^q d\sigma - \frac{\int_{\partial P} e^q d\sigma}{\int_P e^q d\mu} \int_P q e^q d\mu) - 2\pi T \frac{d}{dt}\Big{|}_{t=0} \bm{\sigma} ((1-t)q), 
\end{align*}
where the inequality holds as $q$ is a maximizer of $\NAmu^{-2\pi T}$. 
Since $q \in \E^{\exp, \frac{n}{n-1}} (P)$, the integrals
\[ \frac{\int_{\partial P} e^q d\sigma}{\int_P e^q d\mu} \int_P q e^q d\mu, \quad \frac{d}{dt}\Big{|}_{t=0} \bm{\sigma} ((1-t)q) = -\frac{\int_P q^2 e^q d\mu}{\int_P e^q d\mu} + \Big{(} \frac{\int_P qe^q d\mu}{\int_P e^q d\mu} \Big{)}^2 \]
are finite, thus $\int_{\partial P} q e^q d\sigma \ge 0$ is also finite by the inequality. 
\end{rem}

\begin{rem}
\label{Positive differential}
Later we further consider the assumption that $S_{\mathcal{P}}^{\mathrm{can}} (T)$ has strictly positive differential $\partial_T S_{\mathcal{P}}^{\mathrm{can}} (T) > 0$ at some $T' > 0$. 
In thermodynamical terminology, the quantity $\partial_T U_{\mathcal{P}}^{\mathrm{can}} (T) = T \partial_T S_{\mathcal{P}}^{\mathrm{can}} (T)$ is called the \textit{heat capacity} of $(\mathcal{P}, T)$. 

If a polarized toric manifold $(X, L)$ admits $\mu^\lambda$-cscK metric for every $\lambda \le 0$, then we can show the family of optimal vectors $\{ \xi_\lambda \in \mathfrak{t} \}_{\lambda \le 0}$ are smooth. 
(Apply implicit function theorem to $\mu$-cscK equation. Compare the proof of \cite[Theorem 5.1]{Ino2}. )
It follows that for the associated system $\mathcal{P}$, $S_{\mathcal{P}}^{\mathrm{can}} (T)= S_{\mathcal{P}} (u (\ket{\xi_{-2\pi T}}))$ is smooth. 
We note the positivity of differential implies the strict monotonicity, but the strict monotonocity does not necessarily imply the positivity of differential. 
\end{rem}

\subsubsection{Illustration}

Here we illustrate an explicit example with positive heat capacity. 

Let $X$ be the one point-blowing up of $\mathbb{C}P^2$ and $L$ be the anti-canonical polarization $-K_X$. 
The associated polytope $P$ can be illustrated as follows. 
\begin{figure}[H]
\begin{tikzpicture}
\draw(0,-0.5)node[below]{$(0,-1)$};
\draw(1,-0.5)node[right]{$(2,-1)$};
\draw(-0.5,1)node[left]{$(-1,2)$};
\draw(-0.5,0)node[left]{$(-1,0)$};
\draw[thick]
(0,-0.5)--(1,-0.5)--(-0.5,1)--(-0.5,0)--(0,-0.5);
\end{tikzpicture}
\end{figure}

It is shown in \cite{Ino2} that $(X, L)$ admits $\mu^\lambda$-cscK metric for every $\lambda \in \mathbb{R}$ and the optimal vectors $\xi_\lambda \in \mathfrak{t}$ is of the form $x_\lambda. \eta := (x_\lambda, x_\lambda) \in \mathbb{R}^2$. 

Similarly as \cite[section 5.2]{Ino5}, we can compute 
\begin{align*}
\int_{\partial P} e^{x \ket{\eta}} d\mu 
&= - \frac{1}{x} ((2-x)e^{-x} - (3x+2) e^x), 
\\
\int_P e^{x \ket{\eta}} d\mu 
&= \frac{1}{x^2} ((-x+1) e^{-x} + (3x-1) e^x), 
\\
\int_P x \ket{\eta} e^{x \ket{\eta}} d\mu 
&= \frac{1}{x^2} ((x^2-2) e^{-x} + (3x^2 - 4x+2) e^x). 
\end{align*}
Then we get the explicit expression
\begin{align*}
\NAmu (x \ket{\eta})
&= 2 \pi x \frac{(2-x)e^{-x} - (3x+2) e^x}{(-x+1) e^{-x} + (3x-1) e^x},
\\
\bm{\sigma} (x \ket{\eta}) 
&= \frac{(x^2-2) e^{-x} + (3x^2 - 4x+2) e^x}{(-x+1) e^{-x} + (3x-1) e^x} 
\\
&\qquad - \log \frac{1}{x^2} \Big{(} (-x+1) e^{-x} + (3x-1) e^x \Big{)} - \log \int_P e^{-n} d\mu
\end{align*}
and 
\begin{align*}
\frac{d}{dx} \NAmu (x \ket{\eta})
&= \frac{(x^2-2x+2) e^{-2x} - (9x^2-6x-2)e^{2x} +12x^3-16x^2-4x-4}{(x^2 -2x+1)e^{-2x} + (9x^2 - 6x+1) e^{2x} -3x^2+4x-1 }
\\
\frac{d}{dx} \bm{\sigma} (x \ket{\eta})
&= -\frac{(x^2-2x+2) e^{-2x} - (9x^2-6x-2) e^{2x} +12x^3-16x^2-4x-4}{(x^2 -2x+1)e^{-2x} + (9x^2 - 6x+1) e^{2x} -3x^2+4x-1 }
\\
&\qquad - \frac{(x^2-2)e^{-x} + (3x^2-4x+2) e^x}{x ((-x+1)e^{-x} +(3x-1)e^x)}
\\
&= \frac{(x^2-4x+2) e^{-2x} + (9x^2 -12x+2) e^{2x} -12x^4+16x^3-2x^2+16x -4 }{x ((x^2 -2x+1)e^{-2x} + (9x^2 - 6x+1) e^{2x} -3x^2+4x-1)}.
\end{align*}
We can see $\frac{d}{dx} \bm{\sigma} (x \ket{\eta}) < 0$ for $x < 0$ as illustrated in the following figure. 

\begin{figure}[H]
\includegraphics[width=5cm]{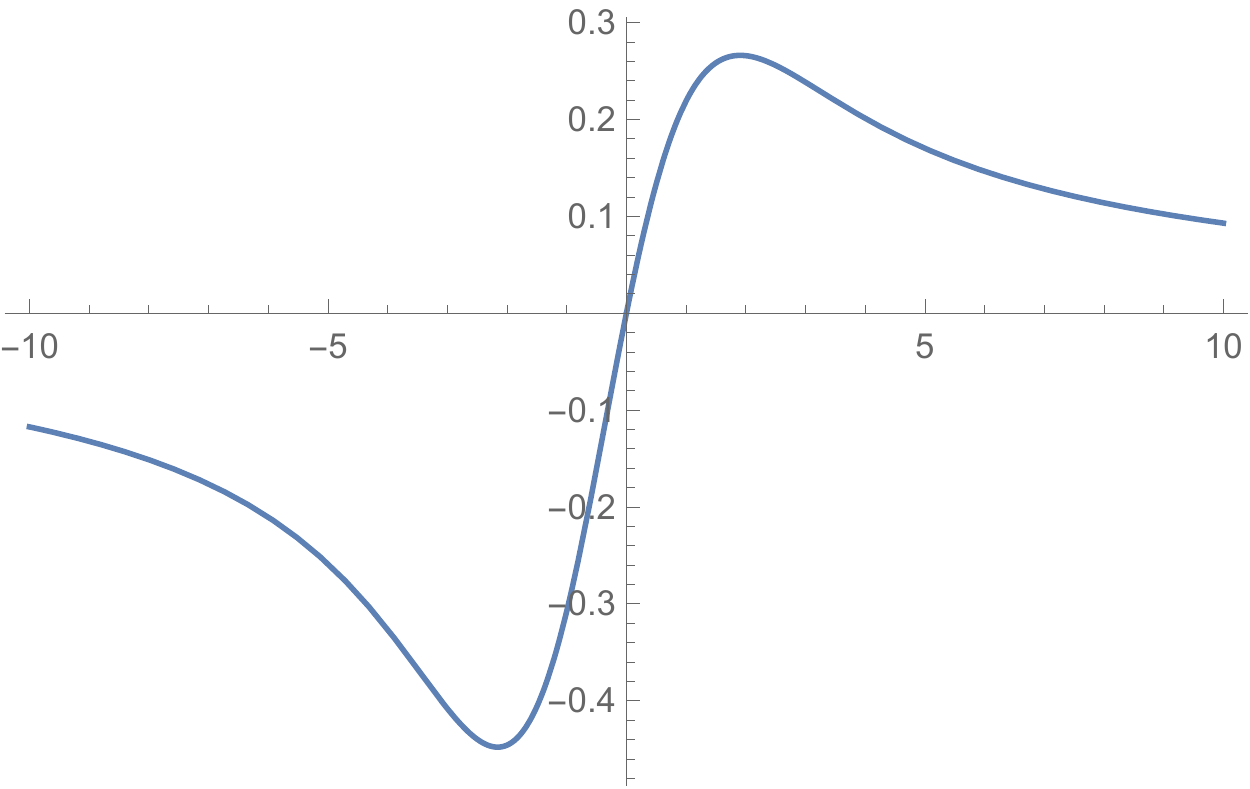}
\caption{The graph of $\frac{d}{dx} \bm{\sigma} (x \ket{\eta})$. }
\end{figure}

The optimal vector $x_\lambda. \eta$ is the critical point of $\NAmu^\lambda|_{\mathfrak{t}}$, so we have
\[ \frac{d}{dx} \NAmu (x_\lambda \ket{\eta}) + \lambda \frac{d}{dx} \bm{\sigma} (x_\lambda \ket{\eta}) = 0. \]
If we put
\[ \lambda (x) := 2\pi x \frac{(9x^2 - 6x -2)e^{2x} + (-x^2 +2x - 2) e^{-2 x} + (-12 x^3 + 16 x^2 +4x +4)}{(9x^2 - 12 x +2) e^{2x} + (x^2 - 4x +2) e^{-2x} + (-12 x^4 + 16 x^3 -2 x^2 + 16 x -4)}, \]
this is equivalent to the condition $\lambda (x_\lambda) = \lambda$. 

\begin{figure}[H]
\includegraphics[width=4cm]{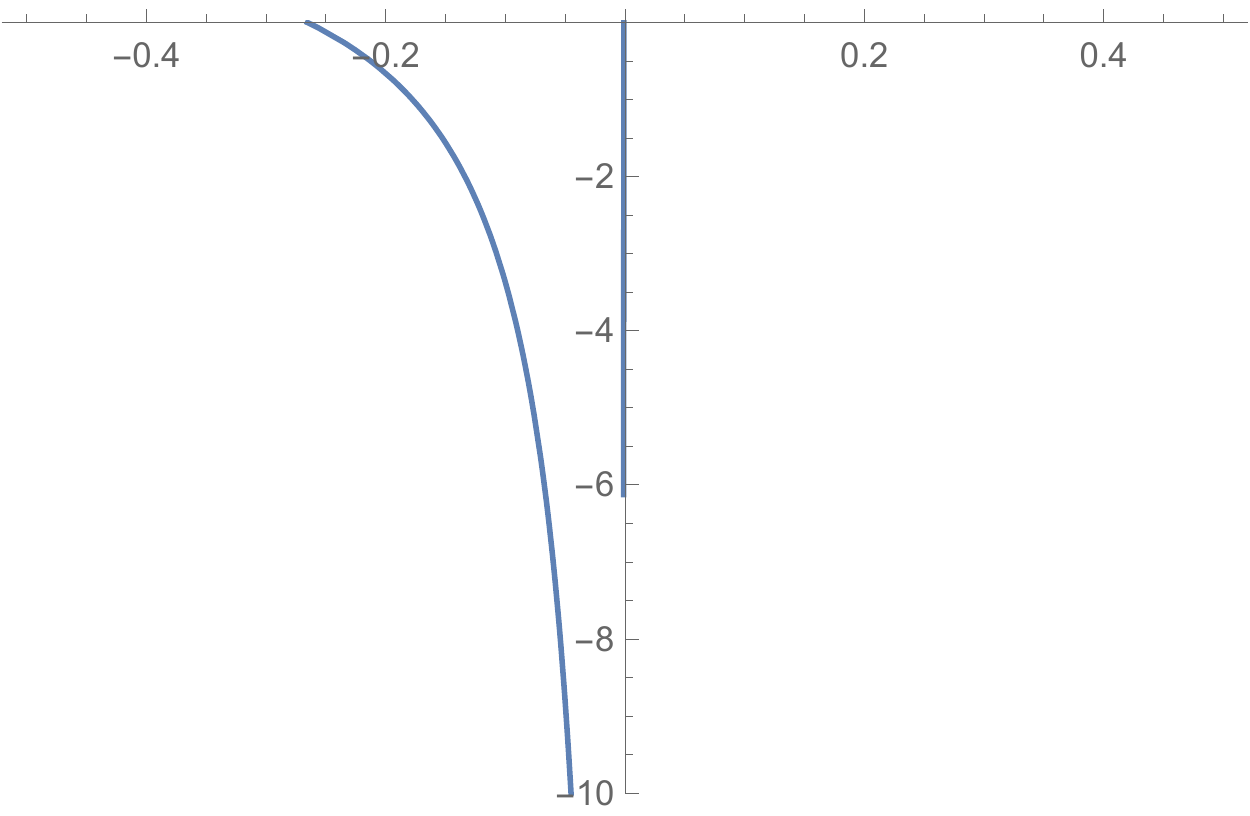}
\qquad
\includegraphics[width=4cm]{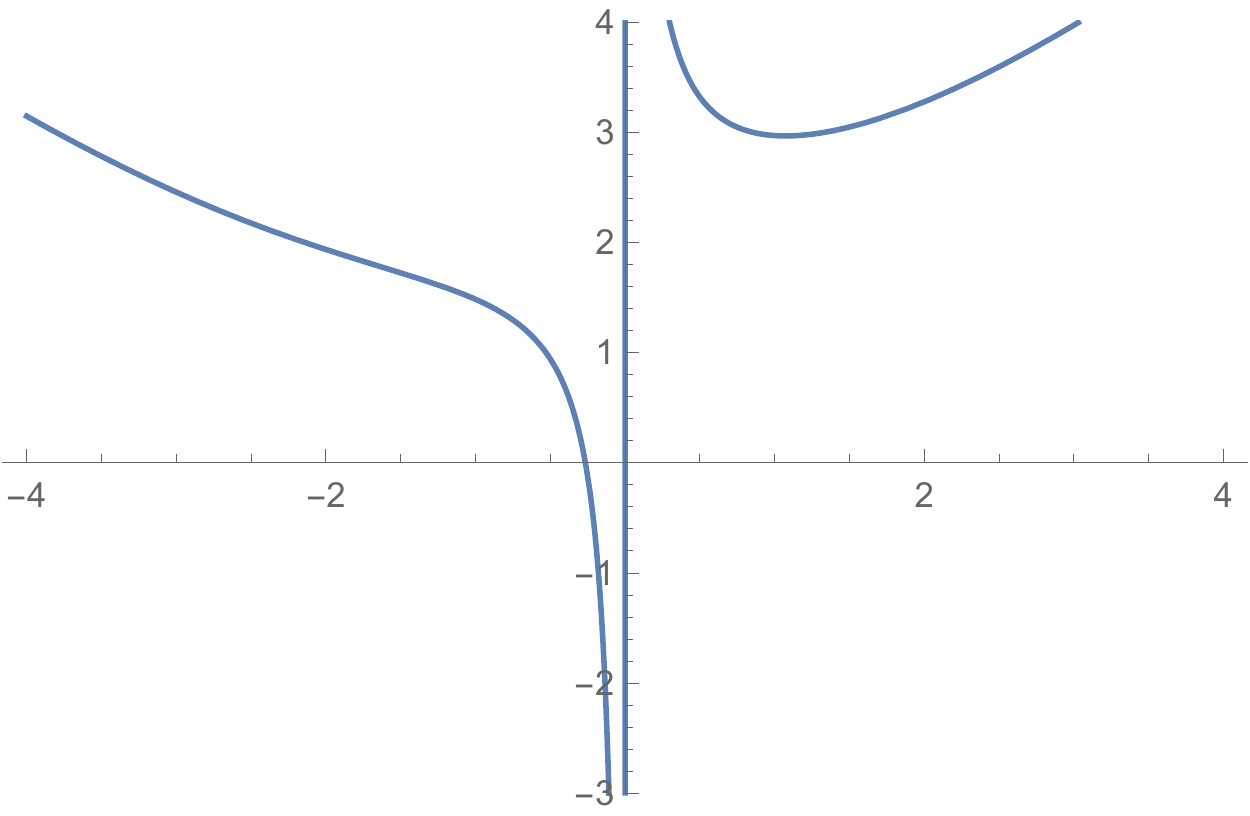}
\caption{The graph of $\frac{1}{2\pi} \lambda (x)$. The vertical axis represents $\lambda$. The left image focuses on the range $\lambda \le 0$, which illustrates $x_\lambda < 0$ and $\frac{d}{d\lambda} x_\lambda < 0$. In the right image, we can observe more than one solutions appear around $\frac{1}{2\pi} \lambda \approx 3$. }
\end{figure}

For $\lambda \le 0$, we have $x_\lambda < 0$ and $\frac{d}{d\lambda} x_\lambda = (\frac{d}{dx} \lambda (x_\lambda))^{-1} < 0$. 
It follows that 
\[ \partial_T S_{\mathcal{P}}^{\mathrm{can}} (T) = 2\pi \frac{d}{d\lambda} \bm{\sigma} (x_\lambda \ket{\eta}) = \frac{d}{d\lambda} x_\lambda \cdot \frac{d}{dx} \bm{\sigma} (x_\lambda \ket{\eta}) > 0 \] 
for $T = - \frac{\lambda}{2\pi} \ge 0$. 

\subsubsection{Heat bath}

It is well known in thermodynamics that the free energy can be derived as the entropy of a composition of the system of our interest and a sufficiently large system working as heat bath. 
We can realize large system as limit of infinitely many composition. 
This observation gives us a new interpretation of $\mu$-canonical distribution: $\mu$-canonical distribution is equilibrium of an infinite dimensional system. 

\begin{figure}[H]
\begin{tikzpicture}
\draw(0.5,0.5)node{$\mathcal{P}$};
\draw(1.25,-0.25)node{$\mathcal{P}_R$};
\draw(3.05,1.15)node{$\mathcal{P}_R$};
\draw(4,0.75)node{$\mathcal{P}_R^{\times N}$};
\draw(3.5,-0.25)node{$\cdots$};
\draw[thick, blue]
(0,0)--(1,0)--(1,1)--(0,1)--(0,0);
\draw[thick, red]
(1,-0.5)--(1,0)--(1.5,0)--(1.5,-0.5)--(1,-0.5);
\draw[thick, red]
(1,0)--(1,0.5)--(1.5,0.5)--(1.5,0)--(1,0);
\draw[thick, red]
(1,0.5)--(1,1)--(1.5,1)--(1.5,0.5)--(1,0.5);
\draw[thick, red]
(1,1)--(1,1.5)--(1.5,1.5)--(1.5,1)--(1,1);
\draw[thick, red]
(1.5,-0.5)--(1.5,0)--(2,0)--(2,-0.5)--(1.5,-0.5);
\draw[thick, red]
(1.5,0)--(1.5,0.5)--(2,0.5)--(2,0)--(1.5,0);
\draw[thick, red]
(1.5,0.5)--(1.5,1)--(2,1)--(2,0.5)--(1.5,0.5);
\draw[thick, red]
(2,-0.5)--(2,0)--(2.5,0)--(2.5,-0.5)--(2,-0.5);
\draw[thick, red]
(2,0)--(2,0.5)--(2.5,0.5)--(2.5,0)--(2,0);
\draw[thick, red]
(2.5,-0.5)--(2.5,0)--(3,0)--(3,-0.5)--(2.5,-0.5);
\draw[thick, red]
(3.1,0.8)--(2.7,1.1)--(3.0,1.5)--(3.4,1.2)--(3.1,0.8);
\draw[->] (2.5,1.1)--(2.25,0.9);
\end{tikzpicture}
\caption{Realization of heat bath as the limit of infinitely many composition}
\end{figure}
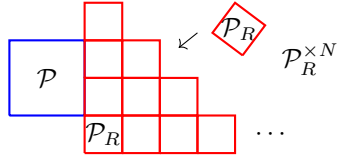

\begin{thm}[Heat bath limit]
Let $\mathcal{P}$ be a K-unstable system and $\mathcal{P}_R$ be a mild K-unstable system. 
Fix $U \in \mathfrak{U}_{\mathcal{P}}^*$ and $T_R \in [0, \infty)$. 
For $N \in \mathbb{N}$, consider the composite system 
\[ \tilde{\mathcal{P}}_N = \mathcal{P} \times \mathcal{P}_R^{\times N}. \]

Let $\tilde{u}_N \in \M^{\exp, 1} (\tilde{P}_N)$ be the equilibrium of internal $\mu$-energy $U + N U_{\mathcal{P}_R}^{\mathrm{can}} (T_R)$. 
Then the following associated equilibrium on the subsystem $P$
\[ u_N := \frac{1}{\int_{P_R^{\times N}} d\mu_{P_R}^{\times N}} \int_{P_R^{\times N}} \tilde{u}_N d\mu_{P_R}^{\times N} \in \M^{\exp, 1} (P), \]
converges in $L^p$-topology ($p \in [1, \frac{n}{n-1})$) to the $\mu$-canonical distribution $u_\infty \in \M^{\exp, 1} (P)$ of temperature $T_R$, which is independent of the choice of $U \in \mathfrak{U}_{\mathcal{P}}$. 
\end{thm}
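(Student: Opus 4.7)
The plan is to identify $\tilde u_N$ with a product of $\mu$-canonical distributions at a common temperature $T_N$, reduce the problem to showing $T_N\to T_R$ via the energy balance, and then conclude by the continuity of the canonical family established in Theorem \ref{continuous family of canonical distributions}.

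First I would set up the temperature $T_N$. Since $\mathcal{P}_R$ is mild K-unstable, the paper already records that $\tilde{\mathcal{P}}_N=\mathcal{P}\times\mathcal{P}_R^{\times N}$ is mild K-unstable as well. The energy level $U+NU_{\mathcal{P}_R}^{\mathrm{can}}(T_R)$ lies in $\mathfrak{U}_{\tilde{\mathcal{P}}_N}^*$ because by the additivity (\ref{Internal energy is the sum of marginals}) both endpoints $\min U_{\tilde{\mathcal{P}}_N}$ and $U_{\tilde{\mathcal{P}}_N}(1_{\tilde P_N})$ are sums of the corresponding quantities of the subsystems, and the given value lies strictly below the upper endpoint since $U<U_{\mathcal{P}}(1_P)$. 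By the identification $\mathbb{T}_{\tilde{\mathcal{P}}_N}^{\mathrm{can}}(\bullet)=\mathbb{T}_{\tilde{\mathcal{P}}_N}^{\mathrm{eq}}(\bullet)$ and mildness, there is a unique $T_N\in[0,\infty)$ with $\tilde u_N=u^{\tilde{\mathcal{P}}_N,\mathrm{can}}_{T_N}$.

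Next I would factor this canonical distribution. Applying Theorem \ref{canonical distribution of product} iteratively gives
\[
\tilde u_N \;=\; u^{\mathcal{P},\mathrm{can}}_{T_N}\;\times\;(u^{\mathcal{P}_R,\mathrm{can}}_{T_N})^{\times N},
\]
so the marginal over $P$ satisfies $u_N=u^{\mathcal{P},\mathrm{can}}_{T_N}$. In particular, the convergence of $u_N$ is reduced to the convergence of $T_N$. Taking the $U_{\mathcal{P}}$ and $U_{\mathcal{P}_R^{\times N}}$ components of the energy and using $U_{\mathcal{P}_R^{\times N}}^{\mathrm{can}}(T)=N\,U_{\mathcal{P}_R}^{\mathrm{can}}(T)$ (a consequence of Theorem \ref{canonical distribution of product} and Lemma \ref{isothermal is reflexive}), the energy balance reads
\[
U_{\mathcal{P}}^{\mathrm{can}}(T_N)+N\,U_{\mathcal{P}_R}^{\mathrm{can}}(T_N)\;=\;U+N\,U_{\mathcal{P}_R}^{\mathrm{can}}(T_R).
\]
Dividing by $N$ and noting that $U_{\mathcal{P}}^{\mathrm{can}}$ is bounded on $[0,\infty)$ (its range lies in $\mathfrak{U}_{\mathcal{P}}$), I would deduce $U_{\mathcal{P}_R}^{\mathrm{can}}(T_N)\to U_{\mathcal{P}_R}^{\mathrm{can}}(T_R)$. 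The mildness of $\mathcal{P}_R$ says $U_{\mathcal{P}_R}^{\mathrm{can}}$ is continuous and strictly increasing, hence invertible, so $T_N\to T_R$.

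Finally, Theorem \ref{continuous family of canonical distributions} asserts that the assignment $T\mapsto u^{\mathcal{P},\mathrm{can}}_T$ is continuous on $[0,\infty]$ with respect to the $L^p$-topology for every $p\in[1,\tfrac{n}{n-1})$. Therefore $u_N=u^{\mathcal{P},\mathrm{can}}_{T_N}$ converges in $L^p$ to $u^{\mathcal{P},\mathrm{can}}_{T_R}=u_\infty$, which is manifestly independent of the chosen $U\in\mathfrak{U}_{\mathcal{P}}^*$. The main (minor) obstacle is purely bookkeeping: one must verify the specified energy lands in $\mathfrak{U}_{\tilde{\mathcal{P}}_N}^*$ so that the inverse $U_{\tilde{\mathcal{P}}_N}^{\mathrm{can}}$ is well defined at that value, but this follows from the mildness hypothesis together with $U\in\mathfrak{U}_{\mathcal{P}}^*$. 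All the real analytic work—compactness of sublevel sets, continuity of the canonical family, and the product factorisation of canonical distributions—has already been done earlier in the paper.
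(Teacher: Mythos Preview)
Your proof is correct and follows essentially the same route as the paper: identify a unique temperature $T_N$ for $\tilde u_N$ via mildness of the composite system, use Theorem \ref{canonical distribution of product} to get $u_N=u^{\mathcal{P},\mathrm{can}}_{T_N}$, derive the energy balance $U_{\mathcal{P}}^{\mathrm{can}}(T_N)+N\,U_{\mathcal{P}_R}^{\mathrm{can}}(T_N)=U+N\,U_{\mathcal{P}_R}^{\mathrm{can}}(T_R)$, divide by $N$ and use boundedness of $U_{\mathcal{P}}^{\mathrm{can}}$ together with the strict monotonicity of $U_{\mathcal{P}_R}^{\mathrm{can}}$ (mildness) to force $T_N\to T_R$, and conclude by Theorem \ref{continuous family of canonical distributions}. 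The only cosmetic difference is that you spell out the invertibility of $U_{\mathcal{P}_R}^{\mathrm{can}}$ explicitly, whereas the paper simply asserts $T_N\to T_R$.
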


\begin{proof}
We note $U + NU_{\mathcal{P}_R}^{\mathrm{can}} (T_R) \in \mathfrak{U}_{\tilde{\mathcal{P}}_N}^*$. 
Since $\mathcal{P}_R$ is mild, the composite system $\mathcal{P} \times \mathcal{P}_R^{\times N}$ is also mild. 
Let $T_N \in [0, \infty)$ be the element of the one point set 
\[ \mathbb{T}_{\tilde{\mathcal{P}}_N}^{\mathrm{eq}} (U + N U_{\mathcal{P}_R}^{\mathrm{can}} (T_R)). \]
Since $\tilde{u}_N$ on $\tilde{\mathcal{P}}_N$ is the $\mu$-canonical distribution of temperature $T_N$, $u_N$ on $\mathcal{P}$ is also the $\mu$-canonical distribution of temperature $T_N$ by Theorem \ref{canonical distribution of product}. 

Since 
\begin{align}
\label{internal energy equality}
U_{\mathcal{P}}^{\mathrm{can}} (T_N) + N U_{\mathcal{P}_R}^{\mathrm{can}} (T_N) = U_{\tilde{\mathcal{P}}_N}^{\mathrm{can}} (T_N) = U+NU_{\mathcal{P}_R}^{\mathrm{can}} (T_R), 
\end{align}
we compute 
\[ U^{\mathrm{can}}_{\mathcal{P}_R} (T_N) = \frac{1}{N} (U + N U_{\mathcal{P}_R}^{\mathrm{can}} (T_R) - U_{\mathcal{P}}^{\mathrm{can}} (T_N)) = U_{\mathcal{P}_R}^{\mathrm{can}} (T_R) + \frac{1}{N} (U - U_{\mathcal{P}}^{\mathrm{can}} (T_N)). \]
Since $U_{\mathcal{P}}^{\mathrm{can}} (T_N) \in \mathfrak{U}_{\mathcal{P}}^*$ is bounded, we get $U^{\mathrm{can}}_{\mathcal{P}_R} (T_N) \to U^{\mathrm{can}}_{\mathcal{P}_R} (T_R)$ as $N \to \infty$. 
It follows that $T_N \to T_R$. 
By the continuity we already proved, we conclude $u_N$ converges to the $\mu$-canonical distribution $u_\infty$ of temperature $T_R$. 
\end{proof}

Now we obtain a characterization of free $\mu$-energy in terms of equilibrium of composite system. 

\begin{thm}[Free $\mu$-energy as composite entropy]
Let $\mathcal{P}, \mathcal{P}_R$, $U, T_R$ and $\tilde{u}_N, u_\infty$ be the same as in the above theorem. 
Assume further the heat capacity $T_R \partial_T S_{\mathcal{P}}^{\mathrm{can}} (T_R)$ of $(\mathcal{P}_R, T_R)$ is positive. 
(See Remark \ref{Positive differential}. )
Namely we assume $T_R > 0$, $S^{\mathrm{can}}_{\mathcal{P}_R}$ is differentiable at $T_R$ and $\partial_T S^{\mathrm{can}}_{\mathcal{P}_R} (T_R) > 0$. 

Let $u_R^{\times N} \in \M^{\exp, 1} (P_R^{\times N})$ be the equilibrium of internal $\mu$-energy $NU_{\mathcal{P}_R}^{\mathrm{can}} (T_R)$ on $\mathcal{P}_R^{\times N}$. 
Then for any $u \in \M^{\exp, 1} (\mathcal{P}, U)$ of internal $\mu$-energy $U$, the difference of composite entropy
\begin{align*} 
\Delta S_{\tilde{\mathcal{P}}_N} 
&:= S_{\tilde{\mathcal{P}}_N} (\tilde{u}_N) - S_{\tilde{\mathcal{P}}_N} (u \times u_R^{\times N}) 
\end{align*}
converges to
\[ - \frac{1}{T_R} (F_{\mathcal{P}} (T_R, u_\infty) - F_{\mathcal{P}} (T_R, u)) \]
as $N \to \infty$, which is independent of the choice of the mild system $\mathcal{P}_R$. 
\end{thm}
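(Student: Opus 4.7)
The plan is to decompose the composite equilibrium $\tilde u_N$ as a product via Theorem \ref{canonical distribution of product}, so that every entropy in $\Delta S_{\tilde{\mathcal{P}}_N}$ splits additively, and then carry out a first-order Taylor analysis in the reservoir temperature.

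First, since $\mathcal{P}_R^{\times N}$ is also a mild K-unstable system, the composite $\tilde{\mathcal{P}}_N$ is mild, and there is a unique $T_N$ with $\tilde u_N = u^{\mathrm{can}}_{T_N,\mathcal{P}}\times (u^{\mathrm{can}}_{T_N,\mathcal{P}_R})^{\times N}$ by Theorem \ref{canonical distribution of product}. In the notation of the previous theorem this gives $u_N = u^{\mathrm{can}}_{T_N,\mathcal P}$ and, using additivity of $S$ on products together with Lemma \ref{isothermal is reflexive} (for $u_R^{\times N}$),
\begin{align*}
\Delta S_{\tilde{\mathcal{P}}_N}
= \bigl(S_{\mathcal{P}}(u_N)-S_{\mathcal{P}}(u)\bigr) + N\bigl(S^{\mathrm{can}}_{\mathcal{P}_R}(T_N)-S^{\mathrm{can}}_{\mathcal{P}_R}(T_R)\bigr).
\end{align*}
The first summand converges to $S_{\mathcal{P}}(u_\infty)-S_{\mathcal{P}}(u)$ by the previous theorem (giving $u_N\to u_\infty$ in $L^{1+\varepsilon}$) and continuity of $S_{\mathcal{P}}$ in that topology.

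The core of the argument is to analyze the reservoir term. From the internal-energy balance (\ref{internal energy equality}), I rewrite
\begin{align*}
N\bigl(U_{\mathcal{P}_R}^{\mathrm{can}}(T_N)-U_{\mathcal{P}_R}^{\mathrm{can}}(T_R)\bigr) = U-U_{\mathcal{P}}^{\mathrm{can}}(T_N) \xrightarrow[N\to\infty]{} U-U_{\mathcal{P}}(u_\infty).
\end{align*}
From Proposition \ref{canonical free energy}, differentiating $U^{\mathrm{can}}_{\mathcal{P}_R}(T)=F^{\mathrm{can}}_{\mathcal{P}_R}(T)+T\,S^{\mathrm{can}}_{\mathcal{P}_R}(T)$ and using $\partial_T F^{\mathrm{can}}_{\mathcal{P}_R}=-S^{\mathrm{can}}_{\mathcal{P}_R}$ gives the thermodynamic identity $\partial_T U^{\mathrm{can}}_{\mathcal{P}_R}(T)=T\,\partial_T S^{\mathrm{can}}_{\mathcal{P}_R}(T)$. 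Under the positive–heat–capacity hypothesis this derivative is finite and strictly positive at $T_R$, so the inverse-function/Taylor argument yields
\begin{align*}
\lim_{N\to\infty} N(T_N-T_R)=\frac{U-U_{\mathcal{P}}(u_\infty)}{T_R\,\partial_T S^{\mathrm{can}}_{\mathcal{P}_R}(T_R)}.
\end{align*}
Multiplying by $\partial_T S^{\mathrm{can}}_{\mathcal{P}_R}(T_R)$ (this is where differentiability of $S^{\mathrm{can}}_{\mathcal{P}_R}$ at $T_R$ is used, to take the Taylor expansion of $N(S^{\mathrm{can}}_{\mathcal{P}_R}(T_N)-S^{\mathrm{can}}_{\mathcal{P}_R}(T_R))$) cancels the factor $\partial_T S^{\mathrm{can}}_{\mathcal{P}_R}(T_R)$ and produces the clean limit
\begin{align*}
N\bigl(S^{\mathrm{can}}_{\mathcal{P}_R}(T_N)-S^{\mathrm{can}}_{\mathcal{P}_R}(T_R)\bigr)\longrightarrow \frac{U-U_{\mathcal{P}}(u_\infty)}{T_R}=\frac{U_{\mathcal{P}}(u)-U_{\mathcal{P}}(u_\infty)}{T_R}.
\end{align*}

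Assembling the two limits and using $F_{\mathcal{P}}(T_R,v)=U_{\mathcal{P}}(v)-T_RS_{\mathcal{P}}(v)$,
\begin{align*}
\Delta S_{\tilde{\mathcal{P}}_N}\longrightarrow \bigl(S_{\mathcal{P}}(u_\infty)-S_{\mathcal{P}}(u)\bigr)+\frac{U_{\mathcal{P}}(u)-U_{\mathcal{P}}(u_\infty)}{T_R} = -\frac{1}{T_R}\bigl(F_{\mathcal{P}}(T_R,u_\infty)-F_{\mathcal{P}}(T_R,u)\bigr),
\end{align*}
which is independent of $\mathcal{P}_R$ because the explicit $\mathcal{P}_R$-dependent derivatives have cancelled. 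The main obstacle is precisely this cancellation step: one must know both that $N(T_N-T_R)$ has a finite limit (requiring $\partial_T U^{\mathrm{can}}_{\mathcal{P}_R}(T_R)>0$, equivalently the positive heat capacity hypothesis) and that $S^{\mathrm{can}}_{\mathcal{P}_R}$ is genuinely differentiable (not merely left/right differentiable) at $T_R$, so that the two asymptotic Taylor expansions can be multiplied legitimately. The remaining verifications---convergence of $T_N$ to $T_R$, continuity of $S_{\mathcal{P}}$, and the product-entropy additivity---were already established in the preceding results and need only be quoted.
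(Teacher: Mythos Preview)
Your proof is correct and follows essentially the same strategy as the paper: decompose $\tilde u_N$ as a product of canonical distributions via Theorem \ref{canonical distribution of product}, split $\Delta S_{\tilde{\mathcal P}_N}$ additively, use the energy balance (\ref{internal energy equality}) together with the positive heat capacity hypothesis to control $N(T_N-T_R)$, and then pass to the limit. The only cosmetic difference is in how the reservoir term $N\bigl(S^{\mathrm{can}}_{\mathcal P_R}(T_N)-S^{\mathrm{can}}_{\mathcal P_R}(T_R)\bigr)$ is handled: you apply a first-order Taylor expansion of $S^{\mathrm{can}}_{\mathcal P_R}$ directly, whereas the paper rewrites this difference through the identity $S^{\mathrm{can}}=\frac{1}{T}(U^{\mathrm{can}}-F^{\mathrm{can}})$ and uses the already-established differentiability $\partial_T F^{\mathrm{can}}_{\mathcal P_R}=-S^{\mathrm{can}}_{\mathcal P_R}$ from Proposition \ref{canonical free energy}; the two routes are equivalent under the stated hypothesis. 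One small point worth making explicit: the paper treats the case $T_R\in\mathbb{T}^{\mathrm{eq}}_{\mathcal P}(U)$ separately (there $T_N=T_R$ for all $N$, so the difference quotients degenerate), while your argument absorbs it implicitly since both $N(T_N-T_R)$ and $U-U_{\mathcal P}(u_\infty)$ vanish identically in that situation.
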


\begin{proof}
When $T_R \in \mathbb{T}_{\mathcal{P}}^{\mathrm{eq}} (U)$, we have
\[ \mathbb{T}_{\tilde{\mathcal{P}}_N}^{\mathrm{eq}} (U + NU_{\mathcal{P}_R}^{\mathrm{can}} (T_R)) = \mathbb{T}_{\mathcal{P}}^{\mathrm{eq}} (U) \cap \mathbb{T}_{\mathcal{P}_R}^{\mathrm{eq}} (U_{\mathcal{P}_R}^{\mathrm{can}} (T_R)) = \{ T_R \}, \]
so that $T_N = T_R$. 
Then since $u_N = u_\infty$ is the $\mu$-canonical distribution of temperature $T_R$, we have $\tilde{u}_N = u_\infty \times u_R^{\times N}$. 
Then by $U_{\mathcal{P}} (u_\infty) = U_{\mathcal{P}}^{\mathrm{can}} (T_R) = U$, we can compute
\[ \Delta S_{\tilde{\mathcal{P}}_N} = S_{\mathcal{P}} (u_\infty) - S_{\mathcal{P}} (u) = - \frac{1}{T_R} (F_{\mathcal{P}} (T_R, u_\infty) - F_{\mathcal{P}} (T_R, u)) \]
for $u \in \M^{\exp, 1} (\mathcal{P}, U)$. 

When $T_R \notin \mathbb{T}_{\mathcal{P}}^{\mathrm{eq}} (U)$, we have either 
\[ T_R < T_N < \mathbb{T}_{\mathcal{P}}^{\mathrm{eq}} (U) \text{ or } T_R > T_N > \mathbb{T}_{\mathcal{P}}^{\mathrm{eq}} (U). \]
In either case, we have $T_R, T_N, T_R - T_N \neq 0$. 
Since $F_{\mathcal{P}_R}^{\mathrm{can}} (T)$ is differentiable on $T \in [0, \infty)$ by Proposition \ref{canonical free energy}, $U_{\mathcal{P}_R}^{\mathrm{can}} (T) = F_{\mathcal{P}_R}^{\mathrm{can}} (T) + TS_{\mathcal{P}_R}^{\mathrm{can}} (T)$ is also differentiable at $T_R$ by our assumption. 
Then by $T_N \to T_R$, we have 
\[ \lim_{N \to \infty} \frac{U_{\mathcal{P}_R}^{\mathrm{can}} (T_R) - U_{\mathcal{P}_R}^{\mathrm{can}} (T_N)}{T_R - T_N} = \partial_T U_{\mathcal{P}_R}^{\mathrm{can}} (T_R) = T_R \partial_T S_{\mathcal{P}_R}^{\mathrm{can}} (T_R) > 0. \]

On the other hand, by (\ref{internal energy equality}), we have 
\[ N (U_{\mathcal{P}_R}^{\mathrm{can}} (T_R) - U_{\mathcal{P}_R}^{\mathrm{can}} (T_N)) = U_P^{\mathrm{can}} (T_N) - U, \]
so that we compute 
\[ \lim_{N \to \infty} N ( U_{\mathcal{P}_R}^{\mathrm{can}} (T_R) - U_{\mathcal{P}_R}^{\mathrm{can}} (T_N)) = U_{\mathcal{P}}^{\mathrm{can}} (T_R) - U. \]
It follows that 
\[ \lim_{N \to \infty} N (T_R -T_N) = \lim_{N \to \infty} \frac{N (U_{\mathcal{P}}^{\mathrm{can}} (T_R) -U_{\mathcal{P}}^{\mathrm{can}} (T_N))}{(\frac{U_{\mathcal{P}}^{\mathrm{can}} (T_R) -U_{\mathcal{P}}^{\mathrm{can}} (T_N)}{T_R -T_N})} = \frac{U_{\mathcal{P}}^{\mathrm{can}} (T_R) - U}{\partial_T U_{\mathcal{P}_R}^{\mathrm{can}} (T_R)}. \]

Since $U_{\tilde{\mathcal{P}}_N}^{\mathrm{can}} (T_N) = U + NU_{\mathcal{P}_R}^{\mathrm{can}} (T_R)$, we have
\[ S_{\tilde{\mathcal{P}}_N} (\tilde{u}_N) = S_{\tilde{\mathcal{P}}_N}^{\mathrm{eq}} (U + NU_{\mathcal{P}_R}^{\mathrm{can}} (T_R)) = S^{\mathrm{can}}_{\tilde{\mathcal{P}}_N} (T_N) = S^{\mathrm{can}}_{\mathcal{P}} (T_N) + N S^{\mathrm{can}}_{\mathcal{P}_R} (T_N). \]
We then compute 
\begin{align*}
\Delta S_{\tilde{\mathcal{P}}_N} 
&= S^{\mathrm{can}}_{\mathcal{P}} (T_N) - S_{\mathcal{P}} (u) + N ( S^{\mathrm{can}}_{\mathcal{P}_R} (T_N) - S_{\mathcal{P}_R}^{\mathrm{can}} (T_R) ) 
\\
&= S^{\mathrm{can}}_{\mathcal{P}} (T_N) - S_{\mathcal{P}} (u) 
- \frac{1}{T_R} N (U_{\mathcal{P}_R}^{\mathrm{can}} (T_R) - U^{\mathrm{can}}_{\mathcal{P}_R} (T_N))
\\
&\quad + \frac{1}{T_R} N (F_{\mathcal{P}_R}^{\mathrm{can}} (T_R) - F^{\mathrm{can}}_{\mathcal{P}_R} (T_N))) + N (\frac{1}{T_N} - \frac{1}{T_R}) (U^{\mathrm{can}}_{\mathcal{P}_R} (T_N) - F^{\mathrm{can}}_{\mathcal{P}_R} (T_N))
\\
&= S^{\mathrm{can}}_{\mathcal{P}} (T_N) - S_{\mathcal{P}} (u) 
- \frac{1}{T_R} (U_{\mathcal{P}_R}^{\mathrm{can}} (T_N) - U)
\\
&\quad + \frac{N (T_R -T_N)}{T_R} \frac{F_{\mathcal{P}_R}^{\mathrm{can}} (T_R) - F^{\mathrm{can}}_{\mathcal{P}_R} (T_N)}{T_R - T_N} + \frac{N (T_R -T_N)}{T_R} S^{\mathrm{can}}_{\mathcal{P}_R} (T_N) 
\\
&\to S_{\mathcal{P}}^{\mathrm{can}} (T_R) - S_{\mathcal{P}} (u) 
- \frac{1}{T_R} (U_{\mathcal{P}}^{\mathrm{can}} (T_R) - U)
\\
&\quad+ \frac{U_{\mathcal{P}}^{\mathrm{can}} (T_R) - U}{T_R \partial_T U_{\mathcal{P}_R}^{\mathrm{can}} (T_R)} (\partial_T F^{\mathrm{can}}_{\mathcal{P}_R} (T_R) + S^{\mathrm{can}}_{\mathcal{P}_R} (T_R))
\\
&= - \frac{1}{T_R} (F_{\mathcal{P}}^{\mathrm{can}} (T_R) - F_{\mathcal{P}} (T_R, u)) = - \frac{1}{T_R} (F_{\mathcal{P}} (T_R, u_\infty) - F_{\mathcal{P}} (T_R, u)).
\end{align*}
\end{proof}

\begin{rem}
The above theorems are also valid for the case $U = U_{\mathcal{P}} (1_P)$. 
We do not even need to assume K-instability of $\mathcal{P}$, but for the proofs we need separate arguments. 
\end{rem}

\end{document}